\documentclass[a4paper,11pt]{article}
\usepackage{amssymb}
\textheight 24cm \textwidth 17cm 
\topmargin -1.4cm 
\oddsidemargin -0.3cm \evensidemargin -0.3cm 

\usepackage{latexsym}
\usepackage{amsmath}
\usepackage{amsthm}
\usepackage{amsfonts}
\usepackage{amssymb}
\allowdisplaybreaks
\usepackage{graphicx}
\usepackage[utf8x]{inputenc}
\usepackage{longtable}
\usepackage{cite}

\usepackage{verbatim}

\newtheorem{Th}{Theorem}
\newtheorem{Prop}{Proposition}
\newtheorem{Co}{Corollary}
\newtheorem{Lm}{Lemma}

\newtheorem{Dfi}{Definition}
\newtheorem{Rm}{Remark}

\newcommand{\be}{\begin{equation}}
\newcommand{\ee}{\end{equation}}
\newcommand{\R}{\mathbb{R}}
\newcommand{\N}{\mathbb{N}}
\newcommand{\C}{\mathbb{C}}
\newcommand{\CP}{\mathbb{CP}}

\def\lf{\left}
\def\rg{\right}

\def\e{\varepsilon}

\def\ov{\overline}

\def\om{\omega}
\def\p{\partial}

\def\delb{\ov{\p}}
\def\delbs{\ov{\p}^\ast}

\newcommand{\norm}[2]{\left\lVert#1 \right\rVert_{#2}}
\newcommand{\opnorm}[1]{{\left\vert\kern-0.25ex\left\vert\kern-0.25ex\left\vert #1 
    \right\vert\kern-0.25ex\right\vert\kern-0.25ex\right\vert}}
\setlength{\parindent}{0em}

\usepackage{setspace}

\addtocontents{toc}{\protect\setstretch{0.9}}
\begin{document}
\title{Weak Holomorphic Structures over K\"ahler Surfaces.}
\author{ Alexandru P\unichar{259}unoiu, Tristan Rivi\`ere\footnote{Forschungsinstitut f\"ur Mathematik, ETH Zentrum,
CH-8093 Z\"urich, Switzerland.}}
\maketitle

{\bf Abstract:}{ In this work we prove that any unitary Sobolev $W^{1,2}$ connection of an Hermitian bundle over a 2-dimensional K\"ahler manifold whose curvature is $(1,1)$ defines a smooth holomorphic structure. We prove moreover that such a connection can be strongly approximated in any $W^{1,p}$ ($p<2$) norm by smooth connections satisfying the same integrability condition. }

\medskip

\noindent{\bf Math. Class. 32L05, 58E15, 58D99, 46T99, 35J47 }

\tableofcontents

\section{Introduction}
The calculus of variations of Yang-Mills in 4-dimensions has naturally lead to the definition of Sobolev connections \cite{freed2012instantons}. We consider this notion in the following complex framework. Let $E$ be $C^\infty$ complex vector bundle of rank $n$ over a  K\"ahler manifold $X$ and $h_0$ be some reference Hermitian
inner product in the fibers of $E$: i.e. $(E,h_0)$ defines an Hermitian vector bundle. We shall sometimes consider $E$ issued from it's associated $GL_n(\C)$ principal bundle or from it's associated unitary principal bundle. We are interested in the space of Sobolev  $W^{1,2}$ connections of $E$ which are defined as follows: Let $\nabla_0$ be a smooth connection of $E$, we denote
\[
{\mathcal S}^{1,2}(E,h_0):=\lf\{\nabla:=\nabla_0+\eta\quad\mbox{ where }\quad \eta\in W^{1,2}(\Omega^1(ad_{h_0}(E)))\rg\}
\]
where $W^{1,2}(\Omega^1(ad_{h_0}(E)))$ is the space of Sobolev $W^{1,2}$ 1-form sections into the sub-bundle of the endomorphism bundle $End(E)$ made of the unitary endomorphisms for the reference metric $h_0$. ${\mathcal S}^{1,2}(E,h_0)$ is called the {\it space of Sobolev unitary $W^{1,2}-$connections of $(E,h_0)$}.\\

Throughout the paper we will heavily use gauge theory in order to obtain our results. Thus, we find it useful to recall to the reader the notion of a gauge transformation of a connection $\nabla\in {\mathcal S}^{1,2}(E,h_0)$. Let $g$ be a section of the Hermitian vector bundle $E$, then the gauge transformation of $\nabla=\nabla_0+\eta$  by $g$ is defined as $\nabla^g = \nabla_0 + \eta^g$, where $\eta^g:= g^{-1}dg +g^{-1}\eta g$.\\

We will be interested in the convergence of Sobolev unitary connections, and their respective Sobolev structures in the case of closed K\"ahler surface. We will positively answer the question of strong convergence: unitary $W^{1,2}$ connections $\nabla$ that preserve the \textit{integrability condition} $$F_{\nabla}^{0,2}=0$$
can be strongly approximated by smooth connections which also satisfy the condition. Recall that in the smooth case that unitary connections satisfying the {\it integrability condition} are in one to one correspondance with holomorphic structures (see \cite[Theorem 2.1.53]{donaldson1990geometry}). The goal of this paper is to extend this identification to Sobolev connections.\\

More precisely, our first main result is the following:
\begin{Th}\label{thm1}
\label{4-dim}
Let $\nabla$ be a unitary $W^{1,2}$ connection of an hermitian bundle $(E,h_0)$ over a closed K\"ahler surface $X^2$. Assume $\nabla$ satisfies the integrability condition
\be
\label{II.1}
F^{0,2}_\nabla=0\quad
\ee
then there exists a \underbar{smooth} holomorphic structure ${\mathcal E}$ on $E$ and a $\bigcap\limits_{q<2} W^{2,q}$ section $h$ of the bundle of  positive Hermitian
endomorphisms of $E$ such that
\be
\label{II.1a}
\nabla=\p_0+h^{-1}\p_0 h+\ov{\p}_{\mathcal E}
\ee
where $\ov{\p}_{\mathcal E}$ is the $\ov{\partial}-$operator associated to the holomorphic bundle ${\mathcal E}$ and $\p_0$ is the $1$-$0$ part of the Chern connection associated\footnote{These connections are not necessarily unitary with respect to $h_0$ anymore.} to the holomorphic structure ${\mathcal E}$ and the chosen reference hermitian product $h_0$.

\end{Th}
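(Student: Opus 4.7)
The strategy is to read $F^{0,2}_\nabla=0$ as saying that $\bar\partial_\nabla$ is a weakly integrable Cauchy-Riemann operator on the underlying smooth bundle $E$, and then to establish a Sobolev analogue of the Koszul-Malgrange theorem: locally, such an operator is conjugate to the trivial $\bar\partial$ via a complex gauge of class $\bigcap_{q<2}W^{2,q}$. Patching the resulting holomorphic trivializations yields the smooth holomorphic bundle $\mathcal{E}$, and expressing the reference metric $h_0$ in those trivializations produces the positive Hermitian endomorphism $h$.

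\medskip

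Concretely, I first cover $X$ by finitely many complex coordinate balls $(B_i,z_i)$ on which the $L^2$ norm of $F_\nabla$ is below a universal Uhlenbeck threshold, and apply Uhlenbeck's theorem in the critical real dimension $4$ to put $\nabla$ in a $W^{2,2}$ Coulomb gauge $d+A_i$ with $\|A_i\|_{W^{1,2}(B_i)}$ small. Splitting into types yields $\bar\partial_\nabla=\bar\partial+A_i^{0,1}$ in each $B_i$. The central analytic step is then to solve the linear $\bar\partial$-system
\[
\bar\partial g_i=-A_i^{0,1}\,g_i,\qquad g_i(x_i)=\mathrm{Id},
\]
whose formal compatibility condition is precisely $F^{0,2}_\nabla=0$. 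A fixed-point/continuity argument combining the smallness of $A_i^{0,1}$, the embedding $W^{1,2}\hookrightarrow L^4$, and Calder\'on-Zygmund estimates for $\bar\partial$ should produce an invertible solution $g_i\in\bigcap_{q<2}W^{2,q}$ close to the identity, in whose frame $\bar\partial_\nabla$ becomes the trivial $\bar\partial$.

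\medskip

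Once this is in place, the transition functions $g_{ij}=g_i g_j^{-1}$ on overlaps satisfy $\bar\partial g_{ij}=0$ distributionally, so elliptic regularity for $\bar\partial$ makes them smooth and holomorphic, and the $B_i$ glue to a smooth holomorphic vector bundle $\mathcal{E}$. Writing $h_0$ in the new holomorphic frames globally defines a positive Hermitian endomorphism $h\in\bigcap_{q<2}W^{2,q}$ of $\mathcal{E}$, and a direct computation using the unitarity of $\nabla$ with respect to $h_0$ shows that the $(1,0)$ component of $\nabla$ agrees with $\partial_0+h^{-1}\partial_0 h$, yielding (\ref{II.1a}).

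\medskip

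The main obstacle is the $\bar\partial$-system for $g_i$: the coefficient $A^{0,1}$ is only $W^{1,2}$, which in real dimension $4$ just fails to embed into $L^\infty$, so a naive Picard iteration does not close at the natural regularity. The integrability condition $F^{0,2}_\nabla=0$ must therefore be used essentially, both to ensure compatibility of the overdetermined system and to drive an iteration staying inside the correct Sobolev scale. This critical-regularity Koszul-Malgrange / Newlander-Nirenberg step is where all the analytic difficulty of the theorem is concentrated.
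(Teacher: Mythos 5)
Your overall architecture (local $\bar\partial$-trivialisation, holomorphic transition functions, $h=\ov{g}^Tg$ from the reference metric) matches the paper's, but the proposal has a genuine gap exactly where you yourself locate the difficulty: the local solvability of $\delb g=-A^{0,1}g$ with $A^{0,1}$ only small in $W^{1,2}(B^4)$ is asserted (``a fixed-point/continuity argument \dots should produce an invertible solution'') and then, in your final paragraph, conceded not to close under naive iteration. Nothing in the proposal replaces the missing argument. The obstruction is twofold. First, on the ball the system is overdetermined: any solution operator $T$ for $\delb$ gives $g=id+T(-A^{0,1}g)$ solving $\delb g=-A^{0,1}g$ only up to an error term supported on the part of $A^{0,1}g$ not in the range of $\delb$, and killing that error using $F^{0,2}_A=0$ requires estimates on the Henkin--{\O}vrelid kernels $T_1,T_2$ that only close for $A^{0,1}\in L^q$ with $q$ large (the paper needs $A\in W^{1,p}$, $p>3$, i.e.\ strictly subcritical data, for this step; see Lemma \ref{stronger}). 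Second, even granting a solution, invertibility of $g$ is not automatic on the ball: the solution space is acted on by all holomorphic matrices, and smallness of $A$ alone does not select an invertible representative with estimates.

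The paper resolves both points by leaving the ball: it extends a perturbation $A^{0,1}+\vartheta\omega$ of the connection to $\CP^2$ while preserving integrability (Propositions \ref{low:fredholm}, \ref{CP2extension}), solves the fixed point $\tilde g=id-\delbs N(\tilde A^{0,1}\tilde g)$ there, and uses the global $\delb$-Hodge theory of $\CP^2$ plus the integrability condition to show the obstruction term $\delbs\delb N(\tilde A^{0,1}\tilde g)$ vanishes; invertibility then comes from Liouville (a left-inverse $\tilde u$ makes $\tilde u\tilde g$ holomorphic on compact $\CP^2$, hence a constant close to $id$). The price is the extension error $\vartheta\omega$, which is removed by an interior Morrey-decay bootstrap raising $\vartheta\omega$ to $W^{1,p}_{loc}$ with $p>3$, at which point the subcritical ball lemma (Lemma \ref{stronger}) applies. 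If you want to salvage your plan, you must either import this compactification-plus-bootstrap scheme or supply a genuinely different mechanism for solving the critical-regularity $\delb$-system with an invertible solution on the ball; as written, the central lemma of the theorem is missing.
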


The second main result of this paper asserts that Sobolev holomorphic structures associated to Sobolev unitary connections are strongly approximable by smooth ones in 2 complex dimension (the dimension for which the Yang-Mills energy is critical):
\begin{Th}\label{thm2}
Under the assumptions of Theorem \ref{thm1}, there exists a sequence of \underbar{smooth} connections $\nabla_k$ on a smooth holomorphic bundle $\mathcal E_k$ satisfying
\[
F^{0,2}_{\nabla_k}=0\quad,
\]
and converging to $\nabla$ in the sense of:
\be
\label{II.2}
d_p(\nabla_k,\nabla):=\inf_{\sigma\in {\mathcal G}^{1,2}(GL(n,{\C}))}\int_{X^2}|\nabla_k-\nabla^\sigma|^p\,\om^2\,+\int_{X^2}|F_{\nabla_k}-F_{{\nabla}^\sigma}|^p\,\om^2\rightarrow 0
\ee
for any $p<2$, where ${\mathcal G}^{1,2}(GL_n({\C}))$ is the space of $W^{1,2}$ gauge transformations on $E$ for the group $GL_n({\C})$. \\

Moreover, there exists a family of isomorphisms $\mathcal H_k$ such that $$\delb_{\mathcal E_k} = \mathcal H_k^{-1}\circ \delb_{\mathcal E}\circ \mathcal H_k.$$
That is, the sequence of connections $\nabla_k$ act on an equivalent bundles to $E$.
\hfill$\Box$
\end{Th}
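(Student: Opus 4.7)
The plan is to leverage the structural decomposition from Theorem \ref{thm1},
\[
\nabla = \partial_0 + h^{-1}\partial_0 h + \bar{\partial}_{\mathcal{E}},
\]
with $\mathcal{E}$ a smooth holomorphic bundle and $h \in \bigcap_{q<2}W^{2,q}$ a positive Hermitian endomorphism. Since the holomorphic bundle is already smooth, the natural choice is $\mathcal{E}_k := \mathcal{E}$ and $\mathcal{H}_k := \mathrm{id}$ for every $k$, which trivially satisfies the conjugation identity $\bar{\partial}_{\mathcal{E}_k} = \mathcal{H}_k^{-1}\circ\bar{\partial}_{\mathcal{E}}\circ\mathcal{H}_k$; the theorem then reduces to producing a smooth positive Hermitian approximation $h_k \to h$ and defining
\[
\nabla_k := \partial_0 + h_k^{-1}\partial_0 h_k + \bar{\partial}_{\mathcal{E}}.
\]
Each $\nabla_k$ is automatically smooth, and since its $(0,1)$-part coincides with $\bar{\partial}_{\mathcal{E}}$, the integrability $F^{0,2}_{\nabla_k} = \bar{\partial}_{\mathcal{E}}^{\,2} = 0$ holds for free from the holomorphy of $\mathcal{E}$.

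To construct the $h_k$, I would fix a finite trivializing cover of $X^2$ with a subordinate partition of unity and convolve $h$ against a nonnegative, compactly supported mollifier $\rho_{\delta_k}$ chart by chart. Because the cone of positive Hermitian matrices is convex, each $h_k$ is again positive Hermitian and smooth; standard mollification yields strong convergence $h_k \to h$ in $W^{2,q}$ for every $q<2$, and hence in $W^{1,r}$ for every $r<4$ by Sobolev embedding in real dimension four.

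For the distance convergence, since $d_p$ is an infimum over gauges it suffices to test with $\sigma = \mathrm{id}$. Writing
\[
\nabla_k - \nabla = h_k^{-1}(\partial_0 h_k - \partial_0 h) + (h_k^{-1} - h^{-1})\partial_0 h
\]
and expanding $h_k^{-1} - h^{-1} = h_k^{-1}(h - h_k)h^{-1}$, the $L^p$ decay of each summand follows from $\partial_0 h_k \to \partial_0 h$ in $L^r$ (any $r<4$), $h_k \to h$ pointwise a.e., and a uniform $L^\infty$ bound on $h_k^{-1}$. The curvature $F_{\nabla_k}$ is a polynomial in $h_k, h_k^{-1}, \partial_0 h_k, \bar{\partial}h_k$ with identically vanishing $(0,2)$-component, so its $L^p$ convergence follows from the same ingredients combined with H\"older's inequality, precisely in the range $p<2$.

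\textbf{Main obstacle.} The delicate point is obtaining a uniform positive lower bound on $h_k$, equivalently a uniform $L^\infty$ bound on $h_k^{-1}$. The regularity $h \in \bigcap_{q<2}W^{2,q}$ sits exactly at the critical Sobolev threshold for $L^\infty$ embedding in real dimension four: $h$ need not be essentially bounded, and its mollifications need not be uniformly positive definite. Overcoming this either requires exploiting finer regularity of $h$ coming from the implicit elliptic equations attached to the Chern data of $(\mathcal{E},h)$, or regularizing through an auxiliary positive shift $h \leadsto h + \varepsilon_k\,\mathrm{Id}$ before mollifying, with $\varepsilon_k$ tuned carefully to the mollification scale $\delta_k$; managing this tuning and controlling the error it introduces in the $d_p$-convergence is where the bulk of the technical work will lie, and is the only ingredient not directly handed to us by Theorem \ref{thm1}.
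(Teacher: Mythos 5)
Your route is genuinely different from the paper's, and the contrast is worth recording. The paper does not pass through the decomposition of Theorem \ref{thm1} at all: it first produces the smooth approximating connections $\nabla_k$ with $F^{0,2}_{\nabla_k}=0$ via the high-energy density theorem (Theorem \ref{highenergy:globalmainthm}), whose proof requires the entire spectral apparatus of Section \ref{sec:highen} --- a unitary gauge perturbation making the linearised operator $T_{(A^{0,1})^g}$ invertible, followed by a fixed-point correction $\delbs\omega_k$ restoring integrability --- and only afterwards builds the holomorphic structures $\mathcal E_k$ and the isomorphisms $\mathcal H_k$ out of the local holomorphic trivialisations of the $\nabla_k$ and their stability (Corollary \ref{holo:stability}). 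You instead propose to freeze the holomorphic structure ($\mathcal E_k=\mathcal E$, $\mathcal H_k=\mathrm{id}$) and approximate only the metric $h$; integrability is then automatic because $F^{0,2}$ depends only on the $(0,1)$-part of the connection. If it could be completed, this would be a shorter argument and would give the stronger conclusion that $\mathcal E_k=\mathcal E$ on the nose.

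There is, however, a genuine gap, and it is exactly the one you flag without resolving. The regularity $h\in\bigcap_{q<2}W^{2,q}$ delivered by Theorem \ref{thm1} sits strictly below the $L^\infty$ threshold in real dimension four (one needs $q>2$ for $W^{2,q}\hookrightarrow L^\infty$), so neither $h$ nor $h^{-1}$ is known to be essentially bounded, and the mollified $h_k$ need not be uniformly positive definite. Every estimate your argument relies on --- the $L^p$ convergence of $h_k^{-1}\partial_0 h_k$, and a fortiori of the curvature, which contains $h_k^{-1}\overline{\partial}\partial_0 h_k$ with second derivatives lying only in $\bigcap_{q<2}L^q$ --- requires uniform control of $h_k^{-1}$, and that control is precisely what is missing. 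Neither of your proposed remedies is carried out, and neither is routine: the ``finer regularity'' route is not available from the paper, since the gauge $\tilde g$ of Lemma \ref{holo:gaugeestimates} is itself only $W^{2,q}$ for $q<2$, which is where the regularity of $h=\ov{g}^Tg$ saturates; and the shift $h\leadsto h+\varepsilon_k\,\mathrm{Id}$ produces error terms of the schematic form $\varepsilon_k\,h_k^{-1}h^{-1}\overline{\partial}\partial_0 h$ for which the naive bound $\opnorm{h_k^{-1}}\leq\varepsilon_k^{-1}$ yields only $O(1)$, not $o(1)$; one needs an additional dominated-convergence argument (using $\varepsilon_k h_k^{-1}\leq\mathrm{Id}$ together with pointwise convergence) and a choice of the mollification scale $\delta_k$ made after $\varepsilon_k$, since mollification of an $L^q$ function carries no quantitative rate. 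Until these estimates are written down, the $d_p$-convergence --- the main content of the theorem --- is not established, so what you have is a promising alternative strategy rather than a proof.
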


The strong approximation of Sobolev connections by smooth ones has been proven in the case of Riemannian manifolds without the integrability condition (\ref{II.1}). This is less involved and hence, one of the novelties of this paper is exploring how the approximation can be achieved by adding the integrability condition.

\begin{Rm}
We have formulated these theorems by considering closed K\"ahler manifolds. This consideration has been done for simplicity, since it allows us to use the fact that $\delb\delbs+\delbs\delb$ is locally equal to the Hodge-Laplace operator $\Delta_d=dd^\ast +d^\ast d$. The reader should take into account the fact that the results are generalisable to closed complex manifolds by carefully dealing with error between the $\delb\delbs+\delbs\delb$ and $\Delta_d$ operators.
\end{Rm}

There has been a definition of {\it weak connections} with $L^2$ bounded curvature given by the second author in collaboration with M. Petrache in \cite{petrache2017resolution} and \cite{petrache2018space}. This definition was motivated in a search of the closure of Sobolev connections below a Yang-Mills energy level. Roughly speaking a weak connection in real $5$-dimensions is defined as being an $L^2$ $1$-form into a Lie algebra $\mathfrak g$ such that its restriction on a.e. $4$-sphere is a Sobolev connection. In higher dimensions weak connections are defined in an iterative way. That is, for $n>5$, a weak connection in $n$-dimensions is an $L^2$ form $A$ into the Lie Algebra such that  when restricted to a.e $n-1$ spheres is itself a weak connection. This space has been proved to be weakly sequentially closed under Yang-Mills Energy control. This was one of the main results in \cite{petrache2017resolution} and \cite{petrache2018space}. \\

In higher even dimensions, for the {\it weak connections} defined in \cite{petrache2018space} over a complex manifold and satisfying in addition the integrability condition $F^{0,2}_A=0$, we expect theorems \ref{thm1}  and \ref{thm2} to extend in the following way :  We expect  to have necessary singularities and the {\it smooth holomorphic structures} should be  replaced by the more general notion of {\it coherent sheaves}. The question remains to know how smooth these sheaves can be and if a weak holomorphic structure defines a {\it reflexive sheaf } or not.\\

The motivation for addressing these questions takes it's roots in a paper of G. Tian \cite{Ti} in which  the closure of the space of smooth Yang-Mills fields has been studied. It leads naturally to the study of Yang-Mills Fields on a bundle well defined away from a co-dimension 4 closed rectifiable set in the basis.
The attempts in \cite{petrache2017resolution} and \cite{petrache2018space} was to give a suitable notion of such singular bundles together with a singular connection that enjoys a {\it sequential weak closure property}. The attached singular ``bundle'' to these singular connections could be thought as a {\it real version of coherent sheaves}. The goal of mixing the  notion 
of weak connection with the integrability condition $F^{0,2}_A=0$ is to check whether the corresponding singular bundle coincide with the classical notion of {\it reflexive sheaves} in the complex framework. The present paper is bringing a positive answer to this question when the basis is a K\"ahler surface.




\vspace{20pt}

\textbf{Structure of the paper}\\

In \ref{sec:notation} we introduce some notation.\\

Section \ref{sec:lowenergy} is devoted to the proof of theorem \ref{thm2} in the case of small Yang-Mills energy. This proof is not going to be used for proving the theorem in its full generality. However, we thought that it could be useful for the reader to expose a different approach in this particular case and the scheme of the proof we are giving in this section is going to be used in later ones.\\

Under the smallness condition of Yang-Mills energy, in Section \ref{sec:holo} we prove that connections satisfying the integrability condition \eqref{II.1} are locally \textit{holomorphically trivialisable}, meaning that in any geodesic ball embedded in our manifold $X^2$ where we can write $\nabla = d+A$, we show the existence of $g\in GL_n(\C)$ such that $A^{0,1} = -\delb g\cdot g^{-1}$. Using this result, we prove theorem \ref{thm1} in Section \ref{sec:thm1}.\\

Sections \ref{sec:highen} and \ref{sec:thm2} are dedicated to proving theorem \ref{thm2} in the case of high Yang-Mills energy. The former section proves the strong approximation result, and the latter concludes the statement by proving that the connections $\nabla_k$ act on equivalent bundles to $E$.

\newpage 
\subsection{Notations}\label{sec:notation}

\begin{longtable}{rll}
$M_n(\C)$ & - & the space of $n\times n$ complex valued square matrices\\
${\mathcal G}^{1,2}(GL_n({\C}))$ & -& the space of $W^{1,2}$ gauge transformations on $E$ for the group $GL_n({\C})$\\
$\Gamma(E)$ & - & the space of global smooth sections of the vector bundle $E$\\
$\Gamma_{W^{p,q}}(E)$ & - & the space of global $W^{p,q}$ sections of the vector bundle $E$\\
$\mathcal A^{p,q}(E)$ & - & the space of global $(p,q)$-sections defined on the vector bundle $E$\\
$\Omega^{p,q} U\otimes \mathfrak{g}$ & - & the space of $\mathfrak{g}$-valued $(p,q)$-forms on $U$\\
$W_D^{2,p}$ & - & the space of Sobolev functions $W^{2,p}$ that vanish on the boundary of the domain\\
$\vartheta \omega$ & - & $\vartheta = -\ast \p\ast$, formal adjoint of $\delb$ (see \cite[p. 83]{folland1972neumann})\\
$N \omega$ & - & $N$ is the inverse operator of $\Box = \delb\delbs+\delbs\delb$ applied to the $(p,q)$-form $
\omega$\\
$B^4$ & - & 4-dimensional unit open ball\\
$B_r^4$ & - & 4-dimensional open ball of radius $r>0$\\
$[A,B]$ & - & $[A,B] = A\wedge B + B\wedge A$, if $A, B$ are $(p,q)$-forms\\
$\left(A^{0,1}\right)^g$ & - & $g^{-1}\delb g + g^{-1}A^{0,1}g$\\
$\opnorm{T}$ & - & the norm of the operator $T:X\rightarrow Y$, for $X,Y$ Banach spaces\\
$\sigma(T)$ & - & the spectrum of the operator $T$\\
$\rho(T)$ & - & the resolvent of the operator $T$, defined as $\C\setminus \sigma(T)$\\
$A(x)$ & - & for $k$-forms $A = \sum_{|I|=k} x_I dx_I$ denote $A(x) = \sum_{I} a_i(x) dx_I$\\
$d_p(\nabla_k,\nabla)$ & - & $\inf_{\sigma\in {\mathcal G}^{1,2}(GL(n,{\C}))}\int_{X^2}|\nabla_k-\nabla^\sigma|^p\,\om^2\,+\int_{X^2}|F_{\nabla_k}-F_{{\nabla}^\sigma}|^p\,\om^2$
\end{longtable}

\section{Density under low energy}\label{sec:lowenergy}

Given a unitary $W^{1,2}$ connection $\nabla$ of the hermitian bundle $(E,h_0)$ over a closed K\"ahler surface $X^2$ satisfying $F_\nabla^{0,2}=0$, we assume without loss of generality that $B^4$ is a geodesic ball in $X^2$ and that $\nabla$ trivialises as $\nabla = d+A$ in $B^4$, where $A$ is a connection $1$-form.  Moreover, in this section we will work with low Yang-Mills energy and hence assume that the $L^2$ norm of curvature $2$-form is controlled by the $W^{1,2}$ norm of $A$ - which satisfies the smallness condition $$\norm{A}{W^{1,2}(B^4)}\leq \e_0(X^2,\omega)$$ for some $ \e_0(X^2,\omega)>0$ depending on the surface $X^2$ and the K\"ahler form $\omega$. We will use the smallness assumption throughout this section. Moreover, to fix ideas we will assume that $B^4$ is the flat closed unit ball.\\ 

We start by showing how to smooth $1$-forms, keeping the approximating sequence unitary. This method however, does not ensure the integrability condition (\ref{II.1}). Let $p>1$ and $A\in W^{1,p}(\Omega^1 B^4\otimes \mathfrak{u}(n))$ then we can always find a smooth sequence of unitary $1$-forms $A_k\in C^\infty(\Omega^1 B^4\otimes \mathfrak{u}(n))$ such that
$$A_k\rightarrow A\text{ in }W^{1,p}(B^4).$$
Indeed, we can write $A$ as $A = A^{0,1} - \ov{A^{0,1}}^T$, where $A^{0,1}=\alpha_1d\ov{z}_1 + \alpha_2d\ov{z}_2$. Since for each $i=1,2$, we have $\alpha_i\in W^{1,p}(B^4,\mathfrak{u}(n))$, then by the density of $C^\infty$ functions into $W^{1,p}$, there exist sequences
$$\alpha_{1,k}\rightarrow \alpha_1\text{ in }W^{1,p}(B^4)$$
and
$$\alpha_{2,k}\rightarrow \alpha_2\text{ in }W^{1,p}(B^4).$$
By defining
$A_k^{0,1}:=\alpha_{1,k} d\ov{z}_1 + \alpha_{2,k}d\ov{z}_2$
and $A_k:= A_k^{0,1}-\ov{A_k^{0,1}}^T$, we obtain by construction the convergence of $A_k$ to our initial form $A$ in $W^{1,p}$. Moreover, $A_k$ is a unitary $1$-form.
\vspace{4mm}

The next lemma helps us to prove that we can always find a perturbation of a given a connection $1$-form $A\in W^{1,p}$ with low norm such that the integrability condition \eqref{II.1} is satisfied.
\begin{Lm}{\label{low:p-approxthm}}
Let $p\geq 2$. There exists $\e>0$ depending on $p$ such that  for any $A \in W^{1,p}( \Omega^1 B^4\otimes \mathfrak{u}(n))$ satisfying $\norm{A}{W^{1,p}(B^4)} \leq \e$, there exists a $1$-form $$\tilde{A}\in W^{1,p}( \Omega^1 B^4\otimes \mathfrak{u}(n))$$ that satisfies the integrability condition $$F^{0,2}_{\tilde{A}} = 0$$
and
$$\norm{\tilde{A}-A}{W^{1,p}(B^4)}\leq C\norm{F_A^{0,2}}{L^p(B^4)},$$
for some constant $C>0$ depending on $p$. Moreover, if $A$ is smooth then $\tilde A$ is also.
\end{Lm}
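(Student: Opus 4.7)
My plan is to solve the integrability constraint $F_{\tilde A}^{0,2}=0$ as a small perturbation of $A$ via a Banach fixed point in the $(0,1)$-component, then reconstruct a skew-Hermitian correction that preserves the unitary structure. Concretely, I look for
\[
\tilde A = A + a^{0,1} - (a^{0,1})^{\ast}, \qquad a^{0,1}\in W^{1,p}(\Omega^{0,1}B^4\otimes M_n(\C)),
\]
so that $\tilde A-A$ is automatically $\mathfrak{u}(n)$-valued. Expanding the curvature,
\[
F_{\tilde A}^{0,2} = F_A^{0,2} + \delb a^{0,1} + [A^{0,1}, a^{0,1}] + a^{0,1}\wedge a^{0,1},
\]
so the integrability condition is equivalent to the nonlinear equation
\[
\delb a^{0,1} = -F_A^{0,2} - [A^{0,1}, a^{0,1}] - a^{0,1}\wedge a^{0,1}.
\]

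The key observation is that in complex dimension two any $(0,2)$-form on $B^4$ is automatically $\delb$-closed (being of top anti-holomorphic degree), which forces $\delbs\delb\equiv 0$ on $(0,2)$-forms. Consequently, letting $N$ denote the inverse of $\Box=\delb\delbs+\delbs\delb$ on $(0,2)$-forms (with $\delb$-Neumann boundary data adapted to the flat ball), the operator $\delbs N$ is a right inverse of $\delb$:
\[
\delb(\delbs N\gamma) = \delb\delbs N\gamma = \Box N\gamma = \gamma \quad\text{for every } \gamma\in\Omega^{0,2}B^4\otimes M_n(\C).
\]
I therefore recast the above equation as the fixed-point problem $a^{0,1} = T(a^{0,1})$ with
\[
T(\alpha) := -\delbs N\bigl(F_A^{0,2} + [A^{0,1},\alpha] + \alpha\wedge \alpha\bigr).
\]

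The bulk of the work is then a Banach contraction argument on the closed ball $\{\alpha\in W^{1,p}: \norm{\alpha}{W^{1,p}(B^4)}\leq R\}$. The crucial analytic input is the continuity of $\delbs N$ as a map $L^p\to W^{1,p}$, which comes from elliptic regularity (Calder\'on--Zygmund theory for the scalar Laplacian, in view of the K\"ahler identity $\Box=\tfrac12\Delta_d$, combined with the appropriate boundary adaptation). Together with the Sobolev embedding $W^{1,p}(B^4)\hookrightarrow L^{2p}(B^4)$, which holds exactly for $p\geq 2$ in real dimension four, this yields
\[
\norm{T(\alpha)}{W^{1,p}(B^4)}\leq C\bigl(\norm{F_A^{0,2}}{L^p(B^4)} + \norm{A}{W^{1,p}(B^4)}\norm{\alpha}{W^{1,p}(B^4)} + \norm{\alpha}{W^{1,p}(B^4)}^2\bigr),
\]
and an analogous Lipschitz estimate for $T(\alpha_1)-T(\alpha_2)$. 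Setting $R:=2C\norm{F_A^{0,2}}{L^p(B^4)}$ (which is small since $\norm{F_A^{0,2}}{L^p}\leq C\norm{A}{W^{1,p}}\leq C\e$) and choosing $\e$ small enough depending only on $p$, both the self-mapping and contraction conditions hold. The resulting fixed point $a^{0,1}$ obeys $\norm{a^{0,1}}{W^{1,p}(B^4)}\leq C\norm{F_A^{0,2}}{L^p(B^4)}$, which is exactly the claimed estimate on $\tilde A - A$.

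For the smoothness preservation statement, if $A\in C^\infty$ then $F_A^{0,2}$ and the entire right-hand side of the nonlinear equation are $C^\infty$, and iterating the elliptic gain of $\delbs N$ on the fixed-point identity $a^{0,1} = T(a^{0,1})$ bootstraps $a^{0,1}$ to $C^\infty$. The principal technical hurdle in the whole scheme is establishing the $L^p\to W^{1,p}$ mapping property of $\delbs N$ on the bounded domain $B^4$ with a boundary condition compatible with the fixed-point framework; once that estimate is in place the rest is Sobolev embedding and linear algebra, and the contraction mechanism closes cleanly.
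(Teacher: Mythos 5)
Your overall scheme is the same as the paper's: perturb $A$ by $a^{0,1}-\ov{a^{0,1}}^T$ with $a^{0,1}=\delbs$ of a $(0,2)$-form potential, reduce $F^{0,2}_{\tilde A}=0$ to a semilinear elliptic equation, and close a contraction using $\norm{F_A^{0,2}}{L^p}\leq C\norm{A}{W^{1,p}}$ (via $W^{1,p}\hookrightarrow L^{2p}$ for $p\geq 2$) to make the fixed-point ball small. The algebra, the Sobolev exponents, and the contraction mechanism are all correct and match the paper's iteration, which runs on the potential $\om$ in $W^{2,p}$ rather than on $a^{0,1}$ in $W^{1,p}$ -- an immaterial difference.

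The one genuine gap is exactly the point you flag as the ``principal technical hurdle'': the estimate $\delbs N: L^p\to W^{1,p}$ does \emph{not} hold if $N$ is the $\delb$-Neumann operator on $B^4$. The ball is strictly pseudoconvex, and for the $\delb$-Neumann problem one only has subelliptic estimates: $N$ gains one derivative and $\delbs N$ gains only half a derivative near the boundary, so the operator $T$ as you define it does not map $W^{1,p}$ to itself and the contraction does not close. The identity $\delb\delbs N\gamma=\gamma$ (using $\delbs\delb=0$ on top-degree $(0,2)$-forms) is fine in the interior, but the regularity of the right inverse is governed entirely by the boundary condition you impose. The fix -- and what the paper does -- is to abandon the $\delb$-Neumann condition and instead solve the \emph{Dirichlet} problem $\delb\delbs\om=\gamma$, $\om=0$ on $\p B^4$: on the flat ball $\delb\delbs$ acting on $(0,2)$-forms is $\tfrac12\Delta$ applied componentwise, so the scalar Dirichlet Laplacian gives the full Calder\'on--Zygmund gain $N:L^p\to W^{2,p}$, hence $\delbs N:L^p\to W^{1,p}$, and one still has $\delb(\delbs N\gamma)=\delb\delbs N\gamma=\gamma$ directly from the equation. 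With that substitution your argument closes; without it, the key mapping property you rely on is false as stated.
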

In the proof we will use Sobolev embeddings under the assumption that $p\in [2,4)$ - which is the more delicate case. If $p\geq 4$, the results hold by considering the corresponding Sobolev embeddings.
\begin{proof}[Proof of Lemma \ref{low:p-approxthm}]
In order to obtain a form satisfying the integrability condition, we want to perturb the $A$ with a form $V\in C^\infty( \Omega^{1} B^4, \mathfrak{u}(n)) $ such that $F_{A+V}^{0,2} = 0$. We express $V$ by $V = v -\ov{v}^T$, where $v \in W^{1,p}(\Omega^{0,1} B^4\otimes M_n(\C))$. By expanding $F_{A+V}^{0,2} = 0$ we, thus, get the following equation
$$\delb v + [v,A^{0,1}] + v\wedge v = - F_{A}^{0,2}.$$
By the fact that we work on a K\"ahler manifold, we know that $\delb\delbs \cdot = \frac{1}{2}\Delta\cdot d\ov{z_1}\wedge d\ov{z_2}$ on the space $\Omega ^{0,2} B^4$. Thus, we want to transform the PDE above into an elliptic one by taking $v$ of the form
$$v = \delbs \om$$
with $\om = 0$ on $\p B^4$ - so that $\delbs$ is well defined. We solve the following elliptic system:

$$\delb\delbs \om + [\delbs\om,A^{0,1}] + \delbs\om\wedge \delbs \om = - F_{A}^{0,2}.$$

Since $A^{0,1}$ and $F_{A}^{0,2}$ have small norms, we can solve it using a fixed point argument. We consider the following Dirichlet problem:
$$\left\{\begin{array}{rll}
\delb\delbs \om &= -  [\delbs\om,A^{0,1}] - \delbs\om\wedge\delbs \om  - F_{A}^{0,2}&\text{ in }B^4\\
\om &=0&\text{ on }\p B^4.
\end{array}\right.$$
We fix $k$ and we build the following sequence $\{\om_j\}_{j=1}^\infty$ of forms that solve the PDEs:
$$\begin{array}{ll}
\delb\delbs \omega_1 &= - F_{A}^{0,2}\\
\delb\delbs\omega_2 &= -  [\delbs\omega_1,A^{0,1}] - \delbs\omega_1\wedge\delbs \omega_1  - F_{A}^{0,2}\\
\cdots\\
\delb\delbs\omega_{j+1}&= -  [\delbs\omega_j,A^{0,1}] - \delbs\omega_j\wedge\delbs \omega_j  - F_{A}^{0,2}\\
\cdots
\end{array}$$
where $\omega_j = 0$ on $\p B^4$ for all $j\geq 1$.\\

\textbf{Claim.} $\{\omega_j\}_{j=1}^\infty$ exists and is a bounded sequence in $W^{2,p}$.\\

By classical elliptic theory, since $F_{A}^{0,2}\in L^p$, there exists a constant $C_1>0$ depending only on $p$ such that  
$$\norm{\om_1}{W^{2,p}(B^4)}\leq C_1\norm {\delb\delbs \om_1}{L^p(B^4)} = C_1\norm{F_A^{0,2}}{L^p(B^4)}<2C_1\norm{F_A^{0,2}}{L^p(B^4)}.$$
By induction we prove that $\omega_j$ exists and satisfies the uniform bound $\norm{\omega_j}{W^{2,p}(B^4)}\leq 2C_1\norm{F_A^{0,2}}{L^p(B^4)}$. We assume that $\om_j$ exists and $\norm{\omega_j}{W^{2,p}(B^4)}\leq 2C_1\norm{F_A^{0,2}}{L^p(B^4)}$ and prove that $\om_{j+1}$ exists with the same $W^{2,p}$ bound. By the Sobolev embedding $W^{1,p}\hookrightarrow L^{4p/(4-p)}$, there exists constants $C_2>0, C_3>0$ so that
$$\norm{\delbs\om_j}{L^{4p/(4-p)}(B^4)}\leq C_2\norm{\nabla \om_j}{W^{1,p}(B^4)}\leq C_2\norm{\om_j}{W^{2,p}(B^4)}\leq 2C_1\cdot C_2\norm{F_A^{0,2}}{L^p(B^4)}$$
and
$$\norm{A^{0,1}}{L^{4p/(4-p)}(B^4)}\leq C_3\norm{A^{0,1}}{W^{1,p}(B^4)}\leq C_3\e$$
In addition, since $4p/(4-p)\geq 2p$ for any $p\geq 2$, then $W^{1,p}$ continuously embeds into $L^{2p}$ and we can bound $\norm{F_A}{L^p(B^4)}$ as such:
\begin{align*}
\norm{F_A}{L^p(B^4)}&\leq \norm{dA}{L^p(B^4)} + \norm{A}{L^{2p}(B^4)}^2\leq \norm{A}{W^{1,p}(B^4)} + C_3\norm{A}{W^{1,p}(B^4)}^2\\
& \leq \norm{A}{W^{1,p}(B^4)} + C_3\e\norm{A}{W^{1,p}(B^4)}\\
&= (1+C_3\e)\norm{A}{W^{1,p}(B^4)}.
\end{align*}
Define the constant $C_4:=1+C_3\e$. Moreover, since $p\geq 2$, we have the embedding $L^{2p/(4-p)}\hookrightarrow L^p$. Denote $$f_j:= -[\delbs\omega_j,A^{0,1}] - \delbs\omega_j\wedge\delbs \omega_j  - F_{A}^{0,2}.$$ Using the estimates above we obtain:
$$\begin{array}{rl}
\norm{f_j
}{L^p(B^4)}&\leq\norm{[\delbs\omega_j,A^{0,1}]}{L^p(B^4)} +  \norm{\delbs\omega_j\wedge\delbs \omega_j}{L^p(B^4)} + \norm{F_A^{0,2}}{L^p(B^4)}\\
&\leq \norm{[\delbs\omega_j,A^{0,1}]}{L^{2p/(4-p)}(B^4)} +  \norm{\delbs\omega_j\wedge\delbs \omega_j}{L^{2p/(4-p)}(B^4)} + \norm{F_A^{0,2}}{L^p(B^4)}\\
&\leq  \norm{\delbs\omega_j}{L^{4p/(4-p)}(B^4)}\norm{A^{0,1}}{L^{4p/(4-p)}(B^4)} +  \norm{\delbs\omega_j}{L^{4p/(4-p)}(B^4)}^2+ \norm{F_A^{0,2}}{L^p(B^4)}\\
&\leq C_2C_3\norm{F_A^{0,2}}{L^p(B^4)}\e + 4C_1^2 \cdot  C_2^2\norm{F_A^{0,2}}{L^p(B^4)}^2 +\norm{F_A^{0,2}}{L^p(B^4)}\\
&\leq C_2C_3\norm{F_A^{0,2}}{L^p(B^4)}\e + 4C_1^2 \cdot  C_2^2C_4\norm{F_A^{0,2}}{L^p(B^4)}\norm{A}{W^{1,p}(B^4)} +\norm{F_A^{0,2}}{L^p(B^4)}\\
& \leq (C_2C_3\e+4C_1^2C_2^2C_4\e+1)\norm{F_A^{0,2}}{L^p(B^4)}
\end{array}$$

Hence, $ -[\delbs\omega_j,A^{0,1}] - \delbs\omega_j\wedge\delbs \omega_j  - F_{A}^{0,2}\in L^p$ and the solution $\omega_{j+1}$ to the PDE 
\be\label{low:recurrpde}\left\{
\begin{array}{rll}
\delb\delbs\omega_{j+1}&= -[\delbs\omega_j,A^{0,1}] - \delbs\omega_j\wedge\delbs \omega_j  - F_{A}^{0,2}&\text{ in } B^4\\
\omega_{j+1}& = 0 & \text{ on }\p B^4
\end{array}\right.\ee exists.
Choosing $\e>0$ such that $$C_2C_3\e+4C_1^2C_2^2C_4\e<1$$ is satisfied, it follows that  we can obtain the required bound:
$$ \norm{\omega_{j+1}}{W^{2,p}(B^4)}\leq C_1\norm{\delbs\delb \omega_{j+1}}{L^p(B^4)} =C_1\norm{f_j}{L^p(B^4)} \leq 2C_1\norm{F_A^{0,2}}{L^p(B^4)}.$$
Hence, by induction, we have proven the claim.\\

\textbf{Claim.} $\{\omega_j\}_{j=1}^\infty$ is a Cauchy sequence.\\

Since each $\omega_j$ satisfies the elliptic PDE (\ref{low:recurrpde}), we can estimate the difference $\omega_{j+1} - \omega_j$ as such:
$$\begin{array}{rl}
\norm{\omega_{j+1}-\omega_j}{W^{2,p}(B^4)} &\leq C\left(\norm{\omega_{j}-\omega_{j-1}}{W^{2,p}(B^4)}\norm{A^{0,1}}{W^{1,p}(B^4)} + \norm{F_A^{0,2}}{L^p} \norm{\omega_{j}-\omega_{j-1}}{W^{2,p}(B^4)}\right)\\
&\leq 2C\e \norm{\omega_{j}-\omega_{j-1}}{W^{2,p}(B^4)}\\
\end{array}$$
where $C>0$ is a constant depending on $p$. Choosing $\e$ such that in addition $2C\e<1$ is satisfied, it then follows that the sequence is Cauchy.\\

Because $W^{2,p}$ is a Banach space and the sequence $\{\omega_j\}_{j=1}^\infty$ is Cauchy, we have that the sequence converges strongly in $W^{2,p}$ to a limit which we denote by $\omega$. Moreover, by construction $\omega$ satisfies the PDE:

$$\left\{\begin{array}{rll}
\delb\delbs \omega &= -  [\delbs\omega,A^{0,1}] - \delbs\omega\wedge\delbs \omega  - F_{A}^{0,2}&\text{ in }B^4\\
\omega &=0&\text{ on }\p B^4.
\end{array}\right.$$

Define $\tilde{A} = A + \delbs\om-\ov{\delbs\om}^T$. Then $F_{\tilde{A}}^{0,2}=0$ and
$$\norm{\tilde{A}-A}{W^{1,p}} = \norm{\delbs\omega-\ov{\delbs\om}^T}{W^{1,p}}\leq 4C_1\norm{F_A^{0,2}}{L^p}.$$
We have proven the first result of this article.
\end{proof}

Using the result above, we can prove the first theorem of this paper. 

\begin{Th}{\label{approxthm}} There exists $\e_0>0$ such that if $A \in W^{1,2}(\Omega^{1} B^4,\mathfrak{u}(n))$ satisfies the smallness condition $\norm{A}{W^{1,2}(B^4)}\leq \e_0$ and the integrability condition $F^{0,2}_{A}= 0$, then there exists a smooth sequence $A_k\in C^\infty(\Omega^1 B^4,\mathfrak{u}(n))$ so that:
$$A_k \rightarrow A\text{ in }W^{1,2}(\Omega^1 B^4,\mathfrak{u}(n))$$
and satisfies the integrability condition $F^{0,2}_{A_k} = 0.$
\end{Th}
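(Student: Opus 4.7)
The plan is to combine the unitary smoothing procedure described at the opening of this section with Lemma \ref{low:p-approxthm}, used as a nonlinear correction that restores the integrability condition. Writing $A = A^{0,1} - \ov{A^{0,1}}^T$ with $A^{0,1} = \alpha_1 d\ov{z}_1 + \alpha_2 d\ov{z}_2$ and smoothing each component $\alpha_i\in W^{1,2}(B^4,\mathfrak{u}(n))$ in $W^{1,2}$ produces a sequence $\tilde A_k \in C^\infty(\Omega^1 B^4,\mathfrak{u}(n))$ of unitary $1$-forms with $\tilde A_k \to A$ in $W^{1,2}(B^4)$. For $k$ sufficiently large one has $\norm{\tilde A_k}{W^{1,2}(B^4)}\le \e$, where $\e>0$ is the threshold furnished by Lemma \ref{low:p-approxthm} taken at $p=2$.

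Next I would check that $\norm{F^{0,2}_{\tilde A_k}}{L^2(B^4)}\to 0$. Since $F^{0,2}_A=0$, this reduces to continuity of the map $A\mapsto F^{0,2}_A = \delb A^{0,1}+A^{0,1}\wedge A^{0,1}$ from $W^{1,2}$ to $L^2$. The linear term converges in $L^2$ trivially, and for the quadratic term one invokes the critical Sobolev embedding $W^{1,2}(B^4)\hookrightarrow L^4(B^4)$ in four real dimensions, which promotes strong $W^{1,2}$ convergence into strong $L^4$ convergence; then
$$
\norm{\tilde A_k^{0,1}\wedge \tilde A_k^{0,1} - A^{0,1}\wedge A^{0,1}}{L^2(B^4)} \le \norm{\tilde A_k - A}{L^4(B^4)}\bigl(\norm{\tilde A_k}{L^4(B^4)}+\norm{A}{L^4(B^4)}\bigr)\longrightarrow 0.
$$

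Applying Lemma \ref{low:p-approxthm} with $p=2$ to each $\tilde A_k$ for $k$ large, and exploiting the smoothness-preservation asserted in the last sentence of that lemma, yields a smooth sequence $A_k\in C^\infty(\Omega^1 B^4,\mathfrak{u}(n))$ with $F^{0,2}_{A_k}=0$ and
$$
\norm{A_k - \tilde A_k}{W^{1,2}(B^4)} \le C\,\norm{F^{0,2}_{\tilde A_k}}{L^2(B^4)}.
$$
Combining $\tilde A_k\to A$ in $W^{1,2}$ from the first step with $\norm{F^{0,2}_{\tilde A_k}}{L^2(B^4)}\to 0$ from the second step, the triangle inequality gives $A_k\to A$ in $W^{1,2}(B^4)$, which concludes the argument.

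The decisive step is the second one: the map $A\mapsto A\wedge A$ from $W^{1,2}$ to $L^2$ in four real dimensions is borderline, resting on the critical Sobolev embedding $W^{1,2}\hookrightarrow L^4$, which is continuous but fails to be compact. Fortunately only continuity, not compactness, is needed, since the preliminary smoothing furnishes strong $W^{1,2}$ convergence; once this input is in place the argument reduces to a clean assembly of the smoothing step with the nonlinear projection onto the integrability locus provided by Lemma \ref{low:p-approxthm}.
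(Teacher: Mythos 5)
Your proposal is correct and follows essentially the same route as the paper: componentwise smoothing to produce a unitary smooth approximating sequence, the observation that $F^{0,2}_{\tilde A_k}\to 0$ in $L^2$ via the critical embedding $W^{1,2}(B^4)\hookrightarrow L^4(B^4)$, and then Lemma \ref{low:p-approxthm} at $p=2$ as the nonlinear correction, with the triangle inequality closing the argument. If anything, you spell out the continuity of $A\mapsto F^{0,2}_A$ from $W^{1,2}$ to $L^2$, which the paper leaves implicit.
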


\begin{proof}[Proof of Theorem \ref{approxthm}]
As we have discussed at the start of this section, we can always construct a sequence smooth sequence of forms $\hat{A}_k$ that converge in $W^{1,2}$ to A and $F_{\hat A_k}^{0,2}\rightarrow 0= F_A^{0,2}$ in $L^2$ as $k\rightarrow \infty$. Let $\e>0$ be the constant given by Lemma \ref{low:p-approxthm} and pick $\e_0=\e/2$. Then there exists $k_0\geq 0$ such that $\norm{\hat A_k-A}{W^{1,2}(B^4)}\leq \e_0$ for all $k\geq k_0$ and:
$$\norm{\hat A_k}{W^{1,2}(B^4)}\leq \norm{\hat A_k-A}{W^{1,2}(B^4)}+\norm{A}{W^{1,2}(B^4)}\leq 2\e_0=\e.$$
Thus, for each $k\geq k_0$ we can apply Lemma \ref{low:p-approxthm} in order to obtain a perturbed sequence $A_k$ satisfying the integrability condition $F_{A_k}^{0,2}=0$ and there exists a constant $C>0$ such that 
$$\norm{A_k-\hat{A}_k}{W^{1,2}}\leq C\norm{F_{\hat A_k}^{0,2}}{L^2}\rightarrow 0.$$
Thus, 
$$\begin{array}{rl}
\norm{A_k-A}{W^{1,2}}&\leq \norm{A_k-\hat{A}_k}{W^{1,2}} + \norm{\hat{A}_k-A}{W^{1,2}}\\
&\leq C\norm{F_{\hat A_k}^{0,2}}{L^2}+ \norm{A-\hat{A}_k}{W^{1,2}}\rightarrow 0
\end{array}$$
as $k\rightarrow \infty$. This concludes the statement.
\end{proof}

\subsection{Existence of holomorphic trivialisations}\label{sec:holo}

In this section we show an application of our density results and prove that under the integrability condition $F_A^{0,2} = 0$ we obtain the existence of local holomorphic trivialisations assuming low $W^{1,2}$ norm for $A$ as before. We state the result:

\begin{Th}
There exists $\e_0>0$ such that if $A \in W^{1,2}(\Omega^{1} B^4\otimes\mathfrak{u}(n))$ satisfies $\norm{A}{W^{1,2}(B^4)}\leq \e_0$, and the integrability condition $F^{0,2}_{A}= 0$. There exists $r>0$ and $g\in W^{2,q}(B_r^4, GL_n(\C))$ for all $q<2$ such that 
\be\label{holotriv}A^{0,1} = -\delb g \cdot g^{-1}\qquad \text{ in }B_{r}^4,\ee
and there exists a constant $C_q>0$ such that 
$$\norm{g-id}{W^{2,q}(B_r^4)}\leq C_q\norm{A}{W^{1,2}(B^4)}\qquad\text{and}\qquad\norm{g^{-1}-id}{W^{2,q}(B_r^4)}\leq C_q\norm{A}{W^{1,2}(B^4)}.$$
 Moreover, $A^g=h^{-1} \p h$ where $h=\ov{g}^Tg$.
\end{Th}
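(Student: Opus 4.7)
The plan is to combine the approximation result of Theorem \ref{approxthm} with a Banach fixed-point construction on a small ball, using a Bochner--Martinelli--Koppelman representation of $\delb^{-1}$. Writing $g=\operatorname{id}+v$, the identity (\ref{holotriv}) becomes the first-order matrix system $\delb v=-A^{0,1}(\operatorname{id}+v)$. First I would apply Theorem \ref{approxthm} to produce $A_k\in C^\infty(\Omega^1 B^4\otimes\mathfrak u(n))$ with $F^{0,2}_{A_k}=0$, $A_k\to A$ in $W^{1,2}(B^4)$ and $\|A_k\|_{W^{1,2}(B^4)}\le 2\e_0$; it would then suffice to construct $g_k=\operatorname{id}+v_k$ on a common ball $B_r^4$ with uniform $W^{2,q}$ estimates and extract a subsequential weak limit.

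For each smooth $A_k$, on $B_r^4\subset\C^2$ I would use a Bochner--Martinelli--Koppelman formula giving continuous operators $\mathcal K:\Omega^{0,1}\to\Omega^{0,0}$ and $\mathcal K':\Omega^{0,2}\to\Omega^{0,1}$ satisfying $\phi=\delb(\mathcal K\phi)+\mathcal K'(\delb\phi)$ with the natural $W^{1,q}\to W^{2,q}$ continuity. Defining $\mathcal F(v):=\mathcal K(-A_k^{0,1}(\operatorname{id}+v))$, the Sobolev embeddings $W^{1,2}(B^4)\hookrightarrow L^4$ and $W^{2,q}\hookrightarrow W^{1,s}$ for $s<4q/(4-q)$ in dimension $4$, combined with smallness of $\e_0$ and $r$, will make $\mathcal F$ a contraction on a ball of radius $\sim \|A_k\|_{W^{1,2}(B^4)}$ in $W^{2,q}(B_r^4,M_n(\C))$ for each $q\in(1,2)$, yielding $v_k$ with $\|v_k\|_{W^{2,q}(B_r^4)}\le C_q\|A_k\|_{W^{1,2}(B^4)}$.

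The hardest step is closing the first-order equation. The fixed point only gives $v_k=\mathcal K(-A_k^{0,1}g_k)$, so by the Koppelman identity $\delb v_k=-A_k^{0,1}g_k+\mathcal K'(\delb(A_k^{0,1}g_k))$. Set $e_k:=\delb g_k+A_k^{0,1}g_k$. A direct Leibniz computation exploiting $F^{0,2}_{A_k}=\delb A_k^{0,1}+A_k^{0,1}\wedge A_k^{0,1}=0$ would give $\delb(A_k^{0,1}g_k)=-A_k^{0,1}\wedge e_k$, hence $e_k=-\mathcal K'(A_k^{0,1}\wedge e_k)$. Continuity of $\mathcal K'$ together with $\|A_k^{0,1}\|_{L^4(B^4)}\lesssim\e_0$ then forces $\|e_k\|\le C\e_0\|e_k\|$, so that $e_k\equiv 0$ for $\e_0$ small enough, and the first-order equation $\delb g_k=-A_k^{0,1}g_k$ holds.

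Finally, a diagonal extraction in $q\uparrow 2$ and the compact embedding $W^{2,q}\Subset W^{1,p}$ produce a weak limit $g\in\bigcap_{q<2}W^{2,q}(B_r^4)$ solving $\delb g=-A^{0,1}g$ with the announced bound $\|g-\operatorname{id}\|_{W^{2,q}}\le C_q\|A\|_{W^{1,2}}$. Invertibility of $g$ and the $W^{2,q}$ bound on $g^{-1}-\operatorname{id}$ would follow from a uniform $L^\infty$ bound on the smooth $g_k$ on a slightly smaller ball (the Koppelman kernel maps $L^s$ into $L^\infty$ for $s$ large enough in dimension $4$), transferred to $g$ by weak-$*$ compactness, together with the pointwise formula $\partial g^{-1}=-g^{-1}(\partial g)g^{-1}$ which propagates the $W^{2,q}$ estimates. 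The identity $A^g=h^{-1}\partial h$ with $h=\bar g^T g$ is then purely algebraic: $(A^g)^{0,1}=g^{-1}(\delb g+A^{0,1}g)=0$ by (\ref{holotriv}); unitarity $A\in\mathfrak u(n)$ gives $A^{1,0}=-(A^{0,1})^\ast=(\bar g^T)^{-1}\partial\bar g^T$; substitution into $(A^g)^{1,0}=g^{-1}\partial g+g^{-1}A^{1,0}g$ yields $g^{-1}\partial g+g^{-1}(\bar g^T)^{-1}(\partial\bar g^T)g=h^{-1}\partial h$.
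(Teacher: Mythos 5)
Your overall architecture (solve $\delb g=-A^{0,1}g$ by a fixed point for an integral solution operator, then kill the error term $e=\delb g+A^{0,1}g$ using $\delb(A^{0,1}g)=-A^{0,1}\wedge e$ and the integrability condition) is exactly the mechanism of the paper's Lemma \ref{stronger}, and that algebra is correct. The genuine gap is that you run this argument at the \emph{critical} regularity: your only uniform control on the smooth approximations $A_k$ is $\norm{A_k}{W^{1,2}(B^4)}\le 2\e_0$, i.e.\ $A_k^{0,1}\in L^4$ uniformly, and at this level the mapping properties you assign to the Koppelman/Henkin operators are not available. The claimed ``natural $W^{1,q}\to W^{2,q}$ continuity'' of $\mathcal K=T_1$ is a full-derivative gain for the Henkin solution operator on a strictly pseudoconvex domain; because of the boundary kernel $K^{\p}_0$ this is false in general (the sharp results, cf.\ Krantz, give only a half-derivative gain in the complex-tangential directions), and the paper's own Proposition \ref{app:T1res} only establishes $L^\infty$ and $L^q$ bounds under the hypotheses $q>6$, $\delb\alpha\in L^p$ with $p>6$ --- which is precisely why Lemma \ref{stronger} requires $\omega\in W^{1,p}$ with $p>3$ (so that $4p/(4-p)>12$). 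Likewise your error-cancellation and your uniform $L^\infty$ bound on $g_k$ need $T_1,T_2$ to land in $L^\infty$ from data built out of $A_k^{0,1}g_k$; since $A_k^{0,1}$ is only uniformly in $L^4$ and $W^{2,q}$ with $q<2$ does not embed in $L^\infty$ in dimension $4$, the relevant Riesz-potential estimate ($I_1:L^s\to L^\infty$ needs $s>4$) just misses. So the contraction, the vanishing of $e_k$, and the invertibility of $g$ all rest on estimates that fail (or at best are unproven) at $W^{1,2}$.

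The paper avoids exactly this obstruction by a two-step detour that your proposal is missing: first it extends a perturbation $\tilde A^{0,1}=A^{0,1}+\vartheta\omega$ of $A$ to all of $\CP^2$ keeping $F^{0,2}=0$ (Proposition \ref{CP2extension}) and solves $\delb\tilde g=-\tilde A^{0,1}\tilde g$ there using the Green operator $N$ of the Hodge Laplacian, which genuinely gains two derivatives, together with the $\delb$-Hodge decomposition and the absence of global holomorphic functions and $(0,2)$-forms on $\CP^2$ to kill the error term (Lemma \ref{holo:gaugeestimates}); second, it observes that the leftover term $\tilde g^{-1}(\vartheta\omega)\tilde g$ enjoys \emph{interior} regularity $W^{1,p}_{loc}$ for all $p<4$ (Proposition \ref{bootstrapmorrey}), which places it in the subcritical regime $p>3$ where the ball-based operators $T_1,T_2$ do have the required $L^\infty$ estimates, and only then applies the Lemma \ref{stronger} fixed point to cancel it. To repair your argument you would either have to prove the $W^{1,q}\to W^{2,q}$ and $L^4\to L^\infty$--type bounds for the Henkin operator that you are implicitly using (which I do not believe hold up to the boundary), or insert an analogous regularity-upgrading step before invoking the integral-representation fixed point.
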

\addtocounter{Th}{-1}

This result is an analog of the real case framework. Indeed, the flatness condition $F_A = 0$ together with the compactness of the Lie group $G$ imply that $A= - dg \cdot g^{-1}$ where $g\in W^{2,2}\cap L^\infty$. This can be easily done by using Uhlenbeck's gauge extraction procedure \cite{uhlenbeck1982removable}. In the complex framework, however, due to the lack of compactness of the group $GL_n(\C)$, we fail to obtain $W^{2,2}\cap L^\infty$ regularity of $g$.
\\

\textbf{Strategy:}\\

Since this proof is quite technical, we start by describing the strategy. We will first prove in Proposition \ref{CP2extension} that we can extend a small perturbation of our connection 1-form $A$ to $\CP^2$ while also keeping the integrability condition \eqref{II.1}. Secondly, Lemma \ref{holo:gaugeestimates} shows that this extended form is holomorphically trivialisable in the sense of \eqref{holotriv}. Thirdly, Lemma \ref{stronger} proves a technical result which shows the existence of holomorphic trivialisations of forms that are more regular than $W^{1,2}$ and this will help us later to cancel the initial perturbation we have added. \\

By combining all these steps, we obtain in Theorem \ref{holo:mainthm} the existence of holomorphic trivialisation of our initial form $A^{0,1}$ in $B_r^4$ for some $r>0$. We conclude the section with Corollary \ref{holo:stability} which proves a stability result.\\

We can assume without loss of generality that the ball of radius $2$, $B_2^4$, is holomorphically embedded into $\CP^2$, by Kodaira's embedding theorem \cite{griffiths2014principles}. Before we start we need to prove the following technical proposition:

\begin{Prop}{\label{low:fredholm}}
There exists $\e>0$ such that for any $\tilde A\in W^{1,2}(\Omega^1 \CP^2\otimes \mathfrak{u}(n))$ satisfying the bound  $\norm{\tilde A}{W^{1,2}(\CP^2)}\leq \e$, the operator $L_{\tilde A}: W^{2,2}(\Omega^2 \CP^2\otimes M_n(\C)) \rightarrow L^2(\Omega^2 \CP^2\otimes M_n(\C))$ defined by
\be\label{low:la}L_{\tilde A}(\omega) = \delb\delbs \omega + [{\tilde A}^{0,1},\delbs\omega]\ee
is Fredholm and invertible.
\end{Prop}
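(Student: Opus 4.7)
My plan is to view $L_{\tilde A}$ as a small perturbation of the principal part $L_0 := \delb\delbs$ acting on $M_n(\C)$-valued $(0,2)$-forms over $\CP^2$, show that $L_0$ is already a Banach space isomorphism $W^{2,2}\to L^2$, and then conclude by a Neumann series argument for the perturbation.

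First I would analyse the unperturbed operator $L_0$. Since $\dim_\C \CP^2 = 2$, one has $\Omega^{0,3}(\CP^2)=0$, so $\delb$ vanishes identically on $(0,2)$-forms and the $\delb$-Laplacian reduces to $\Box_{\delb}=\delb\delbs=L_0$ on $\Omega^{0,2}$. By the K\"ahler identity $\Box_{\delb}=\tfrac12\Delta_d$, so componentwise $L_0$ is half the Hodge--de Rham Laplacian on an $M_n(\C)$-valued $(0,2)$-form. This is a second-order elliptic operator, so by standard elliptic theory $L_0:W^{2,2}(\Omega^{0,2}\CP^2\otimes M_n(\C))\to L^2(\Omega^{0,2}\CP^2\otimes M_n(\C))$ is bounded Fredholm of index zero. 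Its kernel consists of $\delb$-harmonic matrix-valued $(0,2)$-forms, which componentwise is identified with $H^{0,2}(\CP^2,\C)\otimes M_n(\C)$; but the Hodge diamond of $\CP^2$ gives $h^{0,2}=0$. Thus $\ker L_0=0$, and by index zero (equivalently, self-adjointness) also $\operatorname{coker}L_0=0$, so $L_0$ is an isomorphism.

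Next I would estimate the perturbation $T_{\tilde A}(\omega):=[\tilde A^{0,1},\delbs\omega]$ as an operator $W^{2,2}\to L^2$. Since $\dim_\R\CP^2=4$, the critical Sobolev embedding $W^{1,2}(\CP^2)\hookrightarrow L^4(\CP^2)$ applies both to $\tilde A^{0,1}$ and to $\delbs\omega\in W^{1,2}$, whence H\"older gives
$$
\|T_{\tilde A}(\omega)\|_{L^2}\ \le\ 2\|\tilde A^{0,1}\|_{L^4}\,\|\delbs\omega\|_{L^4}\ \le\ C\,\|\tilde A\|_{W^{1,2}(\CP^2)}\,\|\omega\|_{W^{2,2}(\CP^2)},
$$
so $\opnorm{T_{\tilde A}}_{W^{2,2}\to L^2}\le C\e$. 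Writing $L_{\tilde A}=L_0\bigl(I+L_0^{-1}T_{\tilde A}\bigr)$ and choosing $\e>0$ small enough that $C\e\,\opnorm{L_0^{-1}}_{L^2\to W^{2,2}}<1$, the factor in parentheses is invertible by Neumann series, hence $L_{\tilde A}$ is invertible -- and in particular Fredholm.

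The decisive step, and the one I expect to be the main obstacle, is the triviality of $\ker L_0$: it is precisely the vanishing $h^{0,2}(\CP^2)=0$ that allows the unperturbed operator to be an isomorphism rather than merely Fredholm with a nontrivial cokernel. This is exactly why $\CP^2$ is chosen as the target of the Kodaira embedding used earlier in the section; over a K\"ahler surface with $h^{0,2}>0$ (e.g.\ a K3) the present statement would fail verbatim and one would be forced to quotient out a finite-dimensional obstruction space before inverting.
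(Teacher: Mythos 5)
Your proposal is correct, and it rests on exactly the same two pillars as the paper's proof: the vanishing $h^{0,2}(\CP^2)=0$, which makes $\delb\delbs$ injective (hence, being self-adjoint elliptic of index zero, an isomorphism $W^{2,2}\to L^2$), and the criticality of $W^{1,2}\hookrightarrow L^4$ in real dimension $4$, which makes the commutator term an operator of norm $O(\e)$. The only difference is in how the conclusion is assembled. The paper first invokes continuity of the Fredholm index under small perturbations to get $\mathrm{index}(L_{\tilde A})=0$, and then rules out the kernel by the a priori estimate $\norm{\omega}{W^{2,2}}\le C\norm{L_{\tilde A}\omega}{L^2}+C'\e\norm{\omega}{W^{2,2}}$ followed by absorption; invertibility then follows from index zero plus trivial kernel. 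You instead write $L_{\tilde A}=L_0(I+L_0^{-1}T_{\tilde A})$ and invert by Neumann series. Your route is the more economical one: once $L_0$ is known to be an isomorphism, the Neumann series delivers invertibility (and a uniform bound on $\opnorm{L_{\tilde A}^{-1}}$) in one stroke, and Fredholmness comes for free, so the separate index-continuity step becomes redundant. The paper's absorption argument is essentially the same computation carried out elementwise on the kernel, so nothing of substance is gained or lost either way; your closing remark correctly identifies $h^{0,2}(\CP^2)=0$ as the decisive geometric input.
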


\begin{proof}[Proof of Proposition \ref{low:fredholm}]
It follows from G\r arding's Inequality, that the operator $\delb\delbs$ is elliptic over $\CP^2$ (see \cite[p. 93]{griffiths2014principles}), and hence it is also Fredholm. By choosing $\e>0$ so that $\tilde A$ is small in norm, it follows that the operator $[{\tilde A}^{0,1},\delbs\cdot]$ has small operator norm. Hence, from the continuity of the index maps\cite[Theorem 4.4.2, p.185]{salamon2018functional}, we have that $L_{\tilde A}$ is Fredholm and has the same index as $\delb\delbs$ as an operator mapping $W^{2,2}( \CP^2, M_n(\C))$ to $L^2(\CP^2, M_n(\C))$.\\

It is well-known that there are no global nonzero holomorphic $(0,2)$-forms on $\CP^2$ \cite[p. 118]{griffiths2014principles}. This implies that $\delb\delbs$ is an invertible operator on the space of $(0,2)$-forms and consequently has index 0. Thus, it follows that $index(L_{\tilde A}) = index(\delb\delbs) = 0$. \\

It remains to show that $L_{\tilde A}$ has trivial kernel. Once we have shown this, we can use the zero index of $L_{\tilde A}$ in order to conclude that $L_{\tilde A}$ is invertible. Assume $\omega\in Ker L_{\tilde A}$. Hence, $\omega$ satisfies
$$\delb\delbs \omega = -[{\tilde A}^{0,1},\delbs \omega].$$
By the Fredholm Lemma, we obtain
$$\begin{array}{lll}
\norm{\omega}{W^{2,2}}&\leq& C\norm{\delb\delbs\omega}{L^2} \leq C(\norm{L_{\tilde A}(\omega)}{L^2} + \norm{[{\tilde A}^{0,1},\delbs\omega]}{L^2})\\
&\leq & C\norm{L_{\tilde A}(\omega)}{L^2} + \norm{{\tilde A}^{0,1}}{L^4}\norm{\delbs\omega}{L^4}\\
&\leq &C\norm{L_{\tilde A}(\omega)}{L^2} + C'\e\norm{\omega}{W^{2,2}}
\end{array}$$
for some constants $C,C'$. We can take the term $C'\e\norm{\omega}{W^{2,2}}$ on the left hand side of the inequality:
$$(1-C'\e)\norm{\omega}{W^{2,2}}\leq C\norm{L_{\tilde A}(\omega)}{L^2}.$$
Choosing $\e>0$ such that $1-C'\e>\frac{1}{2}$, then we can divide by the positive factor $1-C'\e$. We obtain the bound:
$$\norm{\omega}{W^{2,2}}\leq \frac{C}{1-C'\e} \norm{L_{\tilde A}(\omega)}{L^2}.$$
Because $\omega\in Ker L_{\tilde A}$, we have that $\omega = 0$. Since $\omega$ was arbitrarily chosen from the kernel, it follows that the kernel of $L_{\tilde A}$ is trivial: $Ker L_{\tilde A} = \{0\}$. This finishes the proof. 
\end{proof}

\vspace{4mm}
Having this result at our disposal, we can prove the existence of a $\CP^2$ extension of our connection form $A$, keeping the integrability condition (\ref{II.1}).

\begin{Prop}{\label{CP2extension}}
There exists $\e>0$ such that for any  $A\in W^{1,2}(\Omega^1 B^4\otimes\mathfrak{u}(n))$ satisfying $F_A^{0,2}=0$ and $\norm{A}{W^{1,2}(B^4)}<\e$, there exists $\tilde{A}\in W^{1,2}(\Omega^1 \CP^2\otimes\mathfrak{u}(n))$ that satisfies $F_{\tilde{A}}^{0,2} = 0$ in $\CP^2$ and $\omega\in W^{2,2}(\Omega^{0,2} \CP^2\otimes M_n(\C))$  such that $\tilde{A}^{0,1} = A^{0,1}+\vartheta \omega$ in $B^4$.\\

Moreover, $\omega$ satisfies the estimate $\norm{\omega}{W^{2,2}(\CP^2)}\leq C\norm{A}{W^{1,2}}$ for some constant $C>0$.
\end{Prop}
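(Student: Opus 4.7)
\textbf{Proof plan for Proposition \ref{CP2extension}.}

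The strategy is to first extend $A$ crudely to all of $\CP^2$, then correct the extension by a term of the form $\vartheta\omega$ so that the corrected $(0,1)$-part solves the integrability condition globally. Step one: since $B^4$ is assumed to sit inside $\CP^2$, pick a fixed smooth cutoff $\chi\in C_c^\infty(B^4_{3/2})$ with $\chi\equiv 1$ on $B^4$, and set $\hat A:=\chi\,A$, extended by $0$ outside $B^4_{3/2}$. Then $\hat A\in W^{1,2}(\Omega^1\CP^2\otimes\mathfrak u(n))$, $\hat A=A$ on $B^4$, and $\|\hat A\|_{W^{1,2}(\CP^2)}\le C\|A\|_{W^{1,2}(B^4)}$. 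In particular $F_{\hat A}^{0,2}\in L^2(\CP^2)$ and vanishes on $B^4$, with $\|F_{\hat A}^{0,2}\|_{L^2(\CP^2)}\le C\|A\|_{W^{1,2}(B^4)}$ (by expanding $F_{\hat A}^{0,2}=\delb\hat A^{0,1}+\hat A^{0,1}\wedge\hat A^{0,1}$ and using $W^{1,2}\hookrightarrow L^4$ on the $4$-manifold $\CP^2$).

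Step two: seek $\tilde A^{0,1}$ in the form $\tilde A^{0,1}=\hat A^{0,1}+\vartheta\omega$ for an unknown $\omega\in W^{2,2}(\Omega^{0,2}\CP^2\otimes M_n(\C))$. A direct expansion of $F_{\tilde A}^{0,2}=\delb\tilde A^{0,1}+\tilde A^{0,1}\wedge \tilde A^{0,1}$ shows that the integrability equation rewrites as
\be\label{prop:pde}
L_{\hat A}(\omega)\;=\;\delb\vartheta\omega+[\hat A^{0,1},\vartheta\omega]\;=\;-F_{\hat A}^{0,2}\;-\;\vartheta\omega\wedge\vartheta\omega,
\ee
where $L_{\hat A}$ is exactly the operator of Proposition \ref{low:fredholm}. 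For $\|A\|_{W^{1,2}}$ small enough, $\|\hat A\|_{W^{1,2}(\CP^2)}$ is below the threshold $\e$ of that proposition, so $L_{\hat A}:W^{2,2}\to L^2$ is invertible with an operator-norm bound on $L_{\hat A}^{-1}$ which is uniform in $\hat A$ (this uniformity follows from reinspecting the a priori estimate inside the proof of Proposition \ref{low:fredholm}).

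Step three: solve \eqref{prop:pde} by a Banach fixed point argument. Define
\[
T(\omega):=L_{\hat A}^{-1}\bigl(-F_{\hat A}^{0,2}-\vartheta\omega\wedge\vartheta\omega\bigr).
\]
On $\CP^2$ one has $W^{2,2}\hookrightarrow W^{1,4}$ and $W^{1,2}\hookrightarrow L^4$, giving $\|\vartheta\omega\wedge\vartheta\omega\|_{L^2}\le C\|\omega\|_{W^{2,2}}^2$ as well as the Lipschitz estimate $\|\vartheta\omega_1\wedge\vartheta\omega_1-\vartheta\omega_2\wedge\vartheta\omega_2\|_{L^2}\le C(\|\omega_1\|_{W^{2,2}}+\|\omega_2\|_{W^{2,2}})\|\omega_1-\omega_2\|_{W^{2,2}}$. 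Hence, on the closed ball $\mathcal B_R=\{\omega\in W^{2,2}:\|\omega\|_{W^{2,2}}\le R\}$ with $R:=2\opnorm{L_{\hat A}^{-1}}\|F_{\hat A}^{0,2}\|_{L^2}$, the map $T$ sends $\mathcal B_R$ into itself and is a strict contraction, provided $R$ (equivalently $\|A\|_{W^{1,2}(B^4)}$) is chosen small enough. The unique fixed point $\omega$ produces the required solution, with $\|\omega\|_{W^{2,2}(\CP^2)}\le 2\opnorm{L_{\hat A}^{-1}}\|F_{\hat A}^{0,2}\|_{L^2}\le C\|A\|_{W^{1,2}(B^4)}$.

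Step four (unitarity and conclusion): finally set $\tilde A:=\tilde A^{0,1}-\overline{\tilde A^{0,1}}^{\,T}\in W^{1,2}(\Omega^1\CP^2\otimes\mathfrak u(n))$. By construction $\tilde A^{0,1}=A^{0,1}+\vartheta\omega$ on $B^4$ and $F_{\tilde A}^{0,2}=0$ on $\CP^2$, and the norm bound on $\omega$ is precisely the one stated. The main technical obstacle I foresee is step three: because dimension $4$ is the conformally critical one, the embedding $W^{1,2}\hookrightarrow L^4$ is borderline and the quadratic term $\vartheta\omega\wedge\vartheta\omega$ sits exactly on the edge of what $L_{\hat A}^{-1}$ can absorb, so the smallness thresholds from Proposition \ref{low:fredholm}, from the cutoff extension, and from the contraction estimate must all be tuned consistently.
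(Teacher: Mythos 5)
Your proposal follows essentially the same route as the paper: extend $A$ to a compactly supported $W^{1,2}$ form $\hat A$ on $\CP^2$, then correct $\hat A^{0,1}$ by $\vartheta\omega$ where $\omega$ solves $L_{\hat A}(\omega)=-F_{\hat A}^{0,2}-\vartheta\omega\wedge\vartheta\omega$ via the invertibility of $L_{\hat A}$ from Proposition \ref{low:fredholm}; your Banach fixed point on the ball $\mathcal B_R$ is just a repackaging of the paper's explicit Picard iteration, and the uniformity of $\opnorm{L_{\hat A}^{-1}}$ that you flag is indeed supplied by the a priori estimate in that proposition.

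The one point that needs repair is Step one: $A$ is given only on $B^4$, so $\chi A$ with $\chi\equiv 1$ on $B^4$ and $\operatorname{supp}\chi\subset B^4_{3/2}$ is undefined on $\operatorname{supp}\chi\setminus B^4$. You must first extend $A$ from $B^4$ to a neighbourhood by a bounded Sobolev extension (the paper does this by solving a Dirichlet problem in the annulus $B_{3/2}^4\setminus B^4$ with data $\alpha_i$ on $\p B^4$ and $0$ on $\p B_{3/2}^4$, which simultaneously produces the cutoff). With that standard insertion your argument goes through and yields the stated estimate.
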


\begin{proof}[Proof of Proposition \ref{CP2extension}]\ \\
\textit{Step 1}. We can decompose $A$ into it's $(0,1)$ and $(1,0)$ parts: $A = A^{0,1} - \ov{A^{0,1}}^T$ where $$A^{0,1}= \alpha_1 d\ov{z_1} + \alpha_2d\ov{z_2}$$ and $\alpha_i\in W^{1,2}(B^4,\mathfrak{u}(n))$ for $i=1,2$. We extend each $\alpha_i$ into $B_2^4$ to a compactly supported function $\hat{\alpha}_i$, so that $\hat{\alpha}_i = 0$ in $B_2^4\setminus B_{3/2}^4$. Indeed, for each $i=1,2$ we solve:
$$\left\{\begin{array}{rlll}
\Delta \phi_i &=& 0 & \text{ in } B_{3/2}^4\setminus B_1^4\\
\phi_i &=& \alpha_i & \text{ on }\p B_1^4\\
\phi_i &=& 0 & \text{ on }\p B_{3/2}^4
\end{array}\right.$$
Such solutions exist by \cite[Remark 7.2, Chapter 2]{lions2012non} and satisfy
$$\norm{\phi_i}{W^{1,2}(B_{3/2}^4\setminus B_1^4)}\leq C \norm{\alpha_i}{H^{1/2}(\p B_1^4)}\leq C'\norm{\alpha_i}{W^{1,2}(B_1^4)}$$
for some constants $C,C'>0$. We can now define the following extensions on $B_2^4$:
$$\hat{\alpha}_i =\begin{cases}
\alpha_i&\text{ in }B_1^4\\
\phi_i&\text{ in }B_{3/2}^4\setminus B_1^4\\
0&\text{ in }B_2^4\setminus B_{3/2}^4.
\end{cases}$$
By the construction of $\phi_i$, the functions $\hat{\alpha}_i$ are well-defined $W^{1,2}(B_2^4)$ Sobolev functions that satisfy the estimate:
$$\norm{\hat{\alpha}_i}{W^{1,2}(B_2^4)}\leq C\norm{\alpha_i}{W^{1,2}(B_1^4)}.$$
Define the $(0,1)$-form $\hat{A}^{0,1} = \hat{\alpha}_1 d\ov{z_1} + \hat{\alpha}_2d\ov{z_2}$ and $$\hat{A} := \hat{A}^{0,1} -  \ov{\hat{A}^{0,1}}^T \in W^{1,2}(\Omega^1 B_2^4\otimes\mathfrak{u}(n)).$$
By covering $\CP^2\setminus B_2^4$ with coordinate charts, we can trivially extend $\hat{A}$ by $0$ on $\CP^2\setminus B_2^4$. Thus, we have obtained $\hat{A}\in W^{1,2}(\Omega^1 \CP^2\otimes\mathfrak{u}(n))$ and there exists a constant $\hat C>0$ such that $\norm{\hat A}{W^{1,2}}\leq \hat C\norm{A}{W^{1,2}}.$\\

\textit{Step 2}. It remains to perturb the form $\hat{A}$ so that we obtain the integrability condition. This can be done by finding a $(0,2)$-form solution $\omega$ to the integrability condition:
$$F_{\hat A + \delbs\omega - \ov{\delbs\omega}^T}^{0,2} = 0.$$
This amounts to solving the following PDE globally on the complex projective space $\CP^2$:

\be\label{holo:bilap}
\begin{array}{rll}
\delb\delbs\omega +  [\hat{A}^{0,1},\delbs\omega]&= -\delbs\omega\wedge \delbs\omega -  F_{\hat{A}}^{0,2}
\end{array}\ee

where $\omega$ is a $(0,2)$ form on $\CP^2$. Using the invertibility of the operator $L_{\hat A}$ proven in Proposition \ref{low:fredholm}, we can solve equation (\ref{holo:bilap}) using a fixed point method. This is done by mimicking the procedure we have employed before, in Lemma \ref{low:p-approxthm}. Indeed, consider the sequence given by:

$$\begin{array}{lll}
L_{\hat A}(\omega_1) &=& -F_{\hat A}^{0,2}\\
L_{\hat A}(\omega_2) &= & - \delbs \omega_1 \wedge \delbs\omega_1-F_{\hat A}^{0,2}\\
\cdots\\
L_{\hat A}(\omega_k) &=& - \delbs\omega_{k-1}\wedge \delbs\omega_{k-1} - F_{\hat A}^{0,2}\\
\cdots
\end{array}$$

By showing that the sequence $\omega_k$ converges strongly in $W^{2,2}$, we obtain a $W^{2,2}$ solution to the required equation (\ref{holo:bilap}). Since $L_{\hat A}$ is invertible as an operator $W^{2,2}$ to $L^2$, it is clear that existence holds for each $\omega_k$, $k\geq 1$. We need to show that the sequence $\{\omega_k\}_{k=1}^\infty$ is a Cauchy in $W^{2,2}$.\\

Let $e_0 :=C \norm{F_{\hat A}^{0,2}}{L^2(\CP^2)}$, where $C>0$ is the constant appearing in Fredholm inequality:
$$\norm{\phi}{W^{2,2}(\CP^2)}\leq C\norm{L_{\hat A}(\phi)}{L^2(\CP^2)}.$$

\textbf{Claim}. $\{\omega_k\}_{k=1}^\infty$ is a Cauchy sequence in $W^{2,2}(\CP^2)$.\\\\
We first show by induction the uniform bound on the sequence $\norm{\omega_k}{W^{2,2}(\CP^2)}\leq 2\e_0$. By the Fredholm Lemma \cite[Lemma 4.3.9]{salamon2018functional} we have that
$$\norm{\omega_1}{W^{2,2}(\CP^2)} \leq C\norm{L_{\hat A}(\omega_1)}{L^2(\CP^2)}= C\norm{F_{\hat A}^{0,2}}{L^2(\CP^2)} = \varepsilon_0<2\varepsilon_0$$
Let $k\geq 1$. By the Sobolev embedding $W^{1,2}(\CP^2)\hookrightarrow L^4(\CP^2)$ there exists a constant $C_1>0$ so that
$$
\norm{\delbs \omega_k}{L^4(\CP^2)}\leq C_1\norm{\delbs \omega_k}{W^{1,2}(\CP^2)}\leq C_1^2 \norm{\omega_k}{W^{2,2}(\CP^2)}.$$ Thus, the following inequalities follow:
$$\begin{array}{rl}
\norm{\omega_{k+1}}{W^{2,2}(\CP^2)} &\leq  C\norm{L_{\hat A}(\omega_{k+1})}{L^2(\CP^2)}\\
&\leq C\norm{\delbs\omega_{k}\wedge\delbs\omega_{k}}{L^2(\CP^2)} +C\norm{F_{\hat A}^{0,2}}{L^2(\CP^2)}\\
&\leq C\norm{\omega_{k}}{L^4(\CP^2)}^2 +\e_0\\
&\leq C\cdot C_1^4\norm{\omega_{k}}{W^{2,2}(\CP^2)}^2 +\e_0
\end{array}$$
By the induction hypothesis, we assume the bound $\norm{\omega_k}{W^{2,2}(\CP^2)}<2\e_0$. Thus, 
$$\norm{\omega_{k+1}}{W^{2,2}(\CP^2)}\leq 4(C\cdot C_1^4)\e_0^2+\e_0.$$
Having chosen $\e>0$ such that $4(C\cdot C_1^4)\e<1$ and  $\norm{\hat A}{W^{1,2}(\CP^2)}\leq \hat C\norm{A}{W^{1,2}(B^4)}\leq \e$, it follows that $4(C\cdot C_1^4)\e_0<1$ and we conclude
$$\norm{\omega_{k+1}}{W^{2,2}(\CP^2)}\leq 2\e_0.$$
By induction, we have proven that we have a uniform bound for the sequence of $2$-forms $\{\omega_k\}$:
$$\norm{\omega_k}{W^{2,2}(\CP^2)}\leq 2\varepsilon_0.$$ for all $k\geq 1$.
It remains to show that $\{\omega_k\}$ is a Cauchy sequence. Let $k\geq 2$. Thus, we derive the following bounds from the recurrence relation satisfied by the sequence:
$$\begin{array}{rl}
\norm{\omega_{k+1} - \omega_{k}}{W^{2,2}(\CP^2)} &\leq C\norm{L_{\hat A}(\omega_{k+1}-\omega_k)}{L^2(\CP^2)}\\
&\leq C\norm{\delbs(\omega_k-\omega_{k-1})\wedge\delbs\omega_k}{L^2(\CP^2)}+
C\norm{\delbs\omega_{k-1}\wedge \delbs(\omega_k-\omega_{k-1})}{L^2(\CP^2)}\\
&\leq 4C\cdot  C_1^2\e_0 \norm{\omega_k-\omega_{k-1}}{W^{2,2}(\CP^2)}
\end{array}
$$
To simplify notation, denote $\e_1 := 4C\cdot C_1^2\e_0<1$. We further expand our estimate above:
$$
\norm{\omega_{k+1} - \omega_{k}}{W^{2,2}(\CP^2)} \leq \e\norm{\omega_k-\omega_{k-1}}{W^{2,2}(\CP^2)}\leq \ldots \leq \e_1^k\norm{\omega_1-\omega_{0}}{W^{2,2}(\CP^2)}\leq 4\e_1^k\e_0^2
$$
Let $\ell>k>0$. It follows that
$$\begin{array}{rl}
\norm{\omega_{\ell} - \omega_{k}}{W^{2,2}(\CP^2)}& \leq \norm{\omega_{\ell} - \omega_{\ell-1}}{W^{2,2}(\CP^2)} +  \norm{\omega_{\ell-1} - \omega_{k}}{W^{2,2}(\CP^2)}\\
&\leq 4\e_1^{\ell-1}\e_0^2  + \norm{\omega_{\ell-1} - \omega_{\ell-2}}{W^{2,2}(\CP^2)} + \norm{\omega_{\ell-2} - \omega_{k}}{W^{2,2}(\CP^2)}\\
&\leq 4\e_1^{\ell-1} \e_0^2 +4 \varepsilon_1^{\ell-2 } \varepsilon_0^2 +\ldots + 4\e_1^k \varepsilon_0^2\\
& = 4\e_0^2 \cdot \e_1^k\cdot \frac{1-\e_1^{\ell-k}}{1-\e_1} \leq \e_1^k
\end{array}$$
This is clearly a Cauchy sequence by the inequality above and the claim is proven. \\

Hence, since $\{\omega_k\}_{k=1}^\infty$ is a Cauchy sequence in the Banach space $W^{2,2}(\Omega^{0,2}\CP^2\otimes M_n(\C))$, it has a limit $\omega$ and converges strongly in $W^{2,2}$ to it. Hence, by defining $\tilde A = \hat A + \delbs \omega - \ov{\delbs \omega}^T$, we obtain a skew-Hermitian $1$-form, satisfying $F_{\tilde A}^{0,2}=0$ such that $\tilde A^{0,1} = \hat A^{0,1} +\vartheta \omega = A^{0,1} + \vartheta\omega$ in $B^4$.\\

Moreover, by convergence, we have that the uniform bound is satisfied by the limiting form $\omega$, indeed $\norm{\omega}{W^{2,2}}<2\e_0 = 2C\norm{F_{\hat A}}{L^2}$. By construction of $\hat A$, it is clear that there exists a constant $C'>0$ so that $\norm{F_{\hat A}}{L^2}\leq C'\norm{A}{W^{1,2}}$. This leads to the required estimate on $\omega$, $\norm{\omega}{W^{2,2}}\leq C\norm{A}{W^{1,2}}$, where $C>0$ is some constant.

\end{proof}

\begin{Lm}\label{holo:gaugeestimates}
There exists $\e>0$ such that for any form $\tilde A\in W^{1,2}(\Omega^1 \CP^2\otimes \mathfrak{u}(n))$ satisfying the integrability condition (\ref{II.1}) and $\norm{\tilde A}{W^{1,2}}<\e$, there exists a gauge $\tilde g\in W^{2,q}(\CP^2,GL_n(\C))$ for any $q<2$ such that
$$\tilde A^{0,1} = -\delb \tilde g\cdot \tilde g^{-1}$$
and there exists a constant $C_q>0$ such that 
$$\norm{\tilde g-id}{W^{2,q}(\CP^2)}\leq C_q\norm{\tilde A}{W^{1,2}(\CP^2)}$$
and
$$\norm{\tilde g^{-1}-id}{W^{2,q}(\CP^2)}\leq C_q\norm{\tilde A}{W^{1,2}(\CP^2)}.$$
\end{Lm}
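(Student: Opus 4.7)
The conclusion $\tilde A^{0,1}=-\delb \tilde g\cdot\tilde g^{-1}$ is equivalent to the holomorphic frame equation $\delb \tilde g + \tilde A^{0,1}\tilde g = 0$, and writing $\tilde g = id+u$ for an $M_n(\C)$-valued function $u$ on $\CP^2$ turns this into
\[
(\ast)\qquad \delb u + \tilde A^{0,1}(id+u) = 0.
\]
The plan follows the template of Proposition \ref{CP2extension}: apply $\vartheta$ to $(\ast)$ to replace it with a globally elliptic PDE on $\CP^2$, solve that PDE by Banach fixed point using the smallness of $\tilde A$, recover $(\ast)$ itself from its $\vartheta$-projection via the integrability condition $F^{0,2}_{\tilde A}=0$, and handle the invertibility of $\tilde g$ through a dual construction together with a Liouville-type rigidity argument on $\CP^2$.

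\textbf{Elliptic reduction and fixed point.} On $0$-forms one has $\Box = \vartheta\delb$, so applying $\vartheta$ to $(\ast)$ yields the elliptic equation $\Box u = -\vartheta(\tilde A^{0,1}(id+u))$. Because $\vartheta$ of any $(0,1)$-form integrates to zero on $\CP^2$ and $\ker\Box = \C$ on functions, the Green operator $N:=\Box^{-1}$ is well-defined on mean-zero functions and the problem becomes the fixed-point equation $u = T(u):=-N\vartheta(\tilde A^{0,1}(id+u))$. Using that $N\vartheta$ gains one derivative together with standard Sobolev multiplication on $\CP^2$, one obtains
\[
\|T(u)-T(u')\|_{W^{2,q}(\CP^2)} \leq C_q\|\tilde A\|_{W^{1,2}(\CP^2)}\|u-u'\|_{W^{2,q}(\CP^2)}
\]
for every $q<2$. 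Hence for $\|\tilde A\|_{W^{1,2}}$ below a suitable $\e$, $T$ is a contraction on a small ball of mean-zero maps in $W^{2,q}(\CP^2,M_n(\C))$, and its unique fixed point $u$ lies in $\bigcap_{q<2}W^{2,q}$ with $\|u\|_{W^{2,q}}\leq C_q\|\tilde A\|_{W^{1,2}}$.

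\textbf{Recovering $(\ast)$ and inverting $\tilde g$.} Let $\eta := \delb u + \tilde A^{0,1}(id+u)$. By construction $\vartheta\eta=0$, while the integrability identity $\delb\tilde A^{0,1} + \tilde A^{0,1}\wedge\tilde A^{0,1}=0$ forces $\delb\eta = -\tilde A^{0,1}\wedge\eta$, so $\eta$ solves $\Box\eta = -\vartheta(\tilde A^{0,1}\wedge\eta)$ on $(0,1)$-forms. Because $h^{0,1}(\CP^2)=0$, $\Box$ is an isomorphism on $(0,1)$-forms, and the smallness of $\tilde A$ absorbs the right-hand side to force $\eta\equiv 0$; thus $u$ solves $(\ast)$. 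To invert $\tilde g=id+u$, the same scheme is run on the twin equation $\delb v = (id+v)\tilde A^{0,1}$ (the equation satisfied by $\tilde g^{-1}-id$) with normalization $\int_{\CP^2}v=0$, producing $v\in\bigcap_{q<2}W^{2,q}$ with $\|v\|_{W^{2,q}}\leq C_q\|\tilde A\|_{W^{1,2}}$. The decisive observation is
\[
\delb\bigl((id+v)(id+u)\bigr) = (id+v)\tilde A^{0,1}(id+u) - (id+v)\tilde A^{0,1}(id+u) = 0,
\]
so $(id+v)(id+u)$ is $\delb$-closed on $\CP^2$ and hence a constant matrix $K\in M_n(\C)$ by Liouville. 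Its value is $K = id + \mathrm{vol}(\CP^2)^{-1}\int_{\CP^2}uv$, with $|K-id|\leq C\|\tilde A\|_{W^{1,2}}^2$ by Cauchy-Schwarz; in particular $K\in GL_n(\C)$, $\tilde g$ is pointwise a.e.\ invertible with $\tilde g^{-1} = K^{-1}(id+v)\in W^{2,q}(\CP^2,GL_n(\C))$, and the bound $\|\tilde g^{-1}-id\|_{W^{2,q}}\leq C_q\|\tilde A\|_{W^{1,2}}$ follows from the estimates on $v$ and on $K^{-1}-id$.

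\textbf{Main obstacle.} The most delicate ingredient is the absorption argument that forces $\eta\equiv 0$: it has to be set up in a Sobolev norm on $(0,1)$-forms (for instance $W^{1,r}$ for $r$ slightly below $2$) in which $\tilde A^{0,1}\wedge\eta$ is treated as a genuinely small perturbation, with resulting contraction constant strictly less than $1$. The Liouville step is what circumvents the obstruction that $u\in W^{2,q}$ with $q<2$ does not embed into $L^\infty$ on the four-manifold $\CP^2$: instead of seeking a pointwise inverse of $id+u$, the inverse is obtained globally from the explicit formula $K^{-1}(id+v)$, so no $L^\infty$ control on $u$ is ever needed.
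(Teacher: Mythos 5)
Your overall architecture coincides with the paper's: solve the $\vartheta$-projection of the frame equation by a contraction built from the Green operator $N$, use the integrability condition to show the residual $\eta$ solves $\Box\eta=-\vartheta(\tilde A^{0,1}\wedge\eta)$ and hence vanishes (the paper phrases this as ``$\delbs\delb N(\tilde A^{0,1}\tilde g)$ is a fixed point of a contraction whose only fixed point is $0$'', which is the same absorption), and obtain the inverse from a second gauge solving the transposed equation plus Liouville on $\CP^2$. One place where you genuinely improve on the paper is the last step: by normalizing $u$ and $v$ to have mean zero and integrating the identity $(id+v)(id+u)=K$, you get $K=id+\mathrm{vol}(\CP^2)^{-1}\int_{\CP^2}vu$ (note the order $vu$, not $uv$) and $|K-id|\le C\|\tilde A\|_{W^{1,2}}^2$ directly by Cauchy--Schwarz; the paper instead runs a Fubini-plus-trace argument on a $3$-sphere to see that the constant is close to $id$, which your computation makes unnecessary.

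There is, however, a genuine gap in the regularity step. Your contraction estimate $\|T(u)-T(u')\|_{W^{2,q}}\le C_q\|\tilde A\|_{W^{1,2}}\|u-u'\|_{W^{2,q}}$ is correct for each fixed $q<2$, but the constant $C_q$ necessarily blows up as $q\to 2^-$: the term $\nabla\tilde A^{0,1}\cdot(u-u')$ with $\nabla\tilde A\in L^2$ only lands in $L^q$ via the critical embedding $W^{2,q}\hookrightarrow L^{2q/(2-q)}$, whose constant diverges as $q\to 2$. Hence your argument produces a threshold $\e_q\to 0$, i.e.\ a smallness condition depending on $q$, whereas the lemma (and its later use, e.g.\ to place $\tilde g$ in every $L^p$, $p<\infty$) requires a single $\e$ valid for all $q<2$. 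Nor can you fix one $q_0$ and bootstrap naively: the equation $\Box u=-\vartheta(\tilde A^{0,1}(id+u))$ is critical in four dimensions ($\Delta u\in L^{4/3}$ gives back $u\in W^{2,4/3}\hookrightarrow W^{1,2}$ with no gain). This is exactly why the paper first runs the contraction in the single space $W^{1,2}$ and then invokes Lemma \ref{bootstrapcriticalgauge} together with Proposition \ref{bootstrapmorrey2}, whose Morrey-decay/Adams-embedding argument is what breaks the criticality and delivers $W^{2,q}$ for all $q<2$ under one smallness assumption. Your proposal needs this (or an equivalent) ingredient; everything else goes through.
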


\begin{proof}[Proof of Lemma \ref{holo:gaugeestimates}]\

\textit{Step 1}. We prove this result using a fixed point argument. Indeed, consider the linear operator:
$$T:W^{1,2}(\CP^2, M_n(\C))\rightarrow W^{1,2}(\CP^2, M_n(\C))$$
given by $$T(\tilde g) = -\delbs N(\tilde A^{0,1}\tilde g) + id.$$
We verify that this operator is well-defined. It follows from the G\aa rding inequality on $\CP^2$ (see for example \cite[p. 93]{griffiths2014principles}) that we have the elliptic estimate
$$\norm{\delbs N(\tilde A^{0,1}\tilde g)}{W^{1,2}(\CP^2)}\leq C\norm{\delb\delbs N(\tilde A^{0,1}\tilde g)}{L^2(\CP^2)}$$
for some constant $C$. Moreover, using the $\delb$-Hodge decomposition we can decompose $\tilde A^{0,1}\tilde g$ as such:
\be\label{low:hodgedec}
\tilde A^{0,1}\tilde g = \delbs\delb N(\tilde A^{0,1}\tilde g) + \delb\delbs N(\tilde A^{0,1}\tilde g)\ee
and because $\delbs\perp\delb$, it follows that
$$\norm{\tilde A^{0,1}\tilde g}{L^2} = \norm{ \delbs\delb N(\tilde A^{0,1}\tilde g) }{L^2} + \norm{ \delb\delbs N(\tilde A^{0,1}\tilde g)}{L^2}.$$
Consequently,
$$\norm{ \delb\delbs N(\tilde A^{0,1}\tilde g)}{L^2} \leq \norm{\tilde A^{0,1}\tilde g}{L^2}.$$
Putting the above together, we obtain:
$$\norm{\delbs N(\tilde A^{0,1}\tilde g)}{W^{1,2}(\CP^2)} \leq C\norm{\tilde A^{0,1}\tilde g}{L^2(\CP^2)}\leq C\norm{\tilde A^{0,1}}{L^4(\CP^2)}\norm{\tilde g}{L^4(\CP^2)}.$$
Furthermore, using the Sobolev embedding in $4$-dimensions $W^{1,2}\hookrightarrow L^4$, there exists a constant $C'$ so that
\be\label{holo:estcp2} \norm{\delbs N(\tilde A^{0,1}\tilde g)}{W^{1,2}} \leq  C'\norm{\tilde A^{0,1}}{W^{1,2}}\norm{\tilde g}{W^{1,2}}.\ee
Thus, the operator $T$ is well-defined, mapping $W^{1,2}$ functions to $W^{1,2}$ function.\\

We can now show that $T$ has a unique fixed point. Consider $\tilde g_1,\tilde g_2\in W^{1,2}(\CP^2, M_n(\C))$. Then
$$\norm{T(\tilde g_1)-T(\tilde g_2)}{W^{1,2}(\CP^2)} = \norm{\delbs N(\tilde A^{0,1}(\tilde g_1-\tilde g_2))}{W^{1,2}(\CP^2)}.$$
Using the above inequalities, we obtain 
$$\norm{\delbs N(\tilde A^{0,1}(\tilde g_1-\tilde g_2))}{W^{1,2}(\CP^2)}\leq C' \norm{\tilde A^{0,1}}{W^{1,2}(\CP^2)}\norm{\tilde g_1-\tilde g_2}{W^{1,2}(\CP^2)}$$
and we can choose $\e>0$ such that the bound $\norm{\tilde A^{0,1}}{W^{1,2}(\CP^2)}<\e$ is small gives that the factor $C' \norm{\tilde A^{0,1}}{W^{1,2}(\CP^2)}$ is strictly smaller than $1$. Hence, $T$ is a contraction operator and there exists a unique fixed point $\tilde g\in W^{1,2}(\CP^2, M_n(\C))$, $T(\tilde g)=\tilde g$. Thus, we have 
$$\delb\tilde g=-\delb \delbs N (\tilde A^{0,1}\tilde g).$$

\textit{Step 2}. We can now show that the equation above coupled with the integrability condition satisfied by $\tilde A^{0,1}$, imply that $\tilde g$ solves the required PDE: $\delb \tilde g=-\tilde A^{0,1}\tilde g$. The $\delb$-Hodge decomposition (\ref{low:hodgedec}) gives
\be\label{low:recurr}\delb \tilde g= -\tilde A^{0,1}\tilde g + \delbs\delb N(\tilde A^{0,1}\tilde g).\ee
Since the operators $N$ and $\delb$ compute, $N\delb = N\delb$ (see \cite{chen2001partial}), we can further compute the term $\delbs\delb N(\tilde A^{0,1}\tilde g)$:
$$\delbs\delb N(\tilde A^{0,1}\tilde g) = \delbs N\delb(\tilde A^{0,1}\tilde g) = \delbs N(\delb \tilde A^{0,1}\tilde g - \tilde A^{0,1}\wedge \delb \tilde g).$$
Using the above equation (\ref{low:recurr}), the equation becomes
$$\delbs\delb N(\tilde A^{0,1}\tilde g)=  \delbs N(\delb \tilde A^{0,1}\tilde g - \tilde A^{0,1}\wedge \delb \tilde g) =  \delbs N(\delb \tilde A^{0,1}\tilde g + \tilde A^{0,1}\wedge \tilde A^{0,1}\tilde g - \tilde A^{0,1}\wedge \delbs\delb N(\tilde A^{0,1}\tilde g)).$$
Since $\tilde A$ satisfies the integrability condition $F_{\tilde A}^{0,2} = 0$, we have the recurrence relation:
\be\label{low:recurr2}\delbs\delb N(\tilde A^{0,1}\tilde g) = -\delbs N(\tilde A^{0,1}\wedge\delbs\delb N(\tilde A^{0,1}\tilde g)).\ee
Thus, it is natural to consider the operator
$$\mathcal L:L^2(\Omega^1 \CP^2\otimes M_n(\C)) \rightarrow L^2(\Omega^1 \CP^2\otimes M_n(\C))$$
$$V\mapsto -\delbs N (\tilde A^{0,1}\wedge V).$$
We need to establish whether $\mathcal L$ is a well-defined operator and find its fixed points in order to analyse equation (\ref{low:recurr2}). By the Sobolev embedding $W^{1,4/3}\hookrightarrow L^2$ it follows that
$$\norm{\mathcal L(V)}{L^2(\CP^2)}\leq C\norm{\mathcal L(V)}{W^{1,4/3}(\CP^2)}$$
for some constant $C>0$. We also have that 
$$\norm{\nabla \mathcal  L(V)}{L^{4/3}(\CP^2)}\leq C\norm{\nabla^2 N(\tilde A^{0,1}\wedge V)}{L^{4/3}(\CP^2)}$$
and consequently, since $ N(\tilde A^{0,1}\wedge V)$ is a $(0,2)$-form in $4$-dimensions, the elliptic estimate holds:
$$\norm{\nabla^2 N(\tilde A^{0,1}\wedge V)}{L^{4/3}(\CP^2)}\leq C\norm{\delb\delbs N(\tilde A^{0,1}\wedge V)}{L^{4/3}(\CP^2)} = C\norm{\tilde A^{0,1}\wedge V}{L^{4/3}(\CP^2)}.$$
By the H\"older inequality and the estimates above, it immediately follows that:
$$\norm{\mathcal  L(V)}{L^2(\CP^2)}\leq C\norm{\tilde A^{0,1}}{L^4(\CP^2)}\norm{V}{L^2(\CP^2)}.$$
Similarly as before, this means that $\mathcal  L$ is a well-defined contraction operator and has a unique fixed point. In particular, $0$ is its fixed point. We know from equation  (\ref{low:recurr2})   that $\delbs\delb N(\tilde A^{0,1}\tilde g)$ is also a fixed point for $\mathcal  L$ and, thus, we have that the term $\delbs\delb N(\tilde A^{0,1}\tilde g) $ vanishes. In particular, the equation is solved:
$$\delb \tilde g=-\tilde A^{0,1}\tilde g.$$
\vspace{2pt}

\textit{Step 3}. It remains to show that $\tilde g\in W^{2,q}(\CP^2,GL_n(\C))$ for all $q<2$ and the required estimates. Let $q<2$. We know that $\tilde g$ is a $W^{1,2}$ function and satisfies:
$$\tilde g-id = \delbs N(\tilde A^{0,1}\tilde g).$$
Since $\tilde g$ is a fixed point of $T$, then it satisfies the estimate (\ref{holo:estcp2}), which means:
$$\norm{\tilde g-id}{W^{1,2}(\CP^2)}\leq C\norm{\tilde A^{0,1}}{W^{1,2}(\CP^2)}\norm{\tilde g}{W^{1,2}}\leq C\norm{\tilde A^{0,1}}{W^{1,2}(\CP^2)}\norm{\tilde g - id}{W^{1,2}} + C\norm{\tilde A^{0,1}}{W^{1,2}(\CP^2)}.$$
Because $\norm{\tilde A^{0,1}}{W^{1,2}(\CP^2)}<\e$, where $\e$ is small, then there exists a constant $C>0$ such that
$$\norm{\tilde g-id}{W^{1,2}(\CP^2)}\leq C\norm{\tilde A^{0,1}}{W^{1,2}(\CP^2)}.$$
Since $\tilde g$ satisfies this estimate, we can bootstrap using Lemma \ref{bootstrapcriticalgauge} and Remark \ref{bootstrapsubcritical}(i), from which follow the required estimate and regularity: 
\be\label{holo:estcp2}\norm{\tilde g-id}{W^{2,q}(\CP^2)}\leq C_q\norm{\tilde A}{W^{1,2}(\CP^2)},\ee
for some constant $C_q>0$. \\

We need to show that $\tilde g$ is in $GL_n(\C)$ over $\CP^2$ and that its inverse satisfies a similar estimate as \eqref{holo:estcp2}.
Arguing in a similar way, we can show that there exists $\tilde u\in W^{2,q}(\CP^2, GL_n(\C))$ for any $q<2$ such that 
$$\delb \tilde u = \tilde u\tilde A^{0,1}$$ 
and $\norm{\tilde u -id}{W^{2,q}(\CP^2)}\leq C_q\norm{\tilde A}{W^{1,2}(\CP^2)}$ for some constant $C_q>0$. In particular, we have that $\delb (\tilde u \tilde g) = 0.$ Hence, there exists a holomorphic function $\tilde h$ such that $\tilde u\tilde g=\tilde h$. However, since the only holomorphic functions on $\CP^2$ are the constant ones \cite{griffiths2014principles}, then $\tilde h$ is a constant. \\

We can pick $q_0<2$ so that we obtain the Sobolev embedding $W^{2,q_0}\hookrightarrow L^\infty$ on any $3$-dimensional hypersurface. Moreover, by \cite[Section 4.8.2, Theorem 1]{runst2011sobolev}, there exists $q_1\in (q_0,2)$ such that $\tilde u,\tilde g\in W^{2,q_1}(\CP^2, M_n(\C))$ and $\tilde u\tilde g\in W^{2,q_0}$ and $\norm{\tilde u\tilde g-id}{W^{2,q_0}(\CP^2)}\leq C_{q_0}\norm{\tilde A}{W^{1,2}(\CP^2)}$ for some constant $C_{q_0}>0$. By Fubini, there exists a radius $r>0$ and $z_0\in \CP^2$ such that 
$$\norm{\tilde u\tilde g-id}{W^{2,q_0}(\p B_r^4(z_0))}<2C_{q_0}'\norm{\tilde A}{W^{1,2}(\CP^2)}$$
where $B_r^4(z_0)$ is holomorphically embedded in $\CP^2$. Thus, by the embedding of $W^{2,{q_0}}$ into $L^\infty$ in $3$-dimensions, there exists a constant $C_{q_0}''>0$ so that $\norm{\tilde u\tilde g-id}{L^\infty(\p B_r^4(z_0))}\leq C_{q_0}''\norm{\tilde A}{W^{1,2}(\CP^2)}$. Having chosen $\e>0$ such that $\norm{\tilde A}{W^{1,2}(\CP^2)}<\e$ is small enough, we obtain that $\tilde h = \tilde u\tilde g\in GL_n(\C)$ over $\p B_r(z_0)$. However, because $\tilde h$ is a constant, then $\tilde h\in GL_n(\C)$ and satisfies the estimate:
$$\norm{\tilde h - id}{L^\infty(\CP^2)}\leq C\norm{\tilde A}{W^{1,2}(B^4)},$$
for some constant $C>0$.\\

Hence, we can define $\tilde g^{-1} := \tilde h^{-1} \tilde u$. Since $\tilde g^{-1}\tilde g = id$ by construction, we obtain that $\tilde g$ maps into $GL_n(\C)$. Moreover, it follows that $\tilde g^{-1}\in W^{2,q}(\CP^2,GL_n(\C))$ for any $q<2$, and by the estimates on $\tilde u$, we obtain that for each $q<2$ there exists a constant $C_q>0$ such that
$$\norm{\tilde g^{-1}-id}{W^{2,q}(\CP^2)}\leq C_q\norm{A}{W^{1,2}(\CP^2)}.$$
This conclude the  proof.
\end{proof}

Before proving the existence of a local holomorphic trivialisation for our initial $W^{1,2}$ form, we need to show a stronger version of existence. We consider forms of small norm in $W^{1,p}$, $p>3$. This will be a useful result for our final theorem.

\begin{Lm}\label{stronger}
Let $p>3$. There exists $\e>0$ such that for any $\omega\in W^{1,p}(\Omega^{0,1} B^4\otimes M_n(\C))$ satisfying $F_{\omega}^{0,2}=0$ and $\norm{\omega}{W^{1,p}(B^4)}\leq \e$, there exists $r\in (1/2,1)$ and a gauge $u\in W^{2,p}(B_r^4, GL_n(\C))$ so that
$$\omega = -\delb u\cdot u^{-1}\qquad \text{ in }B_r^4,$$
with estimates $\norm{u-id}{W^{2,p}(B_r^4)}\leq C\norm{\omega}{W^{1,p}(B^4)}$ and $\norm{u^{-1}-id}{W^{2,p}(B_r^4)}\leq C\norm{\omega}{W^{1,p}(B^4)}$.
\end{Lm}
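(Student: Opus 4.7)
My plan is to reduce the local statement on $B^4$ to the global gauge extraction of Lemma \ref{holo:gaugeestimates} on $\CP^2$ and then correct for the discrepancy by a Dirichlet-type iteration modelled on Lemma \ref{low:p-approxthm}. Since $p > 2$, the unitary $1$-form $A := \omega - \ov{\omega}^T$ lies in $W^{1,2}(\Omega^1 B^4 \otimes \mathfrak{u}(n))$, so Proposition \ref{CP2extension} produces $\tilde A \in W^{1,2}(\Omega^1 \CP^2 \otimes \mathfrak{u}(n))$ with $F_{\tilde A}^{0,2} = 0$ on $\CP^2$ and $\tilde A^{0,1} = \omega + \vartheta\omega_0$ on $B^4$ for a small $\omega_0 \in W^{2,2}(\CP^2)$. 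The harmonic extension $\hat A$ in fact lies in $W^{1,p}$, and $\omega_0$ solves an elliptic fixed-point equation with $W^{1,p}$ data, so a bootstrap upgrades $\omega_0$ to $W^{2,p}(\CP^2)$ with $\|\omega_0\|_{W^{2,p}} \leq C\|\omega\|_{W^{1,p}}$. Applying Lemma \ref{holo:gaugeestimates} yields $\tilde g \in \bigcap_{q<2} W^{2,q}(\CP^2, GL_n(\C))$ with $\tilde A^{0,1} = -\delb\tilde g\cdot\tilde g^{-1}$; local elliptic bootstrap on the first-order equation $\delb\tilde g = -\tilde A^{0,1}\tilde g$ then lifts $\tilde g$ to $W^{2,p}$ on a neighbourhood of $\overline{B^4}$.

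Setting $u = \tilde g\cdot h$ and using $\delb\tilde g = -\tilde A^{0,1}\tilde g$ together with $\tilde A^{0,1} - \omega = \vartheta\omega_0$ on $B^4$, the equation $\delb u = -\omega u$ is algebraically equivalent to
\[
\delb h = \mu\cdot h \quad\text{on } B^4, \qquad \mu := \tilde g^{-1}(\vartheta\omega_0)\tilde g,
\]
where $\|\mu\|_{W^{1,p}(B^4)} \leq C\|\omega\|_{W^{1,p}}$ is small and $-\mu$ is precisely the gauge transform of $\omega$ by $\tilde g$ on $B^4$, hence satisfies the compatibility $F_{-\mu}^{0,2}=0$ that makes the over-determined $\delb$-system solvable. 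I seek $h = I + \delbs\zeta$ with $\zeta$ a $(0,2)$-form vanishing on $\partial B_r^4$, so that on $B_r^4$ the system reduces to the elliptic Dirichlet problem
\[
\delb\delbs\zeta = \mu + \mu\cdot\delbs\zeta \quad\text{in } B_r^4, \qquad \zeta\big|_{\partial B_r^4} = 0,
\]
solved by the iteration $\delb\delbs\zeta_{k+1} = \mu + \mu\cdot\delbs\zeta_k$ exactly as in Lemma \ref{low:p-approxthm}. Since $\delb\delbs$ on $(0,2)$-forms of $\C^2$ is (proportional to) the scalar Laplacian, standard $L^p$-theory for Laplace-Dirichlet on the ball, combined with the Banach algebra property of $W^{2,p}$ for $p > 2$ and the Sobolev embedding $W^{1,p}\hookrightarrow L^{2p}$, produces uniform $W^{2,p}$ bounds; smallness of $\|\mu\|_{W^{1,p}}$ yields the contraction. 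Invertibility of $h$ (and hence of $u = \tilde g h$) follows from $\|h - id\|_{L^\infty}$ being small via $W^{2,p}\hookrightarrow L^\infty$ for $p > 2$ in dimension four, and the estimate on $u^{-1}$ from the identity $u^{-1} - id = -u^{-1}(u - id)$.

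The principal technical obstacle is the choice of the radius $r \in (1/2,1)$ and the concurrent bootstrap of $\tilde g$: a Fubini argument on concentric spheres, exploiting the Sobolev embedding $W^{2,p}\hookrightarrow L^\infty$ on the $3$-dimensional sphere --- valid precisely when $p > 3$ --- lets me select $r$ on which the traces of $\tilde g$ and $\mu$ on $\partial B_r^4$ are controlled in $L^\infty$, so that the elliptic constants in the Dirichlet problem on $B_r^4$ remain uniform. This is exactly where the hypothesis $p > 3$ enters the argument, mirroring the final step of the proof of Lemma \ref{holo:gaugeestimates}. The same boundary-sphere estimate also supplies the $L^\infty$ control needed to upgrade $\tilde g$ from $W^{2,q}$, $q < 2$, to $W^{2,p}$, since $\tilde g$ need not be bounded globally on $\CP^2$.
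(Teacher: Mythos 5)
Your proposed route has a fatal gap at its central step, the correction equation $\delb h=\mu\cdot h$ on $B_r^4$. The ansatz $h=I+\delbs\zeta$ with $\zeta$ a $(0,2)$-form does not typecheck: $\delbs\zeta$ is then a $(0,1)$-form, so $I+\delbs\zeta$ is not a gauge (a matrix-valued function) and cannot be substituted into $\delb h=\mu h$. If instead you take $\zeta$ to be a $(0,1)$-form so that $\delbs\zeta$ is a function, then the operator $\zeta\mapsto\delb\delbs\zeta$ is not elliptic (its kernel contains everything $\delbs$-closed); the identity $\delb\delbs=\tfrac12\Delta\, d\ov{z}_1\wedge d\ov{z}_2$ that powers Lemma \ref{low:p-approxthm} holds only on top-degree $(0,2)$-forms, where $\delbs\delb$ vanishes. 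The equation $\delb h=\mu h$ for a function $h$ is an \emph{overdetermined} first-order system on a domain with boundary, and the compatibility condition $F^{0,2}_{-\mu}=0$ does not convert it into a Dirichlet problem. This is exactly the difficulty the paper's proof is built around: it solves the fixed-point problem $u=id+T_1(-\omega\cdot u)$ in $L^\infty\cap\{f:\delb f\in L^q\}$ using the Henkin--{\O}vrelid kernels $T_1,T_2$ on the strictly pseudoconvex ball, and then kills the error term $T_2(\delb(-\omega\cdot u))$ via the integrability condition. The analogous function-valued $\delb$-equation on $\CP^2$ (Lemma \ref{holo:gaugeestimates}) is handled with the global Green operator $N$ of the full Laplacian and the absence of holomorphic $(0,2)$-forms on $\CP^2$; neither ingredient is available on $B_r^4$ with a boundary condition you can impose by hand.

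A second, independent gap is the claimed upgrade of $\tilde g$ from $\bigcap_{q<2}W^{2,q}(\CP^2)$ to $W^{2,p}$ with $p>3$ near $\ov{B^4}$. The bootstrap of Remark \ref{bootstrapsubcritical}(iii) requires as \emph{input} an a priori bound $\norm{\tilde g-id}{L^\infty}\leq C\norm{\omega}{W^{1,p}}$ on the solid ball, and $W^{2,q}$ with $q<2$ does not embed into $L^\infty$ in dimension four. Your Fubini-on-concentric-spheres argument only controls the trace of $\tilde g$ on a single good $3$-sphere; in Lemma \ref{holo:gaugeestimates} that trick is used solely to evaluate the \emph{constant} $\tilde h=\tilde u\tilde g$ (a constant is determined by its restriction anywhere), and it gives no $L^\infty$ control of $\tilde g$ on $B^4$. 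Producing that $L^\infty$ bound for the solution of the $\delb$-equation with $W^{1,p}$, $p>3$, data is the actual content of Lemma \ref{stronger} — it is where $p>3$ enters the paper's proof, via $q=4p/(4-p)>12$ so that the estimates of Propositions \ref{app:T1res} and \ref{app:T2res} close in $L^\infty$ — so your reduction presupposes what it is meant to prove.
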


\begin{Rm}The reader can note the fact that the  technique to solve this Lemma is similar to the ideas used in the previous one. However, this proof will rely more on regularity results from the literature on the analysis of several complex variables.
\end{Rm}

\begin{proof}[Proof of Lemma \ref{stronger}]\ 

\textit{Step 1.} Let $q=4p/(4-p)$. We show the existence of a gauge $u\in GL_n(\C)$ that "almost" solves our equation modulo a perturbation term. Indeed, in Step 2 we can show that the perturbation term vanishes and consequentially $u$ is the solution.  Let $T_1, T_2$ be the operators defined as in (\ref{app:T1}) and (\ref{app:T_q}). Note that we can extend $T_1$ and $T_2$ to operators defined on Sobolev spaces by density arguments.\\

We define the operator $$\mathcal H:L^\infty(B^4,M_n(\C))\cap \{f: \delb f \in L^{q}\}\rightarrow L^\infty(B^4,M_n(\C))\cap \{f: \delb f\in L^{q}\}$$
given by $$\mathcal H(u) = id+T_1(-\omega\cdot u).$$ 

\textbf{Claim.} $\mathcal H$ is well-defined. \\

Since $T_1$ takes $(0,1)$-forms to functions we only need to check that $\mathcal H$ maps $L^\infty\cap \{f: \delb f \in L^{q}\} $to $L^\infty\cap \{f: \delb f \in L^{q}\}$. By the Sobolev embedding $W^{1,p}\hookrightarrow L^{q}$, there exists a constant $C_1>0$ so that
$$\norm{\omega}{L^q(B^4)}\leq C_1 \norm{\omega}{W^{1,p}(B^4)}.$$
The assumption $p>3$ implies that $q=4p/(4-p)>12$. Consequently, for $u \in L^\infty(B^4,M_n(\C))\cap \{f: \delb f \in L^{q}\}$ we have $\omega\cdot u\in L^{q}$. Moreover $\delb (\omega\cdot u)\in L^{q/2}$. We prove this. 
\be\begin{array}{lll}
\norm{\delb(\omega\cdot u)}{L^{q/2}(B^4)}
&\leq& \norm{\delb\omega}{L^{q/2}(B^4)}\norm{u}{L^\infty(B^4)} + \norm{\omega\wedge \delb u}{L^{q/2}(B^4)}\\
&\leq& \norm{\delb\omega}{L^{q/2}(B^4)}\norm{u}{L^\infty(B^4)}  + \norm{\omega}{L^q(B^4)}\norm{\delb u}{L^q(B^4)}
\end{array}\ee
Crucially, we have that $F^{0,2}_\omega = \delb\omega+\omega\wedge\omega= 0$. Because $\omega\in L^q(B^4)$, then $\delb\omega\in L^{q/2}$. Thus,
 \be\label{holo:prodest}\begin{array}{ll}
\norm{\delb(\omega\cdot u)}{L^{q/2}(B^4)} &\leq  \norm{\omega}{L^{q}(B^4)}^2\norm{u}{L^\infty(B^4)}  + \norm{\omega}{L^q(B^4)}\norm{\delb u}{L^q(B^4)}\\
&\leq  C_1\e\norm{\omega}{L^{q}(B^4)}\norm{u}{L^\infty(B^4)}  + \norm{\omega}{L^q(B^4)}\norm{\delb u}{L^q(B^4)}\\
&\leq C_1^2\norm{\omega}{W^{1,p}(B^4)}\left(\norm{u}{L^\infty(B^4)}  + \norm{\delb u}{L^q(B^4)}\right).
\end{array}
\ee
where we have implicitly used the fact that we can choose $\e<1$ so that $\norm{\omega}{W^{1,p}(B^4)}\leq \e$.
Hence, we have shown that $\delb (\omega\cdot u)\in L^{q/2}$. Taking into account that $q/2>6$ and the embedding $W^{1,{q/2}}\hookrightarrow L^q$, we can apply Proposition \ref{app:T1res} to the $(0,1)$-form $\omega\cdot u$ and obtain the estimate:
\be\label{holo:t1_est}\norm{T_1(\omega\cdot u)}{L^\infty(B^4)} + \norm{\delb T_1(\omega\cdot u)}{L^{q}(B^4)}\leq C\left(\norm{\omega\cdot u}{L^q(B^4)} + \norm{\delb(\omega\cdot u)}{L^{q/2}(B^4)}\right).\ee
This shows that, $\mathcal H$ is well-defined, since the operator $T_1(\omega\cdot )$ is a well-defined map from   $L^\infty\cap \{f: \delb f \in L^q\} $ to $L^\infty\cap \{f: \delb f \in L^q\}$. We have proven the claim. \\

Next, we show that $\mathcal H$ has a fixed point. From \eqref{holo:t1_est}, it follows that
$$\norm{T_1(\omega\cdot u)}{L^\infty(B^4)} + \norm{\delb T_1(\omega\cdot u)}{L^q(B^4)}\leq C\left(\norm{\omega}{L^q(B^4)}\norm{u}{L^\infty(B^4)} + \norm{\delb(\omega\cdot u)}{L^{q/2}(B^4)}\right).$$
Since $\omega$ is less than $\e$ in $W^{1,p}$ norm and using \eqref{holo:prodest}, for any $u_1,u_2\in L^\infty(B^4,M_n(\C))\cap \{f: \delb f \in L^{q}\}$ we have:
$$\begin{array}{ll}&\norm{\mathcal H(u_1) - \mathcal H(u_2)}{L^\infty(B^4)} +\norm{\delb \mathcal H(u_1) - \delb \mathcal H(u_2)}{L^q(B^4)}\\
&=\norm{T_1(-\omega\cdot (u_1-u_2))}{L^\infty(B^4)} + \norm{\delb T_1(-\omega\cdot (u_1-u_2))}{L^\infty(B^4)} \\
&\leq C\e \left(\norm{u_1-u_2}{L^\infty(B^4)} + \norm{\delb(u_1-u_2)}{L^q(B^4)}\right).
\end{array}$$
Choosing $\e>0$  such that $C\e<1$ , we obtain that $\mathcal H$ is a contraction and therefore there exists $u\in L^\infty(B^4, M_n(\C)) \cap \{f: \delb f \in L^{q}\}$ satisfying $$u = id+T_1(-\omega\cdot u) = \mathcal H(u).$$ This fixed point "almost" solves the required equation. We will show in the next step that the error we obtain vanishes in light of the integrability condition $F^{0,2}_\omega = 0$.\\

\textit{Step 2}. Having obtained this fixed point, we show that $u$ satisfies $\delb u = -\omega \cdot u$. Since we have proven that $u-id =T_1(-\omega\cdot u)$, we get $\delb u = \delb T_1(-\omega \cdot u) \in L^q$. We can apply Theorem \ref{app:repr} from the Appendix to get the integral representation of $-\omega\cdot u$:
$$
-\omega\cdot u = \delb T_1(-\omega\cdot u) +T_2(\delb (-\omega\cdot u))$$
and expand the last term in the following way:
$$\begin{array}{rl}
T_2(\delb (-\omega\cdot u)) &= T_2(- \delb \omega \cdot u + \omega\wedge \delb u)\\
&=T_2(- \delb  \omega \cdot u +  \omega\wedge \delb T_1(-\omega\cdot u) )\\
&=T_2(- \delb  \omega \cdot u + \omega\wedge (-\omega \cdot u-  T_2(\delb (-\omega \cdot u)))\\
&=T_2(- (\delb \omega+ \omega\wedge \omega)u-  \omega\wedge T_2(\delb (-\omega\cdot u))).
\end{array}$$
By using the fact that $\omega$ satisfies the integrability condition $F_{\omega}^{0,2} = \delb \omega +  \omega \wedge \omega= 0$, we obtain:
$$
T_2(\delb (-\omega\cdot u)) = T_2(\omega \wedge T_2(\delb (-\omega \cdot u))).$$
We want to show that this recurrence equation implies that $T_2(\delb (-\omega\cdot u)) = 0$. From Proposition \ref{app:T2res}, $T_2$ is a well-defined operator mapping $L^s$ to $W^{1,s}$ for any $s>1$ and the following estimate holds:
$$\begin{array}{rl}
\norm{T_2(\delb (-\omega\cdot u))}{L^\infty(B^4)}&\leq C\norm{T_2(\delb (-\omega\cdot u))}{W^{1,q}(B^4)}\\
&\leq C\norm{\omega\wedge T_2(\delb (-\omega\cdot u))}{L^q(B^4)}\\
&\leq C\norm{\omega}{L^q(B^4)}\norm{T_2(\delb (-\omega \cdot u))}{L^\infty(B^4)}\\
&\leq CC_1\e \norm{T_2(\delb (-\omega\cdot u))}{L^\infty(B^4)},
\end{array}$$
where $C$ is the Sobolev constant given by the Sobolev embedding $W^{1,q}\hookrightarrow L^\infty$ ($q>6$) in 4 dimensions. Moreover, we also choose $\e>0$ so that $1-C\cdot C_1\e>0$ and 
$$(1-C\cdot C_1\e)\norm{T_2(\delb (-\omega\cdot u))}{L^\infty(B^4)}\leq 0.$$
Thus, $T_2(\delb (-\omega\cdot u)) = 0$ and we can conclude that the $\delb$-equation is solved:
$$\delb u = -\omega \cdot u\qquad\text{ in }B^4.$$

\textit{Step 3}. It remains to that $u\in GL_n(\C)$ and satisfies the required estimates. We have:
$$\norm{u-id}{L^\infty(B^4)} = \norm{\mathcal H(u)-id}{L^\infty(B^4)}\leq C\norm{\omega}{W^{1,p}(B^4)} \norm{u}{L^\infty(B^4)}\leq C\e\norm{u - id}{L^\infty(B^4)} + C\norm{\omega}{W^{1,p}(B^4)}.$$
Thus, since $\e>0$ is small, we get the $L^\infty$ bound:
$$\norm{u-id}{L^\infty(B^4)} \leq \frac{C}{1-C\e}\norm{\omega}{W^{1,p}(B^4)}.$$
Because we can assume that $1-C\e>\tfrac 1 2$ for $\e$ small enough, then $\norm{u-id}{L^\infty(B^4)} \leq 2C\norm{\omega}{W^{1,p}(B^4)}$. This implies that $u\in GL_n(\C)$. Remark \ref{bootstrapsubcritical}(iii) gives the existence of $r\in (1/2,1)$ and a constant $C>0$ such that
$$\norm{u-id}{W^{2,p}(B_r^4)}\leq C\norm{\omega}{W^{1,p}(B^4)}.$$ Moreover, since $u^{-1}$ exists, we have the following $L^\infty$ estimate:
$$\begin{array}{rl}
\norm{u^{-1}-id}{L^\infty(B^4)}& \leq \norm{u^{-1}-u^{-1}u}{L^\infty(B^4)}\leq
\norm{u^{-1}}{L^\infty(B^4)}\norm{u-id}{L^\infty(B^4)}\\
&\leq \norm{u^{-1}-id}{L^\infty(B^4)}\norm{u-id}{L^\infty(B^4)} + \norm{u-id}{L^\infty(B^4)}
\end{array}.$$
The estimate on $u$ also implies that the norm of $u-id$ in $L^\infty$ is small. Hence, the estimate of $u^{-1}$ then follows: $$\norm{u^{-1}-id}{L^\infty(B^4)}\leq \frac{  \norm{u-id}{L^\infty(B^4)}}{1- \norm{u-id}{L^\infty(B^4)}}\leq C\norm{\omega}{W^{1,p}(B^4)},$$
for some constant $C>0$. By Remark \ref{bootstrapsubcritical} applied to $u^{-1}$, we obtain a similar estimate. This finishes the proof.
\end{proof}
\vspace{4mm}

Having the results above at our disposal, we are ready to proceed at showing the existence of local holomorphic trivialisations in $B_r^4$ for some $r>0$.

\begin{Th}\label{holo:mainthm}
There exists $\e_0>0$ such that if $A \in W^{1,2}(\Omega^{1} B^4\otimes\mathfrak{u}(n))$ satisfies $\norm{A}{W^{1,2}(B^4)}\leq \e_0$, and the integrability condition $F^{0,2}_{A}= 0$. There exists $r>0$ and $g\in W^{2,q}(B_r^4, GL_n(\C))$ for all $q<2$ such that 
\be\label{holotriv}A^{0,1} = -\delb g \cdot g^{-1}\qquad \text{ in }B_{r}^4,\ee
and there exists a constant $C_q>0$ such that 
$$\norm{g-id}{W^{2,q}(B_r^4)}\leq C_q\norm{A}{W^{1,2}(B^4)}\qquad\text{and}\qquad\norm{g^{-1}-id}{W^{2,q}(B_r^4)}\leq C_q\norm{A}{W^{1,2}(B^4)}.$$
 Moreover, $A^g=h^{-1} \p h$ where $h=\ov{g}^Tg$.
\end{Th}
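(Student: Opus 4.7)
The plan is to execute the three-step scheme announced at the start of the section. Choose $\e_0>0$ small enough so that each of Proposition \ref{CP2extension}, Lemma \ref{holo:gaugeestimates}, and Lemma \ref{stronger} applies in turn. First, Proposition \ref{CP2extension} yields an extension $\tilde A \in W^{1,2}(\Omega^1 \CP^2 \otimes \mathfrak{u}(n))$ satisfying $F_{\tilde A}^{0,2}=0$ globally on $\CP^2$, together with $\omega\in W^{2,2}(\Omega^{0,2}\CP^2\otimes M_n(\C))$, such that $\tilde A^{0,1}=A^{0,1}+\vt\omega$ in $B^4$ and $\norm{\tilde A}{W^{1,2}(\CP^2)}+\norm{\omega}{W^{2,2}(\CP^2)}\le C\norm{A}{W^{1,2}(B^4)}$. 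Applying Lemma \ref{holo:gaugeestimates} to $\tilde A$ next produces a global trivialising gauge $\tilde g\in \bigcap_{q<2}W^{2,q}(\CP^2,GL_n(\C))$ with $\tilde A^{0,1}=-\delb\tilde g\cdot \tilde g^{-1}$ on $\CP^2$ and $\norm{\tilde g-id}{W^{2,q}(\CP^2)}+\norm{\tilde g^{-1}-id}{W^{2,q}(\CP^2)}\le C_q\norm{A}{W^{1,2}(B^4)}$ for every $q<2$.

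Next I transfer $A$ by the gauge $\tilde g$ on $B^4$. Using $-\delb\tilde g \cdot \tilde g^{-1}=\tilde A^{0,1}=A^{0,1}+\vt\omega$, a direct substitution gives
\[
(A^{0,1})^{\tilde g}\;=\;\tilde g^{-1}\delb \tilde g+\tilde g^{-1}A^{0,1}\tilde g\;=\;-\tilde g^{-1}(\vt\omega)\tilde g\;=:\;\eta\quad\text{on } B^4,
\]
which still satisfies $F_\eta^{0,2}=0$ as gauge transformations preserve the integrability condition. The crucial gain is regularity: since $\omega\in W^{2,2}(\CP^2)$, Sobolev embedding in real dimension four gives $\vt\omega\in W^{1,p}$ for every $p<4$, and combined with the smallness and high $L^s$-integrability of $\tilde g-id$ coming from $W^{2,q}$ with $q$ just below $2$, the conjugate $\tilde g^{-1}(\vt\omega)\tilde g$ lies in $W^{1,p}(B^4)$ for some fixed $p\in(3,4)$ with $\norm{\eta}{W^{1,p}(B^4)}\le C\norm{A}{W^{1,2}(B^4)}$. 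Applying Lemma \ref{stronger} to $\eta$ then produces $r\in(1/2,1)$ and $u\in W^{2,p}(B_r^4,GL_n(\C))$ with $\eta=-\delb u\cdot u^{-1}$ and the corresponding $W^{2,p}$-estimates for $u$ and $u^{-1}$.

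Setting $g:=\tilde g\, u$ on $B_r^4$, the Leibniz identity $\delb g\cdot g^{-1} = \delb \tilde g\cdot \tilde g^{-1}+\tilde g(\delb u\cdot u^{-1})\tilde g^{-1}$ together with $\tilde g\,\eta\,\tilde g^{-1}=-\vt\omega$ gives $-\delb g\cdot g^{-1}=\tilde A^{0,1}-\vt\omega=A^{0,1}$ on $B_r^4$; combining the two-step estimates (the regularity of $g$ being limited by that of $\tilde g$) produces the claimed bounds in $W^{2,q}(B_r^4,GL_n(\C))$ for every $q<2$. The polar identity $A^g=h^{-1}\p h$ with $h=\bar g^T g$ then follows formally: $(A^{0,1})^g=0$ reduces $A^g$ to its $(1,0)$-part $g^{-1}\p g + g^{-1}A^{1,0}g$, and using the unitarity consequence $A^{1,0}=(\bar g^T)^{-1}\p(\bar g^T)$ one obtains $A^g = g^{-1}\p g + h^{-1}\p(\bar g^T)\, g = h^{-1}\p h$ thanks to $\p h = \p(\bar g^T)\, g + \bar g^T \p g$.

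The hard part is the middle regularity bookkeeping: one must choose the Sobolev indices $q<2$ (for $\tilde g$) and $p\in(3,4)$ (for $\eta$) compatibly so that $\tilde g^{-1}(\vt\omega)\tilde g$ actually sits in $W^{1,p}(B^4)$ with small norm. Since $\tilde g$ is not \emph{a priori} in $L^\infty(\CP^2)$, one closes the estimates by exploiting smallness together with the critical Sobolev embeddings in real dimension four applied to $\omega\in W^{2,2}$ and to $\tilde g-id \in W^{2,q}$ with $q$ just below $2$.
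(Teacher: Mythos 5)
Your overall architecture is the same as the paper's: extend a perturbation of $A$ to $\CP^2$ via Proposition \ref{CP2extension}, trivialise globally with Lemma \ref{holo:gaugeestimates}, observe that the residual term is $-\tilde g^{-1}(\vt\omega)\tilde g$, kill it with Lemma \ref{stronger}, and compose the gauges. However, there is a genuine gap at the pivotal regularity step. You assert that ``since $\omega\in W^{2,2}(\CP^2)$, Sobolev embedding in real dimension four gives $\vt\omega\in W^{1,p}$ for every $p<4$.'' This is false: $\omega\in W^{2,2}$ only gives $\vt\omega\in W^{1,2}$ (hence $\vt\omega\in L^4$ by embedding), because Sobolev embedding never improves the integrability of the top-order derivatives — $\nabla\vt\omega$ is controlled by $\nabla^2\omega$, which is merely $L^2$. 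No amount of smallness or ``critical embedding bookkeeping'' closes this; with only $\vt\omega\in W^{1,2}$ you cannot place $\eta=\tilde g^{-1}(\vt\omega)\tilde g$ in $W^{1,p}$ for some $p>3$, and Lemma \ref{stronger} (which genuinely needs $p>3$ to run the $T_1$, $T_2$ kernel estimates) is inapplicable. Your whole third step therefore does not launch.

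The missing idea is a PDE bootstrap, not an embedding. Since both $F_A^{0,2}=0$ and $F_{\tilde A}^{0,2}=0$ hold on $B^4$ and $\tilde A^{0,1}=A^{0,1}+\vt\omega$ there, the form $\omega$ satisfies the interior elliptic system
\[
\tfrac{1}{2}\Delta \omega = -[A^{0,1},\vt\omega]-\vt\omega\wedge \vt\omega \qquad\text{in }B^4,
\]
and the Adams--Morrey decay argument of Proposition \ref{bootstrapmorrey} upgrades this to $\omega\in W^{2,q}_{loc}(B^4)$ for every $q<4$, whence $\vt\omega\in\bigcap_{q<4}W^{1,q}_{loc}\hookrightarrow\bigcap_{s<\infty}L^s_{loc}$. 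Only then, combined with $\tilde g,\tilde g^{-1}\in\bigcap_{s<\infty}L^s$ (they are not in $L^\infty$, so the product estimate still needs care), does $\tilde g^{-1}(\vt\omega)\tilde g$ land in $W^{1,p}_{loc}$ for a fixed $p>3$; the required smallness of its $W^{1,p}$ norm is then obtained by shrinking to a ball $B^4_{r_0}$ via absolute continuity of the integral, not directly from $\norm{A}{W^{1,2}}$ being small. With this repaired, the remainder of your argument (composition $g=\tilde g u$, the product estimates in $W^{2,q}$ via the paraproduct result of Runst--Sickel, and the polar identity $A^g=h^{-1}\p h$) matches the paper's proof.
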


\begin{proof}[Proof of Theorem \ref{holo:mainthm}]
From Proposition \ref{CP2extension}, there exists a 1-form $\tilde A\in W^{1,2}(\Omega^1 \CP^2\otimes\mathfrak{u}(n))$ satisfying the integrability condition so that $\tilde A^{0,1} = A^{0,1}+\vartheta \omega$ in $B^4$, where $\omega\in W^{2,2}(\Omega^{0,2}\CP^2\otimes M_n(\C))$ with estimate $\norm{\omega}{W^{2,2}(\CP^2)}\leq \norm{A}{W^{1,2}(B^4)}$. This implies that 
\be\label{holo:mainthm:eq1}\norm{\tilde A^{0,1}}{W^{1,2}(\CP^2)}\leq C\norm{A}{W^{1,2}(B^4)}\ee for some constant $C>0$. \\

Lemma \ref{holo:gaugeestimates} applied to the form $\tilde A$ gives the existence of a gauge $\tilde{g}\in W^{2,q}(\CP^2, GL_n(\C))$ for all $q<2$ so that $$\delb \tilde{g} = -\tilde{A}^{0,1} \tilde{g}\qquad\text{ in }\CP^2$$
and for each $q<2$ there exists $C_q>0$ such that \be\label{holo:mainthm:eq2}\norm{\tilde g - id}{W^{2,q}(\CP^2)}\leq C_q\norm{\tilde A}{W^{1,2}(\CP^2)}\qquad\text{and}\qquad \norm{\tilde g^{-1} - id}{W^{2,q}(\CP^2)}\leq C_q\norm{\tilde A}{W^{1,2}(\CP^2)}.\ee
On the unit ball $B^4$ we can rewrite $\left(A^{0,1}\right)^{\tilde g}$ as such:
$$\left(A^{0,1}\right)^{\tilde g} =\tilde g^{-1}\delb \tilde g + \tilde g^{-1} A^{0,1}\tilde g = \tilde g^{-1}\delb \tilde g + \tilde g^{-1}\tilde A^{0,1}\tilde g - \tilde g^{-1}\vartheta \omega \tilde g = - \tilde g^{-1}\left(\vartheta \omega\right) \tilde g.$$ 
In order to find a gauge $g$ for $A^{0,1}$ that gives a holomorphical trivialisation, it remains to find a gauge change $u$ that cancels perturbation term $ - \tilde g^{-1}\left(\vartheta \omega\right) \tilde g$:
\be\label{holo:requiredeq}\delb u = \tilde g^{-1}\left(\vartheta \omega\right) \tilde g\cdot  u.\ee
We claim that the composition of gauges $\tilde g\cdot u$ satisfies the statement. \\

Since the Sobolev embedding $W^{2,q}\hookrightarrow L^{2q/(2-q)}$ holds for any $q<2$, it implies that $\tilde g,\tilde g^{-1}\in \bigcap\limits_{q<\infty} L^q$. Because $A$ and $\tilde A$ satisfy the integrability condition on $B^4$: $F_A^{0,2} = 0$ and $F_{\tilde A}^{0,2} = 0$, imply that $\omega\in W^{2,2}(\Omega^{0,2}B^4) $ satisfies the following PDE:
$$\frac{1}{2}\Delta \omega = -[A^{0,1},\vartheta\omega]-\vartheta\omega\wedge \vartheta\omega.$$
Proposition \ref{bootstrapmorrey} applied to this PDE improves on the regularity of $\omega$ inside $B^4$. Indeed, we have a much better regularity $\omega\in W^{2,q}_{loc}(B^4, M_n(\C))$ for any $q<4$. Sobolev embeddings yield: $$\vartheta\omega\in \bigcap\limits_{q<4} W_{loc}^{1,q}(B^4,M_n(\C))\hookrightarrow \bigcap\limits_{q<\infty}L_{loc}^q.$$ Putting together the regularity of $\vartheta\omega$, $\tilde g$ and $\tilde g^{-1}$ we can obtain the regularity of $\tilde g^{-1}\left(\vartheta\omega\right)\tilde g$:
\be\label{holo:emb}
\tilde g^{-1}\left(\vartheta\omega\right)\tilde g\in \bigcap\limits_{q<4}W^{1,q}_{loc}\hookrightarrow \bigcap\limits_{q<\infty}L_{loc}^q.\ee

Fix $p>3$ and $\delta>0$ small,. There exists $r_0\in (0,1)$ so that $\norm{\tilde g^{-1}\left(\vartheta\omega\right) \tilde g}{W^{1,p}(B_{r_0}^4)}<\delta$. This $(0,1)$-form also solves $F_ {\tilde g^{-1}\left(\vartheta\omega\right) \tilde g}^{0,2}=0$ in $B_{r_0}^4$. Hence, we apply Lemma \ref{stronger} to $\tilde g^{-1}\left(\vartheta\omega\right) \tilde g$ in $B_{r_0}^4$ (by rescaling) to get the existence of $r\in (r_0/2,r_0)$ and $u\in W^{2,p}(B_{r}^4, GL_n(\C))$ that solves the $\delb$-equation above (\ref{holo:requiredeq}):
$$\delb u = \tilde g^{-1}\left(\vartheta \omega\right) \tilde g\cdot  u\qquad \text{ in } B_{r}^4.$$
and satisfies the estimates \be\label{holo:mainthm:eq3}
\norm{u-id}{W^{2,p}(B_{r}^4)}\leq C\norm{ \tilde g^{-1}\left(\vartheta \omega\right) \tilde g}{W^{1,2}(B_{r}^4)}\leq C\norm{A}{W^{1,2}(B^4)}\ee
and
\be\label{holo:mainthm:eq4}
\norm{u^{-1}-id}{W^{2,p}(B_{r}^4)}\leq C\norm{ \tilde g^{-1}\left(\vartheta \omega\right) \tilde g}{W^{1,2}(B_{r}^4)}\leq C\norm{A}{W^{1,2}(B^4)},
\ee
for some constant $C>0$.\\

Define $g:= \tilde g u$ in $B_{r}^4$. By construction, the required $\delb$-equation is solved:
\be\label{holo:gaugeeq} \delb g = - A^{0,1} g\qquad \text{ in } B_{r}^4\ee
We show that $g$ satisfies the required estimate: for any $q<2$ there exists $C_q>0$ such that $\norm{g-id}{W^{2,q}(B_{r_0}^4)}\leq C\norm{A}{W^{1,2}(B_{r}^4)}$. Let $q<2$ arbitrary. The triangle inequality applied on the norm $W^{2,q}$ gives:
$$\norm{g-id}{W^{2,q}(B_{r}^4)}\leq \norm{(\tilde g-id)(u-id)}{W^{2,q}(B_{r}^4)}+\norm{\tilde g-id}{W^{2,q}(B_{r}^4)}+\norm{u-id}{W^{2,q}(B_{r}^4)}.$$
Using the results of \cite[Section 4.8.2, Theorem 1]{runst2011sobolev} and the regularity of $\tilde g-id\in \in \bigcap_{q<2} W^{2,q}(B_{r}^4)$ and $u-id\in W^{2,p}(B_{r}^4)$, it follows that $(\tilde g-id)(u-id)\in \bigcap_{q<2} W^{2,q}(B_{r}^4)$ with
$$ \norm{(\tilde g-id)(u-id)}{W^{2,q}(B_{r}^4)}\leq C\norm{\tilde g-id}{W^{2,q_1}(B_{r}^4)}\cdot \norm{u-id}{W^{2,p}(B_{r}^4)},$$
for some $q_1\in (q,2)$ and constant $C>0$. Hence, from \eqref{holo:mainthm:eq1}, \eqref{holo:mainthm:eq2} and \eqref{holo:mainthm:eq3} it immediately follows that there exists a constant $C_q>0$ such that:
$$\norm{g-id}{W^{2,q}(B_{r}^4)}\leq C_q\norm{A}{W^{1,2}(B^4)}.$$
By arguing in a completely analogous way, we obtain the fact that
$$\norm{g^{-1}-id}{W^{2,q}(B_{r}^4)}\leq C_q\norm{A}{W^{1,2}(B^4)}.$$

It remains to show the existence of $h$. We apply $g$ to $A$ in $B_r^4$ to get:
$$A^g = g^{-1} (\delb g+\p g) + g^{-1}A^{0,1}g - g^{-1}\ov{A^{0,1}}^Tg = g^{-1}\p g - g^{-1}\ov{A^{0,1}}^Tg.$$
Since (\ref{holo:gaugeeq}) holds, then $\p \ov{g}^T = -\ov{g}^T\ov{A^{0,1}}^T.$ Hence, $\left(\ov{g}^T\right)^{-1}\p \ov{g}^T = -\ov{A^{0,1}}^T$. By plugging this into the equation above, we get
$$A^g = g^{-1}\p g + g^{-1}\left(\ov{g}^T\right)^{-1}\p \ov{g}^T g = (\ov{g}^Tg)^{-1}\p (\ov{g}^Tg).$$
We conclude the proof by defining $h:=\ov{g}^Tg$, and $h \in W^{2,q} (B_r^4, i\mathfrak{u}(n)).$ for any $q<2$.
\end{proof}
\vspace{4mm}

\begin{Rm}\label{holo:rm}\ \\
\vspace{-11pt}
\begin{enumerate}
\item[(i)] Firstly, from the proof of the theorem above that the radius $r>0$ can be chosen to be the same under small perturbations of the $1$-form $A$. 
\item[(ii)] Secondly, all the above estimates on $g$ hold also for $g^{-1}$. They can be similarly computed using the ones of $g$ as in the proof of Lemma \ref{stronger}.
\end{enumerate}
\end{Rm}

Having made the above remarks, we end the section by proving a stability result for holomorphic trivialisations. Later on, this Corollary will be used to show the convergence of holomorphic structures. 

\begin{Co}\label{holo:stability}
Let $r<1$, $A_1\in W^{1,2}(\Omega^1 B^4\otimes \mathfrak{u}(n))$ and $g_1\in W^{2,q}(B_r^4, GL_n(\C))$ satisfying Theorem \ref{holo:mainthm}. There exists $\delta>0$ such that for all $A_2\in W^{1,2}(\Omega^1 B^4\otimes \mathfrak{u}(n))$ with $F_{A_2}^{0,2}=0$ satisfying $$\norm{A_1- A_2}{W^{1,2}(B^4)}\leq \delta,$$there exists a radius $r_0\in (r/2,r)$ depending only on $A_1$ and a gauge  
$g_2\in \bigcap\limits_{q<2} W^{2,q}(B_{r_0}^4, GL_n(\C))$ that trivialises $A_2$ in the sense:
$$A_2=-\delb g_2\cdot g_2^{-1}\qquad \text{ in }B_{r_0}^4$$
with the following estimates:  for any $q<2$ there exists $C_q>0$ such that $$\norm{g_2-id}{W^{2,q}(B_{r_0}^4)}\leq C_q\left(\norm{A_1}{W^{1,2}(B^4)}+\norm{A_2}{W^{1,2}(B^4)}\right)$$ and there exists $C>0$ such that $$\norm{g_1-g_2}{L^{p}(B_{r_0}^4)}\leq C\norm{A_1- A_2}{W^{1,2}(B^4)}$$ for any $p<12$. 
\end{Co}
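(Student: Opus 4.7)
The plan is to first obtain $g_2$ by a direct application of Theorem \ref{holo:mainthm} together with Remark \ref{holo:rm}(i), and then to derive the quantitative $L^p$ comparison by running the construction of Theorem \ref{holo:mainthm} in parallel for $A_1$ and $A_2$ and estimating the difference at each stage of the construction.

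For the existence of $g_2$, I would pick $\delta > 0$ small enough so that $\norm{A_2}{W^{1,2}(B^4)} \leq \norm{A_1}{W^{1,2}(B^4)} + \delta$ still falls below the smallness threshold $\e_0$ of Theorem \ref{holo:mainthm}. The theorem then yields a gauge $g_2 \in \bigcap_{q<2} W^{2,q}$ with $A_2^{0,1} = -\delb g_2 \cdot g_2^{-1}$ on some ball $B_{r_2}^4$; Remark \ref{holo:rm}(i) lets one take $r_2 \in (r/2, r)$ depending only on $A_1$. Denoting this common radius by $r_0$, the $W^{2,q}$ estimate of the theorem applied to $A_2$ gives
\[
\norm{g_2 - id}{W^{2,q}(B_{r_0}^4)} \leq C_q \norm{A_2}{W^{1,2}(B^4)} \leq C_q\bigl(\norm{A_1}{W^{1,2}(B^4)} + \norm{A_2}{W^{1,2}(B^4)}\bigr),
\]
which is the first claimed bound.

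For the $L^p$ comparison, I would run the whole construction of Theorem \ref{holo:mainthm} in parallel for $A_1$ and $A_2$. Applying Proposition \ref{CP2extension} to each produces integrable extensions $\tilde A_i$ on $\CP^2$ with $\tilde A_i^{0,1} = A_i^{0,1} + \vartheta \omega_i$ on $B^4$; applying Lemma \ref{holo:gaugeestimates} to each $\tilde A_i$ yields global $GL_n(\C)$ gauges $\tilde g_i$; and applying Lemma \ref{stronger} to the $(0,1)$-forms $\tilde g_i^{-1}(\vartheta \omega_i) \tilde g_i$ provides local correctors $u_i$ so that $g_i = \tilde g_i u_i$ on $B_{r_0}^4$. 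Each of these three stages is a contraction-mapping construction whose fixed-point equation depends linearly on the input form; differencing the fixed-point equations for indices $1$ and $2$ yields a linear equation for the corresponding difference whose forcing term is bounded, after chaining, by $\norm{A_1 - A_2}{W^{1,2}(B^4)}$. The factorisation $g_1 - g_2 = (\tilde g_1 - \tilde g_2) u_1 + \tilde g_2 (u_1 - u_2)$ then allows the various $L^p$ estimates to combine into the final claim.

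The main obstacle, as I see it, is extracting the sharp exponent $p < 12$ out of the chain of Sobolev embeddings. The starting control $A_1 - A_2 \in W^{1,2}(B^4) \hookrightarrow L^4(B^4)$ propagates through the $\CP^2$-extension step to $\tilde A_1 - \tilde A_2 \in L^4(\CP^2)$; Hölder multiplication against $\tilde g_2 \in \bigcap_{\alpha < \infty} L^\alpha(\CP^2)$, which follows from the $W^{2,q}$ bound for every $q < 2$, places the source of the linear equation for $\tilde g_1 - \tilde g_2$ in $L^s(\CP^2)$ for every $s < 4$. The one-derivative Sobolev gain of the operator $\delbs N$ appearing in Lemma \ref{holo:gaugeestimates}, combined with the embedding $W^{1,s} \hookrightarrow L^{4s/(4-s)}$ in four real dimensions and the Hölder products required at the remaining stages, then produces an $L^p$ bound on $g_1 - g_2$ for every $p$ below the threshold $p = 12$, which is the precise endpoint emerging from this balance.
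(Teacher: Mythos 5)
Your first step — obtaining $g_2$ and its $W^{2,q}$ bound by applying Theorem \ref{holo:mainthm} to $A_2$, with Remark \ref{holo:rm}(i) fixing a common radius — is exactly how the paper begins. The gap is in the $L^p$ comparison, where your route diverges from the paper's and, as sketched, does not close. The paper never differences the three-stage construction: it works with the single transition function $g_2^{-1}g_1$, which satisfies the clean equation $\delb(g_2^{-1}g_1)=g_2^{-1}(A_2-A_1)^{0,1}g_2\cdot(g_2^{-1}g_1)$ whose right-hand side lies in $L^s$ for every $s<4$ with norm controlled by $\norm{A_1-A_2}{W^{1,2}(B^4)}$. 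The exponent $12$ then comes from Krantz's optimal regularity for the $\delb$-equation on a strictly pseudoconvex domain in $\C^2$, which gives $\norm{g_2^{-1}g_1-h}{L^{6s/(6-s)}}\leq C\norm{\delb(g_2^{-1}g_1)}{L^s}$ for \emph{some} holomorphic $h$, and $6s/(6-s)\to 12$ as $s\to 4$. Your proposed mechanism — the elliptic gain of $\delbs N$ combined with $W^{1,s}\hookrightarrow L^{4s/(4-s)}$ — does not produce this threshold (indeed $4s/(4-s)\to\infty$ as $s\to 4$), so the claimed balance landing precisely at $p=12$ is not substantiated by the ingredients you invoke.

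More seriously, the parallel-differencing scheme is likely to fail at its last stage. To compare the correctors $u_1,u_2$ of Lemma \ref{stronger} you must control the difference of their input forms, $\tilde g_1^{-1}(\vartheta\omega_1)\tilde g_1-\tilde g_2^{-1}(\vartheta\omega_2)\tilde g_2$, in $W^{1,p}$ for some $p>3$ (the norm in which that lemma's contraction operates) by $\norm{A_1-A_2}{W^{1,2}(B^4)}$. But $\tilde g_1-\tilde g_2$ is only controlled in $W^{1,2}$-type norms by $\norm{A_1-A_2}{W^{1,2}(B^4)}$, and a product such as $\nabla(\tilde g_1-\tilde g_2)\cdot\vartheta\omega_1$, with one factor merely in $L^2$, cannot be placed in $L^p$ with $p>3$; the chain of Lipschitz-dependence estimates therefore breaks before reaching $g_1-g_2$. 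Finally, note the holomorphic ambiguity that your sketch ignores: the $\delb$-regularity estimate only controls $g_2^{-1}g_1-h$ for an a priori unknown holomorphic $h$, so the paper must redefine $g_2$ as $g_2\cdot h$ and re-derive its $W^{2,q}$ bound by a fresh bootstrap (Lemma \ref{bootstrapcriticalgauge} and Remark \ref{bootstrapsubcritical}(ii)); in particular the gauge produced in your first step is not the one that ultimately satisfies the $L^p$ comparison.
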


\begin{proof}[Proof of Corollary \ref{holo:stability}]
Choose $\delta>0$ such that $A_2$ is a small perturbation of $A_1$. By Remark \ref{holo:rm}(i)  and Theorem \ref{holo:mainthm} applied to the forms $A_1$ and $A_2$ we obtain the existence of $r>0$ and gauges $g_1,g_2\in W^{2,q}(B_r^4, GL_n(\C))$ for any $q<2$ so that
$$\delb g_1 = -A_1^{0,1}\cdot g_1\text{ and } \delb g_2=-A_2^{0,1}\cdot g_2\text{ in }B_r^4$$
and there exists a constant $C_q>0$ such that 
\be\label{holo:basicest}
\begin{array}{l}
\norm{g_1-id}{W^{2,q}(B_{r}^4)}\leq C_q\norm{A_1}{W^{1,2}(B^4) },\\\norm{g_2-id}{W^{2,q}(B_{r}^4)}\leq C_q\norm{A_2}{W^{1,2}(B^4)}\leq C_q\left(\norm{A_1}{W^{1,2}(B^4) } +  \norm{A_1-A_2}{W^{1,2}(B^4) }\right)\quad\text{ and }\\
\norm{g_2^{-1}-id}{W^{2,q}(B_{r}^4)}\leq C_q\norm{A_2}{W^{1,2}(B^4)}\leq C_q\left(\norm{A_1}{W^{1,2}(B^4) } +  \norm{A_1-A_2}{W^{1,2}(B^4) }\right)
\end{array}\ee
Since $g_1$ and $g_2$ holomorphically trivialise $A_1$ and $A_2$ respectively, we can relate the \textit{transition gauge} $g_2^{-1}g_1$ with the difference 1-form $A_2-A_1$ through the following $\delb$-equation:
\be\label{holo:transitioneq}\delb (g_2^{-1}g_1) = g_2^{-1}(A_2-A_1)^{0,1}g_2\cdot (g_2^{-1}g_1).\ee
We first estimate $g_2^{-1}g_1-id$ using the inequalities (\ref{holo:basicest}) and then use the equation to show that $g_2^{-1}g_1-id$ is only bounded by the norm of $A_2-A_1$. Fix $q<2$. The triangle inequality gives:
$$
\norm{g_2^{-1}g_1-id}{W^{2,q}(B_{r}^4)}\leq  \norm{(g_2^{-1}-id)(g_1-id)}{W^{2,q}(B_{r}^4)} + \norm{g_2^{-1}-id}{W^{2,q}(B_{r}^4)} + \norm{g_1-id}{W^{2,q}(B_{r}^4)}$$
Hence, by the results of  \cite[Section 4.8.2, Theorem 1]{runst2011sobolev} applied to the product $(g_2^{-1}-id)(g_1-id)$ and estimates (\ref{holo:basicest}), there exists a constant $C_q>0$ so that
\be\label{holo:transitioneq2}\norm{g_2^{-1}g_1-id}{W^{2,q}(B_{r}^4)}\leq C_q(\norm{A_1}{W^{1,2}} + \norm{A_2}{W^{1,2}})\leq 2C_q(\norm{A_1}{W^{1,2}} + \norm{A_1 - A_2}{W^{1,2}}) .\ee
for any $q<2$.
We can use equation (\ref{holo:transitioneq}) in order to find an a-posteriori estimate of $g_2^{-1}g_1-id$ involving \textit{only} the 1-form $A_2-A_1$. 
Let $s<4$. By the regularity of $\delb$ in $L^s$ (see \cite{krantz1976optimal}) there exists a holomorphic function $h$ and a constant $C_s>0$ such that
$$\norm{g_2^{-1}g_1 - h}{L^{6s/(6-s)}(B_r^4)}\leq C_s\norm{\delb (g_2^{-1}g_1)}{L^s(B_r^4)}\leq C_s\norm{g_2^{-1}(A_2-A_1)^{0,1}g_2}{L^{sp/(p-s)}(B_r^4)}\norm{g_2^{-1}g_1}{L^p(B_r^4)},$$
where $s<p<\infty$ arbitrary. Hence, it follows that there exists $C>0$ depending on $A_1$ such that
$$\norm{g_2^{-1}g_1 - h}{L^{6s/(6-s)}(B_r^4)}\leq C \norm{A_1-A_2}{W^{1,2}(B^4)}\norm{g_2^{-1}g_1 - id }{L^p(B_r^4)} + C \norm{A_1-A_2}{W^{1,2}(B^4)}.$$
There exists $q<2$ such that $W^{2,q}\hookrightarrow L^p$. Since $g_2^{-1}g_1 - id$ is bounded in $W^{2,q}$ \eqref{holo:transitioneq2}, then it is also bounded in $L^p$. Hence,
$$\norm{g_2^{-1}g_1 - h}{L^{6s/(6-s)}(B_r^4)}\leq  C \norm{A_1-A_2}{W^{1,2}(B^4)}.$$
Since $s<4$, there exists a constant $C>0$ independent of $p$ such that
$$\norm{g_2^{-1}g_1 - h}{L^{p}(B_r^4)}\leq  C \norm{A_1-A_2}{W^{1,2}(B^4)},$$
for any $p<12$. Having this inequality at our disposal, we can turn to estimate $g_1-g_2\cdot h$. Let $p<12$, then:
$$\norm{g_1-g_2\cdot h}{L^p(B_r^4)} = \norm{(g_2-id)(h-g_2^{-1}g_1) + h-g_1^{-1}g_2}{L^p(B_r^4)}.$$
For $v\in (p,12)$, we get:
$$\norm{g_1-g_2\cdot h}{L^p(B_r^4)} \leq \norm{g_2-id}{L^{vp/(v-p)}(B_r^4)}\norm{g_2^{-1}g_1-h}{L^v(B_r^4)}+ \norm{g_2^{-1}g_1-h}{L^p(B_r^4)}.$$
Thus, there exists a constant $C_{vp}>0$ depending on $v,p$ and $A_1$ such that
$$\norm{g_1-g_2\cdot h}{L^p(B_r^4)} \leq C_{vp}\norm{A_1-A_2}{W^{1,2}(B^4)}.$$
Moreover, $g_2\cdot h$ solves the equation:
$$\delb (g_2\cdot h) = A_2^{0,1} (g_2\cdot h).$$
in a distributional sense. It remains to show that the $g_2\cdot h$ is bounded in $W^{2,q}$ by the norms of $A_1$ and $A_2$ in a possible slightly smaller ball. Let $r_0\in (r/2,r)$, then there exists a constant $C>0$ such that
$$\norm{g_2\cdot h-id}{W^{1,2}(B_{r_0}^4)}\leq C\left( \norm{\delb g_2\cdot h}{L^2(B_r^4)} + \norm{g_2\cdot h-id}{L^2(B_r^4)}\right).$$
Consequently, by using the $\delb$-equation satisfied by $g_2\cdot h$, it follows that:
$$\norm{g_2\cdot h-id}{W^{1,2}(B_{r_0}^4)}\leq C\left( \norm{A_2}{L^4(B_r^4)}\norm{g_2\cdot h}{L^4(B_r^4)} + \norm{g_2\cdot h-g_1}{L^2(B_r^4)} + \norm{id-g_1}{L^2(B_r^4)} \right).$$
Having shown that $g_2\cdot h\in L^p$ for all $p<12$, we obtain $$\norm{g_2\cdot h-id}{W^{1,2}(B_{r_0}^4)}\leq C\left(\norm{A_1}{W^{1,2}(B^4)}+\norm{A_2}{W^{1,2}(B^4)}\right).$$
Hence, given that $g_2\cdot h\in W^{1,2}$ and $g_2\cdot h-id$ is bounded by $A_1$ and $A_2$, we get from Lemma \ref{bootstrapcriticalgauge} and Remark \ref{bootstrapsubcritical}(ii) the estimate: for any $q<2$ there exists a constant $C_q>0$ such that:
$$\norm{g_2\cdot h-id}{W^{2,q}(B_{r_0}^4)}\leq C_q\left(\norm{A_1}{W^{1,2}(B^4)}+\norm{A_2}{W^{1,2}(B^4)}\right).$$
By redefining $g_2$ as $g_2\cdot h$, we have proven our stability result.
\end{proof}

\section{Proof of Theorem \ref{thm1}}\label{sec:thm1}

We pick geodesic balls $B_r^4(x_i)$ covering $X^2$ on which the connection can be trivialised: $\nabla = d+A_i$ and
$$\norm{A_i}{W^{1,2}(B_r^4(x_i))}\leq \e_0(X^2,\omega),$$
where $\e_0(X^2,\om)$ is given by Theorem \ref{holo:mainthm}.
Because $X^2$ is a compact manifold, there are finitely many such balls covering $X^2$. 
By Theorem \ref{holo:mainthm} there exists $r'\in (0,r)$,  $g_i\in W^{2,p}(B_{r'}^4(x_i), GL_n(\C))$ and $h_i = g_i^Tg_i\in W^{2,p}(B_{r'}^4, iU(n))$ for any $p<2$ so that $A_i^{g_i} = h_{i}^{-1}\p h_{i}$. Hence
$$\nabla^{g_i} =d  + h_{i}^{-1}\p h_{i}\qquad\text{ in }B_r'^4(x_i).$$ 
We conclude that there exists $W^{2,p}$ global sections $h$ and $g$ such that
$$\nabla^g = \p_0 + h^{-1}\p_0 h+ \delb_{\mathcal E}.$$

Thus, there exists a smooth holomorphic structure $\mathcal E$ on our hermitian bundle $(E,h_0)$.
\section{Density under high energy}\label{sec:highen}

Until now we have looked at density results under the assumption of low Yang-Mills energy. As before we start by assuming that we work on the flat unit ball $B^4$ and in section \ref{highenergy:globalmainsec} we show how to generalise our results on the compact manifold $X^2$.  We start by investigating the case when $A\in W^{1,2}(\Omega^1B^4\otimes \mathfrak{u}(n))$ and $\norm{A}{W^{1,2}(B^4)}<\infty$. Furthermore we are assuming the integrability condition $F_{A}^{0,2}=0$ is satisfied.\\

\textbf{Difficulty}:\\

If we want to proceed as in the case of low Yang-Mills energy, we start by smoothing $A$ inside $B^4$ by simple convolution and thus, obtain a sequence of smooth forms $A_k$ converging to $A$ in $W^{1,2}$ as $k\rightarrow\infty$. The integrability condition (\ref{II.1}) is, however, lost for $A_k$. Furthermore, since we want to preserve the condition for each $k$, the argument reduces to finding a sequence of perturbations $\omega_k\in C^\infty(\Omega^{0,2} B^4\otimes M_n(\C))$ uniformly bounded in $W^{2,2}$ that solve 
$$\left\{
\begin{array}{rll}
\delb\delbs\omega_k & = - \left[\delbs\omega_k,A_k^{0,1}\right]-\delbs\omega_k\wedge\delbs\omega_j^k  - F_{A_k}^{0,2}& \text{ in }B^4\\
\omega_k&=0& \text{ on }\p B^4
\end{array}\right.$$
Since $A_k^{0,1}$ is not small in $W^{1,2}$ norm, we cannot hope to apply a fixed point argument even if $F_{A_k}^{0,2}$ is very small in $L^2$ norm (it converges to $F_A^{0,2}=0$). To make the situation worse, the linear operator $\delb\delbs \cdot + \left[\delbs\cdot ,A_k^{0,1}\right]$ might have non-trivial kernel.\\

Hence, in this section we have developed a method that deals with the case of $A_k$ having high $W^{1,2}$ norm. We present it below:\\

\textbf{Strategy}:\\

We recall that in Section 2, Proposition \ref{low:fredholm} and Lemma \ref{CP2extension}, we were able to find a perturbation $\delbs \omega$ to the form $\tilde A$ in order to obtain the integrability condition \eqref{II.1} over $\CP^2$. This method, however, heavily used that the operator $L_{\tilde A}$ \eqref{low:la} is invertible under the smallness condition of the $W^{1,2}$ norm of $\tilde A$. Following this blueprint, our idea is to find a unitary gauge change $g$ of $A$ such that the operator $\delb\delbs \cdot + \left[\delbs\cdot ,\left(A^{0,1}\right)^g\right]$ is invertible.\\

Firstly, we will need to acquaint ourselves with this idea. We found it natural to start by considering the case of linear perturbations of $A$ and show that we can always find a smooth perturbation $U$ such that the operator $\delb\delbs \cdot + \left[\delbs\cdot ,A^{0,1} +\beta  \delb U\right]$ acting on $(0,2)$ forms has a trivial kernel for some $\beta>0$. \\

Having this idea, we search for a unitary gauge change $g$ that forces the operator $\delb\delbs \cdot + \left[\delbs\cdot ,\left(A^{0,1}\right)^g\right]$ to have trivial kernel. Moreover, we show that for $k$ large enough, the  \underline{same} gauge $g$ gives that the operators $\delb\delbs \cdot + \left[\delbs\cdot ,\left(A_k^{0,1}\right)^g\right]$ are also invertible.
This enables us to find a perturbation $\omega_k$ that solves
$$\delb\delbs\omega_k + \left[\delbs\omega_j^k,\left(A_k^{0,1}\right)^g\right]+\delbs\omega_k\wedge\delbs\omega_k = - F_{A_k^g}^{0,2}$$
with $\omega_k = 0$ on $\p B^4$, and satisfies the estimate:
$$\norm{\omega_k}{W^{2,2}_0(B^4)}\leq C\opnorm{T_{\left(A_k^{0,1}\right)^g}^{-1}}\norm{F_{A_k^g}^{0,2}}{L^2(B^4)},$$
for some constant $C>0$. Using this estimate, together with the convergence of $A_k$ to $A$ in $W^{1,2}$ and $F_{A}^{0,2}=0$, we obtain the strong convergence of the sequence $\omega_k$ to $0$. \\

Hence, we prove the first theorem of this section by also taking into account that $g$ is a unitary gauge transformation, and we can thus, use the invariance of the $L^2$ norm under the action of $g$: $\norm{F_{A_k}^{0,2}}{L^2} = \norm{g^{-1}F_{A_k^g}^{0,2}g}{L^2} = \norm{F_{A_k^g}^{0,2} }{L^2}$. \\

The local theorem on $B^4$ is stated as follows:
\begin{Th}{\label{highenergy:mainthm}}
Let $A\in W^{1,2}(\Omega^1 B^4\otimes \mathfrak{u}(n))$, with $F_A^{0,2} = 0$. There exists a smooth sequence of forms $A_k \in C^\infty(\Omega^1 B^4\otimes\mathfrak{u}(n))$, $F_{A_k}^{0,2}=0$ and $A_k \rightarrow A$ in $W^{1,2}(B^4)$.
\end{Th}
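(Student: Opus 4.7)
The plan is to produce the approximation by first mollifying $A$ into a smooth sequence $\hat A_k\to A$ in $W^{1,2}$ (so that $F_{\hat A_k}^{0,2}\to 0$ in $L^2$), then correcting each $\hat A_k$ by a small skew-Hermitian perturbation to restore $F^{0,2}=0$. Writing the correction as $\delbs\omega_k - \ov{\delbs\omega_k}^T$ for a $(0,2)$-form $\omega_k$ vanishing on $\p B^4$, the integrability condition becomes the elliptic Dirichlet problem
\be
\delb\delbs\omega_k+[\delbs\omega_k,\hat A_k^{0,1}]+\delbs\omega_k\wedge\delbs\omega_k=-F_{\hat A_k}^{0,2}\quad\text{in }B^4,\qquad \omega_k=0\text{ on }\p B^4,
\ee
which is exactly the equation driving the low-energy proof of Lemma \ref{low:p-approxthm}. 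The new obstruction is that the linearised operator $L_B(\omega):=\delb\delbs\omega+[B^{0,1},\delbs\omega]$, mapping $W^{2,2}_0$ to $L^2$, is Fredholm of index zero but may have non-trivial kernel once $\norm{B}{W^{1,2}(B^4)}$ is not small, so the fixed-point scheme used under low energy breaks down.

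The core of the argument is therefore the construction of a smooth unitary gauge change $g\in C^\infty(B^4,U(n))$ such that $L_{A^g}$ is an isomorphism. Following the strategy paragraph, I would first treat the easier problem of \emph{linear} perturbations: given $A$ one shows that there exist $\beta>0$ and a smooth $\mathfrak{u}(n)$-valued $0$-form $U$ such that $L_{A^{0,1}+\beta\delb U}$ has trivial kernel. This should come out of analytic perturbation theory for Fredholm families, since $L$ depends continuously on its coefficient in the $W^{1,2}$ topology and the family $t\mapsto L_{A^{0,1}+t\delb U}$ has only a discrete set of $t$ at which a zero eigenvalue occurs, so almost every small $(\beta,U)$ avoids it. I would then promote the linear perturbation to a unitary gauge change by writing $g=\exp(\beta u)$ for a smooth $u\in \Gamma(\mathfrak{u}(n))$: expanding $(A^{0,1})^g=A^{0,1}+\beta\,\delb u+O(\beta^2)$, the previous result combined with the openness of invertibility in operator norm yields a genuine smooth unitary $g$ with $L_{A^g}$ invertible. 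Continuity of the assignment $B\mapsto L_B$ under $W^{1,2}$-small perturbations of $B$ then ensures that the \underline{same} $g$ makes $L_{\hat A_k^g}$ invertible for all $k\geq k_0$, with $\opnorm{L_{\hat A_k^g}^{-1}}$ uniformly bounded.

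Once $g$ is fixed, set $B_k:=\hat A_k^g$, so that $B_k\to A^g$ in $W^{1,2}$, and observe the key unitary invariance $\norm{F_{B_k}^{0,2}}{L^2(B^4)}=\norm{F_{\hat A_k}^{0,2}}{L^2(B^4)}\to 0$. With $L_{B_k}$ uniformly invertible, a contraction argument modelled on Proposition \ref{CP2extension} produces smooth $\omega_k\in W^{2,2}_0(B^4)$ (smoothness coming from elliptic regularity for the smooth coefficient $B_k$) solving
\be
\delb\delbs\omega_k+[\delbs\omega_k,B_k^{0,1}]+\delbs\omega_k\wedge\delbs\omega_k=-F_{B_k}^{0,2}
\ee
with the a priori estimate $\norm{\omega_k}{W^{2,2}_0(B^4)}\leq C\,\opnorm{L_{B_k}^{-1}}\norm{F_{B_k}^{0,2}}{L^2(B^4)}$; this forces $\omega_k\to 0$ in $W^{2,2}$. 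Finally set $\tilde A_k:=B_k+\delbs\omega_k-\ov{\delbs\omega_k}^T$, which is smooth, unitary, and satisfies $F_{\tilde A_k}^{0,2}=0$, and take $A_k:=\tilde A_k^{g^{-1}}$. Then $A_k$ is smooth, unitary, integrable (since $F^{0,2}$ transforms as $g^{-1}F^{0,2}g$), and $A_k\to A$ in $W^{1,2}(B^4)$ because $g$ is a fixed smooth unitary gauge and $\hat A_k\to A$ in $W^{1,2}$.

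The main obstacle is the construction of the gauge $g$. Producing a linear perturbation of $A^{0,1}$ that kills the kernel of $L_{A^{0,1}}$ is essentially a generic-position statement, but realising it through a gauge change in the nonlinear manifold of unitary maps, while simultaneously retaining uniform operator-norm invertibility under $W^{1,2}$-small perturbations of $A$, is the delicate point, and it is what motivates treating linear perturbations first and transferring the conclusion to a unitary gauge by an implicit-function-style argument.
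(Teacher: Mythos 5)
Your overall architecture coincides with the paper's: mollify, find a smooth unitary gauge $g$ making the linearised operator $L_{A^g}$ invertible, solve the perturbation equation by contraction using the uniformly bounded inverses $\opnorm{L_{\hat A_k^g}^{-1}}$, and gauge back using the unitary invariance of $\norm{F^{0,2}}{L^2}$. The final assembly (stability of invertibility under $W^{1,2}$-convergence of the coefficient, the a priori estimate forcing $\omega_k\to 0$, and the conjugation law for $F^{0,2}$) is correct and is exactly what the paper does. The gap is in the one step you yourself flag as the main obstacle, and it is genuine rather than cosmetic.

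First, the genericity claim. You assert that for fixed $U$ the family $t\mapsto L_{A^{0,1}+t\delb U}$ has a zero eigenvalue only for a discrete set of $t$, ``so almost every small $(\beta,U)$ avoids it.'' For an analytic, non-self-adjoint family the actual dichotomy is: either the eigenvalue branches emanating from $0$ are not identically zero, in which case the singular set of $t$ is discrete, or they vanish identically and the kernel persists for \emph{all} $t$. Nothing a priori excludes the second alternative for every admissible perturbation direction; to exclude it one must exhibit a specific $U$ for which the first-order perturbation $B_U=[\delb U,\delbs\cdot]$ is injective on $\ker L_0$, and then show, via the first-order expansion of $L_{\beta,U}P_{\beta,U}$ on generalised eigenvectors, that a persisting kernel would force $B_Ue=0$ for some nonzero $e\in\ker L_0$. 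This is precisely the content of Propositions \ref{highenergy:k_null_u} and \ref{highenergy:p_iso} and Lemma \ref{highenergy:k_null_lin}, and it occupies most of the section; your proposal replaces it with an unproven ``almost every'' statement.

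Second, the transfer from linear to gauge perturbations. With $g=\exp(\beta u)$ one has $(A^{0,1})^g=A^{0,1}+\beta\left(\delb u+[A^{0,1},u]\right)+O(\beta^2)$; you dropped the commutator $\beta[A^{0,1},u]$, which is of the same order in $\beta$ as the term $\beta\delb u$ that was supposed to produce invertibility. Since the spectral gain from the linear perturbation is itself only $O(\beta)$, an $O(\beta)$ error cannot be absorbed by ``openness of invertibility in operator norm.'' The gauge case therefore has the different first-order operator $B_{0,U}=[\delb U+[A^{0,1},U],\delbs\cdot]$ and requires its own injectivity statement on $\ker T_0$ (Proposition \ref{highenergy:k_null_nu}, which in turn needs Proposition \ref{app:vanishing2} to handle the extra commutator), together with the observation that $U$ can be chosen $\mathfrak{u}(n)$-valued so that $\exp(\beta U)$ is unitary. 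With these two points supplied, your argument becomes the paper's proof.
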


Moreover, using the local theorem, we will show that it implies the global existence of an approximating smooth sequence. In particular, we obtain:
\begin{Th}{\label{highenergy:globalmainthm}}
Let $\nabla$ a $W^{1,2}$ unitary connection over $X^2$, satisfying the integrability condition $$F_{\nabla}^{0,2} = 0.$$ Then there exists a sequence of smooth unitary connections $\nabla_k$, with $F_{\nabla_k}^{0,2}=0$ such that
$$d_2(\nabla_k,\nabla)\rightarrow 0.$$
\end{Th}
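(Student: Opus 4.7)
The plan is to derive Theorem \ref{highenergy:globalmainthm} from the local Theorem \ref{highenergy:mainthm} by globalising the two key moves of that proof to the closed surface $X^2$, exploiting the $GL_n(\C)$-gauge flexibility built into the $d_2$-distance.

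First I would produce a smooth unitary approximation of $\nabla$ ignoring the integrability condition. Writing $\nabla=\nabla^0+\eta$ for a fixed smooth unitary reference connection $\nabla^0$ and $\eta\in W^{1,2}(\Omega^1(ad_{h_0}(E)))$, and using that $ad_{h_0}(E)$ is a smooth bundle, standard mollification in $h_0$-compatible local frames patched by a smooth partition of unity produces smooth skew-hermitian $\eta_k\to\eta$ in $W^{1,2}$. Setting $\nabla_k^{(1)}:=\nabla^0+\eta_k$ yields smooth unitary connections with $F_{\nabla_k^{(1)}}^{0,2}\to F_\nabla^{0,2}=0$ in $L^2$ (though generically $F_{\nabla_k^{(1)}}^{0,2}\neq 0$).

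Next, for each $k$, I would correct $\nabla_k^{(1)}$ to recover the integrability condition by seeking $\omega_k\in W^{2,2}(\Omega^{0,2}X^2\otimes M_n(\C))$ such that $\nabla_k:=\nabla_k^{(1)}+\delbs\omega_k-\overline{\delbs\omega_k}^T$ satisfies $F_{\nabla_k}^{0,2}=0$. Expanding this produces the non-linear elliptic equation on $X^2$
\[
\delb\delbs\omega_k+\bigl[(\nabla_k^{(1)})^{0,1},\delbs\omega_k\bigr]+\delbs\omega_k\wedge\delbs\omega_k=-F_{\nabla_k^{(1)}}^{0,2}.
\]
Following the strategy of Theorem \ref{highenergy:mainthm}, I would first select a smooth unitary gauge $g_k$, set $A_k:=((\nabla_k^{(1)})^{g_k})^{0,1}$, and arrange that the linearisation $L_{A_k}:=\delb\delbs+[A_k,\delbs\cdot]$ is invertible modulo its cohomological kernel; then a contraction argument would deliver $\omega_k$ with
\[
\|\omega_k\|_{W^{2,2}(X^2)}\;\lesssim\;\opnorm{L_{A_k}^{-1}}\,\|F_{\nabla_k^{(1)}}^{0,2}\|_{L^2(X^2)}\;\longrightarrow\;0,
\]
after which one passes back through the gauge. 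Since $g_k$ is unitary this preserves both unitarity and $\|F^{0,2}\|_{L^2}$, and elliptic regularity upgrades $\omega_k$ to smooth, so $\nabla_k$ is smooth unitary with $F_{\nabla_k}^{0,2}=0$.

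The decisive obstacle, and the only real departure from the local case, is the non-trivial Dolbeault cohomology $H^{0,2}(X^2,End(E))$, which forms the kernel of $\delb\delbs$ acting on $End(E)$-valued $(0,2)$-forms over the closed surface. Unlike Proposition \ref{low:fredholm} on $\CP^2$, where $H^{0,2}=0$ was crucial, or the Dirichlet setting of Lemma \ref{low:p-approxthm} on $B^4$, one cannot hope for bare injectivity here. The fix I propose is to use the flexibility in $g_k$ not only to control the operator norm of the linear part but also to annihilate the harmonic projection of $F_{\nabla_k^{(1)}}^{0,2}$, which is feasible because $F_\nabla^{0,2}=0$ exactly and the harmonic projection is $L^2$-continuous, so the unperturbed harmonic obstruction already tends to zero. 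Finally, $d_2$-convergence follows by taking $\sigma:=g_k^{-1}\in{\mathcal G}^{1,2}(GL_n(\C))$: both $\|\nabla_k-\nabla^\sigma\|_{L^2(X^2)}$ and $\|F_{\nabla_k}-F_{\nabla^\sigma}\|_{L^2(X^2)}$ are then controlled, via the Sobolev embedding $W^{1,2}\hookrightarrow L^4$ in four real dimensions, by $\|\eta_k-\eta\|_{W^{1,2}}+\|\omega_k\|_{W^{2,2}}\to 0$.
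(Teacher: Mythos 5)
Your overall architecture (mollify the connection, then restore $F^{0,2}=0$ by a perturbation $\delbs\omega_k-\overline{\delbs\omega_k}^T$ solving a nonlinear elliptic equation, after a unitary gauge change that controls the linearisation) is exactly the paper's. The divergence, and the genuine gap, is in how you treat the global $(0,2)$-cohomology. The paper does \emph{not} solve "modulo the cohomological kernel": it observes that on the closed K\"ahler surface $\delb\delbs$ acting on $E$-valued $(0,2)$-sections is elliptic and self-adjoint, hence Fredholm of index zero; it then invokes the gauge-perturbation Lemma \ref{highenergy:k_null_nlin} globally to produce a single unitary gauge $g$ with $Ker\,T_{\nabla^g}=\{0\}$, so that index zero forces $Coker\,T_{\nabla^g}=\{0\}$ and $T_{\nabla^g}$ is invertible; Proposition \ref{highenergy:seq_triv_ker} then shows the \emph{same} $g$ makes $T_{(\nabla_k^{(1)})^g}$ invertible for all large $k$ with $\sup_k\opnorm{T^{-1}_{(\nabla_k^{(1)})^g}}<\infty$, and Lemma \ref{highenergy:seq_flat} applies with no orthogonality condition on the data. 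You instead declare bare injectivity impossible and propose to use the gauge to "annihilate the harmonic projection" of $F^{0,2}_{\nabla_k^{(1)}}$. As written this does not close: a unitary gauge acts on $F^{0,2}$ by pointwise conjugation, and you give no mechanism by which this makes the harmonic projection vanish \emph{exactly}; the fact that it tends to zero in $L^2$ is not enough, since any nonzero component in the cokernel renders the equation unsolvable and your $\nabla_k$ would then fail $F^{0,2}_{\nabla_k}=0$ identically, which is the whole point. Moreover the solvability condition would have to hold for the full right-hand side, including the quadratic term $\delbs\omega_k\wedge\delbs\omega_k$, at every step of the contraction iteration, and you do not address this.

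Two further points. First, you choose a gauge $g_k$ for each $k$; without a uniform bound on $\opnorm{L_{A_k}^{-1}}$ the estimate $\|\omega_k\|_{W^{2,2}}\lesssim\opnorm{L_{A_k}^{-1}}\|F^{0,2}_{\nabla_k^{(1)}}\|_{L^2}$ does not yield $\omega_k\to0$; the paper fixes one gauge adapted to $\nabla$ and proves the uniform bound in Proposition \ref{highenergy:seq_triv_ker}(ii). Second, your closing step (taking $\sigma=g^{-1}$ in the definition of $d_2$ and using unitarity to preserve $\|F^{0,2}\|_{L^2}$) does agree with the paper. The fix is therefore not to work orthogonally to $\mathcal H^{0,2}(E)$ but to import the index-zero argument of \eqref{high:index} together with the global version of Lemma \ref{highenergy:k_null_nlin}, which is where the paper actually disposes of the obstruction you identified.
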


\subsection{Linear perturbation}

We will be looking at finding a small linear perturbation that forces the operator $$L_{A^{0,1}} = \delb\delbs \cdot + [A^{0,1},\delbs \cdot]$$ to have trivial kernel, assuming a $0$ boundary condition. Let $U\in C^\infty(B^4,M_n(\C))$. We define the following operators
$$\begin{array}{c}
L_0:W_D^{2,2}(\Omega^{0,2} B^4\otimes M_n(\C))\rightarrow L^2(\Omega^{0,2}B^4\otimes M_n(\C))\\
\omega\mapsto\delb\delbs \omega + [A^{0,1},\delbs \omega]
\end{array}$$
$$
\begin{array}{c}
L_{\beta,U}:W_D^{2,2}(\Omega^{0,2} B^4\otimes M_n(\C))\rightarrow L^2(\Omega^{0,2}B^4\otimes M_n(\C))\\
\omega\mapsto  L_0\omega + \beta B_U\omega
\end{array}
$$
where $B_U = [\delb U,\delbs \cdot].$

\begin{Prop}{\label{highenergy:fred}}
$L_0$ and $L_{\beta,U}$ are Fredholm operators of index zero from the space $W_D^{2,2}(\Omega^{0,2} B^4\otimes M_n(\C))$ to $L^2(B^4,M_n(\C)).$
\end{Prop}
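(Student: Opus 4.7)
The plan is to realise both $L_0$ and $L_{\beta,U}$ as compact perturbations of the principal part $\omega\mapsto\delb\delbs\omega$, which in our setting reduces to a Dirichlet Laplacian and is therefore Fredholm of index zero; stability of the Fredholm index under compact perturbation then gives the conclusion.

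First I would treat the principal part. Since $(0,2)$ is the top bidegree in complex dimension two, $\delb\omega=0$ trivially for every $\omega\in\Omega^{0,2}B^4\otimes M_n(\C)$, so $\delbs\delb\omega=0$. Combining this with the flat Kähler identity $\Box=\delb\delbs+\delbs\delb=\tfrac12\Delta$ used throughout on $B^4$, one obtains $\delb\delbs\omega=\tfrac12\Delta f\cdot d\ov{z_1}\wedge d\ov{z_2}$ when $\omega=f\,d\ov{z_1}\wedge d\ov{z_2}$. The condition $\omega\in W_D^{2,2}$ forces $f$ to vanish in trace on $\p B^4$, so $\delb\delbs:W_D^{2,2}\to L^2$ is conjugate to the Dirichlet Laplacian on the matrix-valued coefficient $f$, which is an isomorphism by classical elliptic theory on the smooth bounded domain $B^4$. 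In particular $\delb\delbs$ is Fredholm of index zero.

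Next I would show that the two perturbations $B_A\omega:=[A^{0,1},\delbs\omega]$ and $B_U\omega:=[\delb U,\delbs\omega]$ are \emph{compact} from $W_D^{2,2}$ to $L^2$. Both are bounded by Hölder combined with the Sobolev embedding $W^{1,2}(B^4)\hookrightarrow L^4(B^4)$ applied to $A^{0,1}$ and to $\delbs\omega$. For compactness of $B_A$, I would take $\omega_k\rightharpoonup 0$ weakly in $W_D^{2,2}$ and approximate $A^{0,1}$ by a smooth form $A^{0,1}_\varepsilon$ with $\|A^{0,1}-A^{0,1}_\varepsilon\|_{W^{1,2}}<\varepsilon$, and use the split
$$\|B_A\omega_k\|_{L^2(B^4)}\leq C\|A^{0,1}-A^{0,1}_\varepsilon\|_{L^4(B^4)}\|\delbs\omega_k\|_{L^4(B^4)}+\|A^{0,1}_\varepsilon\|_{L^\infty(B^4)}\|\delbs\omega_k\|_{L^2(B^4)}.$$
The $L^4$-norm of $\delbs\omega_k$ is bounded uniformly in $k$ by the weak $W^{2,2}$ bound together with $W^{1,2}\hookrightarrow L^4$, so the first term is $O(\varepsilon)$ uniformly in $k$; for fixed $\varepsilon$ the compact embedding $W^{1,2}\hookrightarrow\hookrightarrow L^2$ forces $\|\delbs\omega_k\|_{L^2}\to 0$, killing the second term in the limit. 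A diagonal extraction then gives $B_A\omega_k\to 0$ strongly in $L^2$. The argument for $B_U$ is strictly easier, since $U\in C^\infty$ gives $\delb U\in L^\infty$ and direct Rellich closes the compactness.

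Finally, $L_0=\delb\delbs+B_A$ and $L_{\beta,U}=L_0+\beta B_U$ are compact perturbations of the index-zero Fredholm (in fact invertible) operator $\delb\delbs$, so stability of the Fredholm index as recalled in the paper via \cite{salamon2018functional} yields that both operators are Fredholm of index zero. The only delicate point is the compactness step: in four real dimensions $W^{1,2}\hookrightarrow L^4$ is only continuous (critical exponent), so naive Rellich compactness of $\delbs\omega_k$ in $L^4$ is unavailable; the approximation trick above is the standard way to exchange $L^4$-smallness of the coefficient $A^{0,1}$ for genuine Rellich compactness at the subcritical exponent $L^2$.
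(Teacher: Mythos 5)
Your proposal is correct, and it reaches the conclusion by a slightly more direct route than the paper. Both arguments start from the same principal part analysis: since $(0,2)$ is the top bidegree, $\delb\delbs$ coincides with $\tfrac12\Delta$ on the coefficient and is an index-zero Fredholm operator from $W_D^{2,2}$ to $L^2$; both then conclude by perturbation stability of the index. The difference is in how the bracket term is handled. The paper only establishes compactness of $\omega\mapsto[A_k^{0,1},\delbs\omega]$ for a \emph{smooth} approximating sequence $A_k$ (where $\norm{A_k}{L^\infty}<\infty$ makes Rellich immediate), and then needs a second ingredient --- continuity of the Fredholm index under perturbations of small operator norm --- to transfer the index computation from $\delb\delbs+[A_k^{0,1},\delbs\cdot]$ back to $L_0$ itself. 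You instead prove that $\omega\mapsto[A^{0,1},\delbs\omega]$ is already compact for the given $W^{1,2}$ coefficient, via the splitting $A^{0,1}=(A^{0,1}-A^{0,1}_\e)+A^{0,1}_\e$, trading the critical (merely continuous) embedding $W^{1,2}\hookrightarrow L^4$ on the small piece against genuine Rellich compactness at the subcritical exponent on the smooth piece; the $\limsup$ in $k$ is then $O(\e)$ for every $\e$, so no diagonal extraction is actually needed. This buys a one-step compact-perturbation argument and avoids invoking norm-continuity of the index, at the cost of a slightly more delicate compactness lemma. Both proofs are complete and correct.
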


\begin{proof}[Proof of Proposition \ref{highenergy:fred}]
It is sufficient to prove this statement for $L_0$. $L_0$ is Fredholm since it is elliptic. Moreover, $\delb\delbs=\frac{1}{2}\Delta d\ov{z_1}\wedge d\ov{z_2}$ is an elliptic operator of Fredholm index zero from $W_D^{2,2}(\Omega^{0,2} B^4\otimes M_n(\C))$ into $L^2(\Omega^{0,2} B^4\otimes M_n(\C))$.\\

Let $A_k$ be a smooth sequence of $1$-forms converging strongly in $W^{1,2}$ to $A$. Then the bracket operator
$$\omega\mapsto [A_k^{0,1},\delbs\omega]$$
is compact $W^{2,2}(\Omega^{0,2}(B^4))$ to $L^2(\Omega^{0,2}(B^4))$. Indeed, $A_k$ is bounded in $L^\infty$ and hence:
$$\norm{[A_k^{0,1},\delbs\omega]}{L^2(B^4)}\leq C\norm{A_k}{L^\infty(B^4)}\norm{\delbs \omega}{W^{1,2}(B^4)},$$
for some constant $C>0$, where we have used that $W^{1,2}$ is compactly embedded in $L^2$ in $4$-dimensions, by Rellich-Kondrachov \cite{adams2003sobolev}. By the compact embeddedness, it follows that 
the operators $\omega\mapsto [A_k^{0,1},\delbs\omega ]$ are compact $W^{2,2}$ to $L^2$ for all $k$. Hence, using the compactness of these operators and the fact that $\delb\delbs$ is Fredholm, then by \cite[Theorem 4.4.2, p.185]{salamon2018functional}) we have $$index(\delb\delbs\cdot + [A_k^{0,1},\delbs\cdot ])=index(\delb\delbs) = 0.$$
Moreover, for fix $\e>0$ given by \cite[Theorem 4.4.2, p.185]{salamon2018functional}, then there exists $k_0>0$ such that for all  $k\geq k_0$, we have that
$\opnorm{[A_k^{0,1}-A^{0,1},\delbs]}\leq \e$
since $A_k$ converges strongly to $A$ in $W^{1,2}$.
Thus, by applying \cite[Theorem 4.4.2, p.185]{salamon2018functional} to the perturbation operator $[A_k^{0,1}-A^{0,1},\delbs]$ and to $L_0$, we obtain that
$$index(L_0) = index(L_0+[A_k^{0,1}-A^{0,1},\delbs]) =index(\delb\delbs\cdot + [A_k^{0,1},\delbs\cdot ])=0$$
This proves the statement.
\end{proof}

We will be working with operators of the form $L_0$ and $L_{\beta,U}$, where $\beta\in\mathbb{R}$ is small and $U\in W^{2,2}(B^4,M_n(\C))$. By the Proposition above the kernel of $L_0$ is finite dimensional. 
On the space $(Ker L_0)^\perp$, there exists a compact operator $S = L_0^{-1}$ from $L^2$ into $W_D^{2,2}$ such that
$$S:Ran L_0 \rightarrow (Ker L_0)^\perp.$$ By classical spectral theory, $S$ has discrete spectrum with a possible accumulation point at 0 (see for example \cite[Theorem VI.15]{reed1980methods}). This means that the spectrum of $S$ is
$\{\lambda_1,\lambda_2,\ldots, \lambda_n,\ldots\}$
where $\lambda_n\rightarrow 0$.\\

Thus, on $(Ker L_0)^\perp$, the spectrum of $L_0$ is 
$$\left\{\frac{1}{\lambda_1},\frac{1}{\lambda_2},\ldots,\frac{1}{\lambda_n},\ldots\right\}$$
where  $\lambda_n\rightarrow 0$.
If we assume $Ker L_0 \neq \{0\}$, we have that $0$ has to be in the spectrum as well and then 
$$\left\{0,\frac{1}{\lambda_1},\frac{1}{\lambda_2},\ldots,\frac{1}{\lambda_n},\ldots\right\}$$
is the spectrum of the operator $ L_0$. In addition, $L_{\beta,U}$ has discrete spectrum by arguing as before with $A^{0,1}+\beta \delb U$ instead of $A^{0,1}$. 
\\

In the next Proposition we show that the number of eigenvalues near $0$ of $L_{\beta,U}$ cannot exceed the multiplicity of the $0$ eigenvalue of $L_0$ for $\beta$ small enough. Let $m$ be the multiplicity of $0$ for the operator $L_0$.

\begin{Prop}{\label{highenergy:no_ev}}
There exists $\beta_0>0$ so that for each $0\leq\beta\leq\beta_0$, $L_{\beta,U}$ has at most $m$ finitely many not necessarily distinct eigenvalues $\lambda_U^{(1)}(\beta),\ldots \lambda_U^{(m)}(\beta)$ near $0$ that converge to $0$ as $\beta\rightarrow 0$.
\end{Prop}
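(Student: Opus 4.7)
The plan is a direct application of the Riesz spectral projection / Kato perturbation theory for isolated eigenvalues of Fredholm operators. By Proposition \ref{highenergy:fred} and the spectral description immediately preceding the statement, $L_0$ is a closed Fredholm operator of index zero from $W_D^{2,2}(\Omega^{0,2} B^4 \otimes M_n(\C))$ into $L^2$, its non-zero spectrum accumulates only at $\infty$, and $0$ is an isolated eigenvalue of finite algebraic multiplicity $m$. I fix a radius $r_0>0$ so that the circle $\gamma_{r_0}=\{z\in\C : |z|=r_0\}$ lies in $\rho(L_0)$ and encloses no other point of $\sigma(L_0)$.

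The analytic content is a uniform resolvent bound on $\gamma_{r_0}$. Using H\"older's inequality and the $4$-dimensional Sobolev embedding $W^{1,2}\hookrightarrow L^4$,
\[
\|B_U\omega\|_{L^2(B^4)}\leq \|\delb U\|_{L^4}\,\|\delbs\omega\|_{L^4}\leq C\,\|U\|_{W^{2,2}}\,\|\omega\|_{W_D^{2,2}},
\]
so $B_U$ is a bounded operator from $W_D^{2,2}$ into $L^2$. Because $(z-L_0)^{-1}:L^2\to W_D^{2,2}$ is norm-continuous on the compact set $\gamma_{r_0}\subset \rho(L_0)$, the quantity $M:=\sup_{z\in\gamma_{r_0}}\|B_U(z-L_0)^{-1}\|_{L^2\to L^2}$ is finite. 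Setting $\beta_0:=1/(2M)$, for every $0\leq \beta\leq\beta_0$ the Neumann expansion
\[
(z-L_{\beta,U})^{-1}=(z-L_0)^{-1}\bigl(I-\beta B_U(z-L_0)^{-1}\bigr)^{-1}
\]
converges in operator norm uniformly on $\gamma_{r_0}$, so that $\gamma_{r_0}\subset\rho(L_{\beta,U})$.

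I then introduce the Riesz spectral projections
\[
P_\beta:=\frac{1}{2\pi i}\oint_{\gamma_{r_0}}(z-L_{\beta,U})^{-1}\,dz,\qquad P_0:=\frac{1}{2\pi i}\oint_{\gamma_{r_0}}(z-L_0)^{-1}\,dz,
\]
which are bounded projections in $L^2$. The Neumann expansion above yields $\|P_\beta-P_0\|_{L^2\to L^2}=O(\beta)$; after possibly shrinking $\beta_0$ one has $\|P_\beta-P_0\|<1$, and the classical fact (Kato, Ch.~I) that two bounded projections at distance strictly less than $1$ share the same finite rank forces $\mathrm{rank}\,P_\beta=\mathrm{rank}\,P_0=m$. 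Since $\mathrm{Ran}\,P_\beta$ is precisely the direct sum of generalised eigenspaces attached to the eigenvalues of $L_{\beta,U}$ enclosed by $\gamma_{r_0}$, at most $m$ eigenvalues (counted with algebraic multiplicity) lie inside the disk $\{|z|<r_0\}$; these I label $\lambda_U^{(1)}(\beta),\ldots,\lambda_U^{(m)}(\beta)$. The convergence $\lambda_U^{(j)}(\beta)\to 0$ as $\beta\to 0$ is obtained by redoing the entire construction with each smaller radius $r<r_0$ in place of $r_0$, which produces a threshold $\beta_0(r)>0$ confining these eigenvalues to $\overline{D_r(0)}$ as soon as $\beta\leq\beta_0(r)$.

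The principal technical point I expect to have to check is the uniform resolvent bound on $\gamma_{r_0}$; everything else is standard perturbation theory. This bound rests on $L_0$ being closed with nonempty resolvent set (guaranteed by Proposition \ref{highenergy:fred}) together with the boundedness of $B_U$ from the Sobolev estimate above, after which the norm-continuity of the resolvent on $\rho(L_0)$ furnishes the required supremum on the compact contour.
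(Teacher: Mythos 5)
Your argument is correct, and it reaches both conclusions of the proposition (the count of at most $m$ eigenvalues and their convergence to $0$) by a route that differs from the paper's in its two key steps. For the bound "at most $m$", the paper invokes the compactness of the disc together with a citation to Butler's perturbation theory of analytic non-symmetric operators, whereas you derive it self-containedly: the Neumann factorisation $(z-L_{\beta,U})^{-1}=(z-L_0)^{-1}(I-\beta B_U(z-L_0)^{-1})^{-1}$ on a fixed contour in $\rho(L_0)$ gives $\|P_\beta-P_0\|=O(\beta)$, and the rank stability of nearby projections forces $\mathrm{rank}\,P_\beta=m$. This is arguably cleaner, and it is in fact the very computation the paper carries out later in Proposition \ref{highenergy:p_iso} when comparing $P_{\beta,U}$ with $P_0$; your proof simply front-loads it. For the convergence $\lambda_U^{(i)}(\beta)\to 0$, the paper argues by contradiction using the openness of $\{(\beta,\lambda):\lambda\in\rho(L_{\beta,U})\}$ for an analytic family (Reed--Simon XII.7), while you re-run the contour construction at every smaller radius $r$ to produce a threshold $\beta_0(r)$; the two rank counts (over $\gamma_r$ and over $\gamma_{r_0}$) then coincide, so no eigenvalue can linger in the annulus, which is the quantitative version of the same confinement idea. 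Two minor points worth making explicit if you write this up: first, the fact that $\sigma(L_{\beta,U})$ inside the contour consists only of eigenvalues is not an assumption but a consequence of $\mathrm{Ran}\,P_\beta$ being finite-dimensional and invariant, with $\sigma(L_{\beta,U}|_{\mathrm{Ran}\,P_\beta})$ equal to the enclosed part of the spectrum; second, the finiteness of $m=\mathrm{rank}\,P_0$ follows from $0$ being an isolated point of the spectrum of a Fredholm operator, which is worth stating since the paper's later remark about $\dim G_0=\infty$ could otherwise cause confusion.
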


\begin{proof}[Proof of Proposition \ref{highenergy:no_ev}]
Denote $\varepsilon_0 := \left|\frac{1}{\lambda_1}\right|.$ and let $\varepsilon<\varepsilon_0$. Since the ball $B_\varepsilon(0) = \{|\lambda|\leq\varepsilon\}\subseteq \C$ is compact, then $L_{\beta,U}$ has finitely many not necessarily distinct $m_{\beta}$ eigenvalues in $B_\varepsilon(0)$, $\lambda_{U}^{(1)}(\beta),\ldots,\lambda_U^{(m_{\beta})}(\beta)$. Moreover, by \cite{butler1959perturbation} there are at most $m$ eigenvalues, where $m$ is the multiplicity of the $0$ eigenvalue of $L_0$. We need to show that $\lambda_U^{(i)}(\beta)\rightarrow 0$ as $\beta\rightarrow 0$ for each $1\leq i\leq m_\beta$.\\

We argue by contradiction. If this wouldn't be the case, then there exists $\lambda_0\neq 0$ and $i_0$ such that $\lambda_U^{i_0}(\beta)\rightarrow \lambda_0$ as $\beta\rightarrow 0$, with $\lambda_0\in\sigma(L_0)$. Since $L_{\beta,U}$ is an analytic family of operators in $\beta$, we know that the set $\Gamma = \{(\beta,\lambda)| \lambda\in\rho(L_{\beta,U})\}$  is open (see \cite[Theorem XII.7]{reed1978iv}). \\

Because $\Gamma$ is open and $B_\varepsilon(0)$ contains only one point of the spectrum of $L_0$, there exists $\delta>0$ and an open tubular neighbourhood $$\mathcal{T} = (-\delta,\delta)\times (B_{\e+\delta}(0)\setminus B_{\e-\delta}(0))$$ of $\{0\} \times \p B_\varepsilon(0)$ that does not contain any elements of the spectrum of $L_{\beta,U}$. By definition of $\Gamma$, we have that $\mathcal{T}\subset \Gamma$. In particular, $\delta$ can be chosen small enough such that for each $\beta$ satisfying $|\beta|<\delta$, there is no $\lambda_{\varepsilon,\beta,U} \in \p B_\varepsilon(0)\cap \sigma(L_{\beta,U})$. Thus, we have a contradiction since $|\lambda_0|\geq\varepsilon_0$. We conclude that for all $i\leq m_\beta$, $\lambda_U^{(i)}(\beta)\rightarrow 0$ as $\beta\rightarrow 0$.
\end{proof}
\vspace{4mm}
Define the following operator:
$$P_{\beta,U}:= -\frac{1}{2\pi i} \oint_{|\lambda|=\varepsilon} ( L_{\beta,U}  - \lambda)^{-1} \,d\lambda$$
where $\varepsilon$ is chosen as in Proposition \ref{highenergy:no_ev}. Since we have isolated branched points of the spectrum, we can rewrite this as
$$P_{\beta,U}:= -\frac{1}{2\pi i} \sum_{i=1}^m \oint_{|\lambda - \lambda_U^{(i)}(\beta)|=\varepsilon_1} ( L_{\beta,U}  - \lambda)^{-1} \,d\lambda.$$
for some $\varepsilon_1$ small enough. Each term of this sum is the projection onto the generalised eigenspace of $L_{\beta,U}$ corresponding to the eigenvalue $\lambda_U^{(i)}(\beta)$ (see\cite[Chapter XII]{reed1978iv}). By Proposition \ref{highenergy:no_ev} on the circle $|\lambda|=\varepsilon$, we have that $\lambda \in \rho(L_{\beta,U})$. Thus, the resolvent $R_\lambda =  ( L_{\beta,U}  - \lambda)^{-1}$ is analytic on the $\varepsilon$ circle. It follows that $P_{\beta,U}$ is also analytic in terms of $\beta$ for $\beta$ small enough. Similarly, we can define the operator $P_0$ associated to $L_0$.\\
 
We denote the generalised eigenspace (as defined in \cite[Chapter 5]{salamon2018functional}) corresponding to $0$ for $L_0$  by $$G_0 = \bigcup\limits_{k=1}^\infty \left\{v\in W_D^{2,2}(\Omega^{0,2} B^4\otimes M_n(\C)) | L_0^k v = 0\right\}$$
and the range of $P_{\beta, U}$ by 
$$G_{\beta,U} = \bigcup\limits_{k=1}^\infty\bigoplus\limits_{i=1}^m \left\{v\in W_D^{2,2}(\Omega^{0,2} B^4\otimes M_n(\C)) | (L_{\beta,U} - \lambda_U^{(i)}(\beta))^k v = 0\right\}$$
respectively. The following proposition will show that these two spaces are isomorphic for small values of $\beta$.
\begin{Prop}\label{highenergy:p_iso}
There exists $\beta_0>0$ so that $P_{\beta,U}:G_0\rightarrow G_{\beta,U}$ is an isomorphism for all $0<\beta<\beta_0$.
\end{Prop}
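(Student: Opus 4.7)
The plan is to invoke the standard Riesz-projection machinery from analytic perturbation theory: since $P_{\beta,U}$ is the contour integral $-\tfrac{1}{2\pi i}\oint_{|\lambda|=\varepsilon}(L_{\beta,U}-\lambda)^{-1}d\lambda$ over a circle that lies entirely in the resolvent set for all small $\beta$ (by Proposition \ref{highenergy:no_ev}), one can read off its basic properties from continuity of the resolvent. In particular, $P_{\beta,U}$ is the spectral projection onto $G_{\beta,U}$, it commutes with $L_{\beta,U}$, it satisfies $P_{\beta,U}^2 = P_{\beta,U}$, and its rank equals $\dim G_{\beta,U}$. At $\beta=0$ the operator $P_{0,U}$ reduces to the spectral projection $P_0$ onto $G_0$.

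The first step I would carry out is to show that $\beta\mapsto P_{\beta,U}$ is continuous in operator norm at $\beta=0$. This reduces to a uniform bound on $\|(L_{\beta,U}-\lambda)^{-1}-(L_0-\lambda)^{-1}\|$ on the compact circle $|\lambda|=\varepsilon$, which follows from the second resolvent identity
\[
(L_{\beta,U}-\lambda)^{-1}-(L_0-\lambda)^{-1}=-\beta\,(L_{\beta,U}-\lambda)^{-1}B_U(L_0-\lambda)^{-1},
\]
together with the fact that $\|(L_0-\lambda)^{-1}\|$ is bounded on $|\lambda|=\varepsilon\subset\rho(L_0)$, and the analogous uniform bound for $L_{\beta,U}$, which one gets by choosing $\beta_0$ small enough and using a geometric-series (Neumann) argument. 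Second, I would deduce that $\mathrm{rank}\,P_{\beta,U}=\mathrm{rank}\,P_0=m$ for all $\beta<\beta_0$: two projections whose difference has norm strictly less than $1$ are similar (this is the classical Sz.-Nagy/Kato lemma), hence have the same rank, so $\dim G_{\beta,U}=\dim G_0=m$.

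For the isomorphism itself, the key observation is that the restriction $P_{\beta,U}|_{G_0}$ is a small perturbation of $P_0|_{G_0}=\mathrm{id}_{G_0}$. Indeed, for any $v\in G_0$,
\[
\|P_{\beta,U}v - v\|=\|(P_{\beta,U}-P_0)v\|\le \|P_{\beta,U}-P_0\|\cdot\|v\|,
\]
which, after shrinking $\beta_0$, is bounded by $\tfrac{1}{2}\|v\|$. Hence $P_{\beta,U}|_{G_0}:G_0\to\mathrm{Ran}(P_{\beta,U})=G_{\beta,U}$ is injective, and since $\dim G_0=\dim G_{\beta,U}=m<\infty$, it is automatically an isomorphism.

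The main obstacle I foresee is not conceptual but bookkeeping: one must be careful that the contour $|\lambda|=\varepsilon$ simultaneously isolates \emph{all} eigenvalues $\lambda_U^{(i)}(\beta)$ for $\beta\in[0,\beta_0)$. This is precisely what Proposition \ref{highenergy:no_ev} guarantees, so the argument proceeds by first fixing $\varepsilon<|1/\lambda_1|$ as in that proposition, then selecting $\beta_0$ small enough that simultaneously (a) $|\lambda_U^{(i)}(\beta)|<\varepsilon/2$ for every $i$, (b) the Neumann series for $(L_{\beta,U}-\lambda)^{-1}$ converges uniformly on $|\lambda|=\varepsilon$, and (c) $\|P_{\beta,U}-P_0\|<1/2$. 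With these choices the three steps above fit together and yield the claim.
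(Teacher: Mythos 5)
Your proposal is correct in substance and follows the same basic mechanism as the paper — both arguments hinge on the Neumann-series expansion of $(L_{\beta,U}-\lambda)^{-1}$ on the circle $|\lambda|=\varepsilon\subset\rho(L_0)$ to get $\|P_{\beta,U}-P_0\|=O(\beta)$, and both prove injectivity by observing that $P_{\beta,U}|_{G_0}$ is a small perturbation of the identity. Where you genuinely diverge is the surjectivity step. The paper argues directly: it assumes $P_{\beta,U}G_0\subsetneq G_{\beta,U}$, notes that $P_{\beta,U}G_0$ is closed, extracts via the Riesz lemma a unit vector $u\in G_{\beta,U}$ at distance $>\tfrac12$ from $P_{\beta,U}G_0$, and then shows $P_{\beta,U}P_0u$ is within $\tfrac{2}{16}+\tfrac{1}{16^2}$ of $u$ — a contradiction. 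This argument is dimension-free. You instead invoke the Sz.-Nagy/Kato similarity of two projections with $\|P_{\beta,U}-P_0\|<1$ to get $\dim G_{\beta,U}=\dim G_0=m$, and then conclude from "injective between equal finite dimensions". That is cleaner when $G_0$ is finite-dimensional, and indeed it is here (the resolvent of $L_0$, viewed as an unbounded operator on $L^2$ with domain $W^{2,2}_D$, is compact, so every Riesz projection has finite rank); but note that the paper is deliberately agnostic on this point — in the proof of Lemma \ref{highenergy:k_null_lin} it explicitly entertains the "worst case $\dim G_0=\infty$", in which your final counting step would not close the argument. If you want your route to be as robust as the paper's, replace the dimension count by the slightly stronger standard fact that when $\|P-Q\|$ is small enough the restriction $Q|_{\operatorname{Ran}P}:\operatorname{Ran}P\to\operatorname{Ran}Q$ is itself an isomorphism (the Sz.-Nagy similarity essentially provides the inverse); this removes any reliance on $m<\infty$ and makes the two proofs fully equivalent in strength.
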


\begin{proof}[Proof of Proposition \ref{highenergy:p_iso}]
From \cite[Theorem XII.5]{reed1978iv}, $P_{\beta,U}$ and $P_0$ define two surjective projection operators:
$$P_{\beta,U}:W_D^{2,2}(\Omega^{0,2} B^4\otimes M_n(\C))\rightarrow G_{\beta,U}$$
and
$$P_0:W_D^{2,2}(\Omega^{0,2} B^4\otimes M_n(\C))\rightarrow G_0.$$
\textbf{Claim 1}. There exists $\beta_0>0$ so that for all $\beta<\beta_0$, $P_{\beta,U}$ is surjective as an operator from $G_0$ to $G_{\beta,U}$.
$$\begin{array}{rll}
P_{\beta,U}-P_0 &=&  -\frac{1}{2\pi i} \oint_{|\lambda|=\epsilon} ( L_{\beta,U}  - \lambda)^{-1} -  ( L_0  - \lambda)^{-1}  \,d\lambda\\
&=&-\frac{1}{2\pi i} \oint_{|\lambda|=\epsilon}  \left(( L_0  - \lambda)^{-1} - \sum\limits_{i=1}^\infty \beta^i (L_0 - \lambda)^{-1} (B_U (L_0-\lambda)^{-1})^i\right) \\
 & & -  ( L_0  - \lambda)^{-1}\,d\lambda\\
& = &O(\beta)
\end{array}$$
Thus, there exists $\beta_0$ such that for all $\beta<\beta_0$ we have the following bound for the norm of the operator $P(\beta)-P(0)$:
$$\norm{P_{\beta,U}-P_0}{W_D^{2,2}} < \frac{1}{16}.$$
Since $L_{\beta,U} - \lambda$ is a continuous operator from $W_D^{2,2}$ to $L^2$ for any $\lambda,\beta\in \mathbb{R}$, we have that 
$$(L_{\beta,U} - \lambda)^{-1}G_0$$
is closed, since $Ran P_0 = G_0$ is closed (see \cite[Theorem XII.5]{reed1978iv}).
This implies that $P_{\beta,U}G_0$ is closed as $P_{\beta,U}$ is a composition of two continuous operators.\\

Assume that $P_{\beta,U}G_0\neq G_{\beta,U}$. Since $P_{\beta,U}G_0$ is closed in $G_{\beta,U}$, we can apply Riesz Lemma (see for example \cite[Lemma 1.2.13]{salamon2018functional}). Then there exists $u\in G_{\beta,U}$ where $\norm{u}{W_D^{2,2}} = 1$ and
$$\inf\limits_{v\in P_{\beta,U}G_0} \norm{u - v}{W_D^{2,2}}>\tfrac{1}{2}.$$

$P_{\beta,U}$ is a projection operator and $u\in G_{\beta,U}$, then $u$ is its own projection - $u=P_{\beta,U}u$. Moreover, the norm distance between $u$ and $P_0u$ satisfies the following inequality
$$\begin{array}{rl}
\norm{u-P_0u}{W_D^{2,2}} &= \norm{P_{\beta,U}u- P_0u}{W_D^{2,2}} \leq \norm{P_{\beta,U}-P_0}{W_D^{2,2}}\norm{u}{W_D^{2,2}}\\
& = \norm{P_{\beta,U}-P_0}{W_D^{2,2}} <\frac{1}{16}.
\end{array}$$
The last inequality holds, because $\beta$ is chosen to be small. From the above estimate, we can easily estimate the norm of $P_0u$:
$$\norm{P_0u}{W_D^{2,2}} \leq \norm{u}{W_D^{2,2}}+\norm{u-P_0u}{W_D^{2,2}}<1+\tfrac{1}{16}.$$
By further computing, we get
$$\begin{array}{rcl}\norm{P_{\beta,U}P_0u - P_0u}{W_D^{2,2}} &=& \norm{P_{\beta,U}P_0u - P_0P_0u}{W_D^{2,2}} \\
&\stackrel{P_0^2 = P_0}{\leq}& \norm{P_{\beta,U}-P_0}{W_D^{2,2}} \norm{P_0u}{W_D^{2,2}}\\
&<&\tfrac{1}{16^2} +\tfrac{1}{16}.
\end{array}$$
Thus,
$$\norm{u - P_{\beta,U}P_0u}{W_D^{2,2}} \leq \norm{P_{\beta,U}P_0u - P_0u}{W_D^{2,2}}+\norm{u-P_0u}{W_D^{2,2}}< \tfrac{2}{16} +\tfrac{1}{16^2}. $$
Since $P_{\beta,U}P_0u\in P_{\beta,U}G_0$, we get a contradiction with $\inf\limits_{v\in P_{\beta,U}G_0} \norm{u - v}{W^{2,2}}>\frac{1}{2}.$
Hence, $P_{\beta,U}G_0 = G_{\beta,U}$ and we have proven the claim that $P_{\beta,U}$ is surjective from $G_0$ to $G_{\beta,U}$.\\

\textbf{Claim 2}. $P_{\beta,U}$ is injective as an operator from $G_0$ to $G_{\beta,U}$. \\

Let $\omega_0,\omega_1\in G_0$ such that $P_{\beta,U}\omega_1 = P_{\beta,U}\omega_0$ . Then
$$\begin{array}{rll}
0 = P_{\beta,U}(\omega_1-\omega_0) &=&  -\frac{1}{2\pi i} \oint_{|\lambda|=\epsilon} ( L_{\beta,U}  - \lambda)^{-1} (\omega_1-\omega_0)  \,d\lambda\\
&=&-\frac{1}{2\pi i} \oint_{|\lambda|=\epsilon}  \left(( L_0  - \lambda)^{-1} - \sum\limits_{i=1}^\infty \beta^i (L_0 - \lambda)^{-1} (B_U (L_0-\lambda)^{-1})^i\right) \\
 & & (\omega_1-\omega_0)\,d\lambda\\
& = &\omega_1-\omega_0 + O(\beta).
\end{array}$$
Because this holds for any $\beta<\beta_0$ we conclude that $\omega_0=\omega_1$.\\

Hence, from the two claims above, there exists $\beta_0>0$ so that
$$P_{\beta,U}:G_0\rightarrow G_{\beta,U}$$ is an isomorphism for all $\beta<\beta_0$.
\end{proof}
\vspace{4mm}

The last ingredient we need to prove, in order to obtain the existence of a perturbation that makes $Ker L_{\beta,U}$ trivial, is the next statement. It gives us the perturbation $U$ which will satisfy the necessary condition to make the kernel trivial. This next Proposition together with the existence of the isomorphism $P_{\beta,U}$ will be key to proving the result.

\begin{Prop}\label{highenergy:k_null_u}
There exists a smooth Hermitian function $U\in C^\infty(B^4,\mathfrak{u}(n))$ such that $B_U$ is injective on $Ker L_0$.
\end{Prop}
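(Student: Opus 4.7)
The plan is to establish first a Key Lemma: for each $\omega\in\ker L_0$ with $\omega\neq 0$, the linear map $V\mapsto B_V\omega$ from $C^\infty(B^4,\mathfrak{u}(n))$ to $L^2(\Omega^{0,2} B^4\otimes M_n(\mathbb{C}))$ has infinite-dimensional range. Granted this, I would then upgrade to injectivity of $B_U$ on the whole (finite-dimensional) $\ker L_0$ by iterated perturbation of $U$.

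For the Key Lemma, I would first show that $\delbs\omega\neq 0$. Since $\Omega^{0,3} B^4=0$, one has $\delb\omega=0$ automatically, and then $\delbs\omega=0$ combined with $\omega|_{\partial B^4}=0$ would give $(\delb\delbs+\delbs\delb)\omega=\frac{1}{2}\Delta\omega=0$ componentwise with zero Dirichlet data, hence $\omega=0$. Next, decompose $\delbs\omega=\lambda\,I+\tilde v$ into its scalar (trace) and trace-free matrix parts. If $\tilde v\equiv 0$, then $\delbs\omega$ is central in $M_n(\mathbb{C})$, so $[A^{0,1},\delbs\omega]=0$ and consequently $L_0\omega=\delb\delbs\omega=\frac{1}{2}\Delta\omega$; the Dirichlet condition then forces $\omega=0$, contradicting $\omega\neq 0$. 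Hence $\tilde v$ is nonzero on some open set $\Omega'\subset B^4$. Writing $\delbs\omega=v_1\,d\bar z_1+v_2\,d\bar z_2$ on $\Omega'$, since the centralizer of $\mathfrak{u}(n)$ inside $M_n(\mathbb{C})$ is $\mathbb{C}\cdot I$, there exists $X_0\in\mathfrak{u}(n)$ such that $[X_0,v_1]$ and $[X_0,v_2]$ are not simultaneously zero on some open subset of $\Omega'$. Taking the Ansatz $V=f\,X_0$ with $f\in C^\infty(B^4,\mathbb{R})$ real-valued (so that $V$ is skew-Hermitian), a direct bracket computation gives
\[
B_V\omega=\bigl(\partial_{\bar z_1}f\cdot[X_0,v_2]-\partial_{\bar z_2}f\cdot[X_0,v_1]\bigr)\,d\bar z_1\wedge d\bar z_2,
\]
and as $f$ ranges over (say) functions localized in the chosen open subset, the right-hand side sweeps out an infinite-dimensional subspace of $L^2$.

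To upgrade, let $W:=\ker L_0$, finite-dimensional by Proposition \ref{highenergy:fred}, and set $k(U):=\dim\ker(B_U|_W)$. Starting from any $U_0\in C^\infty(B^4,\mathfrak{u}(n))$ with $k(U_0)=k>0$, I would find a direction $V$ and small $t$ with $k(U_0+tV)<k$, and then iterate. If no such $V$ existed, then along every line $t\mapsto U_0+tV$ the rank of $B_{U_0+tV}|_W$ would be locally constant at $t=0$ by upper semicontinuity of $\dim\ker$. Constant rank of a linear family forces smooth dependence of its kernel in the Grassmannian, so for a smooth curve $\omega(t)\in\ker(B_{U_0+tV}|_W)$ with $\omega(0)=\omega_0\neq 0$ in $\ker(B_{U_0}|_W)$, differentiating $B_{U_0+tV}\omega(t)=0$ at $t=0$ yields $B_V\omega_0\in\mathrm{Im}(B_{U_0}|_W)$ for every $V$. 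This confines the range of $V\mapsto B_V\omega_0$ to the finite-dimensional subspace $\mathrm{Im}(B_{U_0}|_W)$, contradicting the Key Lemma. Hence $k$ can always be strictly decreased, and after at most $\dim W$ iterations one obtains $U\in C^\infty(B^4,\mathfrak{u}(n))$ with $B_U$ injective on $\ker L_0$. The main obstacle is the Key Lemma, specifically the careful use of the Kähler identity together with the Dirichlet condition to rule out scalar-valued $\delbs\omega$, and the trace-free centralizer argument producing a useful $X_0$; the reduction to full injectivity is then a standard semicontinuity-and-rank argument in finite dimensions.
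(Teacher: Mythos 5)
Your proof is correct, and both halves of it reach the same destination as the paper by noticeably different roads. For the per-vector statement, the underlying mechanism is identical: if $B_V\omega$ carried no information, $\delbs\omega$ would have to be pointwise central in $M_n(\C)$, so $[A^{0,1},\delbs\omega]=0$, so $L_0\omega=\delb\delbs\omega=\tfrac12\Delta\omega$, and the Dirichlet condition kills $\omega$. The paper packages this through an abstract vanishing lemma for pointwise-acting operators (Proposition \ref{app:vanishing1}) applied to $H(\cdot)=([\cdot,\delbs\omega])^{0,2}$, whereas you make it concrete via the trace/trace-free splitting and the fact that the centralizer of $\mathfrak{u}(n)$ in $M_n(\C)$ is $\C\cdot I$; you also build the skew-Hermitian constraint into the ansatz $V=fX_0$ from the start rather than splitting $U_v$ into Hermitian and anti-Hermitian parts afterwards. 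The real divergence is in the globalization: the paper runs an induction over an orthonormal basis of $\ker L_0$, gluing the individual $U_v$'s with disjointly supported cutoffs, while you prove the stronger per-vector statement that $V\mapsto B_V\omega$ has infinite-dimensional range and then strictly decrease $\dim\ker(B_U|_{\ker L_0})$ by a semicontinuity/constant-rank perturbation along lines $U_0+tV$. Your stronger Key Lemma costs essentially nothing extra (e.g.\ testing against $f=x_1^k$ already shows the range cannot lie in any finite-dimensional subspace once $([X_0,v_1],[X_0,v_2])\not\equiv 0$), and in exchange the finite-dimensional upgrade becomes a clean soft argument that avoids the cutoff bookkeeping. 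Two small points to tighten: since $\omega\in W^{2,2}$ only gives $\delbs\omega\in W^{1,2}$, the trace-free part $\tilde v$ need not be nonzero on an \emph{open} set, only on a set of positive measure — but that suffices both for extracting $X_0$ (pigeonhole over a basis of $\mathfrak{u}(n)$) and for the polynomial test functions establishing infinite-dimensionality; and the existence of the differentiable kernel curve $\omega(t)$ should be justified, e.g.\ via the analyticity of the spectral projection of $M(t)^\ast M(t)$ onto its (constant-dimensional) kernel.
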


\begin{proof}[Proof of Proposition \ref{highenergy:k_null_u}]
Since $Ker L_0$ is finite dimensional, let $\{e_1,\ldots, e_N\}$ be an orthonormal basis of it. \\

Let $v\in Ker L_0$, $v\neq 0$. We show that for each such $v$, we can find $U_v$ such that $B_{U_v}v\neq 0$. Assume by contradiction that $B_Uv= 0$ for all smooth functions $U$ on $B^4$. Define the linear operator $H(\omega) := \left([\omega, \delbs v]\right)^{0,2}:C^\infty(\Omega^1 B^4\otimes M_n(\C))\rightarrow C^\infty(\Omega^2 B^4\otimes M_n(\C))$ which satisfies the fact that $H(\omega(z))=H(\omega)(z)$, where $\omega(z)$ means that each component of $\omega$ is applied to $z$. Moreover, we have that $$0= B_Uv= [\delb U,\delbs v] = H(dU)$$ for all smooth functions $U$ on $B^4$. Applying Proposition \ref{app:vanishing1} to $H$, we obtain that $H=0$. By density of smooth $(0,1)$-forms into $W^{1,2}$ $(0,1)$-forms, it follows in particular that
$$H(A^{0,1}) = [A^{0,1},\delbs v] = 0.$$
Putting this together with the fact that $v\in Ker L_0$, we obtain:
$$0= L_0 v = \delb\delbs v.$$
Since $v = 0$ on $\p B^4$ and $\delb\delbs v = 0$, then $v = 0$ in $B^4$. This is a contradiction because $\norm{v}{W_D^{2,2}} = 1$. Hence, there exists $U_v\in  C^\infty(B^4,M_n(\C))$ so that $B_{U_v} v \neq 0$.\\

Next, we show that such a $U_v$ can be chosen to be Hermitian. Indeed since $U_v\in M_n(\C)$, there exists a decomposition in terms of its Hermitian and anti-Hermitian part: $$U_v = U_1+U_2,$$ where $U_1\in C^\infty(B^4,\mathfrak{u}(n))$ and $U_2\in C^\infty(B^4,i\mathfrak{u}(n))$. Assume that $B_{U_1}v= 0$, otherwise we redefine $U_v := U_1$. Under this assumption, by linearity it then necessarily follows that $$B_{U_2}v\neq 0.$$ If this condition holds, then by multiplying with $i$, $$iB_{U_2}v = B_{iU_2}v\neq 0.$$ Moreover, $iU_2\in C^\infty(B^4,\mathfrak{u}(n))$ and in this case we redefine $U_v:=iU_2$. Hence, there exists 
\be\label{highenergy:lin_uv}U_v\in  C^\infty(B^4,\mathfrak{u}(n)) \text{ so that } B_{U_v}v\neq 0, \text{ for any }v\in Ker L_0, v\neq 0.\ee

\textbf{Claim.} There exists $U$ smooth Hermitian function such that  $B_U$ is injective on $Ker L_0$.\\

We formulate the following inductive hypothesis: 
$$
\mathcal I(k) =\left\{\begin{array}{ll} &\text{there exists } U^k\in C^\infty(B^4,\mathfrak u(n))\text{ supported in } V^k\subsetneq B^4 \text{ such that }\\
& \{B_{U^k}e_j\}_{j=1}^k \text{ is linearly independent},
\end{array}\right.$$ where $k\leq N$
We show by induction that $\mathcal I(N)$ holds from which it follows that $B_{U^N}$ is injective on $Ker L_0$.\\

By \eqref{highenergy:lin_uv}, there exists $U_1$ such that $B_{U_1}e_1\neq 0$. Without loss of generality, by multiplying with a compactly supported function $\rho_1$, we can localise $U_1$ in $V_1\subsetneq B^4$. Hence $\mathcal I(1)$ holds. Assume that for $k<N$, $\mathcal I(k)$ holds. We prove that $\mathcal I(k+1)$ holds as well. \\

If $\{B_{U^k}e_j\}_{j=1}^{k+1}$ is linearly independent, then set $U^{k+1}=U^k$. Otherwise there exists $\lambda_1,\ldots,\lambda_{k+1}$ not all $0$ such that $\sum_{i=1}^{k+1}\lambda_i B_{U^k} e_{i}=0$. Notice that $\lambda_{k+1}\neq 0$.\\

By \eqref{highenergy:lin_uv} there exists $U_{k+1}$ such that $B_{U_{k+1}}\sum_{i=1}^{k+1}\lambda_i e_{i}\neq 0$. We can choose a neighbourhood $V_{k+1}$ and $\tilde V^k\subseteq V^k$ disjoint from $V_{k+1}$ such that $\{B_{U^k}e_j\}_{j=1}^k$ is linearly independent in $\tilde V^k$ and $$B_{\rho_{k+1} U_{k+1}}\sum_{i=1}^{k+1}\lambda_i e_{i}\neq 0\qquad \text{ in } V_{k+1}$$
In particular, we can define functions $\rho_{k+1}$ compactly supported in $V_{k+1}$, $\rho_k$ compactly supported in $\tilde V^k$. Define $U^{k+1} := \rho_k U^k + \rho_{k+1}U_{k+1}$.\\

It remains to show that $\{B_{U^{k+1}}e_j\}_{j=1}^{k+1}$ is linearly independent. Assume there exists $\beta_1, \ldots, \beta_{k+1}$ such that
\be\label{highenergy:lin_us}\sum_{j=1}^{k+1}\beta_j B_{\rho_k U^{k} + \rho_{k+1}U_{k+1}}e_j = \sum_{j=1}^{k+1}\beta_j B_{U^{k+1}}e_j = 0.\ee
In the neighbourhood $\tilde V^k$, we have that
$$\sum_{j=1}^{k+1}\beta_j B_{U^{k}}e_j =0.$$
Then $(\beta_1,\ldots,\beta_{k+1}) = c(\lambda_1,\ldots,\lambda_{k+1})$ for some constant $c$. Hence, in $V_{k+1}$, we have that
$$c\sum_{j=1}^{k+1}\lambda_j B_{U_{k+1}}e_j=0.$$
By the choice of $U_{k+1}$, we obtain that $c=0$. Hence $\beta_1=\ldots=\beta_{k+1} = 0$. To conclude, define $V^{k+1}=\tilde V^k \cup V_{k+1}$. This proves the induction. \\

Hence, we have obtained $U=U^N$ such that $\{B_{U}e_j\}_{j=1}^N$ are linearly independent, where $e_j$ the orthonormal basis of  $Ker L_0$ we have picked initially. It follows that $B_U$ is injective on $Ker L_0$.
\end{proof}

\vspace{4mm}
We are now ready to prove the result of this section. 

\begin{Lm}\label{highenergy:k_null_lin}
There exists a small constant $\beta\in[0,1]$ and $U\in C^\infty(B^4,M_n(\C))$ such that 
$$Ker L_{\beta,U} = \{0\}$$
where 
$$L_{\beta,U}: W_D^{2,2}(\Omega^{0,2} B^4\otimes M_n(\C))\rightarrow L^2(\Omega^{0,2}B^4\otimes M_n(\C)).$$
Hence, $L_{\beta,U}$ is an invertible operator.
\end{Lm}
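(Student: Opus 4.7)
The strategy is to combine the three preceding propositions via analytic perturbation theory of Fredholm families. First, apply Proposition \ref{highenergy:k_null_u} to fix a smooth Hermitian $U$ such that the bracket operator $B_U = [\delb U, \delbs \cdot]$ is injective on $Ker\, L_0$. With $U$ frozen, Propositions \ref{highenergy:no_ev} and \ref{highenergy:p_iso} become available: for all sufficiently small $\beta > 0$, the spectral projector $P_{\beta,U}$ restricts to an isomorphism $P_{\beta,U}\colon G_0 \to G_{\beta,U}$, and the only spectrum of $L_{\beta,U}$ inside a small disk around $0$ consists of the $m=\dim G_0$ eigenvalues $\lambda_U^{(i)}(\beta)$ that collapse to $0$ as $\beta \to 0$. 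Since $L_{\beta,U}$ is Fredholm of index zero (Proposition \ref{highenergy:fred}), invertibility is equivalent to triviality of the kernel, and any element of $Ker\, L_{\beta,U}$ must lie in $G_{\beta,U}$.

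Via the isomorphism $P_{\beta,U}|_{G_0}$, the problem reduces to showing that for some small $\beta>0$ the finite-dimensional analytic family
\[
M(\beta) := (P_{\beta,U}|_{G_0})^{-1} \circ L_{\beta,U} \circ P_{\beta,U}|_{G_0} \in \mathrm{End}(G_0)
\]
is invertible. Differentiating the idempotent relation $P_{\beta,U}^2 = P_{\beta,U}$ at $\beta = 0$ gives $P_1 = P_0 P_1 + P_1 P_0$, which on $G_0$ forces $P_0 P_1|_{G_0}=0$, and a short computation then yields the expansion
\[
M(\beta) = L_0|_{G_0} + \beta\, P_0 B_U|_{G_0} + O(\beta^2).
\]
Since $\det M(\beta)$ is analytic near $0$, either it has only isolated zeros there --- in which case one simply picks $\beta \in (0,\beta_0]$ avoiding them and concludes $Ker\, L_{\beta,U}=\{0\}$ --- or it vanishes identically, and this latter alternative must be ruled out.

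To exclude $\det M \equiv 0$, argue by contradiction. If this held, Kato's analytic perturbation theory would furnish a (possibly Puiseux) analytic branch $\omega(\beta) \in Ker\, L_{\beta,U}$ with $\omega(0)=\omega_0$ nontrivial; the Fredholm elliptic estimate $\|\omega\|_{W^{2,2}} \le C(\|L_0\omega\|_{L^2} + \|\omega\|_{L^2})$ combined with Rellich compactness guarantees $\omega_0 \in Ker\, L_0 \setminus \{0\}$. Expanding $\omega(\beta)=\omega_0+\beta\omega_1+\cdots$ in $L_{\beta,U}\omega(\beta)=0$ order by order produces the chain $L_0\omega_{k+1}=-B_U\omega_k$; applying $P_0$ successively and using the nilpotency of $L_0|_{G_0}$ (together with the commutator $[L_0,P_0]=0$ and the $H_0$-valued nature of $P_1$) propagates through the Jordan structure and ultimately forces $B_U\omega_0=0$, contradicting the injectivity supplied by Proposition \ref{highenergy:k_null_u}. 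This last iterative step is where the real difficulty lies: the zero-th and first-order relations alone only place $B_U\omega_0$ in $Ran\,L_0$, so one must push the expansion to higher orders, carefully tracking how the nilpotent blocks of $L_0|_{G_0}$ interact with the successive corrections $\omega_k$, in order to fully exploit the injectivity on all of $Ker\, L_0$ and close the contradiction.
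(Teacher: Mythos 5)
Your overall strategy coincides with the paper's: fix $U$ by Proposition \ref{highenergy:k_null_u}, use the spectral projections $P_{\beta,U}$ and the isomorphism of Proposition \ref{highenergy:p_iso} to localise any putative kernel inside $G_{\beta,U}$, and derive a contradiction with the injectivity of $B_U$ on $Ker\,L_0$. The reduction to the finite-dimensional analytic family $M(\beta)$ and the dichotomy on $\det M(\beta)$ is a clean repackaging of the paper's ``assume $Ker\,L_{\beta,U}\neq\{0\}$ for all small $\beta$'' argument, and your expansion $M(\beta)=L_0|_{G_0}+\beta\,P_0B_U|_{G_0}+O(\beta^2)$ is correct.

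The gap is in the final, decisive step, and it is not a matter of omitted routine detail. Your order-by-order expansion of $L_{\beta,U}\omega(\beta)=0$ gives $\omega_0\in Ker\,L_0$ and $L_0\omega_{k+1}=-B_U\omega_k$, hence only $B_U\omega_0\in Ran\,L_0$, i.e. $P_0B_U\omega_0=0$; your own formula for $M(\beta)$ says the same thing, since the first-order reduced operator is $P_0B_U|_{G_0}$ and not $B_U|_{G_0}$. You defer to ``higher orders,'' but each subsequent order only yields the condition $B_U\omega_k\in Ran\,L_0$ for the next correction and never upgrades $P_0B_U\omega_0=0$ to $B_U\omega_0=0$. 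A two-dimensional model shows the obstruction is real: with $L_0=\mathrm{diag}(0,1)$ and $B_U=e_2\otimes e_1^{\ast}$ the operator $B_U$ is injective on $Ker\,L_0$, yet $L_0+\beta B_U$ annihilates $(1,-\beta)$ for every $\beta$, precisely because $P_0B_U$ vanishes on $Ker\,L_0$. So injectivity of $B_U$ on $Ker\,L_0$ --- which is all Proposition \ref{highenergy:k_null_u} supplies --- cannot close the contradiction along the route you describe; what your argument actually requires is injectivity of the reduced operator $P_0B_U$ on $G_0$. The paper closes this step by a different mechanism, namely the explicit contour-integral manipulation in \eqref{highenergy:k_null_lin_eqn}--\eqref{highenergy:k_null_lin_eqn2}, which combines the two occurrences of $B_U$ in the first-order term so as to conclude both $P_0B_Ue_j^{\beta,U}=0$ and $B_Ue_j^{\beta,U}=0$; your proposal contains no analogue of that identity, and without it the contradiction does not terminate. (A minor further point: if the kernel branch is genuinely Puiseux, the expansion of $\omega(\beta)$ in integer powers of $\beta$ is not available as written.)
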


\begin{proof}[Proof of Lemma \ref{highenergy:k_null_lin}]
The case when $\beta=0$ and $Ker L_0 = \{0\}$ is trivial. We focus on the case when $Ker L_0 \neq \{0\}$. \\

We assume the worst case scenario $dim G_0 =\infty$. By Proposition \ref{highenergy:k_null_u}, there exists $U\in C^\infty(B^4,\mathfrak{u}(n))$ so that $B_U$ is injective on $Ker L_0$. Furthermore it follows from Proposition \ref{highenergy:p_iso} that there exists an isomorphism $P_{\beta,U}$ between $G_0$ and $G_{\beta,U}$ for all $\beta<\beta_0$ for some $\beta_0>0$. We want to show the existence of $\beta$ so that $Ker L_{\beta,U} = \{0\}$.\\

Assume that for all $\beta<\beta_0$ we have that  $Ker L_{\beta,U} \neq \{0\}$. We aim at showing by contradiction that for some $\beta<\beta_0$ we will get that $Ker L_{\beta,U} = \{0\}$. Thus, let the space $$S_{\beta,U} = P_{\beta,U}^{-1}(Ker L_{\beta,U}).$$
Since $Ker L_{\beta, U}$ is finite dimensional and its dimension is bounded by the dimension of $Ker L_0$ by Proposition \ref{highenergy:no_ev}. Because $P_{\beta, U}$ is an isomorphism, then $S_{\beta, U}$ is a finite dimensional space in $G_0$ of dimension at most $dimKer L_0$. Moreover, $Ker L_0\cap S_{\beta, U}$ is also finite dimensional and there exists an orthonormal basis of this space. We can complete it, to obtain an orthonormal basis $\{e_j^{\beta,U}\}_{j=1}^N$ on $S_{\beta, U}$, where $N=dim Ker L_{\beta, U} = dim S_{\beta, U}$. Fix $1\leq j\leq N$ and $\e>0$ small enough such that $\lambda\in \rho(L_0)$ for all $|\lambda|=\e$. We compute the following:
\begin{align}\label{high:lin_longcomp}
0 =\ & L_{\beta,U}P_{\beta,U}e_j^{\beta_U} = -\frac{1}{2\pi i} L_{\beta,U}\oint_{|\lambda|=\epsilon} ( L_{\beta,U}  - \lambda)^{-1}e_j^{\beta,U} \,d\lambda\nonumber\\
 =\ & -\frac{1}{2\pi i} L_{\beta,U}\oint_{|\lambda|=\epsilon} ( L_0 + \beta B_U  - \lambda)^{-1}e_j^{\beta,U} \,d\lambda\nonumber\\
=\ & -\frac{1}{2\pi i} (L_0 + \beta B_U) \oint_{|\lambda|=\epsilon} ( L_0 + \beta B_U  - \lambda)^{-1}e_j^{\beta,U} \,d\lambda\nonumber\\
 =\ &-\frac{1}{2\pi i} (L_0 + \beta B_U) \oint_{|\lambda|=\epsilon} (( L_0  - \lambda)^{-1}  - \sum\limits_{i=1}^\infty \beta^i (L_0 - \lambda)^{-1} (B_U (L_0-\lambda)^{-1})^i)e_j^{\beta,U} \,d\lambda\nonumber\\
=\ &-\frac{1}{2\pi i} L_0  \oint_{|\lambda|=\epsilon} ( L_0  - \lambda)^{-1} e_j^{\beta,U} \,d\lambda\nonumber \\
 & + \beta \frac{1}{2\pi i} L_0  \oint_{|\lambda|=\epsilon}   \sum\limits_{i=1}^\infty \beta^{i-1} (L_0 - \lambda)^{-1} (B_U (L_0-\lambda)^{-1})^ie_j^{\beta,U}  \,d\lambda\nonumber\\
 & - \beta \frac{1}{2\pi i} B_U  \oint_{|\lambda|=\epsilon} ( L_0  - \lambda)^{-1} e_j^{\beta,U} \,d\lambda\nonumber\\
 & + \frac{1}{2\pi i} \beta B_U \oint_{|\lambda|=\epsilon} \sum\limits_{i=1}^\infty \beta^i (L_0 - \lambda)^{-1} (B_U (L_0-\lambda)^{-1})^i e_j^{\beta,U} \,d\lambda\nonumber\\
=\ & L_0e_j^{\beta,U} -\beta \frac{1}{2\pi i} \left(B_Ue_j^{\beta,U} - L_0  \oint_{|\lambda|=\epsilon} ( L_0  - \lambda)^{-1}B_U ( L_0  - \lambda)^{-1}e_j^{\beta,U} \,d\lambda\right) \\
 &+ O (\beta^2)\nonumber
\end{align}

The last equality holds because $e_j^{\beta,U}\in G_0$ and we have $$P_0e_j^{\beta,U} = -\frac{1}{2\pi i} \oint_{|\lambda|=\epsilon} ( L_0  - \lambda)^{-1} e_j^{\beta,U} \,d\lambda = e_j^{\beta,U}.$$
We further discuss two cases:\\

\textbf{Case 1.} $e_j^{\beta,U}\notin Ker L_0$\\

Because $e_j^{\beta,U}$ is an element of the orthonormal basis, then $e_j^{\beta,U}\in (Ker L_0)^\perp$. Moreover, since $L_0$ is Fredholm, we have
$$1=\norm{e_j^{\beta,U}}{W_D^{2,2}} \leq C\norm{L_0e_j^{\beta,U}}{L^2}  \leq O(\beta)\norm{e_j^{\beta,U}}{W_D^{2,2}} = O(\beta),$$
where $C>0$ is a constant independent of $\beta$. Since $\beta<\beta_0$ is small, we get a contradiction. \\

\textbf{Case 2.} $e_j^{\beta,U}\in Ker L_0$ \\

Because the equation \eqref{high:lin_longcomp} vanishes for any $\beta<\beta_0$ and $Ker L_0\neq \{0\}$ it follows that
\begin{equation}\label{highenergy:k_null_lin_eqn}
B_Ue_j^{\beta,U}  =  L_0  \oint_{|\lambda|=\epsilon} ( L_0  - \lambda)^{-1}B_U ( L_0  - \lambda)^{-1} e_j^{\beta,U} \,d\lambda.
\end{equation}
Using the invertibility of the operator $L_0-\lambda$, where $\lambda\in \rho(L_0)$ then $( L_0  - \lambda)^{-1} (L_0-\lambda) = I$. Thus, by expanding we obtain that
$$(L_0-\lambda)^{-1}L_0 - I = (L_0-\lambda)^{-1}\lambda.$$
$(( L_0  - \lambda)^{-1} \frac{L_0}{\lambda }- \frac I\lambda)e_j^{\beta,U} = (L_0-\lambda)^{-1}e_j^{\beta,U}$. Since $e_j^{\beta,U}\in Ker L_0$, then $(L_0-\lambda)^{-1}e_j^{\beta,U} =  -\frac 1\lambda e_j^{\beta,U}$. We obtain the following:
\begin{equation}\label{highenergy:k_null_lin_eqn2}\begin{array}{lll}
\text{(\ref{highenergy:k_null_lin_eqn})} &= & -L_0  \oint_{|\lambda|=\epsilon} ( L_0  - \lambda)^{-1} \frac{1}{\lambda} B_U e_j^{\beta,U} \,d\lambda
\\
 &=& - \oint_{|\lambda|=\epsilon}  \left(( L_0  - \lambda)^{-1}  + \frac{I}{\lambda}\right) B_Ue_j^{\beta,U}\,d\lambda\\
 & =& - 2\pi i B_U e_j^{\beta,U} - \oint_{|\lambda|=\epsilon} ( L_0  - \lambda)^{-1} B_Ue_j^{\beta,U} \,d\lambda
 \end{array}
 \end{equation}
 
Putting the above equalities {\ref{highenergy:k_null_lin_eqn}} and \ref{highenergy:k_null_lin_eqn2} together, we have that
 $$(1+2\pi i) B_Ue_j^{\beta,U} = -\oint_{|\lambda|=\epsilon} ( L_0  - \lambda)^{-1} B_Ue_j^{\beta,U} \,d\lambda = 2\pi i P_0 B_Ue_j^{\beta,U}$$
We apply $P_0$ on both sides of the equation to get $(1+2\pi i) P_0 B_Ue_j^{\beta,U} = 2\pi i P_0^2 B_Ue_j^{\beta,U}$. Moreover, since $P_0$ is a projection, and thus satisfies $P_0^2=P_0$, our computations then give us the following equality
$$(1+2\pi i) P_0 B_Ue_j^{\beta,U} = 2\pi i P_0 B_Ue_j^{\beta,U}.$$
This can be true only if $P_0 B_Ue_j^{\beta,U} = 0$. Together with $$ (1+2\pi i) B_Ue_j^{\beta,U} = 2\pi i P_0 B_Ue_j^{\beta,U},$$ it implies that $B_Ue_j^{\beta,U} = 0$. This is a contradiction by the choice of our initial $U$. \\

We conclude that for some $\beta<\beta_0$ we have $Ker L_{\beta,U} = \{0\}$.   
\end{proof}

\subsection{Gauge perturbation}

After having acquainted ourselves with the linear perturbation in the section before, we are now in a position to generalise the previous results. First consider operators of the form
$$T_{(A^{0,1})^{g(\beta U)}} = \delb\delbs \cdot + [(A^{0,1})^{g(\beta U)}, \delbs \cdot]$$
where  $$T_{(A^{0,1})^{g(\beta U)}}: W_D^{2,2}(\Omega^{0,2} B^4\otimes M_n(\C))\rightarrow L^2(\Omega^{0,2} B^4\otimes M_n(\C))$$ and $g(\beta U) := \exp (\beta U) \in C^\infty(B^4,U(n))$. For the following proofs we will denote $T_{A^{0,1}}$ and $T_{{A^{0,1}}^{g(\beta U)}}$ by $T_0$ and $T_{\beta,U}$ respectively. We can remark the fact that $T_0 = L_0$. \\

Similar to the linear case, we can deduce that $ T_{\beta,U}$ has a discrete spectrum for any $U$ and $\beta$, with $\beta<1$ small (so that $\exp$ is defined). Moreover, the family of operators have discrete spectrum with no accumulation points and we can express them as
$$\begin{array}{lll}
T_{\beta,U} &=&  \delb\delbs \cdot + [(A^{0,1})^{g(\beta U)}, \delbs \cdot] \\
&=& \delb\delbs \cdot + \sum\limits_{n=0}^\infty [\beta^n A_n,\delbs \cdot]\\
& = & \delb\delbs \cdot  + [A^{0,1},\delbs\cdot] + \sum\limits_{n=1}^\infty \beta^n[ A_n,\delbs \cdot]
\end{array}
$$
where $A_1 = A^{0,1}$ and $A_n$ are $(0,1)$-forms. This shows that the resolvent is an analytic function of $\beta$. Thus, the operator is analytic in the sense of Kato (see \cite{reed1978iv}). In a completely analogous way to Proposition \ref{highenergy:no_ev} we have the existence of $m$ not necessarily distinct eigenvalues corresponding to $T_{\beta, U}$, namely $\lambda_U^{(1)}(\beta),\ldots,\lambda_U^{(m)}(\beta)$. There exists $\e>0$ so that $|\lambda_U^{(i)}(\beta)|<\e$ for all $\beta\geq 0$  and $1\leq i\leq m$. In this section we denote $P_{\beta,U}$ by
$$P_{\beta,U} = -\frac{1}{2\pi i} \oint_{|\lambda|=\e} ( T_{\beta,U} -\lambda)^{-1} \,d\lambda$$
and it is an analytic function of $\beta$ on $|\lambda|=\e$. Similarly, we define $P_0$ for the operator $T_0$.\\

We denote the generalised eigenspace corresponding to $0$ for $T_0$ by $$G_0 = \bigcup\limits_{k=1}^\infty \left\{v\in W_D^{2,2}(\Omega^{0,2} B^4\otimes M_n(\C)) | T_0^k v = 0\right\}$$
and the range of $P_{\beta, U}$ by
$$G_{\beta,U} =  \bigcup\limits_{k=1}^\infty\bigoplus\limits_{i=1}^m \left\{v\in W_D^{2,2}(\Omega^{0,2} B^4\otimes M_n(\C)) | (T_{\beta,U} - \lambda_U^{(i)}(\beta))^k v = 0\right\}$$
respectively. \\

Since $g(\beta U) = \exp(\beta U)$, we then have the existence of an operator $B_{\beta, U}$ so that $$\beta B_{\beta,U}\cdot  = [(A^{0,1})^{g(\beta U)} - A^{0,1},\delbs\cdot]$$ and $B_{\beta, U}$ is analytic in $\beta$ (in particular it does not have any poles). Since $U$ is a smooth Hermitian function we can obtain
$$B_{\beta,U}\cdot = [\delb U + [A^{0,1},U],\delbs\cdot]+O(\beta),$$
by expanding $B_{\beta,U}$ in $\beta$. Define $$B_{0,U}\cdot:= [\delb U + [A^{0,1},U],\delbs\cdot]$$ as a map from $W_D^{2,2}(\Omega^{0,2}B^4\otimes M_n(\C))$ to $L^2(\Omega^{0,2} B^4\otimes M_n(\C))$. Thus, by again using the smoothness of $U$, we can expand $T_{\beta,U}$ in $U$ and obtain
$$T_{\beta,U} = T_0 +\beta B_{\beta,U} = T_0 + \beta B_{0,U} + O(\beta^2).$$

In a completely analogous way we also have that $P_{\beta,U}$ is an isomorphism between $G_0$ and $G_{\beta,U}$. Hence, we can assume this and prove the reciprocal version of Lemma \ref{highenergy:k_null_lin}. Firstly, we prove an analogue of Proposition \ref{highenergy:k_null_u}. The proof will follow very similar steps as before.

\begin{Prop}\label{highenergy:k_null_nu}There exists a smooth Hermitian function $U\in C^\infty(B^4,\mathfrak{u}(n))$ such that $B_{0,U}$ is injective on $Ker T_0$.
\end{Prop}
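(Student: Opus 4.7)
The proof proceeds by the same three-step strategy as Proposition \ref{highenergy:k_null_u}, with the role of $[\delb U,\delbs v]$ now taken by $B_{0,U}v = [\delb U + [A^{0,1}, U],\delbs v]$. First I would establish the single-vector version: for each nonzero $v \in \ker T_0$, there exists $U_v \in C^\infty(B^4, M_n(\C))$ with $B_{0,U_v}v \neq 0$. Assume for contradiction that $B_{0,U}v = 0$ for every smooth $U$, and introduce the pointwise-linear map $H(\omega) := ([\omega,\delbs v])^{0,2}$. The hypothesis then reads $H(\delb U + [A^{0,1}, U]) = 0$ in $L^2$ for every smooth $U$.

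To extract pointwise information despite $A^{0,1}$ being only $W^{1,2}$, I would test against the countable family of affine matrix-valued polynomials $U(y) = M + (\ov{y}_1 - \ov{x}_1)T_1 + (\ov{y}_2 - \ov{x}_2)T_2$ with $M,T_1,T_2$ ranging over a countable dense subset of $M_n(\C)$ and $x$ over rational points. For each such polynomial the identity holds a.e., and intersecting the corresponding full-measure sets yields a set $E$ of full measure on which, by density in $(M,\eta_0)$, one has $[\eta_0 + [A^{0,1}(x), M],\delbs v(x)]^{0,2} = 0$ for every $M \in M_n(\C)$ and every matrix-valued $(0,1)$-covector $\eta_0$ at $x$. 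Taking $M = 0$ forces $H(\eta_0)(x) = 0$ for all $\eta_0$, whence $H \equiv 0$ a.e. Evaluating at $\omega = A^{0,1}$ gives $[A^{0,1},\delbs v]^{0,2} = 0$ in $L^2$, and combining this with $T_0 v = 0$ yields $\delb\delbs v = 0$; since $v \in W_D^{2,2}(\Omega^{0,2}B^4\otimes M_n(\C))$ and $\delb\delbs$ reduces to $\tfrac12 \Delta$ on the scalar component, uniqueness of the Dirichlet problem forces $v = 0$, contradicting $\norm{v}{W_D^{2,2}} = 1$.

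Next, to guarantee $U_v \in \mathfrak{u}(n)$, I would run the same Hermitian/anti-Hermitian decomposition used in Proposition \ref{highenergy:k_null_u}: write $U_v = U_1 + iU_2$ with $U_1,U_2 \in \mathfrak u(n)$ and use the complex linearity $B_{0,iU}v = i B_{0,U}v$ to conclude that at least one of $B_{0,U_1}v,B_{0,U_2}v$ is nonzero, so we may assume $U_v \in C^\infty(B^4,\mathfrak u(n))$. Finally, to upgrade the single-vector separation to joint injectivity on an orthonormal basis $\{e_1,\ldots,e_N\}$ of $\ker T_0$, I would copy verbatim the inductive cutoff construction of Proposition \ref{highenergy:k_null_u}: at each step the existing perturbation $U^k$ and the new perturbation $U_{k+1}$ are localized on mutually disjoint open supports via cutoffs, preventing the span of $\{B_{0,U^k}e_j\}_{j=1}^k$ from collapsing when $U_{k+1}$ is incorporated.

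The main obstacle is Step 1: passing from ``the identity holds in $L^2$ for each fixed smooth $U$'' to ``the pointwise bilinear form vanishes at a.e.\ $x$ for every $(M,\eta_0)$ simultaneously'' requires the countable-dense-family argument above, which is where the proof genuinely departs from the linear case (in Proposition \ref{highenergy:k_null_u} it is cleanly handled by the abstract Proposition \ref{app:vanishing1} applied to $H(dU)$, whereas here the presence of $[A^{0,1},U]$ prevents a direct application). Once this step is in place, everything else is a direct transplant of the linear-perturbation proof.
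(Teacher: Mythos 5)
Your proposal is correct and follows the same overall architecture as the paper (single-vector separation by contradiction, reduction to a Hermitian $U_v$, then the inductive cutoff construction); the one place where you genuinely depart is the extraction of pointwise vanishing in Step 1. The paper handles this with the abstract Proposition \ref{app:vanishing2}, which is stated precisely for the combination $H_0(U)+H_1(dU)=0$ that you flag as the obstacle: since $H_0(\omega):=([[A^{0,1},\omega],\delbs v])^{0,2}$ acts pointwise, testing against the constant functions $V_x:=U(x)$ (for which $dV_x=0$) kills $H_0$ outright, leaving $H_1\circ d=0$, i.e.\ $[\delb U,\delbs v]=0$ for all smooth $U$, which contradicts the separation statement \eqref{highenergy:lin_uv} already proved in the linear case. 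Your countable family of affine matrix-valued test functions, combined with intersecting full-measure sets and using bilinearity/continuity in $(M,\eta_0)$ at a.e.\ point, proves the same pointwise vanishing by hand and then re-derives the linear-case endgame ($[A^{0,1},\delbs v]=0$, hence $\delb\delbs v=0$, hence $v=0$ by the Dirichlet condition) instead of citing it. Both routes are valid; the paper's is shorter because the splitting into a pointwise operator $H_0$ and a derivative operator $H_1$ decouples the two unknowns $(U(x),\delb U(x))$ without any density argument, whereas yours is self-contained and makes explicit why the a.e.\ identities can be upgraded to a simultaneous statement for all $(M,\eta_0)$. The remaining steps (Hermitian decomposition via $B_{0,iU}v=iB_{0,U}v$ and the disjoint-support induction) coincide with the paper's.
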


\begin{Rm}
It is important to remark that this proof will give us a function $U$ that belongs to the Lie algebra $\mathfrak{u}(n)$. This, in turn, will yield a perturbation by a gauge that is unitary, since $g(\beta U)=\exp(\beta U)$. It is crucial to find a unitary gauge, because it will preserve our Hermitian vector bundle structure later on.
\end{Rm}

\begin{proof}[Proof of Proposition \ref{highenergy:k_null_nu}]
Since $Ker T_0$ is finite dimensional, let $\{e_1,\ldots, e_N\}$ be an orthonormal basis of it.\\

Let $v\in Ker T_0$, $v\neq 0$. We show that for each such $v$, we can find $U_v$ such that $B_{0,U_v}v\neq 0$. Assume by contradiction that $B_{0,U}v= 0$ for all smooth functions $U$ on $B^4$. Define the linear operators $$H_0(\omega) := \left([[A^{0,1},\omega],\delbs v]\right)^{0,2}:C^\infty(B^4, M_n(\C))\rightarrow C^\infty(\Omega^2 B^4\otimes M_n(\C))$$ and
$$H_1(\omega) := \left([\omega,\delbs v]\right)^{0,2}:C^\infty(\Omega^1 B^4\otimes M_n(\C))\rightarrow C^\infty(\Omega^2 B^4\otimes M_n(\C))$$ which satisfies the fact that $H_0(\omega(z))=H_0(\omega)(z)$. Moreover, we have that $$0= B_{0,U}v= [\delb U + [A^{0,1},U],\delbs v] = H_1(dU) + H_0(U)$$ for all smooth functions $U$ on $B^4$. Applying Proposition \ref{app:vanishing2} to $H_1$ and $H_0$, we obtain that $H_1\circ d=0$ and $H_0=0$. In particular, we have obtained that for all $U$ smooth functions on $B^4$, $$H_1(dU) = [\delb U,\delbs v] = 0.$$ This is a contradiction by Proposition \ref{highenergy:k_null_lin}. Hence, there exists $U_v\in  C^\infty(B^4,M_n(\C))$ so that $B_{0,U_v} v \neq 0$.\\

Next, we show that such a $U_v$ can be chosen to be Hermitian. Indeed since $U_v\in M_n(\C)$, there exists a decomposition in terms of its Hermitian and anti-Hermitian part: $$U_v = U_1+U_2,$$ where $U_1\in C^\infty(B^4,\mathfrak{u}(n))$ and $U_2\in C^\infty(B^4,i\mathfrak{u}(n))$. Assume that $B_{0,U_1}v= 0$, otherwise we redefine $U_v := U_1$. Under this assumption, by linearity it then necessarily follows that $$B_{0,U_2}v\neq 0.$$ If this condition holds, then by multiplying with $i$, $$iB_{0,U_2}v = B_{0,iU_2}v\neq 0.$$ Moreover, $iU_2\in C^\infty(B^4,\mathfrak{u}(n))$ and in this case we redefine $U_v:=iU_2$. Hence, there exists 
\be\label{highenergy:nlin_uv}U_v\in  C^\infty(B^4,\mathfrak{u}(n)) \text{ so that } B_{0,U_v}v\neq 0, \text{ for any }v\in Ker T_0, v\neq 0.\ee

\textbf{Claim.} There exists $U$ smooth Hermitian function such that  $B_{0,U}$ is injective on $Ker T_0$.\\

We formulate the following inductive hypothesis: 
$$
\mathcal I(k) =\left\{\begin{array}{ll} &\text{there exists } U^k\in C^\infty(B^4,\mathfrak u(n))\text{ supported in } V^k\subsetneq B^4 \text{ such that }\\
& \{B_{0,U^k}e_j\}_{j=1}^k \text{ is linearly independent},
\end{array}\right.$$ where $k\leq N$
We show by induction that $\mathcal I(N)$ holds from which it follows that $B_{0,U^N}$ is injective on $Ker T_0$.\\

By \eqref{highenergy:nlin_uv}, there exists $U_1$ such that $B_{0,U_1}e_1\neq 0$. Without loss of generality, by multiplying with a compactly supported $\rho_1$, we can localise $U_1$ in a neighbourhood $V_1\subsetneq B^4$. Hence $\mathcal I(1)$ holds. Assume that for $k<N$, $\mathcal I(k)$ holds. We prove that $\mathcal I(k+1)$ holds as well. \\

If $\{B_{0,U^k}e_j\}_{j=1}^{k+1}$ is linearly independent, then set $U^{k+1}=U^k$. Otherwise there exists $\lambda_1,\ldots,\lambda_{k+1}$ not all $0$ such that $\sum_{i=1}^{k+1}\lambda_i B_{0,U^k} e_{i}=0$. Notice that $\lambda_{k+1}\neq 0$.\\

By \eqref{highenergy:lin_uv} there exists $U_{k+1}$ such that $B_{0,U_{k+1}}\sum_{i=1}^{k+1}\lambda_i e_{i}\neq 0$. We can choose a neighbourhood $V_{k+1}$ and $\tilde V^k\subseteq V^k$ disjoint from $V_{k+1}$ such that $\{B_{0,U^k}e_j\}_{j=1}^k$ is linearly independent in $\tilde V^k$ and $$B_{0,\rho_{k+1} U_{k+1}}\sum_{i=1}^{k+1}\lambda_i e_{i}\neq 0\qquad \text{ in } V_{k+1}$$
In particular, we can define functions $\rho_{k+1}$ compactly supported in $V_{k+1}$, $\rho_k$ compactly supported in $\tilde V^k$. Define $U^{k+1} := \rho_k U^k + \rho_{k+1}U_{k+1}$.\\

It remains to show that $\{B_{0,U^{k+1}}e_j\}_{j=1}^{k+1}$ is linearly independent. Assume there exists $\beta_1, \ldots, \beta_{k+1}$ such that
\be\label{highenergy:nlin_us}\sum_{j=1}^{k+1}\beta_j B_{0,\rho_k U^{k} + \rho_{k+1}U_{k+1}}e_j = \sum_{j=1}^{k+1}\beta_j B_{0,U^{k+1}}e_j = 0.\ee
In the neighbourhood $\tilde V^k$, we have that
$$\sum_{j=1}^{k+1}\beta_j B_{0,U^{k}}e_j =0.$$
Then $(\beta_1,\ldots,\beta_{k+1}) = c(\lambda_1,\ldots,\lambda_{k+1})$ for some constant $c$. Hence, in $V_{k+1}$, we have that
$$c\sum_{j=1}^{k+1}\lambda_j B_{0,U_{k+1}}e_j=0.$$
By the choice of $U_{k+1}$, we obtain that $c=0$. Hence $\beta_1=\ldots=\beta_{k+1} = 0$. To conclude, define $V^{k+1}=\tilde V^k \cup V_{k+1}$. This proves the induction. \\

Hence, we have obtained $U=U^N$ such that $\{B_{0,U}e_j\}_{j=1}^N$ are linearly independent, where $e_j$ the orthonormal basis of  $Ker T_0$ we have picked initially. It follows that $B_{0,U}$ is injective on $Ker T_0$.
\end{proof}
\vspace{4mm}

The following Lemma proves our perturbation result. 

\begin{Lm}{\label{highenergy:k_null_nlin}}
There exists a small constant $\beta\in[0,1]$ and $U\in C^\infty(B^4,\mathfrak{u}(n))$ such that 
$$Ker T_{\beta,U} = \{0\}$$
where 
$$T_{\beta, U} : W_D^{2,2}(\Omega^{0,2} B^4\otimes M_n(\C))\rightarrow L^2(\Omega^{0,2} B^4\otimes M_n(\C)).$$
Hence, $T_{\beta,U}$ is an invertible operator.
\end{Lm}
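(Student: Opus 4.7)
The plan is to argue exactly along the lines of Lemma \ref{highenergy:k_null_lin}, using the expansion $T_{\beta,U} = T_0 + \beta B_{0,U} + O(\beta^2)$ that was established just before the statement and replacing the role of the linear bracket $B_U$ by the gauge-derivative $B_{0,U} = [\delb U + [A^{0,1},U],\delbs\cdot]$. By Proposition \ref{highenergy:k_null_nu} there exists $U\in C^\infty(B^4,\mathfrak u(n))$ such that $B_{0,U}$ is injective on $Ker\, T_0$, and since $T_{\beta,U}$ is an analytic family in $\beta$ the analogues of Propositions \ref{highenergy:no_ev} and \ref{highenergy:p_iso} furnish $\beta_0>0$ and $\varepsilon>0$ such that for $\beta<\beta_0$ the spectral projector
\[
P_{\beta,U} \;=\; -\frac{1}{2\pi i}\oint_{|\lambda|=\varepsilon}(T_{\beta,U}-\lambda)^{-1}\,d\lambda
\]
is an isomorphism from $G_0$ onto $G_{\beta,U}$.

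Suppose by contradiction that $Ker\, T_{\beta,U}\neq\{0\}$ for every $\beta<\beta_0$. Setting $S_{\beta,U}:=P_{\beta,U}^{-1}(Ker\, T_{\beta,U})$, the space $S_{\beta,U}$ is finite dimensional of dimension at most $\dim Ker\, T_0$. Choose an orthonormal basis $\{e_j^{\beta,U}\}_{j=1}^N$ of $S_{\beta,U}$ obtained by extending an orthonormal basis of $Ker\, T_0\cap S_{\beta,U}$. Exactly as in \eqref{high:lin_longcomp}, expand
\[
0 \;=\; T_{\beta,U}P_{\beta,U}e_j^{\beta,U} \;=\; T_0 e_j^{\beta,U} \;-\; \beta\,\frac{1}{2\pi i}\Bigl(B_{0,U}e_j^{\beta,U} - T_0\oint_{|\lambda|=\varepsilon}(T_0-\lambda)^{-1}B_{0,U}(T_0-\lambda)^{-1}e_j^{\beta,U}\,d\lambda\Bigr) \;+\; O(\beta^2),
\]
using the Neumann series expansion of $(T_{\beta,U}-\lambda)^{-1}$ around $(T_0-\lambda)^{-1}$ and the fact that $P_0 e_j^{\beta,U}=e_j^{\beta,U}$ whenever $e_j^{\beta,U}\in G_0$.

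If some $e_j^{\beta,U}\notin Ker\, T_0$, then $e_j^{\beta,U}\in (Ker\, T_0)^\perp$ and the Fredholm estimate for $T_0$ gives $1=\|e_j^{\beta,U}\|_{W^{2,2}_D}\leq C\|T_0 e_j^{\beta,U}\|_{L^2}=O(\beta)$, which is a contradiction for $\beta$ small. Hence $e_j^{\beta,U}\in Ker\, T_0$ for every $j$, and dividing the expansion by $\beta$ and letting $\beta\to 0$ yields
\[
B_{0,U}e_j^{\beta,U} \;=\; T_0\oint_{|\lambda|=\varepsilon}(T_0-\lambda)^{-1}B_{0,U}(T_0-\lambda)^{-1}e_j^{\beta,U}\,d\lambda.
\]
Using the identity $(T_0-\lambda)^{-1}T_0=I+\lambda(T_0-\lambda)^{-1}$ together with $(T_0-\lambda)^{-1}e_j^{\beta,U}=-\lambda^{-1}e_j^{\beta,U}$, the right-hand side simplifies as in \eqref{highenergy:k_null_lin_eqn2} and one obtains $(1+2\pi i)B_{0,U}e_j^{\beta,U}=2\pi i\,P_0 B_{0,U}e_j^{\beta,U}$. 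Applying $P_0$ to both sides and using $P_0^2=P_0$ forces $P_0B_{0,U}e_j^{\beta,U}=0$, whence $B_{0,U}e_j^{\beta,U}=0$, contradicting the injectivity of $B_{0,U}$ on $Ker\, T_0$ secured by Proposition \ref{highenergy:k_null_nu}. Therefore $Ker\, T_{\beta,U}=\{0\}$ for some $\beta<\beta_0$, and since $T_{\beta,U}$ is Fredholm of index zero (by the same compact-perturbation argument as in Proposition \ref{highenergy:fred}) this forces invertibility. The main subtlety, as in the linear case, is ensuring that the contour integral manipulations are justified: one must check that $\varepsilon$ can be chosen uniformly so that $|\lambda|=\varepsilon$ lies in $\rho(T_0)\cap\rho(T_{\beta,U})$ for all $\beta$ small enough, which is exactly what Proposition \ref{highenergy:no_ev} (adapted) guarantees.
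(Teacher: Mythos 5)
Your proposal is correct and follows essentially the same route as the paper: you invoke Proposition \ref{highenergy:k_null_nu} for the injectivity of $B_{0,U}$ on $Ker\,T_0$, use the analyticity in $\beta$ and the spectral projector $P_{\beta,U}$ (via the analogues of Propositions \ref{highenergy:no_ev} and \ref{highenergy:p_iso}), and run the same two-case contradiction argument on an orthonormal basis of $S_{\beta,U}$, reducing Case 2 to the contour-integral computation of Lemma \ref{highenergy:k_null_lin}. No substantive differences from the paper's proof.
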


\begin{proof}[Proof of Lemma \ref{highenergy:k_null_nlin}]
We argue as before:\\

The case when $\beta=0$ and $Ker T_0 = \{0\}$ is trivial. We focus on the case when $Ker T_0 \neq \{0\}$.\\ 

We assume the worst case scenario $dim G_0 =\infty$. By Proposition \ref{highenergy:k_null_nu}, there exists $U\in C^\infty(B^4,\mathfrak{u}(n))$ so that $B_{0,U}$ is injective on $Ker T_0$. To further set up our proof, it follows from Proposition \ref{highenergy:p_iso} that there exists an isomorphism $P_{\beta,U}$ between $G_0$ and $G_{\beta,U}$ for all $\beta<\beta_0$ for some $\beta_0>0$. We want to show the existence of $\beta$ so that $Ker T_{\beta,U} = \{0\}$.\\

Assume that for all $\beta<\beta_0$ we have that  $Ker T_{\beta,U} \neq \{0\}$. We aim at showing by contradiction that for some $\beta<\beta_0$ we will get that $Ker T_{\beta,U} = \{0\}$. Thus, let the space $$S_{\beta,U} = P_{\beta,U}^{-1}(Ker L_{\beta,U}).$$
Since $Ker T_{\beta, U}$ is finite dimensional, its dimension is bounded by the dimension of $Ker T_0$ by Proposition \ref{highenergy:no_ev}. Because $P_{\beta, U}$ is an isomorphism, then $S_{\beta, U}$ is a finite dimensional space in $G_0$ of size at most $dim KerT_0$. Moreover, $Ker T_0\cap S_{\beta, U}$ is also finite dimensional and there exists an orthonormal basis of this space. We can complete it, to obtain an orthonormal basis $\{e_j^{\beta,U}\}_{j=1}^N$ on $S_{\beta, U}$, where $N=dim Ker T_{\beta, U} = dim S_{\beta, U}$. Fix $1\leq j\leq N$ and $\e>0$ small enough such that $\lambda\in \rho(T_0)$ for all $|\lambda|=\e$. We compute the following:

We compute the following:
\begin{align}\label{high:gauge_vanish}
0 &= T_{\beta,U}P_{\beta,U} e_j^{\beta,U}  =  -\frac{1}{2\pi i} T_{\beta,U}\oint_{|\lambda|=\epsilon} ( T_{\beta,U}  - \lambda)^{-1} e_j^{\beta,U} \,d\lambda\nonumber\\
&=-\frac{1}{2\pi i} (T_0 + \beta B_{\beta,U})\oint_{|\lambda|=\epsilon} ( T_0 + \beta B_{\beta,U} - \lambda)^{-1} e_j^{\beta,U} \,d\lambda\nonumber\\
&=-\frac{1}{2\pi i} T_0\oint_{|\lambda|=\epsilon} ( T_0 - \lambda)^{-1} e_j^{\beta,U} \,d\lambda \nonumber\\
&\quad  -\frac{1}{2\pi i}\beta \left( B_{\beta,U}  -T_0  \oint_{|\lambda|=\epsilon} ( T_0  - \lambda)^{-1}B_{\beta,U} ( T_0  - \lambda)^{-1}  e_j^{\beta,U} \,d\lambda\right) + O(\beta^2)\nonumber\\
&=T_0 e_j^{\beta,U} - \frac{1}{2\pi i}\beta \left( B_{\beta,U}  -T_0  \oint_{|\lambda|=\epsilon} ( T_0  - \lambda)^{-1}B_{\beta,U} ( T_0  - \lambda)^{-1}  e_j^{\beta,U} \,d\lambda\right) + O(\beta^2)
\end{align}

The last equality holds because $e_j^{\beta,U}\in G_0$ and we have $$P_0e_j^{\beta,U} = -\frac{1}{2\pi i} \oint_{|\lambda|=\epsilon} ( T_0  - \lambda)^{-1} e_j^{\beta,U} \,d\lambda = e_j^{\beta,U}.$$
By further expanding $B_{\beta,U}$ in $\beta$ we can rewrite the vanishing equation \eqref{high:gauge_vanish} as such:
\be\label{high:gauge_vanish2}0 = T_0 e_j^{\beta,U} - \frac{1}{2\pi i}\beta \left( B_{0,U}  -T_0  \oint_{|\lambda|=\epsilon} ( T_0  - \lambda)^{-1}B_{0,U} ( T_0  - \lambda)^{-1}  e_j^{\beta,U} \,d\lambda\right) + O(\beta^2).\ee We discuss two cases:\\

\textbf{Case 1.} $e_j^{\beta,U}\notin Ker T_0$\\

Because $e_j^{\beta,U}$ is an element of the orthonormal basis, then $e_j^{\beta,U}\in (Ker T_0)^\perp$. Moreover, since $T_0$ is Fredholm, we have
$$1=\norm{e_j^{\beta,U}}{W_D^{2,2}} \leq C\norm{T_0e_j^{\beta,U}}{L^2}  \leq O(\beta)\norm{e_j^{\beta,U}}{W_D^{2,2}} = O(\beta),$$
where $C>0$ is a constant independent of $\beta$. Since $\beta<\beta_0$ is small, we get a contradiction. \\

\textbf{Case 2.} $e_j^{\beta,U}\in Ker T_0$ \\

Because the equation \eqref{high:gauge_vanish2} holds for any $\beta<\beta_0$ and $Ker T_0\neq \{0\}$ we then have that
$$B_{0,U}e_j^{\beta,U}  =  T_0  \oint_{|\lambda|=\epsilon} ( T_0  - \lambda)^{-1}B_{0,U} ( T_0  - \lambda)^{-1} e_j^{\beta,U} \,d\lambda.$$
By computing in an analogous way as in Lemma \ref{highenergy:k_null_lin} we get that the equation above implies that $B_{0,U} e_j^{\beta,U} = 0$. This is a contradiction with our initial choice of $U$. Thus, we found $\beta$ and $U$ so that $Ker T_{\beta,U} =\{0\}$.

\end{proof}

\subsection{Local density result}\label{highenergy:mainsec}

We start this section by first proving the existence of trivial kernels for approximating smooth connection $1$-forms. Secondly, we prove the existence of perturbations that give us the integrability condition \eqref{II.1} in $B^4$. Finally, we will end this section with proving the main result - that we can always approximate connection forms by smooth ones in $B^4$, in such a way that we satisfy the integrability condition \eqref{II.1} throughout 

\begin{Prop}{\label{highenergy:seq_triv_ker}}
Let $A\in W^{1,2}(\Omega^1 B^4\otimes \mathfrak{u}(n))$ and a smooth sequence of forms $A_k\rightarrow A$ in $W^{1,2}$. Then there exists a gauge $g\in C^\infty(B^4, U(n))$ and $k_0\in\N$ such that
\begin{enumerate}
\item[(i)]
$$Ker T_{\left({A^{0,1}}\right)^g}=\{0\} \qquad\text{and}\qquad Ker T_{\left(A_k^{0,1}\right)^g}=\{0\}$$ for all $k\geq k_0$. In particular, the operators  $T_{\left({A_k^{0,1}}\right)^g}$ and  $T_{\left({A^{0,1}}\right)^g}$ are all invertible.
\item[(ii)]
$$\sup_{k\geq k_0}\opnorm{T_{\left(A_k^{0,1}\right)^g}^{-1}}\leq 2\opnorm{T_{\left(A^{0,1}\right)^g}^{-1}}.$$
\end{enumerate}
\end{Prop}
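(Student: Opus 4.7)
The plan is to reduce this proposition to Lemma \ref{highenergy:k_null_nlin} (which already provides a unitary gauge killing the kernel at $A$) followed by a Neumann series argument for the perturbation to $A_k$.

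First I would apply Lemma \ref{highenergy:k_null_nlin} to $A$: this yields $\beta\in[0,1]$ small and $U\in C^\infty(B^4,\mathfrak{u}(n))$ such that setting $g:=g(\beta U)=\exp(\beta U)\in C^\infty(B^4,U(n))$ we have $\mathrm{Ker}\,T_{(A^{0,1})^g}=\{0\}$. By Proposition \ref{highenergy:fred} the operator $T_{(A^{0,1})^g}$ is Fredholm of index $0$ from $W_D^{2,2}(\Omega^{0,2}B^4\otimes M_n(\C))$ to $L^2(\Omega^{0,2}B^4\otimes M_n(\C))$, so triviality of the kernel implies it is invertible with bounded inverse. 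Denote $T:=T_{(A^{0,1})^g}$ and $T_k:=T_{(A_k^{0,1})^g}$.

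Next I would compare $T_k$ with $T$. Because the $g^{-1}\delb g$ pieces cancel,
\[
(A_k^{0,1})^g - (A^{0,1})^g = g^{-1}(A_k^{0,1}-A^{0,1})g,
\]
so for any $\omega\in W_D^{2,2}$ the operator $(T_k-T)\omega=[g^{-1}(A_k^{0,1}-A^{0,1})g,\delbs\omega]$. Using that $g,g^{-1}\in C^\infty(B^4)$ are uniformly bounded together with the Sobolev embedding $W^{1,2}(B^4)\hookrightarrow L^4(B^4)$, I obtain
\[
\|(T_k-T)\omega\|_{L^2} \leq C\,\|g\|_{L^\infty}\|g^{-1}\|_{L^\infty}\,\|A_k-A\|_{L^4}\,\|\delbs\omega\|_{L^4} \leq C'\,\|A_k-A\|_{W^{1,2}}\,\|\omega\|_{W^{2,2}},
\]
hence $\opnorm{T_k-T}\leq C'\|A_k-A\|_{W^{1,2}}\to 0$ as $k\to\infty$.

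Finally I would choose $k_0$ large enough so that for $k\geq k_0$
\[
\opnorm{T^{-1}(T_k-T)}\leq \opnorm{T^{-1}}\,\opnorm{T_k-T}\leq \tfrac{1}{2}.
\]
Writing $T_k=T\bigl(I+T^{-1}(T_k-T)\bigr)$, the Neumann series then yields that $T_k$ is invertible with
\[
\opnorm{T_k^{-1}} \leq \frac{\opnorm{T^{-1}}}{1-\opnorm{T^{-1}(T_k-T)}} \leq 2\opnorm{T^{-1}},
\]
which proves both (i) and (ii) simultaneously. The only place where genuine work happens is the appeal to Lemma \ref{highenergy:k_null_nlin}; the rest is a standard perturbation-of-Fredholm argument, and there is no real obstacle beyond ensuring the Sobolev bound on the bracket operator, which is immediate in four real dimensions.
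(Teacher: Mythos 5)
Your proposal is correct and follows essentially the same route as the paper: the gauge comes from Lemma \ref{highenergy:k_null_nlin}, the key estimate is $\opnorm{T_{(A_k^{0,1})^g}-T_{(A^{0,1})^g}}\leq C\norm{A_k-A}{W^{1,2}}$ via the cancellation of the $g^{-1}\delb g$ terms and the embedding $W^{1,2}\hookrightarrow L^4$, and the conclusion is a standard perturbation of an invertible Fredholm operator. The only cosmetic difference is that the paper proves (i) by a contradiction argument with the Fredholm lower bound and then invokes the resolvent estimate from Salamon's book for (ii), whereas you obtain both at once from the Neumann series; the two are equivalent.
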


\begin{proof}[Proof of Proposition \ref{highenergy:seq_triv_ker}]\ 

(i) The existence of a unitary smooth gauge $g$ is given by Lemma \ref{highenergy:k_null_nlin}. We have that $$Ker T_{\left(A^{0,1}\right)^g}=\{0\}.$$ 
It remains to prove that there exists $k_0\in\N$ so that $Ker T_{\left(A_k^{0,1}\right)^g}=\{0\}$ for all $k\geq k_0$. \\

In order to prove this statement, we assume by contradiction that $Ker T_{\left(A_k^{0,1}\right)^g}\neq \{0\}$ and let $0\neq \omega_k\in Ker T_{\left(A_k^{0,1}\right)^g}$. We can also assume without loss of generality that $\norm{\omega_k}{W_D^{2,2}(B^4)} = 1$. Since $Ker T_{\left(A^{0,1}\right)^g}$ is trivial and Fredholm, we then get (see \cite[Lemma 4.3.9]{salamon2018functional}):

$$1= \norm{\omega_k}{W_D^{2,2}(B^4)} \leq C_{A,g}\norm{T_{\left(A^{0,1}\right)^g}\omega_k}{L^2(B^4)},$$
for some constant $C_{A,g}>0$ depending on the initial $1$-form $A$ and on the gauge change $g$.

We compute this further:
$$1 \leq C_{A,g}\norm{T_{\left(A^{0,1}\right)^g}\omega_k}{L^2(B^4)} = \norm{T_{\left({A^{0,1}}\right)^g}\omega_k - T_{\left(A_k^{0,1}\right)^g}\omega_k }{L^2(B^4)} = \norm{\left[\left(A_k^{0,1}\right)^g- \left(A^{0,1}\right)^g,\delbs\omega_k\right]}{L^2(B^4)}.$$

Indeed, we can bound the last bracket above by $\norm{A_k-A}{W^{1,2}(B^4)}$:
$$\begin{array}{ll}
\norm{\left[\left(A_k^{0,1}\right)^g- \left(A^{0,1}\right)^g,\delbs\omega_k\right]}{L^2(B^4)} &\leq C_g \norm{A_k-A}{L^4(B^4)}\norm{\omega_k}{L^4(B^4)}\\
&\leq C_g\norm{A_k-A}{W^{1,2}(B^4)}\norm{\omega_k}{W_D^{2,2}(B^4)} \\
&= C_g\norm{A_k-A}{W^{1,2}(B^4)}.
\end{array}$$
for some constant $C_g$ depending on $g$. Since the constants are independent of $k$, it follows that
$$1\leq  C_{A,g}\cdot C_g\norm{A_k-A}{W^{1,2}(B^4)} \rightarrow 0\text{ as } k\rightarrow \infty.$$
Thus, for $k$ large enough the above inequality yields a contradiction. We then have that there exists $k_0$ large so that  $\omega_k=0$ and that $Ker T_{\left(A_k^{0,1}\right)^g}= \{0\}$ for all $k\geq k_0$. We conclude that since  $T_{\left({A^{0,1}}\right)^g}$ and $ T_{\left({A_k^{0,1}}\right)^g}$ for each $k\geq k_0$ are all operators of index zero and their kernel is trivial, they are invertible.\\

(ii) Since $T_{\left(A^{0,1}\right)^g}$ has trivial kernel and is an operator of index zero, its inverse exists mapping $L^2$ to $W_D^{2,2}$ and we have that $\opnorm{T_{\left(A^{0,1}\right)^g}^{-1}}<\infty$. Since $A_k$ converges strongly to $A$ in $W^{1,2}$ and $g$ is smooth by construction, then we can assume without loss of generality that $\opnorm{T_{\left(A_k^{0,1}\right)^g}-T_{\left(A^{0,1}\right)^g}}\opnorm{T_{\left(A^{0,1}\right)^g}^{-1}}<\frac 12$ for any $k\geq k_0$. Hence \cite[Theorem 1.5.5(iii)]{salamon2018functional}, gives that $$\opnorm{T_{\left(A_k^{0,1}\right)^g}^{-1}-T_{\left(A^{0,1}\right)^g}^{-1}}\leq \cfrac{\opnorm{T_{\left(A_k^{0,1}\right)^g}-T_{\left(A^{0,1}\right)^g}}\opnorm{T_{\left(A^{0,1}\right)^g}^{-1}}}{1-\opnorm{T_{\left(A_k^{0,1}\right)^g}-T_{\left(A^{0,1}\right)^g}}\opnorm{T_{\left(A^{0,1}\right)^g}^{-1}}}\opnorm{T_{\left(A^{0,1}\right)^g}^{-1}}\leq \opnorm{T_{\left(A^{0,1}\right)^g}^{-1}}.$$
Thus, for $k\geq k_0$, we have
$$\opnorm{T_{\left(A_k^{0,1}\right)^g}^{-1}}\leq \opnorm{T_{\left(A_k^{0,1}\right)^g}^{-1}-T_{\left(A^{0,1}\right)^g}^{-1}} + \opnorm{T_{\left(A^{0,1}\right)^g}^{-1}}\leq 2\opnorm{T_{\left(A^{0,1}\right)^g}^{-1}}.$$
Hence, by taking the $\sup$ over all $k\geq k_0$, we obtain the result.
\end{proof}

\vspace{4mm}

The following Lemma proves the existence of a perturbation under the conditions that $T_{A^{0,1}}$ has trivial kernel and that $F_A^{0,2}$ is small in $L^2$ norm.

\begin{Lm}{\label{highenergy:seq_flat}}
There exists a constant $C>0$ such that for every $A\in W^{1,2}(\Omega^1 B^4\otimes \mathfrak{u}(n))$ with $T_{A^{0,1}}$ invertible and satisfying $\norm{F_{A}^{0,2}}{L^2} \leq \frac{C}{\opnorm{T_{A^{0,1}}^{-1}}^2}$, there exists $\omega\in W_D^{2,2}(\Omega^{0,2} B^4\otimes M_n(\C))$ a solution of the PDE:
$$\delb\delbs \omega + [{A^{0,1}},\delbs \omega] + \delbs \omega\wedge\delbs\omega = -F_{A}^{0,2}$$
 satisfying the estimate\be\label{highenergy:bounds}\norm{\omega_0}{W_D^{2,2}(B^4)} \leq C'\opnorm{T_{A^{0,1}}^{-1}}\norm{F_A^{0,2}}{L^2(B^4)}\ee
 where $C'>0$ is a constant independent of $A$.
\end{Lm}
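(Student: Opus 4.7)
The plan is to solve the PDE by a fixed point argument, using the invertibility of $T_{A^{0,1}}$ to rewrite the equation as
\[
\omega = - T_{A^{0,1}}^{-1}\!\left(\delbs\omega\wedge\delbs\omega + F_A^{0,2}\right),
\]
and then iterate in $W_D^{2,2}$ in the spirit of Lemma \ref{low:p-approxthm} and Proposition \ref{CP2extension}. Concretely, I would set $\omega_0:=0$ and define recursively
\[
\omega_{k+1} := - T_{A^{0,1}}^{-1}\!\left(\delbs\omega_k\wedge\delbs\omega_k + F_A^{0,2}\right) \in W_D^{2,2}(\Omega^{0,2}B^4\otimes M_n(\C)),
\]
which is well-defined because $T_{A^{0,1}}$ is an isomorphism from $W_D^{2,2}$ onto $L^2$.

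The first step is a uniform $W_D^{2,2}$ bound of the form $\norm{\omega_k}{W_D^{2,2}}\leq M$, where I aim to take $M = 2\opnorm{T_{A^{0,1}}^{-1}}\norm{F_A^{0,2}}{L^2}$. Using the Sobolev embedding $W^{1,2}(B^4)\hookrightarrow L^4(B^4)$, there is a constant $C_0>0$ with $\norm{\delbs\omega\wedge\delbs\omega}{L^2}\leq C_0\norm{\omega}{W_D^{2,2}}^2$. Applying $T_{A^{0,1}}^{-1}$ to the recursion gives
\[
\norm{\omega_{k+1}}{W_D^{2,2}} \leq \opnorm{T_{A^{0,1}}^{-1}}\bigl(C_0 M^2 + \norm{F_A^{0,2}}{L^2}\bigr),
\]
and the inductive bound $\norm{\omega_{k+1}}{W_D^{2,2}}\leq M$ reduces to the algebraic condition $4C_0\opnorm{T_{A^{0,1}}^{-1}}^2\norm{F_A^{0,2}}{L^2}\leq 1$. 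Choosing the constant $C$ of the statement to be $C = 1/(8C_0)$ (for example) produces exactly this inequality and provides the required uniform bound.

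The second step is to show the sequence is Cauchy. Subtracting two consecutive identities and using the bilinear identity
\[
\delbs\omega_k\wedge\delbs\omega_k - \delbs\omega_{k-1}\wedge\delbs\omega_{k-1} = \delbs(\omega_k-\omega_{k-1})\wedge\delbs\omega_k + \delbs\omega_{k-1}\wedge\delbs(\omega_k-\omega_{k-1}),
\]
together with the same $L^4$ estimate, yields
\[
\norm{\omega_{k+1}-\omega_k}{W_D^{2,2}} \leq 2C_0\opnorm{T_{A^{0,1}}^{-1}}M\,\norm{\omega_k-\omega_{k-1}}{W_D^{2,2}} = 4C_0\opnorm{T_{A^{0,1}}^{-1}}^2\norm{F_A^{0,2}}{L^2}\,\norm{\omega_k-\omega_{k-1}}{W_D^{2,2}}.
\]
With the smallness assumption we have already enforced, the multiplicative factor is at most $1/2$, so $\{\omega_k\}$ is a Cauchy sequence in the Banach space $W_D^{2,2}(\Omega^{0,2}B^4\otimes M_n(\C))$; its limit $\omega$ inherits the bound $\norm{\omega}{W_D^{2,2}}\leq 2\opnorm{T_{A^{0,1}}^{-1}}\norm{F_A^{0,2}}{L^2}$, which is the claimed estimate \eqref{highenergy:bounds} with $C'=2$.

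Finally, passing to the limit in the recursion uses continuity of $T_{A^{0,1}}^{-1}:L^2\to W_D^{2,2}$ and the fact that the quadratic map $\omega\mapsto\delbs\omega\wedge\delbs\omega$ is continuous from $W_D^{2,2}$ into $L^2$ (bilinearity plus the $L^4$ estimate again). Therefore the limit $\omega$ solves
\[
T_{A^{0,1}}\omega + \delbs\omega\wedge\delbs\omega = -F_A^{0,2},
\]
which is exactly the desired PDE with zero boundary values. The only genuinely delicate point is the careful choice of the smallness constant $C$ so that both the uniform $W^{2,2}$ bound and the contraction estimate hold simultaneously; but since both conditions depend on the same quantity $\opnorm{T_{A^{0,1}}^{-1}}^2\norm{F_A^{0,2}}{L^2}$, a single threshold (depending only on the Sobolev constant $C_0$ of $W^{1,2}\hookrightarrow L^4$) suffices.
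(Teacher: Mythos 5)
Your proposal is correct and follows essentially the same route as the paper: the same iteration scheme $T_{A^{0,1}}\omega_{k+1}=-\delbs\omega_k\wedge\delbs\omega_k-F_A^{0,2}$, the same uniform bound $2\opnorm{T_{A^{0,1}}^{-1}}\norm{F_A^{0,2}}{L^2}$ via the Sobolev embedding $W^{1,2}\hookrightarrow L^4$, and the same contraction estimate yielding a Cauchy sequence in $W_D^{2,2}$. The choice of the smallness constant $C$ in terms of the Sobolev constant matches the paper's argument as well.
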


\begin{proof}[Proof of Lemma \ref{highenergy:seq_flat}]
We construct the following sequence of solutions:
$$\begin{array}{rl}
T_{A^{0,1}}\omega_0 &= - F_{A}^{0,2}\\
T_{A^{0,1}}\omega_1& = -  \delbs \omega_0 \wedge\delbs\omega_0 - F_{A}^{0,2}\\
T_{A^{0,1}}\omega_2 &= -  \delbs \omega_1 \wedge\delbs\omega_1 - F_{A}^{0,2}\\
\ldots&\\
T_{A^{0,1}}\omega_k&= -  \delbs \omega_{k-1} \wedge\delbs\omega_{k-1} - F_{A}^{0,2}\\
\ldots&
\end{array}
$$
\textbf{Claim}. $\{\omega_k\}_{k=0}^\infty$ is a Cauchy sequence in $W_D^{2,2}$.\\\\
We first show by induction the uniform bound on the sequence $\norm{\omega_k}{W_D^{2,2}}\leq 2\opnorm{T_{A^{0,1}}}\norm{F_{A}^{0,2}}{L^2(B^4)}$. Since $T_{A^{0,1}}$ is invertible, then we have the identity:
$$\omega_0 =T_{A^{0,1}}^{-1} T_{A^{0,1}}\omega_0.$$
Hence, from the definition of the norm of operators, it follows that:
$$\norm{\omega_0}{W_D^{2,2}(B^4)} \leq \opnorm{T_{A^{0,1}}^{-1}}\norm{T_{{A^{0,1}}}\omega_0}{L^2(B^4)}= \opnorm{T_{A^{0,1}}^{-1}}\norm{F_{A}^{0,2}}{L^2(B^4)} < 2\opnorm{T_{A^{0,1}}^{-1}}\norm{F_{A}^{0,2}}{L^2(B^4)}$$
Let $k>0$. By the Sobolev embedding of $W^{1,2}\hookrightarrow L^4$ there exists a constant $C_1>0$ so that
$$
\norm{\delbs \omega_k}{L^4(B^4)}\leq C_1\norm{\delbs \omega_k}{W^{1,2}(B^4)}\leq C_1 \norm{\omega_k}{W_D^{2,2}(B^4)}.$$ Then we have:
$$\begin{array}{rl}
\norm{\omega_k}{W_D^{2,2}(B^4)} &\leq  \opnorm{T_{A^{0,1}}^{-1}}\norm{T_{{A^{0,1}}}\omega_k}{L^2(B^4)}\\
&\leq \opnorm{T_{A^{0,1}}^{-1}}\norm{\delbs\omega_{k-1}\wedge\delbs\omega_{k-1}}{L^2(B^4)} +\opnorm{T_{A^{0,1}}^{-1}}\norm{F_A^{0,2}}{L^2(B^4)}\\
&\leq C_1\opnorm{T_{A^{0,1}}^{-1}}\norm{\omega_{k-1}}{L^4(B^4)}^2 +\opnorm{T_{A^{0,1}}^{-1}}\norm{F_A^{0,2}}{L^2(B^4)}\\
&\leq C_1^2\opnorm{T_{A^{0,1}}^{-1}}\norm{\omega_{k-1}}{W_D^{2,2}(B^4)}^2 +\opnorm{T_{A^{0,1}}^{-1}}\norm{F_A^{0,2}}{L^2(B^4)}
\end{array}$$
By the induction hypothesis we have that $\norm{\omega_{k-1}}{W_D^{2,2}(B^4)}<2\opnorm{T_{A^{0,1}}^{-1}}\norm{F_A^{0,2}}{L^2(B^4)}$. Thus, 
$$\norm{\omega_k}{W_D^{2,2}(B^4)}\leq 4C_1^2\opnorm{T_{A^{0,1}}^{-1}}^3\norm{F_A^{0,2}}{L^2(B^4)}^2+ \opnorm{T_{A^{0,1}}^{-1}}\norm{F_A^{0,2}}{L^2(B^4)}.$$
Choosing the constant $C>0$ such that $\frac{1}{C}<4C_1^2$, we obtain by assumption $$4C_1^2\opnorm{T_{A^{0,1}}^{-1}}^2\norm{F_A^{0,2}}{L^2(B^4)}<C\opnorm{T_{A^{0,1}}^{-1}}^2\norm{F_A^{0,2}}{L^2(B^4)}\leq 1,$$ and we conclude that
$$\norm{\omega_k}{W_D^{2,2}}\leq 2\opnorm{T_{A^{0,1}}^{-1}}\norm{F_A^{0,2}}{L^2(B^4)}$$
Hence, by induction it follows that the sequence is uniformly bounded in $W_D^{2,2}$:
$$\norm{\omega_k}{W_D^{2,2}(B^4)}\leq 2\opnorm{T_{A^{0,1}}^{-1}}\norm{F_A^{0,2}}{L^2(B^4)}$$ for all $k\geq 0$. 
It remains to show that $\{\omega_k\}_{k=0}^\infty$ is a Cauchy sequence.\\

Let $k>0$. It follows  that 
$$\begin{array}{rll}
\norm{\omega_{k+1} - \omega_{k}}{W_D^{2,2}(B^4)} &\leq& \opnorm{T_{A^{0,1}}^{-1}}\norm{T_{{A^{0,1}}}(\omega_{k+1}-\omega_k)}{L^2(B^4)}\\
&\leq& \opnorm{T_{A^{0,1}}^{-1}}\norm{\delbs(\omega_k-\omega_{k-1})\wedge\delbs\omega_k}{L^2(B^4)}\\
& &+\opnorm{T_{A^{0,1}}^{-1}}\norm{\delbs\omega_{k-1}\wedge \delbs(\omega_k-\omega_{k-1})}{L^2(B^4)}\\
&\leq& 4C_1\opnorm{T_{A^{0,1}}^{-1}}^2\norm{F_A^{0,2}}{L^2(B^4)}  \norm{\omega_k-\omega_{k-1}}{W_D^{2,2}(B^4)}
\end{array}
$$
Choosing the constant $C>0$ such that $\frac{1}{C}<4C_1$, we obtain by assumption $$4C_1\opnorm{T_{A^{0,1}}^{-1}}^2\norm{F_A^{0,2}}{L^2(B^4)} < \frac{1}{C}\opnorm{T_{A^{0,1}}^{-1}}^2\norm{F_A^{0,2}}{L^2(B^4)}\leq 1,$$ and we can conclude that the sequence $\{\omega_k\}_{k=0}^\infty$ is Cauchy. Hence, we have proven the claim. \\

Moreover, because $W_D^{2,2}$ is a Banach space and $\{\omega_k\}_{k=0}^\infty$ is a Cauchy sequence, there exists $\omega_\infty$ such that $\omega_k \rightarrow \omega_\infty$ as $k\rightarrow \infty$ in $W_D^{2,2}$. Moreover, by the strong convergence and the uniform bound of the sequence we obtain that $\norm{\omega_\infty}{W_D^{2,2}(B^4)} \leq 2\opnorm{T_{A^{0,1}}^{-1}}\norm{F_A^{0,2}}{L^2(B^4)} $ and 
$$\delb\delbs \omega_\infty + [{A^{0,1}},\delbs \omega_\infty] + \delbs \omega_\infty\wedge\delbs\omega_\infty = -F_{A}^{0,2}.$$
\end{proof}

We can conclude this section with the main result. 
\addtocounter{Th}{-2}
\begin{Th}\label{highenergy:mainthm_lin}
Let $A\in W^{1,2}(\Omega^1 B^4\otimes \mathfrak{u}(n))$, with $F_A^{0,2} = 0$. There exists a smooth sequence of forms $A_k \in C^\infty(\Omega^1 B^4\otimes \mathfrak{u}(n))$, $F_{A_k}^{0,2}=0$ and $A_k \rightarrow A$ in $W^{1,2}(\Omega^1 B^4\otimes \mathfrak{u}(n))$.
\end{Th}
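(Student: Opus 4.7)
The plan is to mimic the low-energy scheme: smooth $A$ naively, losing integrability, then correct by a $(0,2)$-form perturbation. The obstruction compared to the low-energy case is twofold: $A^{0,1}$ is not small in $W^{1,2}$, so a naive fixed-point for the correction cannot start; and the linearized operator $T_{A^{0,1}}$ may have nontrivial kernel. Both are bypassed by precomposing with \emph{one} fixed smooth unitary gauge $g\in C^\infty(B^4,U(n))$ delivered by Proposition \ref{highenergy:seq_triv_ker}, which simultaneously makes $T_{(A^{0,1})^g}$ and $T_{(\hat A_k^{0,1})^g}$ invertible for all large $k$, with uniformly bounded inverses. Because $g$ is unitary, perturbing $\hat A_k^g$ by a skew-Hermitian correction and undoing the gauge returns a \emph{unitary} smooth $1$-form, which is what the statement demands.

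First I would mollify $A$ (after reflection or cutoff into a slightly larger ball) to produce $\hat A_k\in C^\infty(\Omega^1 B^4\otimes\mathfrak u(n))$ with $\hat A_k\to A$ strongly in $W^{1,2}$; convolution preserves skew-Hermiticity and $F^{0,2}_{\hat A_k}\to F^{0,2}_A=0$ in $L^2$. Proposition \ref{highenergy:seq_triv_ker} then furnishes $g$ and $k_0$ with
\[
\sup_{k\ge k_0}\opnorm{T_{(\hat A_k^{0,1})^g}^{-1}}\le 2\opnorm{T_{(A^{0,1})^g}^{-1}}=:M.
\]
Since $g$ is unitary, curvature transforms by pointwise conjugation, so $\|F^{0,2}_{\hat A_k^g}\|_{L^2}=\|F^{0,2}_{\hat A_k}\|_{L^2}\to 0$. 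For $k$ large enough that $\|F^{0,2}_{\hat A_k^g}\|_{L^2}\le C/M^2$, with $C$ the constant of Lemma \ref{highenergy:seq_flat}, I apply Lemma \ref{highenergy:seq_flat} to $\hat A_k^g$ to obtain $\omega_k\in W^{2,2}_D(\Omega^{0,2}B^4\otimes M_n(\C))$ solving
\[
\delb\delbs\omega_k+[(\hat A_k^{0,1})^g,\delbs\omega_k]+\delbs\omega_k\wedge\delbs\omega_k=-F^{0,2}_{\hat A_k^g},
\]
with $\|\omega_k\|_{W^{2,2}(B^4)}\le C'M\|F^{0,2}_{\hat A_k^g}\|_{L^2(B^4)}\to 0$. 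Because $\hat A_k^g$ is smooth and $\omega_k\in W^{2,2}\hookrightarrow L^q$ for every $q<\infty$ in dimension four, a standard elliptic bootstrap on this semilinear second-order equation promotes $\omega_k$ to $C^\infty(\ov{B^4})$.

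Defining the smooth skew-Hermitian $1$-form
\[
\tilde A_k:=\hat A_k^g+\delbs\omega_k-\ov{\delbs\omega_k}^T,
\]
one has $F^{0,2}_{\tilde A_k}=0$ by construction. Setting $A_k:=\tilde A_k^{g^{-1}}$ and expanding the gauge transformation yields
\[
A_k=\hat A_k+g\bigl(\delbs\omega_k-\ov{\delbs\omega_k}^T\bigr)g^{-1},
\]
which is smooth, unitary, and satisfies $F^{0,2}_{A_k}=gF^{0,2}_{\tilde A_k}g^{-1}=0$. Convergence follows from
\[
\|A_k-A\|_{W^{1,2}(B^4)}\le C(g)\|\omega_k\|_{W^{2,2}(B^4)}+\|\hat A_k-A\|_{W^{1,2}(B^4)}\longrightarrow 0,
\]
where $C(g)$ only depends on the smooth bounds on $g$ and $g^{-1}$.

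The principal obstacle is concentrated in the gauge-selection step rather than in the perturbation step: the existence of a \emph{single} smooth unitary $g$ that yields a uniform operator-norm bound on the inverses $T^{-1}_{(\hat A_k^{0,1})^g}$ along the tail $k\ge k_0$ is precisely what makes the smallness threshold $C/M^2$ of Lemma \ref{highenergy:seq_flat} independent of $k$, thereby allowing the fixed-point construction of $\omega_k$ with a $k$-uniform Lipschitz constant. This uniform gauge, together with the spectral-perturbation analysis of Lemmas \ref{highenergy:k_null_lin} and \ref{highenergy:k_null_nlin} that produces it, is the heart of the argument; once $g$ is fixed, the rest is essentially the same bootstrap as in the low-energy case applied to $\hat A_k^g$.
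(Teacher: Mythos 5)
Your proposal is correct and follows essentially the same route as the paper: mollify, fix a single smooth unitary gauge via Proposition \ref{highenergy:seq_triv_ker} to get uniformly invertible operators $T_{(\hat A_k^{0,1})^g}$, solve for the correction $\omega_k$ with Lemma \ref{highenergy:seq_flat}, and undo the gauge using unitarity. If anything, you are slightly more careful than the paper in flagging that $\omega_k$ must be bootstrapped to smoothness (the critical step there uses the smallness of $\omega_k$, in the spirit of Proposition \ref{bootstrapmorrey}), a point the paper's proof leaves implicit.
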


\begin{proof}[Proof of Theorem \ref{highenergy:mainthm_lin}]
By Lemma \ref{highenergy:k_null_nlin} there exists a unitary gauge change $g\in C^\infty(B^4,U(n))$ so that $T_{\left(A^{0,1}\right)^g}$ is invertible $W_D^{2,2}$ to $L^2$. Moreover, we can obtain a smooth sequence $\tilde{A}_{k}$ by simple convolution such that $\norm{F_{\tilde{A}_{k}}^{0,2}}{L^2(B^4)}\rightarrow 0 $ and $\tilde{A}_{k} \rightarrow A$ in $W^{1,2}$ as $k\rightarrow \infty$. By Proposition \ref{highenergy:seq_triv_ker}(i) there exists $k_0$ so that for all $k\geq k_0$, $T_{{\tilde{A}_{k}}^g}$ is invertible and that $\tilde{A}_{k}^g \rightarrow A^g$. Moreover, since the change of gauge is unitary we also have that $\norm{F_{\tilde{A}_{k}}^{0,2}}{L^2(B^4)} = \norm{F_{{\tilde{A}_{k}}^g}^{0,2}}{L^2(B^4)}.$\\

Moreover, by Proposition \ref{highenergy:seq_triv_ker}(ii) we know that 
$$\sup_{k\geq k_0} \opnorm{T_{\left(\tilde A_k^{0,1}\right)^g}^{-1}}\leq 2\opnorm{T_{\left(A^{0,1}\right)^g}^{-1}}.$$
Thus, for all $k\geq k_0$, it follows that 
$\frac 1 {\opnorm{T_{\left(\tilde A_k^{0,1}\right)^g}^{-1}}^{2}}\geq \frac 1 {2\opnorm{T_{\left( A^{0,1}\right)^g}}^2}$.
Let $C>0$ the constant given by Lemma \ref{highenergy:seq_flat}. There exists $k_1>k_0$ such that $$\norm{F_{{\tilde{A}_{k}}^g}^{0,2}}{L^2(B^4)}\leq \frac C {2\opnorm{T_{\left( A^{0,1}\right)^g}}^2}\leq \frac C {\opnorm{T_{\left(\tilde A_k^{0,1}\right)^g}^{-1}}^{2}}.$$
for all $k\geq k_1$. Hence, for each $k\geq k_1$, Lemma \ref{highenergy:seq_flat} gives the existence of $(0,2)$ forms $\omega_k$ satisfying the estimate 
$$\norm{\omega_k}{W_D^{2,2}(B^4)}\leq C'\opnorm{T_{\left(A_k^{0,1}\right)^g}^{-1}}\norm{F_{{\tilde{A}_{k}}^g}^{0,2}}{L^2(B^4)}\leq 2C'\opnorm{T_{\left(A^{0,1}\right)^g}^{-1}}\norm{F_{{\tilde{A}_{k}}}^{0,2}}{L^2(B^4)},$$
where $C'>0$ is a constant independent of $k$. Moreover, each $\omega_k$ solve the PDE:
\be\label{high:flat}\delb\delbs \omega_k + \left[{\left(\tilde A_k^{0,1}\right)^g},\delbs \omega_k\right] + \delbs \omega_k\wedge\delbs\omega_k = -F_{\tilde A_k^g}^{0,2},\ee
i.e. $F_{\tilde A_k^g + \delbs\omega_k}^{0,2}=0$. Since $\norm{F_{{\tilde{A}_{k}}}^{0,2}}{L^2(B^4)}$ converges strongly to $0$, the estimates on the $(0,2)$ forms $\omega_k$ give $\omega_k\rightarrow 0$ in $W_D^{2,2}$ as $k\rightarrow \infty$. Thus, we obtain the strong convergence $$\left({\tilde{A}_{k}}^{0,1}\right)^{g} +\delbs\omega_k\rightarrow \left(A^{0,1}\right)^g\text{ in }W^{1,2}.$$
Define the sequence of connection forms
$$A_k := \left(({\tilde{A}_{k}}^{0,1})^{g} +\delbs\omega_k- \ov{({\tilde{A}_{k}}^{0,1})^{g} +\delbs\omega_k}^T\right)^{g^{-1}} \in W^{1,2}(\Omega^1 B^4\otimes\mathfrak{u}(n))\cap C^\infty.$$ Because $g$ is a smooth unitary gauge, and $A_k^g\rightarrow A^g$ in $W^{1,2}$ by construction,  then this sequence of forms are unitary and convergent in $W^{1,2}$. We need to establish that $A_k\rightarrow A$ in $W^{1,2}$. Indeed, we obtain the following $L^2$ convergence: 
$$\norm{A_k^g - A^g}{L^2}\rightarrow 0 \iff  \norm{g^{-1} \left(A_k - A\right)g}{L^2}\rightarrow 0 \stackrel{g\in U(n)}{\iff} \norm{A_k - A}{L^2} \rightarrow 0.$$
Because the limit is unique, then $A_k\rightarrow A$ in $W^{1,2}$. Moreover, the smooth sequence $A_k$ satisfies the integrability condition $F_{A_k}^{0,2}=0$ by construction of $\omega_k$ in \eqref{high:flat}. 
\end{proof}

\subsection{Global density result}\label{highenergy:globalmainsec}

In this section we will use the result we have proven in the above section in order to obtain a global result for a closed K\"ahler manifold $X^2$.  In order to be able to generalise, we will work on sections of the vector bundle $(E,h_0)$ over $X^2$. The $\delb$ operator is well-defined and acts on the space of $E$-valued $(p,q)$-forms:
$$\delb : \mathcal A^{p,q}(E) \to \mathcal A^{p,q+1}(E).$$
Its corresponding dual operator, $\delbs$, is defined as a map:
$$\delbs:\mathcal A^{p,q}(E)\to \mathcal A^{p,q-1}(E).$$ On the space $\mathcal A^{p,q}(E)$ the $\delb$-Hodge theorem \cite{griffiths2014principles} gives the orthogonal $L^2$ decomposition:
\be\label{high:globaldec}\mathcal A^{p,q}(E) = \delb \mathcal A^{p,q-1}(E) + \delbs \mathcal A^{p,q+1}(E) + \mathcal H^{p,q}(E),\ee
where $\mathcal H^{p,q}(E)$ is the space of holomorphic $(p,q)$-sections. Since $X^2$ is a closed K\"ahler manifold, then we remark that $\mathcal H^{p,q}(E)$ is finite dimensional. In particular, by (\ref{high:globaldec}) any $(0,2)$-form $\omega\in \mathcal A^{0,2}(E)$ can be orthogonally decomposed as follows:
$$\omega = \delb\delbs \alpha+ h,$$
where $h\in \mathcal H^{0,2}(E)$ and $\alpha\in\mathcal A^{0,2}(E)$. \\

Since $\delb$ and $\delbs$ define elliptic complexes over closed K\"ahler surfaces (see for example \cite[Chapter IV]{nash1991differential}):
$$0\rightarrow \mathcal A^{0,0}(E)\overset{\delb}{ \rightarrow} \mathcal A^{0,1}(E)\overset{\delb}{ \rightarrow}\mathcal A^{0,2}(E) \rightarrow 0$$
and
$$0\rightarrow \mathcal A^{0,2}(E)\overset{\delbs}{ \rightarrow} \mathcal A^{0,1}(E)\overset{\delbs}{ \rightarrow}\mathcal A^{0,0}(E) \rightarrow 0$$
then the operator $\delb\delbs$ is a elliptic on $(0,2)$-sections over closed K\"ahler surfaces. In particular it is Fredholm and moreover, $\delb\delbs$ is self-adjoint. Thus, its Fredholm index vanishes:
\be\label{high:index} index(\delb\delbs) = dim Ker(\delb\delbs)- dim Coker(\delb\delbs) = dim Ker(\delb\delbs)- dim Ker(\delb\delbs) = 0.\ee 

We redefine our operator $T_{A^{0,1}}$ as such:
$$T_{\nabla}: \Gamma_{W^{2,2}}(\mathcal A^{0,2}(E))\to \Gamma_{L^2}(\mathcal A^{0,2}(E))$$
where $\nabla$ is a $W^{1,2}$ unitary connection over $X^2$ and $\Gamma_{W^{p,q}}$ is the space of locally $W^{p,q}$ sections. We can directly apply Lemma \ref{highenergy:k_null_nlin} to obtain the existence of a global smooth section $g$ so that $Ker T_{\nabla^g}=0$. Moreover, by (\ref{high:index}) and Proposition \ref{highenergy:fred} applied to $T_{\nabla^g}$ it follows that $T_{\nabla^g}$ is a Fredholm operator of index $0$. Hence, $Coker T_{\nabla^g}$ is empty and the operator $T_{\nabla^g}$ is invertible. \\

It follows that we can apply Lemma \ref{highenergy:seq_flat} to $\nabla^g$ and we can conclude that, similarly to Theorem \ref{highenergy:mainthm}, we have proven:

\begin{Th}{\label{highenergy:globalmainthm}}
Let $\nabla$ a $W^{1,2}$ unitary connection over $X^2$, satisfying the integrability condition $F_{\nabla}^{0,2} = 0$. Then there exists a sequence of smooth unitary connections $\nabla_k$, with $F_{\nabla_k}^{0,2}=0$ such that
$$d_2(\nabla_k,\nabla)\rightarrow 0.$$
\end{Th}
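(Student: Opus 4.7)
The plan is to mirror, on the closed K\"ahler surface $X^2$, the proof of the local Theorem \ref{highenergy:mainthm_lin}, with local elliptic theory on $B^4$ replaced by the global $\delb$-Hodge decomposition \eqref{high:globaldec} together with the observation \eqref{high:index} that the self-adjoint elliptic operator $\delb\delbs$ is Fredholm of index zero on $\Gamma(\mathcal A^{0,2}(E))$. The three building blocks to globalize are Lemma \ref{highenergy:k_null_nlin} (existence of a unitary gauge making the linearized operator invertible), Proposition \ref{highenergy:seq_triv_ker} (propagation of this invertibility along the smooth approximation), and Lemma \ref{highenergy:seq_flat} (construction of a perturbation enforcing the integrability condition).

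First I would produce a smooth unitary approximation $\tilde\nabla_k\to\nabla$ in $W^{1,2}(X^2)$ by mollifying the connection $1$-forms in a finite trivialising cover (the mollification stays valued in $\mathfrak u(n)$) and reassembling via a partition of unity; since $F_{\nabla}^{0,2}=0$, this forces $\|F^{0,2}_{\tilde\nabla_k}\|_{L^2(X^2)}\to 0$. Next, the globalized Lemma \ref{highenergy:k_null_nlin} produces a smooth unitary gauge $g$ on $X^2$ with $\mathrm{Ker}\,T_{\nabla^g}=\{0\}$, so that the Fredholm index zero property gives invertibility of $T_{\nabla^g}:\Gamma_{W^{2,2}}(\mathcal A^{0,2}(E))\to\Gamma_{L^2}(\mathcal A^{0,2}(E))$. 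The same argument as in Proposition \ref{highenergy:seq_triv_ker} (using only Sobolev embedding $W^{1,2}\hookrightarrow L^4$ in four real dimensions) furnishes $k_0$ such that $T_{\tilde\nabla_k^g}$ is invertible for every $k\geq k_0$ with $\sup_{k\geq k_0}\opnorm{T_{\tilde\nabla_k^g}^{-1}}\leq 2\opnorm{T_{\nabla^g}^{-1}}$. The global version of Lemma \ref{highenergy:seq_flat} then yields $\omega_k\in\Gamma_{W^{2,2}}(\mathcal A^{0,2}(E))$ with $F^{0,2}_{\tilde\nabla_k^g+\delbs\omega_k-\overline{\delbs\omega_k}^T}=0$ and
\[
\norm{\omega_k}{W^{2,2}(X^2)}\leq C\,\opnorm{T_{\nabla^g}^{-1}}\,\norm{F^{0,2}_{\tilde\nabla_k}}{L^2(X^2)}\longrightarrow 0,
\]
where unitarity of $g$ was used via $\|F^{0,2}_{\tilde\nabla_k^g}\|_{L^2}=\|F^{0,2}_{\tilde\nabla_k}\|_{L^2}$. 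Setting $\nabla_k:=\bigl(\tilde\nabla_k^g+\delbs\omega_k-\overline{\delbs\omega_k}^T\bigr)^{g^{-1}}$ gives smooth unitary connections with $F^{0,2}_{\nabla_k}=0$, and since conjugation by the smooth unitary $g$ preserves the $L^2$ norm of both connection $1$-forms and curvatures, the $W^{1,2}$ convergence before conjugation translates into $d_2(\nabla_k,\nabla)\to 0$.

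The main obstacle is the globalisation of Lemma \ref{highenergy:k_null_nlin}, and within it of Proposition \ref{highenergy:k_null_nu}: for each $0\neq v\in\mathrm{Ker}\,T_{\nabla}\subset\Gamma(\mathcal A^{0,2}(E))$ one must exhibit a smooth skew-Hermitian section $U$ of $ad_{h_0}(E)$ with $[\delb U+[\nabla^{0,1},U],\delbs v]\neq 0$. This step is purely local: one picks $p\in X^2$ where $v(p)\neq 0$, trivialises $E$ by a unitary frame over a small geodesic ball around $p$, applies Proposition \ref{app:vanishing2} in that chart to obtain a compactly supported local candidate, and then iterates the cut-off induction of Proposition \ref{highenergy:k_null_nu} over a finite orthonormal basis of $\mathrm{Ker}\,T_{\nabla}$, arranging the supports to be pairwise disjoint. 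Once this is achieved, the analytic-perturbation machinery (discrete spectrum, the projectors $P_{\beta,U}$, the isomorphism between generalised eigenspaces $G_0$ and $G_{\beta,U}$) transfers verbatim, since it relies only on the Fredholm, index-zero and analyticity properties of the ambient operator, all of which hold globally on $X^2$.
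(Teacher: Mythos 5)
Your proposal is correct and follows essentially the same route as the paper: the paper's own proof of this theorem consists precisely of redefining $T_\nabla$ on $\Gamma_{W^{2,2}}(\mathcal A^{0,2}(E))$, invoking the global $\delb$-Hodge decomposition and the index-zero property of $\delb\delbs$ to make Lemma \ref{highenergy:k_null_nlin} and Lemma \ref{highenergy:seq_flat} apply globally, and then repeating the argument of the local Theorem \ref{highenergy:mainthm}. Your write-up in fact supplies details the paper leaves implicit (the partition-of-unity mollification and the local-trivialisation step needed to globalise Proposition \ref{highenergy:k_null_nu}), so no discrepancy to report.
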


\section{Proof of Theorem \ref{thm2}}\label{sec:thm2}

We are ready to prove the second result of our paper. 

\begin{proof}\ 
By Theorem \ref{highenergy:globalmainthm} applied to the given $W^{1,2}$ connection $\nabla$, we obtain the existence of a sequence of smooth connections $\nabla_k$ that converge to $\nabla$ in the sense of $$d_2(\nabla_k,\nabla)\rightarrow 0.$$

\textit{Step 1}. There exists $r>0$ and a finite good cover $\{B_r^4(x_i)\}$ such that
$$\nabla = d+ A_i\text{ in }B_r^4(x_i)$$
with $$\norm{A_i}{L^2(B_r^4(x_i))}\leq \e_0,$$
where $\e_0>0$ is given by Theorem \ref{holo:mainthm}.
Since $\nabla_k$ converges to $\nabla$ in the sense of $d_2$, it follows that in each ball $B_r^4(x_i)$ with $\nabla_k = d+A_i^k$
we have $F_{A_i^k}^{0,2} = 0$, $A_i^k\rightarrow A_i$ in $W^{1,2}$ and $F_{A_i^k}\rightarrow F_{A_i}$ in $L^2$ as $k\rightarrow \infty$. \\

 By Theorem \ref{holo:mainthm} there exists $r'<r$ and $\sigma_i\in W^{2,p}(B_{r'}^4(x_i), GL_n(\C))$ for any $p<2$ such that  $$A_i^{0,1} = -\delb \sigma_i\cdot \sigma_i.$$
 Moreover, the gauges $\sigma_i$ define a global section $\sigma$.\\

Let $\delta>0$ be the constant in Corollary \ref{holo:stability}. There exists $k_0\geq 0$ so that $\norm{A_i^k - A_i}{W^{1,2}}\leq \delta$ for all $k\geq k_0$. Corollary \ref{holo:stability} applied to each $A_i^k$, $k\geq k_0$ gives the existence of a sequence $$\sigma_i^k\in W^{2,p}(B_{r'/2}^4(x_i), GL_n(\C))$$ of gauges that holomorphically trivialise $A_i^k$:
$$\left(A_i^k\right)^{0,1} = -\delb \sigma_i^k\cdot\left(\sigma_i^k\right)^{-1}$$
with the estimates
\be\label{thm2:estimate}\norm{\sigma_i^k-\sigma_i}{L^{p}(B_{r'/2}^4(x_i))}\leq C\norm{A_i^k - A_i}{W^{1,2}(B_r^4(x_i)}\ee
for some constant $C>0$ and any $p<12$, and for each $q>2$ there exists $C_q>0$ such that
\be\label{thm2:estimate2}\norm{\sigma_i^k-id}{L^{p}(B_{r'/2}^4(x_i))}\leq C_q\left(\norm{A_i^k}{W^{1,2}(B_r^4(x_i)} + \norm{A_i}{W^{1,2}(B_r^4(x_i)}\right)\ee
To abuse notation from now on we will use $r'$ to denote $r'/2$. From \eqref{thm2:estimate}, $\sigma_i^k\rightarrow \sigma_i$ in $L^{p}(B_{r'}^4(x_i))$ for any $p<12$. Moreover, the uniform bound \eqref{thm2:estimate2} gives that the sequence converges strongly in $W^{2,q}$ for any $q<2$. Since limits are unique, we obtain that $\sigma_i^k$ converges strongly to $\sigma_i$ in $W^{2,q}$ for any $q<2$. $A_i^k$ is smooth, it follows that each gauge $\sigma_i^k$ is smooth. \\

Define $h_i^k = \ov{\sigma_i^k}^T\cdot \sigma_i^k$. Then each $h_i^k$ is uniformly bounded in  $W^{2,p}(B_{r'}^4, iU(n))$ and $$\left(A_i^k\right)^{\sigma_i^k} = \left(h_i^k\right)^{-1}\p h_i^k\rightarrow h_i^{-1}\p h_i\qquad \text{ in } W^{1,p}(B_{r'}^4(x_i))\text{ for any }p<2.$$

Moreover, the corresponding curvature forms satisfy $$F_{\left(A_i^k\right)^{\sigma_i^k}} = \left(\sigma_i^k\right)^{-1}\left(F_{A_i^k}\right)\sigma_i^k.$$
Combining the estimate (\ref{thm2:estimate}) with the $L^2$ convergence of the sequence $F_{A_i^k}$, we have
$$F_{\left(A_i^k\right)^{\sigma_i^k}} \rightarrow F_{A_i^{\sigma_i}}\qquad \text{ in } L^p(B_{r'}^4(x_i))\text{ for any }p<2.$$
Since $i$ is arbitrary, $\{h_i^k\}_i$ defines a smooth section $h_k$ of a positive Hermitian bundle. Thus, there exists a smooth holomorphic structure $\mathcal E_k$ such that
$$\nabla_k^{\sigma_k} = \p_0 +\delb_{\mathcal E_k} + h_k^{-1}\p_0 h_k$$
and for any $p<2$ we obtain the required convergence:
$$d_p(\nabla_k^{\sigma_k}, \nabla^\sigma)\rightarrow 0,$$
where $\sigma_k:=\{\sigma_i^k\}_i\in \Gamma (GL_n(\C))$.\\

\textit{Step 2}. It remains to show that there exists bundle isomorphisms $\mathcal H_k$ between the holomorphic bundles $\mathcal E_k$ and $\mathcal E$ such that $\delb_{\mathcal E_k} = \mathcal H_k^{-1}\circ \delb_{\mathcal E}\circ  \mathcal H_k$.\\

Let $i$,$j$ such that there exist gauge transition functions $g_{ij}\in W^{2,2}(B_{r}^4(x_i)\cap B_{r}^4(x_j),U(n))$ and $g_{ij}^k\in C^\infty(B_{r}^4(x_i)\cap B_{r}^4(x_j),U(n))$ satisfying:
$$A_i^{g_{ij}} = A_j\qquad \text{ in }B_{r}^4(x_i)\cap B_{r}^4(x_j)$$
and
$$\left(A_i^k\right)^{g_{ij}^k} = A_j^k\qquad \text{ in }B_{r}^4(x_i)\cap B_{r}^4(x_j).$$
Since $g_{ij}^k\rightarrow g_{ij}$ in $W^{2,2}$ by construction, from \cite{isobe2009topological} there exist $\phi_i^k\in W^{2,2}(B_{r'}^4(x_i),U(n))$  and $\phi_j^k\in W^{2,2}(B_{r'}^4(x_i),U(n))$ such that
$$g_{ij}^k =\left( \phi_i^k\right)^{-1}\cdot  g_{ij} \cdot \phi_j^k.$$
Define $$h_{ij} = \sigma_i^{-1} \cdot g_{ij}\cdot \sigma_j\qquad \text{ and }\qquad h_{ij}^k = \left(\sigma_i^k\right)^{-1}\cdot g_{ij}^k\cdot \sigma_j^k.$$ Since $A_i^{\sigma_i}$, $A_j^{\sigma_j}$, $\left(A_i^k\right)^{\sigma_i^k}$ and $\left(\sigma_j^k\right)^{g_j^k}$ have vanishing $(0,1)$-parts, then $h_{ij}$ and $h_{ij}^k$ are transition functions for the holomorphic bundles $\mathcal E$ and $\mathcal E_k$ respectively and they satisfy
\be\label{thm:preservation}\delb h_{ij} = 0\qquad \text{ and }\qquad  \delb h_{ij}^k = 0\qquad \text{ in }B_{r'}^4(x_i)\cap B_{r'}^4(x_j)
\ee
and $\left(A_i^{\sigma_i}\right)^{h_{ij}} = A_j^{\sigma_j}$, $\left(\left(A_i^k\right)^{\sigma_i^k}\right)^{h_{ij}^k} = \left(A_j^k\right)^{\sigma_j^k}$.\\

Define the functions $\mathcal H_{i}^k := \sigma_i^{-1}\cdot \phi_i^k \cdot \sigma_i^k$ and  $\mathcal H_{j}^k = \sigma_j^{-1}\cdot \phi_j^k\cdot \sigma_j^k$. By construction, we have:
$$h_{ij}^k = \left(\mathcal H_{i}^k \right)^{-1}\cdot h_{ij}\cdot \mathcal H_{j}^k.$$
Thus, $\mathcal H_k = \{\mathcal H_i^k\}_i$ defines a bundle isomorphism and by (\ref{thm:preservation}) preserves the holomorphic structure:
 $$\delb_{\mathcal E_k} = \mathcal H_k^{-1}\circ \delb_{\mathcal E}\circ  \mathcal H_k.$$
 This finishes our proof.
\end{proof}

\section{Appendix}

\subsection*{Linear Operators}

We prove the following results that will be used in Section 4:

\begin{Prop}\label{app:vanishing1}
Let $D$ be a bounded domain in $\R^n$, $\mathfrak g$ a Lie algebra and for $k\in \N$, let $$H: C^\infty(\Omega^1 D\otimes \mathfrak g) \rightarrow C^\infty(\Omega^k D\otimes \mathfrak g)$$ be a linear operator such that $H(A)(x) = H(A(x))$ for all $x\in D$ and $A\in C^\infty(\Omega^1 D\otimes \mathfrak g)$. If for all $U\in C^\infty(D, \mathfrak g)$ we have
\be\label{app:rel1}H(dU)= 0\ee
then $H=0$.
\end{Prop}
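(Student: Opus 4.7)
The hypothesis $H(A)(x)=H(A(x))$ together with linearity of $H$ says that $H$ is effectively a pointwise operator: its value at $x$ depends only on the value of $A$ at $x$. More precisely, writing $c = \sum_{i=1}^n c_i\,dx_i$ for an arbitrary element of the fiber $\Lambda^1\R^n \otimes \mathfrak g$ (with $c_i \in \mathfrak g$), the operator $H$ induces a well-defined fiberwise linear map $H_0 : \Lambda^1\R^n \otimes \mathfrak g \to \Lambda^k\R^n \otimes \mathfrak g$ characterized by $H(A)(x)=H_0(A(x))$ for every smooth $A$ and every $x \in D$. My plan is to show $H_0 \equiv 0$; together with the pointwise characterization this immediately yields $H = 0$.

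The key step is to realize any prescribed fiber element $c$ as the differential of an explicit smooth $\mathfrak g$-valued function on $D$. Fix $c = \sum_{i=1}^n c_i\,dx_i$ with $c_i \in \mathfrak g$ arbitrary, and define
\[
U(x) := \sum_{i=1}^n c_i\, x_i \in C^\infty(D,\mathfrak g).
\]
Then $dU = \sum_i c_i\, dx_i$ is the constant $1$-form equal to $c$ at every point. By the standing hypothesis \eqref{app:rel1} applied to this $U$, we have $H(dU) \equiv 0$ on $D$. On the other hand, the pointwise property gives, for any $x_0 \in D$,
\[
0 = H(dU)(x_0) = H_0\bigl(dU(x_0)\bigr) = H_0(c).
\]

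Since $c \in \Lambda^1\R^n \otimes \mathfrak g$ was arbitrary, $H_0 \equiv 0$. Consequently, for any $A \in C^\infty(\Omega^1 D \otimes \mathfrak g)$ and any $x \in D$, $H(A)(x) = H_0(A(x)) = 0$, proving $H = 0$. There is no real obstacle here; the only subtlety is correctly reading the pointwise hypothesis $H(A)(x) = H(A(x))$ and exploiting the fact that the linear coordinate functions on $\R^n$ provide an explicit primitive realizing every constant $1$-form as a $d U$.
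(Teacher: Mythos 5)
Your proof is correct and is essentially the same as the paper's: both arguments realize an arbitrary fiber value as $dU$ for the linear primitive $U(x)=\sum_i c_i x_i$ and then invoke the pointwise hypothesis $H(A)(x)=H(A(x))$ to conclude $H(A)(x_0)=H(dU)(x_0)=0$. Your explicit introduction of the induced fiberwise map $H_0$ is just a slightly more formal phrasing of the identical idea.
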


Before we prove this statement we remark that the condition $H(A)(x) = H(A(x))$ in general prevents $H$ from being a differential operator acting on forms $A$. Otherwise the statement cannot be true. For example take $H=d$, $H(dU) = 0$ - since $d^2=0$, but $d\neq 0$.

\begin{proof}[Proof of Proposition \ref{app:vanishing1}]
Fix $A \in  C^\infty(\Omega^1 D\otimes \mathfrak g)$, then we can write it as $A = \sum_{i=1}^n a_i dx_i$. Fix $x_0\in D$ arbitrary and define the function $V(x) := \sum_{i=1}^n a_i(x_0)x_i$. Then $dV = A(x_0)$. Moreover, by \eqref{app:rel1} we have
$$H(dV) = H(A(x_0)) = H(A)(x_0)=0.$$
Hence, because $x_0$ is arbitrary, we have that $H(A) = 0$ and since $A$ was an arbitrarily chosen smooth $1$-form, then $H=0$.
\end{proof}

Next, we prove a more general statement than the one above:

\begin{Prop}\label{app:vanishing2} Let $D$ be a bounded domain in $\R^n$, $\mathfrak g$ a Lie algebra and for $k\in \N$, let $$H_0: C^\infty(D, \mathfrak g) \rightarrow C^\infty(\Omega^k D\otimes \mathfrak g)$$
and $$H_1:C^\infty(\Omega^1 D\otimes \mathfrak g) \rightarrow C^\infty(\Omega^k D\otimes \mathfrak g)$$ be linear operators such that $H_0(U)(x) = H_0(U(x))$ for all $x\in D$. If for all $U\in C^\infty(D, \mathfrak g)$ we have
\be\label{app:rel}H_0(U) + H_1(dU) = 0\ee
then $H_0=0$ and $H_1\circ d= 0$.
\end{Prop}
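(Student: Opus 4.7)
The strategy mirrors Proposition \ref{app:vanishing1} but exploits the additional pointwise hypothesis on $H_0$. The relation $H_0(U)(x) = H_0(U(x))$ says that the value of $H_0(U)$ at a point $x$ depends only on $U(x) \in \mathfrak g$, interpreted on the right-hand side as the constant function with that value. This is what lets us separate the contribution of $H_0$ from that of $H_1 \circ d$.

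First, I would test the identity $H_0(U) + H_1(dU) = 0$ against constant functions. For any $c \in \mathfrak g$, set $U \equiv c$; then $dU = 0$, so by linearity $H_1(dU) = H_1(0) = 0$, and the hypothesis forces $H_0(U) = 0$ for every constant function $U \equiv c$.

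Next, I would propagate this vanishing to arbitrary smooth $U$ using the pointwise property. For any $U \in C^\infty(D, \mathfrak g)$ and any $x_0 \in D$, the relation gives
\[
H_0(U)(x_0) = H_0(U(x_0))(x_0) = 0,
\]
since $U(x_0)$ on the right-hand side denotes the constant function with value $U(x_0) \in \mathfrak g$, on which $H_0$ vanishes by the previous step. Since $x_0$ and $U$ were arbitrary, $H_0 \equiv 0$. Substituting $H_0 = 0$ back into the hypothesis immediately yields $H_1(dU) = 0$ for every $U \in C^\infty(D, \mathfrak g)$, which is precisely the claim $H_1 \circ d = 0$.

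There is no serious obstacle; the argument is essentially immediate once one notes that the pointwise hypothesis reduces the problem to evaluating $H_0$ on constants, on which the exact form $dU$ is zero. The role of the structural assumption on $H_0$ is exactly to rule out pathologies such as differential operators (e.g.\ $H_0 = d$) that annihilate constants without being identically zero. Note that the conclusion gives only $H_1 \circ d = 0$ rather than $H_1 = 0$, which is consistent with the absence of any pointwise hypothesis on $H_1$.
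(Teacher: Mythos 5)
Your proof is correct and follows essentially the same route as the paper: evaluate the identity on constant functions (where $dU=0$ kills the $H_1$ term) to get $H_0=0$ on constants, then use the pointwise hypothesis $H_0(U)(x)=H_0(U(x))$ to conclude $H_0\equiv 0$, and finally read off $H_1\circ d=0$ from the relation. Your write-up is in fact slightly more explicit than the paper's about why the $H_1(dU)$ term drops out for constant $U$.
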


\begin{proof}[Proof of Proposition \ref{app:vanishing2}]
Fix $U\in C^\infty(D,\mathfrak g)$ and $x\in D$. Define $V_x: = U(x)$ a constant function. Then by \eqref{app:rel}, we have
$$H_0(V_x) = H_0(U(x))= H_0(U)(x) = 0.$$
Since $x$ is arbitrary in $D$, then we have that $H_0(U)=  0$. Since $U$ is an arbitrarily chosen smooth function, then $H_0 = 0$. Hence, from this and \eqref{app:rel}, we also obtain $H_1\circ d = 0$. This concludes the proof.
\end{proof}

\subsection*{Sobolev Estimates}

In this section of the Appendix, we will prove a few results that help us bootstrap certain $\delb$-equations. These results have been heavily used in order to prove the regularity of sections which yield a holomorphic structure over a given closed K\"ahler surface. 

\begin{Lm}\label{bootstrapcriticalgauge}
Let $\e>0$ be a small constant, $D$ a domain holomorphically embedded in $\CP^2$, a gauge $g\in L^4(D, M_n(\C))$ and $\omega\in W^{1,2}(\Omega^{0,1} D\otimes M_n(\C))$, $\norm{\omega}{W^{1,2}(D)}\leq \e$ such that $\omega$ satisfies the integrability condition (\ref{II.1}), the equation:
$$\delb g=-\omega\cdot g$$ 
is solved in a distributional sense and $\norm{g-id}{L^4(D)}\leq C\norm{\omega}{W^{1,2}(D)}$.
Then there exists a subdomain $D_0\subseteq D$, and for each $q\in (1,2)$, a constant $C_q>0$ such that: $$\norm{g-id}{W^{2,q}(D_0)}\leq C_q\norm{\omega}{W^{1,2}(D)}.$$
\end{Lm}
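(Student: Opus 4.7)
The plan is to set $u = g - id$ and bootstrap the regularity of $u$ via the elliptic PDE satisfied by $g$, exploiting crucially the $\delb$-closedness of the right-hand side of the equation. The equation $\delb g = -\omega g$ becomes $\delb u = -\omega - \omega u$, and a direct computation using the integrability condition $\delb\omega = -\omega\wedge\omega$ gives $\delb(\omega + \omega u) = 0$ in $D$. This $\delb$-closedness is what will allow me to use $\delb$-inverse operators with a full one-derivative Sobolev gain in each iteration.

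First, I would localize to a small ball $B \Subset D$ in which the K\"ahler structure is nearly flat (metric perturbations are l.o.t.); the identity $\delbs\delb = \tfrac{1}{2}\Delta$ on functions then converts the equation into the elliptic PDE $\Delta g = -2\delbs(\omega g)$ modulo controllable errors. I would then perform a Calder\'on--Zygmund bootstrap on nested subdomains $B \supset B_1 \supset B_2$: the initial data $\omega, u \in L^4$ give $\omega g \in L^2$, whence $\delbs(\omega g) \in W^{-1,2}$ and $u \in W^{1,2}(B_1)$, with linear dependence on $\norm{\omega}{W^{1,2}(D)}$ after absorbing the nonlinear $\omega u$ term via the smallness hypothesis. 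The Kato--Ponce product estimate then yields $\omega g \in W^{1,4/3}(B_1)$, and a second Calder\'on--Zygmund step upgrades to $u \in W^{2,4/3}(B_2)$, still linearly in $\norm{\omega}{W^{1,2}(D)}$. This already covers all $q \in (1, 4/3]$ by inclusion on bounded subdomains.

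For $q \in (4/3, 2)$, I would combine the $\delb$-closedness of the right-hand side with the Henkin--Ramirez solution operator $S$ for $\delb$ on the strictly pseudoconvex ball $B_2$, which solves $\delb v = f$ for $\delb$-closed $f$ with $\norm{Sf}{W^{k+1,p}(B')} \leq C\norm{f}{W^{k,p}(B_2)}$ for every $1 < p < \infty$, $k \geq 0$, and $B' \Subset B_2$. The decomposition $u = S(-\omega - \omega u) + h$ with $h$ holomorphic (hence $C^\infty$ on smaller subdomains, with all Sobolev norms controlled by any $L^1$-norm on $B_2$) reduces the regularity of $u$ to that of $Sf$. To close the iteration for each $q<2$, I would approximate $\omega$ by smooth $\omega_k \to \omega$ in $W^{1,2}$, solve the corresponding equations for smooth $g_k$, and establish uniform $W^{2,q}$ bounds via a Neumann-series expansion $g_k = id - S\omega_k - S(\omega_k\, S\omega_k) - \cdots$ convergent in $W^{2,q}$ by the smallness of $\omega$, then pass to the limit using weak compactness.

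The hard part will be precisely the step into $q \in (4/3, 2)$: since $\omega \in W^{1,2}$ is critical in real dimension $4$ (just failing $L^\infty$), the naive product-estimate bootstrap saturates at $W^{2,4/3}$ and cannot reach $W^{2,2}$; indeed the constant $C_q$ is forced to blow up as $q \to 2^-$, reflecting this critical behavior. The delicate technical task is to leverage $\delb$-closedness to invoke the Henkin operator's full one-derivative gain, together with fine multiplicative estimates in fractional Sobolev or Triebel--Lizorkin scales on shrinking subdomains, so as to close the iteration individually for each $q < 2$.
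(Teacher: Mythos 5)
Your first stage (the Calder\'on--Zygmund bootstrap taking $g-id$ from $L^4$ to $W^{1,2}$ and then to $W^{2,4/3}$, using $\delb g=-\omega g$ and H\"older) is correct and matches the paper's opening step, which likewise first establishes $\norm{g-id}{W^{1,2}(D_0)}\leq C\norm{\omega}{W^{1,2}(D)}$ and then passes to the second-order elliptic equation $\delbs\delb g=-\vartheta\omega\cdot g-\ast(\ast\omega\wedge\p g)$. The problem is the decisive step $q\in(4/3,2)$, where your proposal has a genuine gap. The obstruction there is not the regularity of the $\delb$-solution operator but the bilinear term itself: with $\omega\in W^{1,2}$ and $u=g-id\in W^{2,4/3}\hookrightarrow W^{1,2}\hookrightarrow L^4$, the best any product estimate can give is $\nabla(\omega u)=\nabla\omega\cdot u+\omega\cdot\nabla u\in L^{4/3}$ (each factor pairing is $L^2\times L^4$), so the right-hand side $-\omega-\omega u$ lies only in $W^{1,4/3}$ and a full one-derivative gain of a Henkin--Ramirez type operator returns you to $W^{2,4/3}$ --- the iteration saturates exactly where you say it does, and no refinement in Triebel--Lizorkin scales can fix this, since $W^{1,2}\cdot W^{1,2}\subset W^{1,q}$ genuinely fails for $q>4/3$ in real dimension $4$. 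The $\delb$-closedness of the right-hand side and the Neumann-series device do not supply the missing integrability either (note also that $\omega$ itself is not $\delb$-closed, so the individual terms $S\omega$, $S(\omega S\omega),\dots$ are not produced by the solution operator acting on closed forms, and the asserted $W^{k,p}\to W^{k+1,p}$ boundary estimate for all $k,p$ is not available in the literature in that generality; only the interior version, which is just elliptic regularity for $\Delta=2\delbs\delb$, is safe).

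What the paper actually does at this point is qualitatively different and is the idea your plan is missing: it invokes Proposition \ref{bootstrapmorrey2}, an $\epsilon$-regularity/Morrey-decay argument. One compares $u$ on small balls with the harmonic extension, uses monotonicity of $r^{-4}\int_{B_r}(|v|^4+|\nabla v|^2)$ for harmonic $v$ together with the smallness of $\norm{\omega}{W^{1,2}}$ on small balls to derive the decay $\int_{B_{\rho/4}}(|u|^4+|\nabla u|^2)\leq\tfrac12\int_{B_\rho}(|u|^4+|\nabla u|^2)$, hence a uniform Morrey bound $\rho^{-\gamma}\int_{B_\rho}|\Delta u|^{4/3}<\infty$; Adams' embedding for the Riesz potential $I_1$ on Morrey spaces then yields $\nabla u\in L^p_{loc}$ for some $p>2$, after which the equation is subcritical and the classical bootstrap gives $u\in W^{2,q}_{loc}$ for every $q<2$ with the linear bound. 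Without this (or an equivalent device for breaking the criticality of $W^{1,2}$ in dimension $4$), your argument cannot reach any $q>4/3$.
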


\begin{proof}[Proof of Lemma \ref{bootstrapcriticalgauge}] We start with a bootstrapping procedure. Firstly, let $D_0$ be a slightly smaller subdomain of $D$ such that we obtain the existence of a constant $C>0$ and the following inequality holds:
$$\norm{g-id}{W^{1,2}(D_0)}\leq C\left(\norm{\delb g}{L^2(D)} + \norm{g-id}{L^2(D)}\right)\leq C\left(\norm{\omega}{L^4(D)}\norm{g}{L^4(D)} + \norm{g-id}{L^2(D)}\right).$$
By using the embedding of $W^{1,2}$ into $L^4$ in $4$-dimensions, it follows that for some constant $C>0$, we have:
$$\norm{g-id}{W^{1,2}(D_0)}\leq C\left(\norm{\omega}{W^{1,2}(D)}\norm{g-id}{L^4(D)} +\norm{\omega}{W^{1,2}(D)} +  \norm{g-id}{L^2(D)}\right).$$
Hence, by $\norm{g-id}{L^4(D)}\leq C\norm{\omega}{W^{1,2}(D)}$, we obtain:
$$\norm{g-id}{W^{1,2}(D_0)}\leq C\norm{\omega}{W^{1,2}(D)}.$$
Once we have obtained the $W^{1,2}$ estimate on $D_0$, we can proceed to bootstrapping to $W^{2,q}$ regularity. We can apply $\delbs$ to the $\delb$-equation to obtain the elliptic PDE:
$$\delbs\delb g = -\vartheta(\omega\cdot g) = -\vartheta \omega \cdot g - \ast(\ast \omega \wedge \p g),$$
which holds in a distributional sense. Since $\delbs\delb$ is equal to the Hodge Laplacian $d^\ast d$ acting on functions, we can apply Proposition \ref{bootstrapmorrey2} and obtain that $g\in W_{loc}^{2,q}(D_0, M_n(\C)$ for all $q<2$ and for each $q<2$ there exists a constant $C_q>0$ such that \be\label{app:gest}\norm{g}{W_{loc}^{2,q}(D_0)}\leq C_q\norm{\omega}{W^{1,2}(D_0)}.\ee Without loss of generality, we can assume $g\in W^{2,q}(D_0, M_n(\C)$ for all $q<2$, otherwise we pick a slightly smaller domain than $D_0$. It remains to show the required bound. Since $\delbs\delb$ is elliptic, we have the a-priori estimate:
\be\label{app:elliptic}\norm{g-id}{W^{2,q}(D_0)}\leq C\left(\norm{\delbs\delb g}{L^q(D_0)} + \norm{g-id}{L^q(D_0)}\right),\ee
for any $q\in (1,2)$. We estimate $\delbs\delb g$:
$$\norm{\delbs\delb g}{L^q(D_0)}\leq \norm{\nabla \omega}{L^2(D_0)}\norm{g}{L^{2q/(2-q)}(D_0)} + \norm {\omega}{L^4(D_0)}\norm{\nabla g}{L^{4q/(4-q)}(D_0)}.$$
Since $W^{1,2}$ embeds into  $L^4$, it follows that
$$\norm{\delbs\delb g}{L^q(D_0)}\leq  C\norm{\omega}{W^{1,2}(D)}\left(\norm{g}{L^{2q/(2-q)}(D_0)} + \norm{\nabla g}{L^{4q/(4-q)}(D_0)}\right),$$
for some constant $C>0$. 
Moreover, $W^{2,q}$ embeds into $L^{2q/(2-q)}$ and $W^{1,q}$ embeds into $L^{4q/(4-q)}$. Using these embeddings, there exists a constant $C_q>0$ and $C>0$ such that:
$$\norm{\delbs\delb g}{L^q(D_0)}\leq C_q\norm{\omega}{W^{1,2}(D)}\norm{g}{W^{2,q}(D_0)} +C\norm{\omega}{W^{1,2}(D)}.$$
By the bound \eqref{app:gest}, it follows that there exists $C_q>0$:
$$\norm{\delbs\delb g}{L^q(D_0)}\leq C_q\norm{\omega}{W^{1,2}(D)}\norm{\omega}{W^{1,2}(D_0)} +C\norm{\omega}{W^{1,2}(D)}$$
and since $\norm{\omega}{W^{1,2}(D)}\leq \e<1$, then for some constant $C_q>0$, we have
$$\norm{\delbs\delb g}{L^q(D_0)}\leq C_q\norm{\omega}{W^{1,2}(D)}.$$
Putting this together with the fact that $\norm{g-id}{L^q(D)}\leq \norm{g-id}{L^4(D)}\leq  C\norm{\omega}{W^{1,2}(D)}$ and (\ref{app:elliptic}), there exists a constant $C_q>0$ such that 
$$\norm{g-id}{W^{2,q}(D_0)}\leq C_q\norm{\omega}{W^{1,2}(D)}.$$
Since $q\in(1,2)$ is arbitrary, we have proven the result.
\end{proof}

\begin{Rm}\label{bootstrapsubcritical}\ \\
\begin{enumerate}
\item[(i)] In the statement above, if $D=\CP^2$, then $D_0=D=\CP^2$. This is the case because $\delb$ is elliptic on $\CP^2$. 
\item[(ii)] Assume instead of $\norm{g-id}{L^4(D)}\leq C\norm{\omega}{W^{1,2}(D)}$, the slightly perturb inequality: $$\norm{g-id}{L^4(D)}\leq C\norm{\omega}{W^{1,2}(D)} +C_0,$$ where $C_0>0$ is a small constant. We can conclude from the proof of the statement that all arguments pass through and we can reach the natural conclusion: $$\norm{g-id}{W^{2,q}(D_0)}\leq C\left(\norm{\omega}{W^{1,2}(D)} +C_0\right)\qquad\text{ for all }q<2.$$
\item[(iii)] If we  higher regularity of $\omega$, we can obtain similar estimates using classical elliptic regularity results. Let $p>2$, a $(0,1)$-form $\omega\in W^{1,p}(\Omega^{0,1}D\otimes M_n(\C))$, satisfying a smallness condition and the integrability condition (\ref{II.1}). Moreover, $$\norm{g-id}{L^\infty(D)}\leq C\norm{\omega}{W^{1,p}(D)}.$$ Then we can bootstrap the equation solved by $g$ to show that $g\in W_{loc}^{2,p}$ with the expected estimate:
$$\norm{g-id}{W_{loc}^{2,p}(D)}\leq C_p\norm{\omega}{W^{1,p}(D)}.$$
This means that there exists a domain $D_0\subseteq D$ such that
$\norm{g-id}{W^{2,p}(D_0)}\leq C_p\norm{\omega}{W^{1,p}(D)}.$
\end{enumerate}

\end{Rm}
\vspace{4mm}

We can start proving two bootstrap procedures for two types of PDE-s. The general technique is to use show the boundedness of Morrey norms in order to bootstrap beyond the critical embedding level. We use the ideas from \cite{tristan2014yangmills}.

\begin{Prop} \label{bootstrapmorrey2}
Let $N\in \N^\ast$, $A\in W^{1,2}(B^4,\C^N)$ and $f_A\in C^\infty(\C^N,\C^N)$ such that there exists $C>0$ satisfying:
$$|f_A(\xi)|\leq C|\xi||\nabla A|+|A||\nabla \xi|$$
and $u\in W^{1,2}(B^4, \R^N)$ solving the equation:
$$\Delta u = f_A(u)$$
in a distributional sense, then $u\in W_{loc}^{2,p}(B^4, \C)$ for any $p<2$ and for any $p<2$ there exists $C_p>0$ such that $\norm{u}{W_{loc}^{2,p}(B^4)}\leq C_p\norm{A}{W^{1,2}(B^4)}$.
\end{Prop}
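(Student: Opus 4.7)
The result is a critical interior regularity statement for weak $W^{1,2}$ solutions of the first-order semilinear equation $\Delta u = f_A(u)$ on the four-ball. Its criticality is apparent from Sobolev scaling: the hypothesis $|f_A(u)| \leq C(|u||\nabla A| + |A||\nabla u|)$, together with $u, A \in L^4_{\mathrm{loc}}$ and $\nabla u, \nabla A \in L^2$, only produces $f_A(u) \in L^{4/3}_{\mathrm{loc}}$, which by Calder\'on--Zygmund yields $u \in W^{2,4/3}_{\mathrm{loc}}$ --- a space with the same scaling as $W^{1,2}$ in dimension four, so the naive bootstrap stalls exactly at the critical threshold. To cross it, the plan is to exploit the smallness of $\norm{\nabla A}{L^2(B_r)}$ and $\norm{A}{L^4(B_r)}$ on small balls via a Campanato--Morrey decay argument.

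First, by absolute continuity of $\int|\nabla A|^2$ and $\int|A|^4$, there is a nondecreasing modulus $\omega\colon (0,1]\to (0,\infty)$ with $\omega(r)\to 0$ as $r\to 0^+$, depending only on $A$, such that
$$\sup_{x_0 \in B_{3/4}^4}\Bigl(\norm{\nabla A}{L^2(B_r(x_0))}^2 + \norm{A}{L^4(B_r(x_0))}^2\Bigr) \leq \omega(r)$$
for every $0<r\leq r_0$. Fix $x_0 \in B_{1/2}^4$ and $r\leq r_0$, and perform a Hodge-type splitting $u = v_r + h_r$ on the ball $B_r(x_0)$, with $v_r \in W_0^{1,2}(B_r(x_0))$ solving $\Delta v_r = f_A(u)$ and $h_r$ harmonic on $B_r(x_0)$ with $h_r = u$ on $\partial B_r(x_0)$. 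Duality together with the Sobolev embedding $W_0^{1,2}(B_r)\hookrightarrow L^4(B_r)$ and the hypothesis on $f_A$ give
$$\norm{\nabla v_r}{L^2(B_r(x_0))} \leq C\norm{f_A(u)}{L^{4/3}(B_r(x_0))} \leq C\sqrt{\omega(r)}\,\Bigl(\norm{u}{L^4(B_r(x_0))} + \norm{\nabla u}{L^2(B_r(x_0))}\Bigr),$$
while subharmonicity of $|\nabla h_r|^2$ produces the classical decay $\int_{B_{\tau r}(x_0)}|\nabla h_r|^2 \leq C\tau^4\int_{B_r(x_0)}|\nabla h_r|^2$ for every $\tau \in (0,1/2]$.

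Adding the two pieces and writing $\phi(\rho) := \int_{B_\rho(x_0)}|\nabla u|^2$ yields a Campanato-type recurrence of the form
$$\phi(\tau r) \leq \bigl(C\omega(r) + C\tau^4\bigr)\phi(r) + C\omega(r)\,\norm{u}{W^{1,2}(B^4)}^2.$$
Fix $\alpha \in (0,1)$, choose $\tau$ small so that $C\tau^4 \leq \tfrac14\tau^{2\alpha}$, and then shrink $r_0$ so that $C\omega(r_0) \leq \tfrac14\tau^{2\alpha}$; a standard Campanato iteration lemma --- whose inhomogeneous term is summed as a geometric series with tail controlled by $\omega(\tau^k r)$ and transferred to algebraic decay using the monotonicity of $\omega$ --- then delivers
$$\int_{B_\rho(x_0)}|\nabla u|^2 \leq C_\alpha\,\rho^{2\alpha} \qquad\text{for every } \alpha<1,$$
uniformly in $x_0 \in B_{1/2}^4$. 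Equivalently, $\nabla u$ belongs to the Morrey space $L^{2,2\alpha}_{\mathrm{loc}}(B^4)$ for every $\alpha < 1$; by Adams' Morrey--Sobolev embedding this upgrades to $u \in L^q_{\mathrm{loc}}$ for every $q < \infty$ and $\nabla u \in L^s_{\mathrm{loc}}$ for every $s < 4$.

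With these improved integrabilities at hand, H\"older's inequality gives, for every $p<2$ and every ball $B \Subset B^4$,
$$\norm{f_A(u)}{L^p(B)} \leq \norm{u}{L^{2p/(2-p)}(B)}\norm{\nabla A}{L^2(B)} + \norm{A}{L^4(B)}\norm{\nabla u}{L^{4p/(4-p)}(B)} < \infty,$$
and the classical interior Calder\'on--Zygmund estimate $\norm{u}{W^{2,p}(B')} \leq C\bigl(\norm{\Delta u}{L^p(B)} + \norm{u}{L^p(B)}\bigr)$ on nested balls $B' \Subset B$ closes the argument with the asserted bound. The main obstacle is the passage from the qualitative smallness $\omega(r)\to 0$ to the quantitative power-law Morrey decay above: because the inhomogeneous term in the recurrence does not vanish polynomially with $r$, the iteration must be run delicately, extracting a geometric rate from the combination of the harmonic decay $\tau^4$ and the quenched coefficient $\omega(r)$, so as to recover $\rho^{2\alpha}$ for every $\alpha<1$, which is exactly the sub-critical regime needed to feed back into the final Calder\'on--Zygmund step.
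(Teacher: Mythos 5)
Your overall strategy is the one the paper uses: absolute continuity of $\int|\nabla A|^2+\int|A|^4$ to get smallness on small balls, a splitting of $u$ on $B_r(x_0)$ into a harmonic part plus a Newtonian part with zero boundary data, the sub-mean-value decay for the harmonic part, a Morrey-type decay for $\nabla u$, Adams' embedding, and a final Calder\'on--Zygmund step. However, there is a genuine gap exactly at the point you flag as ``the main obstacle''. Because you bound $\norm{u}{L^4(B_r(x_0))}$ by the \emph{global} norm $\norm{u}{W^{1,2}(B^4)}$, your recurrence has the form $\phi(\tau r)\le \theta\,\phi(r)+C\,\omega(r)M$ with $\theta\le\tfrac12\tau^{2\alpha}$ but with an inhomogeneous term that decays only like the qualitative modulus $\omega$. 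Iterating gives
$$\phi(\tau^k r_0)\ \le\ \theta^k\phi(r_0)\ +\ CM\sum_{j=0}^{k-1}\theta^{j}\,\omega(\tau^{k-1-j}r_0),$$
and splitting the sum at $j=k/2$ leaves a tail of size $C\,\omega(\tau^{k/2}r_0)$, which tends to $0$ but with \emph{no rate}. So the ``standard Campanato iteration lemma'' does not deliver $\phi(\rho)\le C_\alpha\rho^{2\alpha}$; it only delivers $\phi(\rho)=o(1)$. Since your final step genuinely needs $u\in L^q_{loc}$ for all $q<\infty$ and $\nabla u\in L^s_{loc}$ for all $s<4$ (the exponents $2p/(2-p)$ and $4p/(4-p)$ blow up to $\infty$ and $4$ as $p\to 2$), the argument as written does not close.

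There are two standard repairs, and the paper's proof implements the first. (i) Keep $\norm{u}{L^4(B_r(x_0))}$ \emph{local} and track the combined quantity $\Phi(r)=\int_{B_r(x_0)}|u|^4+|\nabla u|^2$ (this is exactly what the paper does): then $\norm{f_A(u)}{L^{4/3}(B_r)}\lesssim\sqrt{\omega(r)}\,(\norm{u}{L^4(B_r)}+\norm{\nabla u}{L^2(B_r)})$ feeds back into $\Phi(r)$ itself and the recurrence becomes homogeneous, $\Phi(\tau r)\le(C\tau^{2}+C\omega(r))\Phi(r)$, with no inhomogeneous term; this yields a genuine power decay $\Phi(\rho)\le C\rho^{\gamma}$ for \emph{some} $\gamma>0$ (the paper settles for the fixed factor $\tfrac12$ per dyadic step). (ii) Do not insist on all $\alpha<1$: any single $\gamma>0$ already makes $\Delta u$ lie in an $L^{4/3}$-Morrey space with positive exponent, so Adams' embedding gives $\nabla u=I_1(\Delta u)\in L^{p}_{loc}$ for \emph{some} $p>2$; from there the equation is strictly subcritical and an ordinary Calder\'on--Zygmund/Sobolev bootstrap (not a single pass) climbs to $u\in W^{2,p}_{loc}$ for every $p<2$. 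Either fix is routine, but one of them is needed; as stated, the claim that the quenched-coefficient iteration with a rate-free $\omega$ produces $\rho^{2\alpha}$ for every $\alpha<1$ is false.
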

\begin{proof}[Proof of Proposition \ref{bootstrapmorrey2}]
Dimension 4 is critical in this case because $W^{2,4/3} \hookrightarrow L^4$ and we cannot directly bootstrap. In order to improve on the regularity of $u$, we will use the Adams-Morrey embedding.\\

\textbf{Claim}. $\exists \gamma>0$ such that
$$\sup_{x_0\in B_{1/2}^4(0),\ 0<\rho<1/4} \rho^{-\gamma} \int_{B_\rho^4(x_0)} |u|^4+|\nabla u|^2 dx^4<\infty$$

Let $\e>0$ to be fixed later. There exists $\rho_0>0$ such that:
$$\sup_{x_0\in B_{1/2}^4(0),\ 0<\rho<\rho_0} \norm{A}{W^{1,2}(B_\rho^4(x_0))}<\varepsilon.$$

We can always find such  $\e$ and $\rho_0$ since $\rho\mapsto  \int_{B_{\rho}(x_0)}$ is continuous.
Fix $x_0\in B_{1/2}^4(0)$ and $\rho<\rho_0$ arbitrary. To prove this claim we first consider :
$$\begin{array}{rcll}
\Delta \varphi & = & f_A(u)&\text{ in } B_\rho^4(x_0)\\
\varphi & = &0&\text{ on }\p B_\rho^4(x_0)
\end{array}$$
Let $v:= u - \varphi$. Then $\Delta v = 0$ and it easy to see that
$\Delta |v|^4\geq 0, \text { and } \Delta|\nabla v|^2\geq 0$
in $B_{\rho}(x_0)$. Applying the divergence theorem, we get that $\forall r<\rho$:
$$\int_{\p B_r^4(x_0)} \cfrac{\p |v|^4}{\p r} \geq 0, \text{ and }\int_{\p B_r^4(x_0)} \cfrac{\p |\nabla v|^2}{\p r} \geq 0.$$
These inequalities imply that:
$$\cfrac{d}{dr} \left[ \cfrac{1}{r^4} \int_{B_r^4(x_0)} |v|^4 dx^4 \right]\geq 0\text { and } \cfrac{d}{dr} \left[ \cfrac{1}{r^4} \int_{B_r^4(x_0)} |\nabla v|^2 dx^4 \right]\geq 0.$$
Since these derivatives are non-negative, it follows that the functions $r\mapsto \cfrac{1}{r^4} \int_{B_r^4(x_0)} |v|^4 dx^4$  and $r\mapsto \cfrac{1}{r^4} \int_{B_r^4(x_0)} |\nabla v|^2 dx^4$ are increasing in $r$. In particular:
$$\int_{B_{\rho/4}^4(x_0)} |v|^4 dx^4 \leq 4^{-4} \int_{B_\rho^4(x_0)} |v|^4 dx^4\quad\text{ and }\quad\int_{B_{\rho/4}^4(x_0)} |\nabla u|^2 dx^4 \leq 4^{-4} \int_{B_\rho^4(x_0)} |\nabla u|^2 dx^4.$$
Using these decays, we can bound $\int_{B_{\rho/4}^4(x_0)} | u|^4 dx^4$ and $\int_{B_{\rho/4}^4(x_0)} | \nabla u|^2 dx^4$ as such:
\be\label{app:u}
\begin{array}{lcl}
\int_{B_{\rho/4}^4(x_0)} | u|^4 dx^4 & \leq & 8 \int_{B_{\rho/4}^4(x_0)} | v|^4  + | \varphi|^4 dx^4\\
& \leq & 2^{-5} \int_{B_{\rho}^4(x_0)} | v|^4 dx^4 + 8\int_{B_{\rho}^4(x_0)} | \varphi|^4 dx^4\\
& \leq & 2^{-2} \int_{B_{\rho}^4(x_0)} | u|^4 dx^4 + 16 \int_{B_{\rho}^4(x_0)} | \varphi|^4 dx^4
\end{array}
\ee
Similarly, for $\nabla u$ we get the bound:
\be\label{app:nablau}
\int_{B_{\rho/4}^4(x_0)} | \nabla u|^2 dx^4 \leq 2^{-6} \int_{B_{\rho}^4(x_0)} | \nabla u|^2 dx^4 + 4 \int_{B_{\rho}}^4(x_0) | \nabla \varphi|^2 dx^4.
\ee
There exists a constant $C>0$ so that we can bound $\Delta \varphi$ in the $L^{4/3}$ norm:
$$\begin{array}{lll}
||\Delta \varphi||_{L^{4/3}(B_{\rho}^4(x_0))} &=& ||f_A(u)||_{L^{4/3}(B_{\rho}^4(x_0))}\\ &\leq& C\left(\norm{A}{L^4(B_{\rho}^4(x_0))} \norm{\nabla u}{L^2(B_{\rho}^4(x_0))} + \norm{\nabla A}{L^2(B_{\rho}^4(x_0))}\norm{u}{L^4(B_{\rho}^4(x_0))}\right).
\end{array}$$
Since $\varphi$ vanishes on the boundary, by Calderon-Zygmund inequality \cite{stein2016singular}, it follows that
$$\norm{\varphi}{W^{2,4/3}(B_{\rho}^4(x_0))}\leq C\left(\norm{A}{L^4(B_{\rho}^4(x_0))} \norm{\nabla u}{L^2(B_{\rho}^4(x_0))} + \norm{\nabla A}{L^2(B_{\rho}^4(x_0))} \norm{u}{L^4(B_{\rho}^4(x_0))} \right),$$
for some constant $C>0$. Moreover, the Sobolev embedding $W^{1,2}\hookrightarrow L^4$ gives:
\be\label{app:43norm}\norm{\varphi}{W^{2,4/3}(B_{\rho}^4(x_0))}\leq C\norm{A}{W^{1,2}(B_{\rho}^4(x_0))}\left(\norm{\nabla u}{L^2(B_{\rho}^4(x_0))}+ \norm{u}{L^4(B_{\rho}^4(x_0))} \right),\ee Thus, combining (\ref{app:43norm}) with the inequalities (\ref{app:u}) and (\ref{app:nablau}), we obtain the decay: 
$$
\int_{B_{\rho/4}^4(x_0)} |u|^4+|\nabla u|^2 dx^4  \leq  \left(2^{-2} + C_0\norm{A}{W^{1,2}(B_{\rho}^4(x_0))}\right)\int_{B_\rho^4(x_0)} |u|^4+|\nabla u| ^2 dx^4
$$
for some constant $C_0>0$. We can choose $\e>0$ so that $C_0\e^4\leq 2^{-2}$ to get:
\be\label{app:dec}\int_{B_{\rho/4}^4(x_0)} |u|^4 + |\nabla u|^2 dx^4  \leq  2^{-1}\int_{B_\rho^4(x_0)} |u|^4+|\nabla u| ^2 dx^4.\ee
This estimate gives the required existence of $\gamma>0$, and proves the claim.\\

It remains to prove the main regularity result using the claim. From the equation satisfied by $u$ and the decay inequality (\ref{app:dec}), we obtain the bound:
$$\sup_{x_0\in B_{1/2}^4(0),\ 0<\rho<1/4} \rho^{-\gamma} \int_{B_\rho^4(x_0)} |\Delta u|^{4/3} dx^4<\infty$$

By Adams-Morrey embedding, we get a bound on $||I_1 \Delta u||_{L^p(B_{1/2}^4(0))}$, $p>2$ where $I_1$ is the Riesz potential (see \cite{adams1975note}). We obtain $\nabla u\in{L^p_{loc}} (B^4,\C)$ for $p>2$. Hence, the PDE becomes sub-critical and we can bootstrap to get $u\in W^{2,p}(B^4,\C)$ for any $p<2$.
\end{proof}
\vspace{4mm}

\begin{Prop} \label{bootstrapmorrey}
Let $A\in L^4(B^4,\C)$ and $f_A\in C^\infty(B^4,\C)$ such that there exists $C>0$ satisfying:
$$|f_A(\xi)|\leq C|\xi|^2+|A||\xi|$$
and $u\in W^{2,2}(B^4, \R^N)$ satisfying
$$\Delta u = f_A(\nabla u)$$
then $u\in W_{loc}^{2,p}(B^4, \C)$ for any $p<4$ and $\norm{u}{W_{loc}^{2,p}}\leq C_p \norm{A}{L^4(B^4)}$ where $C_p$ is a constant.
\end{Prop}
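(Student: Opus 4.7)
The plan is to adapt the argument of Proposition \ref{bootstrapmorrey2} to the present setting, with $\nabla u$ taking the place that $u$ occupied there. The scale-invariant quantity in dimension $4$ is $\int_{B_\rho(x_0)}|\nabla u|^4$, and the goal is to show first that there exists $\gamma>0$ with
\[
\sup_{x_0\in B_{1/2}^4,\ 0<\rho<1/4}\rho^{-\gamma}\int_{B_\rho^4(x_0)}|\nabla u|^4\,dx<\infty.
\]
To this end I would fix a small threshold $\e>0$ and, by absolute continuity of the $L^4$-integrals of $\nabla u$ and $A$, choose $\rho_0>0$ so that $\|\nabla u\|_{L^4(B_\rho(x_0))}+\|A\|_{L^4(B_\rho(x_0))}<\e$ on every ball of radius $\rho<\rho_0$ centred in $B_{1/2}^4$. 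On such a ball, decompose $u=v+\varphi$ where $v$ is harmonic with $v|_{\partial B_\rho}=u|_{\partial B_\rho}$ and $\varphi\in W_0^{2,2}(B_\rho)$ solves $\Delta\varphi=f_A(\nabla u)$ with zero Dirichlet data.

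For the harmonic part, Bochner's identity yields $\Delta|\nabla v|^2\ge 0$, so $|\nabla v|^4$ is subharmonic and the monotonicity of its spherical averages gives $\int_{B_{\rho/4}}|\nabla v|^4\le 4^{-4}\int_{B_\rho}|\nabla v|^4$. For the error $\varphi$, Calder\'on--Zygmund combined with the dimension-$4$ embedding $W^{2,2}\hookrightarrow W^{1,4}$ produces
\[
\|\nabla\varphi\|_{L^4(B_\rho)}\le C\|f_A(\nabla u)\|_{L^2(B_\rho)}\le C\bigl(\|\nabla u\|_{L^4(B_\rho)}^2+\|A\|_{L^4(B_\rho)}\|\nabla u\|_{L^4(B_\rho)}\bigr).
\]
Using $\nabla u=\nabla v+\nabla\varphi$ and the smallness hypothesis to absorb the quadratic and cross terms, this yields a genuine decay of the form
$\int_{B_{\rho/4}}|\nabla u|^4\le 4^{-\beta}\int_{B_\rho}|\nabla u|^4$
for some $\beta\in(0,4)$; a dyadic iteration of this inequality delivers the Morrey estimate with $\gamma=\beta$.

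Once the Morrey bound is established, the right-hand side $f_A(\nabla u)$ inherits a Morrey control of $\Delta u$ strictly finer than pure $L^2$, and the Adams--Morrey embedding applied to the Riesz potential $I_1(\Delta u)$ upgrades $\nabla u$ from $L^4$ to $L^{4+\delta}$ for some $\delta>0$. From this supercritical foothold a standard elliptic bootstrap using Calder\'on--Zygmund and the Sobolev chain in dimension $4$ pushes $u$ into $W^{2,p}_{loc}$ for every $p<4$, with the announced control by $\|A\|_{L^4}$. The main obstacle is precisely the critical scaling of the term $|A||\nabla u|$: since $A\in L^4$ sits at the borderline, any naive bootstrap based only on Lebesgue--Sobolev exponents stalls because $p\mapsto 4p/(p+4)$ fixes $4$, and it is the quantitative Morrey decay from the hole-filling step that breaks this cycle. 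That same critical term is also what prevents $\nabla u$ from ever reaching $L^\infty$, explaining the strict inequality $p<4$ in the conclusion.
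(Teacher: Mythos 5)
Your proposal is correct and follows essentially the same route as the paper: the same harmonic-plus-Dirichlet decomposition on small balls, the same subharmonicity/monotonicity decay for $|\nabla v|^4$, the same Calder\'on--Zygmund control of $\nabla\varphi$ in $L^4$ via $\|f_A(\nabla u)\|_{L^2}$, and the same Adams--Morrey/Riesz-potential step to push $\nabla u$ above $L^4$ before the standard bootstrap. If anything, your explicit inclusion of the smallness of $\|\nabla u\|_{L^4(B_\rho)}$ on small balls (needed to absorb the quadratic term $\|\nabla u\|_{L^4}^8$) is a point the paper leaves implicit.
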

\begin{proof}[Proof of Proposition \ref{bootstrapmorrey}]
Dimension 4 is critical in this case because $\nabla u\in W^{1,2} \hookrightarrow L^4$ and we cannot directly bootstrap. In order to improve on the regularity of $u$, we will use the Adams-Morrey embedding.\\

\textbf{Claim}. $\exists \gamma>0$ such that
$$\sup_{x_0\in B_{1/2}^4(0),\ 0<\rho<1/4} \rho^{-\gamma} \int_{B_\rho^4(x_0)} |\nabla u|^4 dx^4<\infty$$

Let $\e>0$ to be fixed later. There exists $\rho_0>0$ such that:
$$\sup_{x_0\in B_{1/2}^4(0),\ 0<\rho<\rho_0} \norm{A}{L^4(B_{\rho}^4(x_0))}<\varepsilon$$

We can always find such  $\e$ and $\rho_0$ since $\rho\mapsto  \int_{B_{\rho}^4(x_0)}$ is continuous.
Fix $x_0\in B_{1/2}^4(0)$ and $\rho<\rho_0$ arbitrary. To prove this claim we first consider :
$$\begin{array}{rcll}
\Delta \varphi & = & f_A(\nabla u)&\qquad\text{ in }B_\rho^4(x_0)\\
\varphi & = &0&\qquad\text{ on }\p B_\rho^4(x_0)
\end{array}$$
Let $v:= u - \varphi$. Then $\Delta v = 0$ and it easy to see that
$\Delta |\nabla v|^4\geq 0$
in $B_r^4(x_0)$, for some $r<\rho$. Applying the divergence theorem, we get that $\forall r<\rho$:
$$\int_{\p B_r^4(x_0)} \cfrac{\p |\nabla v|^4}{\p r} \geq 0$$
This implies that
$$\cfrac{d}{dr} \left[ \cfrac{1}{r^4} \int_{B_r^4(x_0)} |\nabla v|^4 dx^4 \right]\geq 0$$
and consequently the function $r\mapsto \cfrac{1}{r^4} \int_{B_r^4(x_0)} |\nabla v|^4 dx^4$ is increasing. In particular,
$$\int_{B_{\rho/4}^4(x_0)} |\nabla v|^4 dx^4 \leq 4^{-4} \int_{B_\rho^4(x_0)} |\nabla v|^4 dx^4$$
Using this decay, we can obtain a bound for $\int_{B_{\rho/4}^4(x_0)} |\nabla u|^4 dx^4$:
$$
\begin{array}{lcl}
\displaystyle\int_{B_{\rho/4}^4(x_0)} |\nabla u|^4 dx^4 & \leq & 8 \displaystyle\int_{B_{\rho/4}^4(x_0)} |\nabla v|^4  + |\nabla \varphi|^4 dx^4\\
& \leq & 2^{-5} \displaystyle\int_{B_{\rho}^4(x_0)} |\nabla v|^4 dx^4 + 8\displaystyle\int_{B_{\rho}^4(x_0)} |\nabla \varphi|^4 dx^4\\
& \leq & 2^{-2}\displaystyle \int_{B_{\rho}^4(x_0)} |\nabla u|^4 dx^4 + 16 \displaystyle\int_{B_{\rho}^4(x_0)} |\nabla \varphi|^4 dx^4
\end{array}
$$
Moreover, there exists a constants $C_1>0$ and $C_2>0$ such that
$$\begin{array}{lll}
||\Delta \varphi||_{L^2(B_{\rho}^4(x_0))}^4 &\leq & C_1||f_A(\nabla u)||_{L^2(B_{\rho}^4(x_0))}^4\\
& \leq& C_2\left(\norm{A}{L^4(B_{\rho}^4(x_0))} \norm{\nabla u}{L^4(B_{\rho}^4(x_0))} + \norm{\nabla u}{L^4(B_{\rho}^4(x_0))}^2\right)^4\\
&\leq & 8C_2\left(\norm{A}{L^4(B_{\rho}^4(x_0))}^4 \norm{\nabla u}{L^4(B_{\rho}^4(x_0))}^4 + \norm{\nabla u}{L^4(B_{\rho}^4(x_0))}^8\right)
\end{array}$$
Since $\varphi$ vanishes on the boundary of $B_{\rho}^4(x_0)$, then by elliptic estimates, we have for some constant $C>0$ the inequality:
$$\norm{\varphi}{W^{2,2}(B_{\rho}^4(x_0))}^4\leq C||\Delta \varphi||_{L^2(B_{\rho}^4(x_0))}^4,$$
from which we deduce that $\norm{\nabla \varphi}{L^4(B_{\rho}^4(x_0))}^4\leq C||\Delta \varphi||_{L^2(B_{\rho}^4(x_0))}^4$.
Putting this inequality together with the bound on $\Delta \varphi$, we get the following decay:
$$
\int_{B_{\rho/4}^4(x_0)} |\nabla u|^4 dx^4  \leq  \left(2^{-2} + C_0 || A||^4_{L^4(B_\rho^4(x_0))}\right)\int_{B_\rho^4(x_0)} |\nabla u| ^4 dx^4
$$
We can choose $\e>0$ so that $C_0\e^4\leq 2^{-2}$ and obtain:
$$\int_{B_{\rho/4}^4(x_0)} |\nabla u|^4 dx^4  \leq  2^{-1}\int_{B_\rho^4(x_0)} |\nabla u| ^4 dx^4.$$
This decay implies the existence of $\gamma>0$ and proves the claim.\\

Because $|\Delta u|^2 = |f(\nabla u)|^2\leq C\left( |\nabla u|^4 + |\nabla u|^2|A|^2\right)$, we can use the claim above to obtain the following bound:
$$\sup_{x_0\in B_{1/2}^4(0),\ 0<\rho<1/4} \rho^{-\gamma} \int_{B_\rho^4(x_0)} |\Delta u|^2 dx^4<\infty$$

By Adams-Morrey embedding, we get a bound on $||I_1 \Delta u||_{L^p}$, $p>2$ where $I_1$ is the Riesz potential (see \cite{adams1975note}). Thus, it follows that $\nabla u\in{L^p_{loc}} (B^4,\C)$ for $p>4$. Since the PDE becomes sub-critical, we can bootstrap to get $u\in W_{loc}^{2,p}(B^4,\C)$ for any $r<2$.
\end{proof}

\subsection*{Results in Several Complex Variables Theory in $\C^2$}
We will briefly recall some of the results we will be using from the theory of several complex variables and apply them to the case of the unit ball $B^4$ embedded into $\C^2$. At the end of this section we prove regularity results for the operators $T_1$ and $T_2$ as defined in \cite{ovrelid1972integral}.

\begin{Dfi}
We say that $D$ be a bounded domain into $\C^n$ has boundary of class $C^k$ if for every $p\in \p D$ and $U$ neighbourhood of $p$, there exists a function $r:U\to\R$ such that $U\cap D = \{z\in U|\ r(z)<0\}$, $U\cap \p D=\{z\in U|\ r(z)=0\}$ and $\nabla r(z) \neq 0$ on $U\cap\p D$. Then $r$ is called a $C^k$ \textbf{local defining function} for $ D$. If $\overline{ D}\subset U$, then $r$ is a \textbf{global defining function}.
\end{Dfi} 

In particular, on $B^4$ we define $$r(z) = |z|^2-1.$$ $r$ is a global defining function for $B^4$.

\begin{Dfi}
Let $ D$ be a bounded domain into $\C^n$ and $r$ a $C^2$ defining function. $ D$ is \textbf{pseudoconvex} at $p\in \p D$ if the Levi form 
$$L_p(r,t) = \sum\limits_{i,j=1}^n \frac{\p^2 r}{\p z_i\p \ov{z}_j} t_j\ov{t}_k\geq 0$$
for all $t\in T_p^{1,0}(\p  D)$. $ D$ is \textbf{strictly pseudoconvex} at $p$ if $L_p(r,t)>0$ whenever $t\neq 0$. If $ D$ is (strictly) pseudoconvex for all $p\in\p D$ then $ D$ is (strictly) pseudoconvex.
\end{Dfi}

$B^4$ is an example of a strictly pseudoconvex domain. Indeed, if the pick the defining function above, we have that the Levi form
$$L_p(r,t) = |t|^2>0$$
for all $t\in T_p^{1,0}(\p B^4)$, $t\neq 0$.\\

On $B^4$ take the canonical complex structure $J$. At each point $p\in \p B^4$, we can find an orthonormal $(0,1)$ fields $L_{\ov{\tau}}$ and $L_{\delb r}$ that span the complexified tangetial space $T^{\C}_p(\p B^4)$. We will explictly compute them.\\

Let $e^\ast$, $Je^\ast$, $dr$, $Jdr$ define the Hopf frame. These define the a orthonormal basis for $(0,1)$-forms. Namely, $\ov{\tau} = e^\ast +iJe^\ast$ and $\delb r = dr+iJdr$. In terms of $\ov{z}_1$ and $\ov{z}_2$, they satisfy:
\be
\begin{array}{rll}
dr & =  \frac{1}{2r} (\ov{z_1}dz_1 + z_1d\ov{z_1} +z_2d\ov{z_2} + \ov{z_2}dz_2) & =  \frac{1}{2}(\p r + \delb r) \\

Jdr & =  \frac{1}{2ir} (- \ov{z_1}dz_1 + z_1d\ov{z_1} +z_2d\ov{z_2} -  \ov{z_2}dz_2) & =  \frac{1}{2}(\delb r - \p r) \\

e^\ast & =  \frac{1}{2r} (z_2dz_1 - z_1dz_2 +\ov{z_2}d\ov{z_1} - \ov{z_1}d\ov{z_2}) & =  \frac{1}{2}(\tau + \ov{\tau})\\

Je^\ast & =  \frac{1}{2ir} (\ov{z_2}d\ov{z_1} - \ov{z_1}d\ov{z_2}  - z_2dz_1 + z_1dz_2) & =  \frac{1}{2r}(\ov{\tau} - \tau).
\end{array}
\ee
We obtain the explicit formulation of $\ov{\tau}$ and $\delb r$:
\be
\begin{array}{rl}
\ov{\tau} & =  \frac{1}{r}\left(\ov{z_2}d\ov{z_1} - \ov{z_1}d\ov{z_2}\right)\\
\delb r & =  \frac{1}{r}\left(z_1d\ov{z_1} +z_2d\ov{z_2}\right)\\
\end{array}
\ee
Moreover, the vector fields $L_{\ov{\tau}}$ and $L_{\delb r}$ can be computed as follows:
$$\begin{array}{rl}
L_{\ov{\tau}} &= \frac{1}{2}\left(\p_{e^\ast} - i \p_{Je^{\ast}}\right) =\frac{1}{r}\left( z_2\p_{\ov{z}_1} - z_1\p_{\ov{z}_2}\right)\\
L_{\delb r} &= \frac{1}{2}\left(\p_{r} - i \p_{Jdr}\right) = \frac{1}r\left(\ov{z}_1\p_{\ov{z}_1} + \ov{z}_2\p_{\ov{z}_2}\right).
\end{array}$$
In particular, $L_{\ov{\tau}}$ is a \textbf{tangential Cauchy-Riemann} vector field - it satisfies $L_{\ov{\tau}} r=0$. Using these vector fields and their conjugates $L_{\tau}$ and $L_{\p r}$, we obtain the formal adjoint $$\vartheta = -\ast\delb \ast.$$
Moreover, we have the following relation:
$$(\vartheta \alpha, \beta) = (\alpha, \delb \beta) + \int_{\p B^4}\langle \sigma(\vartheta, dr)\alpha, \beta\rangle dS,$$
where $\alpha$ is a $(p,q)$-form and $\beta$ a $(p,q-1)$-form, where we are using the notation in the literature $\sigma(\vartheta, dr)\alpha$ to denote
$$\sigma(\vartheta, dr)\alpha = \ov{\ast \delb r\wedge \ast \ov{\alpha}}.$$
More explicitly, $\sigma(\vartheta, dr)\alpha$ is the form of whose components are in the $\delb r$ frame of $\alpha$. Thus, if $\sigma(\vartheta, dr)\alpha = 0$, then $\vartheta = \delbs$, where $\delbs$ is the Hilbert adjoint. In this case we say that $\alpha\in Dom(\delbs)$ to emphasize that $\alpha$ is in the domain of $\delbs$.\\

It is also useful to remark the fact that if a $(0,q)$-form $\alpha$ vanishes on the boundary, it follows that $\alpha$ vanishes component wise on the boundary. This is because the frame $\delb r$ does not vanish on the boundary, unlike $dr$ which does vanish! In terms of the notation above, $\alpha=0$ on $\p B^4$ is equivalent to $\sigma(\vartheta, dr)\alpha = 0$ and $\sigma(\delb, dr)\alpha=0$ on $\p B^4$, where $\sigma(\delb, dr)\cdot =\delb r \wedge \cdot $ is the adjoint of $\sigma(\vartheta, dr)$.\\

We recall the \textbf{Integral Representation Theorem} which was proven in \cite{ovrelid1972integral} for $(0,q)$-forms and initially in \cite{henkin1970integral} for $(0,1)$-forms. Before doing so, we will have to define a few key operators. Let $D$ be a strictly pseudoconvex domain in $\C^n$ with defining function $r$ such that
$$\sum_{i,j=1}^n \frac{\p^2 r}{\p x_i\p x_j} t_it_j \geq c|t|^2$$
for some $c\in \R$ and $t\in \R^{2n}$.\\

The Bochner-Martinelli-Koppelman kernel (see \cite[Theorem 11.1.2]{chen2001partial}) is given by:
\be\label{app:BMK}
K(\zeta, z) = \frac{1}{(2\pi i)^n} \frac{\sum_{i=1}^n (\ov{\zeta}_i-\ov{z}_i)d\zeta_i}{|\zeta-z|^2}\wedge\left(\frac{\sum_{i=1}^n (d\ov{\zeta}_i-d\ov{z}_i)\wedge d\zeta_i}{|\zeta-z|^2}\right)^{n-1}.
\ee
We define the kernel $K_q$ as being the form of $(0,q)$ degree in $z$ and $(n,n-q-1)$ degree in $\zeta$. Moreover, the boundary kernels $K^\p$, $K^\p_{00}$ given by
$$
\begin{array}{rl}
\displaystyle K^\p =&  \cfrac{1}{(2\pi i)^n} \cfrac{\sum_{i=1}^n (\ov{\zeta}_i-\ov{z}_i)d\zeta_i}{|\zeta-z|^2}\wedge \cfrac{\sum_{i=1}^n \p_{{\zeta}_i} r(\zeta)  d\zeta_i}{\sum_{i=1}^n \p_{{\zeta}_i} r(\zeta) (\zeta_i-z_i)}\\
& \wedge \sum\limits_{k_1+k_2=n-2}  \left(\cfrac{\sum_{i=1}^n (\ov{\zeta}_i-\ov{z}_i)d\zeta_i}{|\zeta-z|^2}\right)^{k_1}\wedge \left(\cfrac{\sum_{i=1}^n \p_{\ov{\zeta}_i}\p_{{\zeta}_i} r(\zeta)  d\ov{\zeta}_i\wedge d\zeta_i}{\sum_{i=1}^n \p_{{\zeta}_i} r(\zeta) (\zeta_i-z_i)}\right)^{k_2}
\end{array}
$$

and

$$
\displaystyle K^\p_{00} =  \cfrac{1}{(2\pi i)^n} \cfrac{\sum_{i=1}^n (\ov{\zeta}_i-\ov{z}_i)d\zeta_i}{|\zeta-z|^2}\wedge \left(\cfrac{\sum_{i=1}^n \p_{\ov{\zeta}_i}\p_{{\zeta}_i} r(\zeta)  d\ov{\zeta}_i\wedge d\zeta_i}{\sum_{i=1}^n \p_{{\zeta}_i} r(\zeta) (\zeta_i-z_i)}\right)^{n-1}.
$$
For each $q\geq 1$ we can, thus, define
$$T_q:C^\infty(\Omega^{0,q}  D)\to C^\infty(\Omega^{0,q-1}D)$$
given by
\be \label{app:T1}T_1 (\alpha)= \int_D K_0 \wedge \alpha - \int_{\p D} K^{\p}_0\wedge \alpha\qquad \text{ when }q=1
\ee
and
\be\label{app:T_q} T_q (\alpha)= \int_D K_{q-1} \wedge \alpha \qquad \text{ when }q>1,\ee
where by $K_0^{\p}$ we understand the form of $(0,0)$ degree in $z$. 
We now formulate the representation theorem (this can be found in \cite[Section 3]{ovrelid1972integral} and \cite[Theorem 11.2.7]{chen2001partial}):
\begin{Th}\label{app:repr}
Let $D$ be a bounded strictly pseudoconvex domain in $\C^n$ with $C^2$ boundary, $0\leq q\leq n$ and $\alpha\in C^\infty(\Omega^{0,q}D, \C)$. Then we have the following representations:
$$\begin{array}{rll}
\alpha(z) &= \displaystyle\int_{\p D} K^\p_{00}(\zeta,z)\alpha(\zeta) + T_1(\delb \alpha)&\text{ when }q=0\\
\alpha(z) &= \delb (T_q \alpha)+ T_{q+1}(\delb \alpha)&\text{ when }q>0.
\end{array}$$
\end{Th}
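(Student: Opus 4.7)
The plan is to use the Bochner--Martinelli--Koppelman (BMK) formula combined with the Henkin--Ramirez homotopy construction specific to strictly pseudoconvex domains. First, I would apply Stokes' theorem to the double form $K(\zeta,z)$ of \eqref{app:BMK} on the pierced domain $D\setminus \overline{B_\varepsilon(z)}$ and let $\varepsilon\to 0$. The kernel $K$ is $\bar\partial$-closed in $\zeta$ off the diagonal $\zeta = z$, and its singularity at $\zeta = z$ is exactly that of the Newton potential, so the boundary contribution from $\partial B_\varepsilon(z)$ reproduces $\alpha(z)$ in the limit. Extracting the component of bidegree $(0,q)$ in $z$ yields the Koppelman identity
$$\alpha(z) \;=\; \int_{\partial D} K_q(\zeta,z)\wedge\alpha(\zeta) \;+\; \bar\partial_z \int_D K_{q-1}(\zeta,z)\wedge\alpha(\zeta) \;+\; \int_D K_q(\zeta,z)\wedge\bar\partial\alpha(\zeta).$$

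Second, the BMK boundary kernel uses the generating section $s(\zeta,z)=(\bar\zeta-\bar z)/|\zeta-z|^2$, which is not holomorphic in $z$. On a strictly pseudoconvex $D$, the positivity of the Levi form furnishes the Levi polynomial
$$\Phi(\zeta,z) \;=\; \sum_i \partial_{\zeta_i}r(\zeta)(\zeta_i-z_i) \;-\; \tfrac{1}{2}\sum_{i,j}\partial_{\zeta_i}\partial_{\zeta_j} r(\zeta)(\zeta_i-z_i)(\zeta_j-z_j),$$
which is holomorphic in $z$ and, for $\zeta,z\in\overline{D}$ sufficiently close, satisfies $\mathrm{Re}\,\Phi(\zeta,z)\geq c(|r(\zeta)|+|\zeta-z|^2)$. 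Consequently the section $\tilde s(\zeta,z) = \partial_\zeta r(\zeta)/\Phi(\zeta,z)$ is nonvanishing for $\zeta\in\partial D$, and the Cauchy--Fantappiè--Leray calculus produces from $\tilde s$ precisely the kernels $K^\partial$ and $K^\partial_{00}$ listed in the statement.

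Third, I would run the Cauchy--Fantappiè homotopy between $s$ and $\tilde s$ on $\partial D$: writing $s_t = (1-t)s+t\tilde s$ and integrating over $t\in[0,1]$ gives a double form $\tilde K(\zeta,z,t)$ such that
$$K^{BMK}_q - K^{HR}_q \;=\; \bar\partial_z \tilde K_{q-1} + d_\zeta(\cdots),$$
on $\partial D\times[0,1]$. Applying Stokes' theorem in $(\zeta,t)$ and extracting bidegrees converts the BMK boundary integral $\int_{\partial D} K_q\wedge\alpha$ into a sum of a $\bar\partial_z$ acting on $-\int_{\partial D} K^\partial_{q-1}\wedge\alpha$ together with $\int_{\partial D} K^\partial_q\wedge\bar\partial\alpha$ (when $q\geq 1$). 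Combining with the Koppelman identity, the $\bar\partial_z$ terms collect into $\bar\partial T_q(\alpha)$ (since the definition of $T_q$ absorbs exactly the Henkin--Ramirez boundary piece for $q=1$ and is purely interior for $q\geq 2$), while the $\bar\partial\alpha$ terms collect into $T_{q+1}(\bar\partial\alpha)$. For $q=0$, the formula degenerates: $\alpha$ is a function, $\bar\partial_z\int_D K_{-1}\wedge\alpha$ is absent, and only the boundary Cauchy--Fantappiè reproducing kernel $K^\partial_{00}$ survives together with $T_1(\bar\partial\alpha)$.

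The main obstacle is the careful bookkeeping of bidegrees in Step three: one must verify that the homotopy $\tilde K(\zeta,z,t)$ produces, upon integration in $t$ and extraction of the correct $(0,q-1)$-part in $z$ and $(n,n-q)$-part in $\zeta$, exactly the kernel $K^\partial_{q-1}$ that appears in the definition of $T_q$, with the right sign. This is ultimately algebraic Cauchy--Fantappiè calculus, but one also needs to check that no interior correction survives (the interior contribution from the homotopy drops by a degree count in $d\bar\zeta$ against the top-degree requirement on $(n,n-q-1)$-forms). Once this combinatorial accounting is in place, the two representations follow directly.
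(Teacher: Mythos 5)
The paper does not actually prove this theorem — it is quoted from \cite[Section 3]{ovrelid1972integral} and \cite[Theorem 11.2.7]{chen2001partial} — and your sketch is precisely the standard argument given in those references: the Bochner--Martinelli--Koppelman identity obtained by applying Stokes on $D\setminus\overline{B_\varepsilon(z)}$ and letting $\varepsilon\to 0$, followed by a Cauchy--Fantappi\`e homotopy on $\partial D$ between the BMK section and a section holomorphic in $z$, which trades the BMK boundary integral for the $K^{\partial}$-kernels and collects the remaining terms into $\bar\partial(T_q\alpha)+T_{q+1}(\bar\partial\alpha)$ (respectively the $K^{\partial}_{00}$ reproducing term plus $T_1(\bar\partial\alpha)$ when $q=0$). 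The one mismatch worth noting is that you build the holomorphic section from the full Levi polynomial $\Phi$, whereas the kernels in the statement carry only the first-order denominator $\sum_i\partial_{\zeta_i}r(\zeta)(\zeta_i-z_i)$; that simpler denominator is guaranteed non-vanishing only on (strictly) convex domains such as the ball $B^4$ — the only case the paper uses — so your homotopy as written would produce kernels with $\Phi$ in place of the stated denominator unless you specialize to the convex situation.
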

Moreover, for results concerning regularity of operators $T_q$, we recommend to the reader \cite{krantz1979estimates} (optimal $L^p$ results for $(0,1)$-forms) and \cite{ovrelid1972integral} ($L^p$ and H\"older regularity results for $(0,q)$-forms).\\

We prove a regularity result for $(0,2)$-forms in the domain $B^4$, which comes in-handy in our paper. We are unaware of such a result being available in the literature.
\begin{Prop}\label{app:T2res}
The operator $T_2$ maps $L^p(\Omega^{0,2}B^4)$ into $W^{1,p}(\Omega^{0,1}B^4)$ whenever $p>1$.
\end{Prop}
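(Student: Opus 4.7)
The approach is to analyze the kernel $K_1$ appearing in the definition \eqref{app:T_q} of $T_2$ and apply classical Calder\'on--Zygmund theory. Specialising the Bochner--Martinelli--Koppelman kernel \eqref{app:BMK} to $n=2$ and isolating the bidegree $(0,1)$ in $z$ and $(2,0)$ in $\zeta$ component, a direct computation shows that each coefficient of $K_1(\zeta,z)$, expanded in the basis $d\ov{z}_i\otimes d\zeta_j\wedge d\ov{\zeta}_k$, is a linear combination of terms of the form $(\ov{\zeta}_i-\ov{z}_i)|\zeta-z|^{-4}$. In particular one obtains the pointwise size bound $|K_1(\zeta,z)|\leq C|\zeta-z|^{-3}$ on $B^4\times B^4$.

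With this bound in hand, the $L^p$-boundedness of $T_2$ itself is essentially automatic. Extending $\alpha$ by zero to $\R^4$, the integrand $K_1\wedge\alpha$ is dominated pointwise by $C|\alpha(\zeta)|/|\zeta-z|^3$, that is, by (a constant multiple of) the Riesz potential $I_1|\alpha|$. Hardy--Littlewood--Sobolev then gives $T_2:L^p(B^4)\to L^q(B^4)$ with $1/q=1/p-1/4$ for $1<p<4$, and in the range $p\geq 4$ the $L^p\to L^p$ boundedness follows from H\"older's inequality since $B^4$ is bounded.

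For the derivative estimate I differentiate $K_1$ with respect to $z$: each derivative raises the singularity order by one, yielding $|\nabla_z K_1(\zeta,z)|\leq C|\zeta-z|^{-4}$, precisely the Calder\'on--Zygmund critical order in $\R^4$. A further differentiation furnishes the H\"ormander smoothness bound $|\zeta-z|^{-5}$. The mean-zero cancellation condition on spheres required by Calder\'on--Zygmund theory reflects the algebraic fact that the Bochner--Martinelli--Koppelman kernel is $\delb$-closed off the diagonal, so that the spherical averages of its $z$-derivative vanish modulo lower-order smooth terms. The standard Calder\'on--Zygmund $L^p$-boundedness theorem then gives $\nabla T_2:L^p(B^4)\to L^p(B^4)$ for every $1<p<\infty$, which combined with the previous paragraph yields $T_2:L^p(B^4)\to W^{1,p}(B^4)$.

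The delicate point will be the verification of the Calder\'on--Zygmund cancellation in the last step: size and H\"ormander smoothness are straightforward from the explicit formula, but the mean-zero property must be teased out from the specific algebraic structure of the Bochner--Martinelli--Koppelman kernel. Moreover, since $B^4$ has boundary and $K_1$ is not translation-invariant, a localisation/truncation argument (reducing to the translation-invariant case away from the singularity, and controlling the boundary contribution by a direct size estimate) will be required before the standard Calder\'on--Zygmund theorem becomes applicable.
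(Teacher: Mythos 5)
Your proposal is correct in substance and follows the same first step as the paper (writing out $K_1$ explicitly from \eqref{app:BMK} and observing that its coefficients are of the form $(\ov{\zeta}_i-\ov{z}_i)|\zeta-z|^{-4}$, hence homogeneous of degree $-3$ in $\zeta-z$), but where the paper then simply invokes Morrey's quasi-potential theorem \cite[Theorem 3.7.1]{morrey2009multiple} componentwise, you propose to re-derive that result from scratch via Hardy--Littlewood--Sobolev for the zeroth-order bound and Calder\'on--Zygmund theory for the gradient. That is a legitimate route --- it is essentially the proof of the cited theorem --- but two of your caveats are off. First, the ``delicate'' mean-zero condition is not tied to $\delb$-closedness of the Bochner--Martinelli--Koppelman kernel: it holds for the elementary reason that $\nabla_z K_1$ is the gradient of a kernel homogeneous of degree $-(n-1)=-3$, and for any such $f$ the divergence theorem on an annulus gives $\log(b/a)\int_{S^{3}}\partial_j f\,d\sigma=\int_{|x|=b}f\nu_j-\int_{|x|=a}f\nu_j=0$ by homogeneity, so the spherical average of $\partial_j f$ vanishes identically; this (together with the fact that differentiation under the integral produces a principal value plus a bounded multiple of $\alpha$ itself) is exactly what Morrey's theorem packages. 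Second, your final worry about non-translation-invariance is unfounded: unlike the boundary kernels $K^\p$, the interior kernel $K_1(\zeta,z)$ depends only on $\zeta-z$, so $T_2$ is a genuine convolution operator applied to the zero-extension of $\alpha$, and no localisation or truncation beyond the standard one is needed. With those two points corrected, your argument closes and reproduces the paper's conclusion $\norm{T_2(\alpha)}{W^{1,p}(B^4)}\leq C\norm{\alpha}{L^p(B^4)}$ for all $p>1$.
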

\begin{proof}[Proof of Proposition \ref{app:T2res}]
From the formula (\ref{app:BMK}), we obtain
$$K_1(\zeta,z) =  -\frac{1}{4\pi^2} \frac{\sum_{i=1}^2 (\ov{\zeta}_i-\ov{z}_i)d\zeta_i}{|\zeta-z|^2}\wedge \frac{\sum_{i=1}^2 -d\ov{z}_i\wedge d\zeta_i}{|\zeta-z|^2}.$$
We expand the equation above to get:
$$K_1(\zeta,z) = -\frac{1}{4\pi^2 |\zeta-z|^4} \left( (\ov{\zeta}_1-\ov{z}_1) d\zeta_1\wedge d\ov{z_2}\wedge d\zeta_2 + (\ov{\zeta}_2-\ov{z}_2) d\zeta_2\wedge d\ov{z_1}\wedge d\zeta_1\right).$$
Let $\alpha = f d\ov{z}_1\wedge d\ov{z}_2\in L^p(\Omega^{0,2}B^4)$. From the formula of $T_2$  it follows that:
$$\begin{array}{rl}
\displaystyle T_2(\alpha) = &\left(-\cfrac{1}{4\pi^2} \displaystyle\int_{B^4}  \cfrac{1}{|\zeta-z|^4} (\ov{\zeta}_2-\ov{z}_2) f d\zeta_1\wedge d\ov{\zeta}_1\wedge d\zeta_2\wedge d\ov{\zeta}_2\right)d\ov{z_1} \\
&+  \left(-\cfrac{1}{4\pi^2} \displaystyle\int_{B^4}  \cfrac{1}{|\zeta-z|^4} (\ov{\zeta}_1-\ov{z}_1) f d\zeta_1\wedge d\ov{\zeta}_1\wedge d\zeta_2\wedge d\ov{\zeta}_2\right)d\ov{z_2}.
\end{array}$$
Since each component of $K_1$ is a quasi-potential in the sense of \cite[Definition 3.7.1]{morrey2009multiple}, then we can apply \cite[Theorem 3.7.1]{morrey2009multiple} component wise to $T_2(\alpha)$ to get the required result:
$$\norm{T_2(\alpha)}{W^{1,p}(B^4)}\leq \norm{\alpha}{L^p(B^4)}.$$
\end{proof}

In addition, the following result builds upon the sharp estimates of the Henkin operator ($T_1$ in our notation) found by \cite{krantz1979estimates}. In particular, we show that for estimating $T_1\alpha$, where $\alpha$ is a $(0,1)$ form, we can relax the condition $\delb \alpha =0$. The estimates we find are not sharp.

\begin{Prop}\label{app:T1res}
Let $p>6$ and $q>6$ such that $W^{1,p}(B^4)\hookrightarrow L^q(B^4)$ and $\alpha\in L^q(\Omega^{0,1}B^4)$ satisfying $\delb \alpha\in L^p(B^4)$. Then there exists a constant $C>0$ depending on $p$ and $q$ such that:
$$\norm{T_1\alpha}{L^\infty(B^4)} + \norm{\delb T_1\alpha}{L^q(B^4)}\leq C\left(\norm{\alpha}{L^q(B^4)} + \norm{\delb\alpha}{L^p(B^4)}\right).$$
\end{Prop}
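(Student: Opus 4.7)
} The plan is to estimate the two pieces $T_1\alpha$ and $\delb T_1\alpha$ separately, using the Bochner--Martinelli--Koppelman representation (Theorem \ref{app:repr}), the regularity of $T_2$ established in Proposition \ref{app:T2res}, and the classical $L^q\to L^\infty$ kernel estimates for Henkin's operator from \cite{krantz1979estimates}. The key observation is that after dualising via Theorem \ref{app:repr}, the $(0,2)$-contribution $T_2\delb\alpha$ enjoys strictly better regularity than $\alpha$ itself, so that the two halves of the inequality decouple cleanly.

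First I would handle the $L^q$-bound on $\delb T_1\alpha$. Applying Theorem \ref{app:repr} to the $(0,1)$-form $\alpha$ gives
$$\alpha = \delb T_1\alpha + T_2\delb\alpha \qquad\text{in }B^4,$$
hence $\delb T_1\alpha = \alpha - T_2\delb\alpha$. Proposition \ref{app:T2res} yields $T_2\delb\alpha\in W^{1,p}(\Omega^{0,1}B^4)$ with norm bounded by $C\norm{\delb\alpha}{L^p(B^4)}$, and the assumed Sobolev embedding $W^{1,p}\hookrightarrow L^q$ plus a triangle inequality produces
$$\norm{\delb T_1\alpha}{L^q(B^4)}\leq \norm{\alpha}{L^q(B^4)} + C\norm{\delb\alpha}{L^p(B^4)}.$$
For the pointwise bound on $T_1\alpha$, I would estimate the defining kernel (\ref{app:T1}) directly: the interior BMK kernel satisfies $|K_0(\zeta,z)|\lesssim |\zeta-z|^{-3}$ in $\R^4$, while the boundary kernel $K_0^\p$ on $\p B^4$ has additional non-isotropic decay adapted to the Levi form. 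Since $q>6>4$, the conjugate exponent $q'$ obeys $3q'<4$, so $\zeta\mapsto|\zeta-z|^{-3}$ is in $L^{q'}(B^4)$ uniformly in $z$, and H\"older's inequality yields $\norm{T_1\alpha}{L^\infty(B^4)}\leq C\norm{\alpha}{L^q(B^4)}$, which is the Henkin-type sharp estimate of \cite{krantz1979estimates}. Summing the two bounds gives the proposition.

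The main technical obstacle I anticipate is giving a meaning to the boundary integral in the definition of $T_1$ when $\alpha$ is only in $L^q$ and need not admit a trace on $\p B^4$. I would circumvent this by proving the inequality first for smooth $\alpha$ --- where both the kernel estimates and the representation formula apply classically --- and then extending to general $\alpha\in L^q(B^4)$ with $\delb\alpha\in L^p(B^4)$ by mollification: an approximating sequence $\alpha_k\to\alpha$ in $L^q$ with $\delb\alpha_k\to\delb\alpha$ in $L^p$ is standard, and the uniformity in $z$ of the kernel bounds allows passage to the limit on both sides of the inequality.
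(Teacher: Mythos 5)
Your treatment of the second half of the estimate is exactly the paper's: apply Theorem \ref{app:repr} to write $\delb T_1\alpha=\alpha-T_2\delb\alpha$, invoke Proposition \ref{app:T2res} and the embedding $W^{1,p}\hookrightarrow L^q$. That part is correct.

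The $L^\infty$ bound, however, has a genuine gap, and it is precisely at the point you flag as the "main technical obstacle." The boundary term $\int_{\p B^4}K_0^\p\wedge\alpha$ is an integral over the three-dimensional sphere $\p B^4$, so any H\"older estimate applied to it produces a bound in terms of the boundary values of $\alpha$, not in terms of $\norm{\alpha}{L^q(B^4)}$. Your proposed fix by mollification does not repair this: the difficulty is not merely that the boundary integral is ill-defined for $L^q$ data, but that the inequality $\bigl|\int_{\p B^4}K_0^\p\wedge\alpha\bigr|\leq C\norm{\alpha}{L^q(B^4)}$ is \emph{false even for smooth} $\alpha$ (take $\alpha$ small in $L^q(B^4)$ but with large boundary trace), so there is no uniform estimate to pass to the limit. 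A symptom of the problem is that your claimed sup bound $\norm{T_1\alpha}{L^\infty}\leq C\norm{\alpha}{L^q}$ contains no $\delb\alpha$ term at all, whereas the sharp estimates of \cite{krantz1979estimates} that you are invoking assume $\delb\alpha=0$; the whole point of the proposition is to relax that hypothesis, and the mechanism by which $\delb\alpha$ enters is missing from your argument. The paper's resolution is to integrate by parts: by Stokes,
$$\int_{\p B^4} K^{\p}_0\wedge \alpha \;=\; \int_{B^4} \delb K^{\p}_0\wedge \alpha \;-\; \int_{B^4} K^{\p}_0\wedge \delb \alpha,$$
which converts the boundary integral into two \emph{interior} integrals. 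The first is controlled by $\norm{\alpha}{L^q(B^4)}$ via the kernel estimates of \cite[Sections 5--6]{krantz1979estimates}, and the second by $\norm{\delb\alpha}{L^p(B^4)}$ since $K_0^\p$ is less singular than $\delb K_0^\p$ --- this is exactly where the $\norm{\delb\alpha}{L^p}$ term in the $L^\infty$ estimate comes from. Your H\"older bound on the interior BMK piece $\int_{B^4}K_0\wedge\alpha$ is fine; it is only the boundary piece that needs the Stokes argument.
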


\begin{proof}[Proof of Proposition \ref{app:T1res}]
We refer to the proofs presented in \cite{krantz1979estimates}. We recall:
$$T_1(\alpha) = \int_{B^4} K_0 \wedge \alpha - \int_{\p B^4} K^{\p}_0\wedge \alpha.$$ 
In \cite[Section 5]{krantz1979estimates} it is shown that the first term has good regularity. In particular that $\int_{B^4} K_0 \wedge \alpha $ belongs to a Lipschitz space when $\alpha\in L^q$ for $q>6$. We focus our attention to the second term which is problematic. By Stokes we obtain:
$$\int_{\p B^4} K^{\p}_0\wedge \alpha = \int_{B^4} \delb K^{\p}_0\wedge \alpha - \int_{B^4} K^{\p}_0\wedge \delb \alpha.$$
Since $\delb \alpha$ does not vanish, we obtain two terms. The first integral is estimated in \cite[Section 5,6]{ krantz1979estimates} and yields the regularity result:
$$\norm{T_1(\alpha)}{L^\infty(B^4)}\leq C\norm{\alpha}{L^q(B^4)}.$$
for some constant $C>0$. It remains to deal with the term: $\int_{B^4} K^{\p}_0\wedge \delb \alpha$. However, since $\delb K^{\p}_0$ is more singular than $K^{\p}_0$, since $\delb \alpha\in L^p$, we have at least the estimate:
$$\norm{\int_{B^4} K^{\p}_0\wedge \delb \alpha}{L^\infty(B^4)}\leq C\norm{\delb \alpha}{L^p(B^4)}.$$
Hence, we have that there exists a constant $C>0$ such that 
$$\norm{T_1\alpha}{L^\infty(B^4)} \leq C\left(\norm{\alpha}{L^q(B^4)} + \norm{\delb\alpha}{L^p(B^4)}
\right).$$
Since $\delb\alpha$ is well-defined, by density of smooth forms, we obtain by Theorem \ref{app:repr} the following equation:
$$\alpha = \delb T_1(\alpha) + T_2(\delb \alpha),$$
and
$$\norm{\delb T_1(\alpha)}{L^q(B^4)}\leq \norm{\alpha}{L^q(B^4)} + \norm{T_2(\delb \alpha)}{L^q(B^4)}.$$
By Proposition \ref{app:T2res}, we have that $T_2(\delb\alpha)\in W^{1,p}\hookrightarrow L^q$. In particular, we obtain constants $C,C'>0$  such that:
$$\norm{\delb T_1(\alpha)}{L^q(B^4)}\leq C\left(\norm{\alpha}{L^q(B^4)} + \norm{T_2(\delb \alpha)}{W^{1,p}(B^4)}\right)\leq C'\left(\norm{\alpha}{L^q(B^4)} + \norm{\delb \alpha}{L^{p}(B^4)}\right).$$
Hence, by putting everything together we get:
$$\norm{T_1\alpha}{L^\infty(B^4)} + \norm{\delb T_1\alpha}{L^q(B^4)}\leq C\left(\norm{\alpha}{L^q(B^4)} + \norm{\delb\alpha}{L^p(B^4)}\right).$$
\end{proof}
\newpage

\addcontentsline{toc}{section}{References}
\bibliographystyle{plain}

\end{document}